\documentclass[a4paper, 11pt]{paper}
\usepackage{amssymb}
\usepackage{amsmath}
\usepackage{amsthm}
\usepackage{enumerate}
\usepackage[mathscr]{eucal}
\usepackage{graphics}
\usepackage[dvips]{graphicx}
\usepackage{inctabs}


\begin{document}



\let\goth\mathfrak


\newcommand*{\Land}{\;\land\;}
\newcommand\theoremname{Theorem}
\newcommand\lemmaname{Lemma}
\newcommand\corollaryname{Corollary}
\newcommand\propositionname{Proposition}
\newcommand\factname{Fact}
\newcommand\remarkname{Remark}
\newcommand\examplename{Example}

\newtheorem{thm}{\theoremname}[section]
\newtheorem{lem}[thm]{\lemmaname}
\newtheorem{cor}[thm]{\corollaryname}
\newtheorem{prop}[thm]{\propositionname}
\newtheorem{fact}[thm]{\factname}
\newtheorem{exmx}[thm]{\examplename}
\newenvironment{exm}{\begin{exmx}\normalfont}{\end{exmx}}

\newtheorem{rem}[thm]{\remarkname}

\def\myend{{}\hfill{\small$\bigcirc$}}

\newtheorem{reprx}[thm]{Representation}
\newenvironment{repr}{\begin{reprx}\normalfont}{\myend\end{reprx}}
\newtheorem{cnstrx}[thm]{Construction}
\newenvironment{constr}{\begin{cnstrx}\normalfont}{\myend\end{cnstrx}}
\def\classifname{Classification}
\newtheorem{classification}[thm]{\classifname}
\newenvironment{classif}{\begin{classification}\normalfont}{\myend\end{classification}}

\bibliographystyle{acm}
\newcommand{\minus}{{\bf -}}

\title{Multiplied configurations, series induced by quasi difference sets}
\author{Krzysztof Petelczyc \and Krzysztof Pra\.zmowski}
\pagestyle{myheadings}
\markboth{K. Petelczyc, K. Pra{\.z}mowski}{Multiplied configurations}

\def\LineOn(#1,#2){\overline{{#1},{#2}\rule{0em}{1,5ex}}}
\def\PointOf(#1,#2){{#1}\sqcap{#2}}
\def\lines{{\cal L}}
\def\collin{\sim}
\def\chain(#1){{#1}^{\ast}}
\def\inc{\mathrel{\rule{2pt}{0pt}\rule{1pt}{9pt}\rule{2pt}{0pt}}}
\def\dist{\mathrm{dist}}
\def\DifSpace(#1,#2){{\bf D}({#1},{#2})}
\def\CyclSpacex(#1,#2,#3){{#1}\circledast_{{#2}}{#3}}
\def\CyclSpace(#1,#2,#3){\circledast_{{#1}}({#2},{#3})}
\def\entier{\mathrm{E}}
\def\invers(#1){{#1}^{{-1}}}
\def\nwd(#1,#2){\mathrm{GCD}(#1,#2)}
\def\gras(#1,#2){{\bf G}_{#1}({#2})}
\def\otocz(#1,#2){{#1}_{({#2})}}
\def\alf(#1,#2){\mbox{${\strut}^{\alpha}\mkern-2mu{#1}_{({#2})}$}}
\def\bet(#1,#2){\mbox{${\strut}^{\beta}\mkern-2mu{#1}_{({#2})}$}}
\def\embfunc{\varepsilon}
\def\emb(#1,#2){\embfunc_{{#1}}({#2})}
\def\img{\mathrm{im}}
\def\ciach#1{\;\wr_{{#1}}\;}
\def\dod{\mathrel{\wr}}
\def\baero{\mathrel{\rho}}
\def\kor{\mathrel{\varkappa}}
\def\dod{\mathrel{\wr}}
\def\baer{\mathrel{\diamond}}
\def\Baer{\mathrel{\scriptstyle{\blacklozenge}}}
\def\corr{\mathrel{\vartriangleleft}}
\def\pls{partial linear space}
\def\id{\mathrm{id}}
\def\mappedby#1{%
{\rule{0pt}{2.5ex}}\mkern8mu{\longmapsto\mkern-25mu{\raise1.3ex\hbox{$#1$}}}
\mkern20mu{\rule{0pt}{4pt}}
}
\def\rank(#1){\mathrm{r}(#1)}
\def\qdots{\rule{0pt}{10pt}%
 {\raise-0.6ex\hbox{$\cdot$}}\mkern1.2mu{\raise0.1ex\hbox{$\cdot$}}%
 \mkern1.2mu{\raise0.8ex\hbox{$\cdot$}}
 }
\def\supp{\mathrm{supp}}
\def\flines{{\cal J}}  
\newcommand*{\struct}[1]{{\ensuremath{\langle #1 \rangle}}}
\newcommand*{\sub}{\raise.5ex\hbox{\ensuremath{\wp}}}
\def\Aut{{\text{\rm Aut}}}

\def\aulemname{Auxiliary Lemma}
\newtheorem{auxlem}{\aulemname}[thm]
\newenvironment{auproof}{\par\noindent{\it D o w ó d}:}{\hfill$\vartriangle$\par}
\newcounter{sentencex}
\def\thesentencex{\Roman{sentencex}}
\def\labelsentencex{\upshape(\thesentencex)}

\newenvironment{sentencesx}{%
        \list{\labelsentencex}
          {\usecounter{sentencex}\def\makelabel##1{\hss\llap{##1}}
            \topsep3pt\leftmargin0pt\itemindent40pt\labelsep8pt}%
  }{%
    \endlist}

\newenvironment{cmath}{%
  \par
  \smallskip
  \centering
  $
}{%
  $
  \par
  \smallskip
  \csname @endpetrue\endcsname
}


  \def\pforall#1{(\forall{#1})}
  \def\pexists#1{(\exists{#1})}

\newcounter{sentence}
\def\thesentence{\roman{sentence}}
\def\labelsentence{\upshape(\thesentence)}

\newenvironment{sentences}{%
        \list{\labelsentence}
          {\usecounter{sentence}\def\makelabel##1{\hss\llap{##1}}
            \topsep3pt\leftmargin0pt\itemindent40pt\labelsep8pt}%
  }{%
    \endlist}

\newenvironment{ctext}{%
  \par
  \smallskip
  \centering
}{%
 \par
 \smallskip
 \csname @endpetrue\endcsname
}
\def\LineOn(#1,#2){\overline{{#1},{#2}\rule{0em}{1,5ex}}}
\def\PointOf(#1,#2){{#1}\sqcap{#2}}
\def\lines{{\cal L}}
\def\collin{\sim}
\def\chain(#1){{#1}^{\ast}}
\def\inc{\rule{2pt}{0pt}\rule{1pt}{9pt}\rule{2pt}{0pt}}
\def\dist{\mathrm{dist}}
\def\DifSpace(#1,#2){{\bf D}({#1},{#2})}
\def\CyclSpace(#1,#2){\circledast_{{#1}} {#2}}
\def\CyclSpacex(#1,#2,#3){{#1}\circledast_{{#2}}{#3}}
\def\entier{\mathrm{E}}
\def\invers(#1){{#1}^{{-1}}}
\def\nwd(#1,#2){\mathrm{GCD}(#1,#2)}
\def\gras(#1,#2){{\bf G}_{#1}({#2})}
\def\otocz(#1,#2){{#1}_{({#2})}}
\def\alf(#1,#2){\mbox{${\strut}^{\alpha}\mkern-2mu{#1}_{({#2})}$}}
\def\bet(#1,#2){\mbox{${\strut}^{\beta}\mkern-2mu{#1}_{({#2})}$}}
\def\embfunc{\varepsilon}
\def\emb(#1,#2){\embfunc_{{#1}}({#2})}
\def\img{\mathrm{im}}
\def\ciach#1{\;\wr_{{#1}}\;}

\def\mappedby#1{%
{\rule{0pt}{2.5ex}}\mkern8mu{\longmapsto\mkern-25mu{\raise1.3ex\hbox{$#1$}}}
\mkern20mu{\rule{0pt}{4pt}}
}

\def\qdots{\rule{0pt}{10pt}%
 {\raise-0.6ex\hbox{$\cdot$}}\mkern1.2mu{\raise0.1ex\hbox{$\cdot$}}%
 \mkern1.2mu{\raise0.8ex\hbox{$\cdot$}}
 }

\def\QDS{{\sf QDS}}

\maketitle

\begin{abstract}
  Using the technique of quasi difference sets we characterize geometry
  and automorphisms of configurations which can be presented as a join of
  some others, in particular -- which can be presented as series of
  cyclically inscribed copies of another configuration.\\
MSC 2000: 51D20, 51E30 \\
Key words: partial linear space, difference set, quasi difference set, cyclic projective plane.
\end{abstract}

\section*{Introduction}

The technique involving difference sets is one of the standard ones used
to construct block designs of various types, in particular -- to construct
finite projective planes.
In fact, every finite Desarguesian projective plane  can be defined with
the help of this method (see \cite{Lipski}).
However, in effect, the structure defined in terms of a difference set
cannot be "partial".
We propose to overcome this trouble generalizing the notion of difference set 
to a {\em quasi difference set}.
\newline
This approach, applied in \cite{petel} to a very special and simple case of
products of two cyclic groups could be fruitfully used to represent configurations,
which can be visualized as series of closed polygons,
inscribed cyclically one into the previous one.
In particular, classical Pappus configuration can be presented in this way,
and some others as well.

\par
The idea is simple -- blocks ("lines") are the images of some fixed subset $D$
of a group $\sf G$ under left translations of this group.
Some necessary and sufficient conditions are imposed on $D$
which assure that the resulting incidence structure is a $\lambda$-design
(such a set $D$ is called a difference set in $\sf G$);
specifically, for $\lambda = 1$ -- a linear space.
Some weaker conditions imposed on $D$ assure that the resulting structure
is a partial linear space. A set $D$ which meets these conditions is called
a quasi difference set in $\sf G$.

\par
Some other generalizations can be found in the literature.
One of them is the notion of a {\em partial difference set} (PDS).
While defining a difference set $D$ we require that every nonzero element of 
$\goth G$ can be presented in exactly $\lambda$ ways as a difference 
of two elements of $D$, defining a PDS $D$ we require that
every nonzero element $a$ of $\goth G$ can be presented in 
$\lambda_1$ ways (if $a \in D$) and in $\lambda_2$ ways (if $a \notin D$)
as a corresponding difference, for some fixed $\lambda_1,\lambda_2$
(cf. e.g. \cite{PDS1}).
Generally, a nonzero element of $\goth G$ can be presented in 
either $\lambda_1 = 0$, or $\lambda_2 = 1$, or $\lambda_3 = n$ ways
as a difference
of a $n$-element quasi difference set.
However, no algebraic regularity is assumed characterizing those elements of 
$\goth G$, which are $\lambda_i$-ways differences, for each particular $i =1,2,3$.
Moreover, admitting elements which are $\lambda_3 = n$-ways differences
we come to (geometrically) less regular structures so,
finally, in this paper we consider quasi difference sets with only 
$\lambda_1 = 0$ and $\lambda_2 = 1$ admitted.
However, such quasi difference sets {\em are not} partial difference sets.
One of the most important questions in the theory of PDS's is 
to determine the existence and characterize such sets in various particular
groups, for various special types of them (there is a huge literature 
on this subject, older and newer,
see e.g. \cite{eliot}, \cite{chandler}, \cite{gordon}).
These are not the questions considered in 
this paper.
Instead, we are mainly interested in the geometry (in the rather classical style) of 
partial linear spaces determined by quasi difference sets. 

\par
This project was started in \cite{petel}.
Here, we study in some details configurations which can be defined with the help
of arbitrary quasi difference set.
We also pay some attention to elementary properties of such structures:
we discuss if Veblen, Pappos, and Desargues axioms may hold
in them (Prop.'s \ref{pr:veblen}, \ref{pr:desargues}, \ref{lem:pappus},
\ref{cor:vebdes}, \ref{prop:veb3}, \ref{prop:des3}).
A special emphasis was imposed on structures which arise from groups
decomposed into a cyclic group $C_k$ and some other group $\sf G$, simply because
these structures can be seen as {\em multiplied configurations} --
series of cyclically inscribed configurations, each one isomorphic to the
configuration associated with $\sf G$.
This construction, on the other hand, is just a special case of the operation of 
"joining" ("gluing")
two structures, corresponding to the operation of the direct sum of groups.
In some cases corresponding decomposition can be defined within the resulting
"sum", in terms of the geometry of the considered structures.
This definable decomposition enables us to characterize the automorphism group
of such a "glue-sum".
Some other techniques are used to determine the automorphism group of 
cyclically inscribed configuration.
Roughly speaking, groups in question are semidirect products of 
some symmetric group and the group of translations of the underlying group.

\par
The technique of quasi difference sets can be used to produce new configurations,
so far not considered in the literature.
Many of them seem to be of a real geometrical interest for their own.
In the last section we apply our apparatus to some new configurations,
arising from the well known (like cyclically inscribed Pappus or Fano configurations,
 multiplied Pappus configurations, sums of cyclic projective planes),
and determine their geometric properties and automorphisms.
\par
Usually, dealing with abelian groups we shall follow "additive" notation,
while the "multiplicative" one will be used for arbitrary group.

\section{Basic notions}

In \cite{petel} series of cyclically inscribed $n$-gons were investigated
and for this purpose a construction involving quasi difference sets was used.
Below we briefly recall this construction.
Let ${\sf G} = \struct{G,\cdot,1}$ be an arbitrary group and $D\subset G$, we set
$$\lines = \lines_{({\sf G},D)} = G/D = \{a\cdot D\colon a\in G\}.$$
Clearly, $\lines \subseteq \sub(G)$, and, since every left translation 
$\tau_a\colon G\ni x\mapsto a\cdot x\in G$ is a bijection, we get
$|a\cdot D| = |D|$ for every $a\in G$.
Following this notation we can write $a\cdot D = \tau_a(D)$, and
$\lines_{({\sf G},D)} = \{ \tau_a(D)\colon a\in G \}$.
We set
\begin{equation}\label{def:mystructure}
  \DifSpace({\sf G},D) = \struct{G,\lines_{({\sf G},D)}}.
\end{equation}
Every translation $\tau_a$ over $\sf G$ is an automorphism of $\DifSpace({\sf G},D)$.
Indeed, clearly, 
$\lines_{({\sf G},D)} = \lines_{({\sf G},\tau_a(D))}$ 
for every $a \in G$;
this also yields that without loss of generality we can assume that $1 \in D$.

It was proved in \cite{petel} that the following conditions are equivalent:
\begin{quotation}
\begin{description}
\item
  The structure $\DifSpace({\sf G},D)$ is a configuration (i.e. a partial linear space
  in which the rank of a point and the rank of a line are equal)
\item[\QDS:]
  For every $c\in G$, $c\neq 1$ there is at most one pair $(a,b)\in D\times D$ with $a\invers(b)=c$.
\end{description}
\end{quotation}
If $\sf G$ is abelian then $\DifSpace({\sf G},D)$ is a configuration iff it is
a partial linear space with point rank at least 2.
The number of points of $\DifSpace({\sf G},D)$ is $|G|$, the number of lines
is $\frac{|G|}{|G_D|}$, the rank of a line is $|D|$, and the rank of a point is
$\frac{|D|}{|G_D|}$.

In the sequel {\em a quasi difference set in} $\sf G$ means any subset $D$ of $G$
which satisfies \QDS.
In \cite{petel} we were mainly interested in the structures of
the form 
$\DifSpace(C_k\oplus C_n,{\cal D})$, where 
${\cal D} = \{ (0,0),(1,0),(0,1) \}$.
In the paper we shall generalize this construction.

Let us adopt the following convention:
\begin{itemize}
\item
  element $a\in G$ will be denoted as $p_a$ -- an abstract "point" with
  "coordinates" $(a)$;
\item
  a line $a\cdot D\in G/D$ will be denoted by $l_a$ -- its "coordinates" will
  be written as $[a]$.
\end{itemize}
Then we can write
\begin{equation}\label{wz:analinc}
  (a)\inc [b] \quad\text{iff}\quad p_a \in l_b \quad\text{iff}\quad a \in b \cdot D 
  \quad\text{iff}\quad \invers(b)\cdot a \in D.
\end{equation}
We use the symbol $\inc$ to denote the relation of incidence.


\par
Generally, an automorphism of an incidence structure 
  $\goth M= \struct{S,\lines,\inc}$ 
is a pair 
  $\varphi=(\varphi',\varphi'')$ of bijections 
$\varphi': S \longmapsto S$, $\varphi'':\lines \longmapsto \lines$ such that
for every $a \in S$, $l \in \lines$ the conditions 
  $a \inc l$ and $\varphi'(a) \inc \varphi''(l)$ are equivalent.
In particular, if $\goth M =\DifSpace({\sf G},D)$ then every automorphism 
$\varphi=(\varphi',\varphi'')$ of $\goth M$ uniquely corresponds to a pair 
$f=(f',f'')$ of bijections of $G$ determined by 
  $$\varphi'\big((a)\big)= \big(f'(a)\big), \; 
     \varphi''\big([b]\big)= \big[f''(b)\big].$$
We shall frequently refer to the pair $f$ as to an automorphism of $\goth M$.

\par
As a convenient tool to establish possible automorphisms of an incidence structure 
${\goth M} = \struct{S,\lines,\inc}$,
we frequently use in the paper the notion of the neighborhood 
$\otocz({\goth M},a)$ of a point $a \in S$.
It is a substructure of $\goth M$, whose points are all the points of $\goth M$
collinear with $a$, and lines are the lines of $\goth M$ which contain at least
two points collinear with $a$
("lines" are considered in a purely incidence way here: 
lines of $\otocz({\goth M},a)$ consist only of points of $\otocz({\goth M},a)$).
Clearly, if $\varphi=(\varphi',\varphi'')$ is an automorphism of $\goth M$,
then $\varphi$ maps $\otocz({\goth M},a)$ onto $\otocz({\goth M},{\varphi'(a)})$.

\par
Let ${\goth M}=\struct{M,\lines}$  be an arbitrary partial linear space
and let $\varkappa = (\varkappa',\varkappa'')$ 
with 
  $\varkappa'\colon M \longrightarrow\lines$, 
  $\varkappa''\colon\lines \longrightarrow M$
be a correlation of ${\goth M}'$.
Recall that if $\varkappa = (\varkappa',\varkappa'')$ is a correlation of 
$\goth M$ then the pair $\psi$ of maps,
(convention: coordinate-wise composition of pairs of functions)
$\psi=(\psi',\psi'')=(\varkappa'',\varkappa')\circ(\varkappa',\varkappa'')$
is a {\em standard} collineation of $\goth M$.



\section{Generalities}

Now, we are going to present some general facts about properties of the structure
$\DifSpace({\sf G},D)$.

Some of the automorphisms of $\DifSpace({\sf G},D)$ are determined
by automorphisms of the underlying group $\sf G$, namely
\begin{rem}\label{rem:whenautline}
  If $f\in \Aut({\sf G})$ then $f$ determines an automorphism 
  $\varphi=(\varphi',\varphi'')$ of $\DifSpace({\sf G},D)$ with $\varphi'=f$
  iff $f(D)=q\cdot D$ for some $q\in G$  and 
  $\varphi''([a])=[q\cdot f(a)]$ for every $a \in G$ . 
\end{rem}

\begin{prop}\label{pr:corelindif}
  Let $D$ be a quasi difference set in a commutative group $\sf G$. Then 
  the map $\varkappa$ defined by
  \begin{equation}\label{wz:corel}
    \varkappa((a))=[\invers(a)], \;\;
    \varkappa([a])=(\invers(a))
  \end{equation}
  is an involutive correlation of
  the structure ${\goth D} = \DifSpace({\sf G},D)$.
  Consequently, $\goth D$ is self-dual.
    A point $a$ of $\goth D$ is selfconjugate under $\varkappa$
    iff $a^2 \in D$.
\end{prop}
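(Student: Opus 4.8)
The plan is to reduce every clause to the analytic incidence condition (\ref{wz:analinc}), namely $(a)\inc[b]$ iff $\invers(b)\cdot a\in D$. Recall that a correlation is, dually to the automorphism notion fixed above, a pair $\varkappa=(\varkappa',\varkappa'')$ of bijections $\varkappa'\colon M\to\lines$, $\varkappa''\colon\lines\to M$ that reverses incidence, i.e.\ $(a)\inc l$ iff $\varkappa''(l)\inc\varkappa'((a))$. I would first settle that the formulas (\ref{wz:corel}) do define such bijections. The map $\varkappa'\colon(a)\mapsto[\invers(a)]$ is unproblematic, since points carry unique coordinates and $a\mapsto\invers(a)$ is a bijection of $G$. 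The delicate map is $\varkappa''\colon[a]\mapsto(\invers(a))$, because one line may be written $[a]$ for several $a$; it is well defined exactly when $aD=a'D$ forces $a=a'$, that is when the stabiliser $G_D=\{g\colon gD=D\}$ is trivial. I would obtain $G_D=\{1\}$ straight from \QDS: if $1\neq g\in G_D$ then $d\mapsto g\cdot d$ permutes $D$, so every $d\in D$ yields a pair $(g\cdot d,d)\in D\times D$ with $(g\cdot d)\invers(d)=g$, and for $|D|\geq2$ these are two distinct pairs representing $g$, contradicting \QDS (the case $|D|=1$ gives $G_D=\{1\}$ at once). This also matches the recorded point rank $\frac{|D|}{|G_D|}$ equalling the line rank $|D|$ for a configuration. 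With $G_D=\{1\}$ both $\varkappa'$ and $\varkappa''$ are bijections interchanging points and lines.

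Next I would check that $\varkappa$ reverses incidence, which is the core of the argument and the sole place where commutativity is used. Here $\varkappa''([b])=(\invers(b))$ and $\varkappa'((a))=[\invers(a)]$, so by (\ref{wz:analinc}) and the cancellation $(\invers(a))^{-1}=a$ the relation $\varkappa''([b])\inc\varkappa'((a))$ unwinds to $a\cdot\invers(b)\in D$, whereas $(a)\inc[b]$ reads $\invers(b)\cdot a\in D$. Since $\sf G$ is commutative, $a\cdot\invers(b)=\invers(b)\cdot a$, so the two membership conditions coincide and $\varkappa$ is a correlation. Involutivity follows immediately from $(\invers(a))^{-1}=a$: on points $\varkappa$ sends $(a)$ to $[\invers(a)]$ and then back to $(a)$, and symmetrically $\varkappa\circ\varkappa$ is the identity on $\lines$. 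Since a duality exists, $\goth D$ is self-dual, which is the asserted consequence.

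For the final clause I would use that, $\varkappa$ being an involutive correlation (a polarity), a point $(a)$ is selfconjugate iff it is incident with its own image line $\varkappa'((a))=[\invers(a)]$. By (\ref{wz:analinc}), $(a)\inc[\invers(a)]$ iff $(\invers(a))^{-1}\cdot a\in D$, i.e.\ $a\cdot a=a^2\in D$, which is exactly the stated criterion.

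The two incidence rewritings and the double application of $\varkappa$ are routine once (\ref{wz:analinc}) is in hand. The genuine obstacle is the well-definedness of $\varkappa''$ on lines: I expect the deduction $G_D=\{1\}$ from \QDS to be the crux, for without it $\varkappa''$ would not even descend to a map on $\lines$ and no correlation could be defined.
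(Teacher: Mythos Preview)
Your argument is correct and follows the same route as the paper's: both unwind $(a)\inc[b]$ and $(b^{-1})\inc[a^{-1}]$ via (\ref{wz:analinc}), match them using commutativity, observe involutivity from $(a^{-1})^{-1}=a$, and read off the selfconjugate criterion directly. The one addition on your side is the verification that $G_D=\{1\}$ so that $\varkappa''\colon[a]\mapsto(a^{-1})$ is well defined on lines; the paper leaves this implicit (it is a consequence of the recorded equivalence of \QDS\ with the configuration property, which forces point rank $|D|/|G_D|$ to equal line rank $|D|$), and your direct deduction from \QDS\ is a clean way to make the point explicit.
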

\begin{proof}
  Clearly, $\varkappa$ is involutory.
 \par
  Let $a,b\in G$. 
  Then $(a)\inc[b]$ means that $\invers(b)\cdot a \in D$.
  This is equivalent to $\invers({(\invers(a))})\cdot \invers(b)\in D$, i.e.
  $\varkappa([b]) = (\invers(b))\inc[\invers(a)]= \varkappa((a))$.
  Thus $\varkappa$ is a correlation.

  Finally, assume that $(a)\inc\varkappa((a))=[\invers(a)]$.
  From \eqref{wz:analinc} we obtain $a^2\in D$.
\end{proof}
The correlation defined by \eqref{wz:corel} will be referred to
as {\em the standard correlation of $\DifSpace({\sf G},D)$}.

Following a notation frequently used in the projective geometry
we write $\chain([b])$ for the set of the points which are incident with a line $[b]$,
  $$\chain([b]) = \{ (b\cdot d)\colon d\in D \}.$$
Immediate from \eqref{wz:analinc} is the following
\begin{lem}\label{lem:analchain}
  Let ${\goth D}=\DifSpace({\sf G},D)$ for
  a subset $D$ of a group $\sf G$ and let  $a,b\in G$.
  \begin{sentences}\itemsep-2pt
  \item\label{analchain:cas1}
    The set of the lines of $\goth D$ through the point $(a)$ 
    can be identified with $a\cdot\invers(D)$.
    We write $\chain({(a)})$ for the set of the lines through $(a)$ and then
    \begin{equation}\label{wz:analchain1}
      \chain({(a)}) = \{ [a\cdot\invers(d)]\colon d\in D \}.
    \end{equation}
    %
  \item\label{analchain:cas2}
    The points $(a)$ and $(b)$ are collinear in $\goth D$ 
    (we write: $(a)\collin(b)$) iff 
       $\invers(a)\cdot b\in \invers(D)D$.
    If $(a)\neq (b)$ are two collinear points then we write 
    $\LineOn(a,b)$ for the line which joins these two points.
    If $\invers(a)b = \invers(d_1) d_2$ with $d_1,d_2\in D$ then
    \begin{equation}\label{wz:analchain2}
      \LineOn(a,b) = [a\cdot\invers(d_1)] = [b\cdot\invers(d_2)].
    \end{equation}
  \item\label{analchain:cas3}
    The lines $[a]$ and $[b]$ of $\goth D$ have a common point iff 
    $\invers(a)\cdot b \in D\invers(D)$.
    We write $\PointOf(a,b)$ for the common point of two mutually intersecting
    lines $[a]$ and $[b]$.
    If $\invers(a)\cdot b = d_1\cdot\invers(d_2)$ with $d_1,d_2\in D$ then
    \begin{equation}\label{wz:analchain3}
      \PointOf(a,b) = (a\cdot d_1) = (b\cdot d_2).
    \end{equation}
  \end{sentences}
\end{lem}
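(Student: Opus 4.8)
The plan is to read off all three statements directly from the incidence criterion \eqref{wz:analinc}, which is the only ingredient needed; I simply spell out how each item is extracted, keeping careful track throughout of the side on which elements of ${\sf G}$ act, since ${\sf G}$ need not be commutative here.

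For \eqref{analchain:cas1}: every line of ${\goth D}$ is $[c]$ for some $c\in G$, and by \eqref{wz:analinc} the incidence $(a)\inc[c]$ is just the condition $\invers(c)\cdot a\in D$, i.e.\ $c=a\cdot\invers(d)$ for some $d\in D$; conversely every such $c$ gives a line through $(a)$. Hence $\chain({(a)})$ consists exactly of the lines $[a\cdot\invers(d)]$ with $d\in D$, which is \eqref{wz:analchain1}, and the assignment $c\mapsto[c]$ identifies this set with $a\cdot\invers(D)\subseteq G$ (injectively precisely when the stabiliser $G_D$ of $D$ is trivial). For \eqref{analchain:cas2}: $(a)\collin(b)$ means that some single line passes through both points, and by \eqref{analchain:cas1} such a line is at once of the form $[a\cdot\invers(d_1)]$ and $[b\cdot\invers(d_2)]$ with $d_1,d_2\in D$; so it exists iff $a\cdot\invers(d_1)=b\cdot\invers(d_2)$ for some $d_1,d_2\in D$, which rearranges to $\invers(a)\cdot b=\invers(d_1)\cdot d_2\in\invers(D)D$. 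Conversely, from such a representation put $c=a\cdot\invers(d_1)=b\cdot\invers(d_2)$; then $\invers(c)=d_1\cdot\invers(a)=d_2\cdot\invers(b)$, whence $\invers(c)\cdot a=d_1\in D$ and $\invers(c)\cdot b=d_2\in D$, i.e.\ $(a),(b)\inc[c]$, which yields \eqref{wz:analchain2}.

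Statement \eqref{analchain:cas3} is the exact dual: fixing the line and varying the point in \eqref{wz:analinc}, a point incident with $[a]$ is precisely one of the form $(a\cdot d_1)$ with $d_1\in D$, so $[a]$ and $[b]$ share a point iff $a\cdot d_1=b\cdot d_2$ for some $d_1,d_2\in D$, i.e.\ $\invers(a)\cdot b=d_1\cdot\invers(d_2)\in D\invers(D)$; and the common point is then $(a\cdot d_1)=(b\cdot d_2)$, since $\invers(a)\cdot(a\cdot d_1)=d_1\in D$ and $\invers(b)\cdot(b\cdot d_2)=d_2\in D$, which is \eqref{wz:analchain3}. I expect no genuine obstacle here: the only thing that truly needs attention is the order of multiplication, which is exactly what makes \eqref{analchain:cas2} come out with the factor set $\invers(D)D$ while \eqref{analchain:cas3} gives $D\invers(D)$ --- the asymmetry reflecting that translations act on the left, so the lines through a point are obtained by right multiplication by $\invers(D)$ whereas $D$ itself plays the role of the model point set of a line. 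One should also keep in mind that the notations $\LineOn(a,b)$ and $\PointOf(a,b)$ tacitly presuppose uniqueness of the joining line, respectively of the common point, which is not part of the statement as such but holds as soon as ${\goth D}$ is a \pls, i.e.\ whenever $D$ satisfies \QDS.
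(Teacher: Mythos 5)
Your proof is correct and follows essentially the same route as the paper: all three parts are read off directly from the incidence criterion \eqref{wz:analinc}, with \eqref{analchain:cas2} derived from \eqref{analchain:cas1} and \eqref{analchain:cas3} obtained dually. Your extra remarks on the order of multiplication and on the uniqueness implicit in the notations $\LineOn(a,b)$ and $\PointOf(a,b)$ are accurate but not needed beyond what the paper records.
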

\begin{proof}
\eqref{analchain:cas1}:
  By definition, $(a)\inc[b]$ is equivalent to $a\in bD$, i.e. to
  $\invers(a)\in \invers({(bD)}) = \invers(D)\invers(b)$.
  And the last condition is equivalent to $\invers(a)b\in\invers(D)$,
  i.e. to $b\in a\invers(D)$.

\eqref{analchain:cas2}:
  From \eqref{analchain:cas1}, $(b)$ is collinear with $(a)$ iff $(b)\inc[p]$ for some line
  $[p]$ through $(a)$. This means that $b = pd_2$, for some $d_2\in D$, and 
  $[p]\in\chain({(a)})$.
  Thus $b = a\invers(d_1)d_2$ for some $d_1,d_2\in D$, 
  so $\invers(a)\cdot b \in \invers(D)D$. 
  Then $a\invers(d_1) = b\invers(d_2)$ and
  directly from \eqref{wz:analinc} we verify that 
  $(a),(b)\inc[a\invers(d_1)]$.

\eqref{analchain:cas3} is proved dually; $[a]$ and $[b]$ have a common point if 
  $ad_1 = bd_2$ for some $d_1,d_2\in D$, which yields our claim.
\end{proof}

As a straightforward consequence of \ref{lem:analchain} we get 
\begin{prop}\label{pr:concomp}
  Let ${\sf G}=\struct{G,\cdot,1}$ be a group,
  $D$ be a quasi difference set in $\sf G$ with $1\in D$,
  and $a_1,a_2\in G$.
  Points $(a_1)$ and $(a_2)$ can be joined with a 
  polygonal path in $\DifSpace({\sf G},D)$ iff there is a finite sequence
  $q_1,\ldots,q_s$ of elements of $\invers(D) D$ such that 
  $a_1=q_1\cdot\ldots\cdot q_s\cdot a_2$.
  Consequently, the connected component of the point $(1)$ is isomorphic
  to $\DifSpace(\struct{D}_{\sf G},D)$, where $\struct{D}_{\sf G}$
  is the subgroup of $\sf G$ generated by $D$. Every two connected components of 
  any two points are isomorphic.
\end{prop}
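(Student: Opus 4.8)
The plan is to read off everything from Lemma~\ref{lem:analchain}\eqref{analchain:cas2}. First I would fix the description of collinearity: two points $(x)$ and $(y)$ are collinear in $\DifSpace({\sf G},D)$ precisely when $\invers(x)\cdot y\in\invers(D)D$. Hence a polygonal path from $(a_1)$ to $(a_2)$ is exactly a finite chain of points $(b_0)=(a_1),(b_1),\dots,(b_s)=(a_2)$ with consecutive members collinear, i.e. with $\invers(b_{i-1})\cdot b_i\in\invers(D)D$ for each $i$; setting $q_i:=\invers(b_{i-1})\cdot b_i$ gives elements of $\invers(D)D$ whose product telescopes to $\invers(a_1)\cdot a_2$, so $a_1=q_1\cdots q_s\cdot a_2$ up to taking inverses and reversing the order. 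Conversely, given such a sequence $q_1,\dots,q_s\in\invers(D)D$ with $a_1=q_1\cdots q_s\cdot a_2$, I would define $b_s:=a_2$ and $b_{i-1}:=q_i\cdot b_i$; then $\invers(b_{i-1})\cdot b_i=\invers(q_i)\in\invers(D)D$ (note $\invers(D)D$ is symmetric under inversion since $(\invers{d_1}d_2)^{-1}=\invers{d_2}d_1$), so consecutive points are collinear and we recover a polygonal path. A small point to watch is the degenerate case $s=0$ (then $a_1=a_2$) and the fact that collinear \emph{distinct} points actually determine a line; if $b_{i-1}=b_i$ one simply drops that step, so WLOG the path is honest.

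Next I would identify the connected component of $(1)$. By the previous paragraph, $(a)$ lies in the component of $(1)$ iff $a\in\langle\invers(D)D\rangle$, the subgroup generated by $\invers(D)D$; but since $1\in D$, we have $D=\invers(D)D\cdot 1\subseteq\invers(D)D$ and $\invers(D)\subseteq\invers(D)D$, so $\langle\invers(D)D\rangle=\langle D\rangle_{\sf G}$. Thus the point set of the component of $(1)$ is exactly $\langle D\rangle_{\sf G}$. The lines meeting this component are the lines $l_a$ with $a\in\langle D\rangle_{\sf G}$ (since $l_a=\{(a d):d\in D\}$ and $1\in D$ forces $(a)\in l_a$), and for such $a$ every point of $l_a$ again lies in $\langle D\rangle_{\sf G}$. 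Moreover $\lines_{(\langle D\rangle_{\sf G},D)}=\{a\cdot D:a\in\langle D\rangle_{\sf G}\}$ is precisely this family of lines with its induced incidence. Hence the identity map on $\langle D\rangle_{\sf G}$, together with the obvious bijection $l_a\mapsto l_a$, is an isomorphism from the component of $(1)$ onto $\DifSpace(\struct{D}_{\sf G},D)$; incidence is preserved on the nose by \eqref{wz:analinc} because the membership condition $\invers(b)\cdot a\in D$ is the same whether computed in $\sf G$ or in the subgroup $\struct{D}_{\sf G}$.

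Finally, for an arbitrary point $(a)$, the translation $\tau_a$ is an automorphism of $\DifSpace({\sf G},D)$ (remarked in the excerpt just after \eqref{def:mystructure}), it sends $(1)$ to $(a)$, and automorphisms carry connected components onto connected components; hence the component of $(a)$ is the image under $\tau_a$ of the component of $(1)$ and is therefore isomorphic to it, and so any two components are isomorphic.

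The routine verifications are purely bookkeeping; the only place needing a little care is the bidirectional translation between ``polygonal path'' and ``product of elements of $\invers(D)D$'', specifically keeping track of inverses and of the order of the $q_i$, and handling the possibility of repeated consecutive points in a path. I do not expect a genuine obstacle here, since Lemma~\ref{lem:analchain} already packages the collinearity condition in exactly the form required.
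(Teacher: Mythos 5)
Your argument is exactly the one the paper intends: Proposition \ref{pr:concomp} is stated there as an immediate consequence of Lemma \ref{lem:analchain}\eqref{analchain:cas2} with no written proof, and your telescoping of the collinearity condition, the identification $\struct{\invers(D)D}=\struct{D}_{\sf G}$ via $1\in D$, and the use of translations $\tau_a$ to carry the component of $(1)$ onto the component of $(a)$ is precisely that omitted argument. One bookkeeping point deserves more care than you give it: since lines are \emph{left} cosets $a\cdot D$, a path $(b_0)=(a_1),\ldots,(b_s)=(a_2)$ yields $\invers(a_1)\cdot a_2=p_1\cdots p_s$ with $p_i\in\invers(D)D$, i.e.\ $a_1=a_2\cdot q_1\cdots q_s$ with the product on the \emph{right} of $a_2$; for non-abelian $\sf G$ this is not the same as the displayed $a_1=q_1\cdots q_s\cdot a_2$, and in your converse the step ``$\invers(b_{i-1})\cdot b_i=\invers(q_i)$'' with $b_{i-1}:=q_i\cdot b_i$ actually produces the conjugate $\invers(b_i)\invers(q_i)b_i$, which need not lie in $\invers(D)D$. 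This is really an imprecision inherited from the paper's own formulation (harmless in the abelian setting used later on), and it does not affect the two ``consequently'' claims, since the component of $(1)$ is $\struct{\invers(D)D}=\struct{D}_{\sf G}$ under either reading and the remainder of your argument goes through unchanged.
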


\noindent
From now on we assume that $D$ generates $\sf G$.

\begin{equation}\label{ass:veblen}
 \begin{tabular}{c}
   Let $D$ be a quasi difference set in a group $\sf G$ satisfying
   the following:\\
if $d_1,d_2,d_3,d_4\in D$ and $d_1\invers(d_2)d_3\invers(d_4)\in D\invers(D)$
    then \\ $d_1\invers(d_2)=1$ or $d_3\invers(d_4)=1$ or $d_1\invers(d_4)=1$,
    or $d_3\invers(d_2)=1$.
 \end{tabular}
 \tag{$\bigstar$}
\end{equation}
\begin{lem}\label{lem:veblen}
 Assume that $\sf G$ is an abelian group, and \eqref{ass:veblen} holds.
  Let  $(a)$ be a point of $\DifSpace({\sf G},D)$, $d_1,d_2\in D$, and
  $[b_1]=[a\invers(d_1)]$ and $[b_2]=[a\invers(d_2)]$ be two distinct lines
  through $(a)$.
  \begin{sentences}
  \item\label{veblen:cas1}
    If $d'_i \in D$ and $(p_i)=(a\invers(d_i)d'_i)$
    is a point (on $[b_i]$) distinct from $(a)$ for $i=1,2$,
    then
    $(p_1)\collin(p_2)$ iff $d'_1=d'_2$.
  \item\label{veblen:cas2}
    For every point $(p_d)=(a\invers(d_1)d)$ of $[b_1]$
    with $d \in D$, $d\neq d_2,d_1$ there is the unique point $(q)=(a\invers(d_2)d)$ on
    $[b_2]$ which completes $(a),(p_d)$ to a triangle.
    The point $(p_{d_2})$ cannot be completed in such a way.
  \item\label{veblen:cas3}
    If $[g]$ is a line of $\DifSpace({\sf G},D)$ which crosses
    $[b_1]$, $[b_2]$ and misses $(a)$ then 
    $g=a\invers(d_1)\invers(d_2)d$ for some $d\in D$ with $d\neq d_1,d_2$.
  \item\label{veblen:cas4}
    If $[g_i]=[a\invers(d_1)\invers(d_2)d'_i]$ with $d'_i\in D$ for $i=1,2$ are two lines,
    both crossing $[b_1],[b_2]$ and missing $(a)$, then 
    $[g_1],[g_2]$ intersect each other in the point
    $(a\invers(d_1)\invers(d_2)d'_1d'_2)$.
  \end{sentences}
\end{lem}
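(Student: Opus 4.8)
The plan is to translate every assertion into the coordinate language of Lemma~\ref{lem:analchain} and to concentrate the whole combinatorial force of~\eqref{ass:veblen} in part~\eqref{veblen:cas1}; the remaining three parts then reduce to~\eqref{veblen:cas1} together with routine bookkeeping. First some standing remarks. Since $\sf G$ is abelian, $\invers(D)D = D\invers(D)$ and group elements commute freely, so the collinearity criterion of Lemma~\ref{lem:analchain}\eqref{analchain:cas2} and the line--line intersection criterion of Lemma~\ref{lem:analchain}\eqref{analchain:cas3} refer to the same subset of $G$. Next, $[b_1]\neq[b_2]$ forces $d_1\neq d_2$: the sets $a\invers(d_1)D$ and $a\invers(d_2)D$ coincide exactly when $\invers(d_1)d_2$ stabilises $D$, and, $D$ being a quasi difference set, $\DifSpace({\sf G},D)$ is a configuration, so this stabiliser is trivial. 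Also $(p_i)\neq(a)$ lying on $[b_i]=[a\invers(d_i)]$ gives $d'_i\neq d_i$, and, $\DifSpace({\sf G},D)$ being a partial linear space, $[b_1]\cap[b_2]=\{(a)\}$.

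For part~\eqref{veblen:cas1} I write out coordinates and use commutativity:
\[
  \bigl(a\invers(d_1)d'_1\bigr)^{-1}\cdot a\invers(d_2)d'_2 \;=\; d_1\invers(d'_1)\invers(d_2)d'_2 \;=\; d_1\invers(d_2)\,d'_2\invers(d'_1),
\]
so by Lemma~\ref{lem:analchain}\eqref{analchain:cas2} the points $(p_1),(p_2)$ are collinear iff $d_1\invers(d_2)\,d'_2\invers(d'_1)\in D\invers(D)$. If it is, I apply~\eqref{ass:veblen} to the quadruple $(d_1,d_2,d'_2,d'_1)$ of elements of $D$: one of $d_1=d_2$, $d'_2=d'_1$, $d_1=d'_1$, $d'_2=d_2$ must hold, and the first, third and fourth are ruled out respectively by $d_1\neq d_2$, $d'_1\neq d_1$, $d'_2\neq d_2$; hence $d'_1=d'_2$. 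Conversely, if $d'_1=d'_2$ then the element above equals $d_1\invers(d_2)=\invers(d_2)d_1\in\invers(D)D$, so $(p_1)\collin(p_2)$, while $(p_1)\neq(p_2)$ since $d_1\neq d_2$.

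Parts~\eqref{veblen:cas2} and~\eqref{veblen:cas3} follow quickly. For~\eqref{veblen:cas2}, the points of $[b_2]$ other than $(a)$ are exactly the $(a\invers(d_2)d')$ with $d'\in D$, $d'\neq d_2$, and by~\eqref{veblen:cas1} such a point is collinear with $(p_d)$ iff $d'=d$; when $d\neq d_1,d_2$ this leaves the single point $(q)=(a\invers(d_2)d)$, which differs from $(a)$ (as $d\neq d_2$) and from $(p_d)$ (as $d_1\neq d_2$) and lies off $[b_1]$ (because $[b_1]\cap[b_2]=\{(a)\}$), so $(a),(p_d),(q)$ form a triangle and $(q)$ is the only completion on $[b_2]$; when $d=d_2$ the only candidate on $[b_2]$ would be $(a\invers(d_2)d_2)=(a)$, so no completion exists. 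For~\eqref{veblen:cas3}, let $(x_1),(x_2)$ be the common points of $[g]$ with $[b_1],[b_2]$; as $[g]$ misses $(a)$ and $[b_1]\cap[b_2]=\{(a)\}$, these are distinct, both $\neq(a)$, both on $[g]$, so $(x_1)=(a\invers(d_1)d)$, $(x_2)=(a\invers(d_2)d'')$ with $d,d''\in D$, $d\neq d_1$, $d''\neq d_2$; since $(x_1)\collin(x_2)$, part~\eqref{veblen:cas1} forces $d=d''$, whence $d\neq d_1,d_2$, and computing $\bigl(a\invers(d_1)d\bigr)^{-1}\cdot a\invers(d_2)d = d_1\invers(d_2)=\invers(d_2)d_1$ and applying~\eqref{wz:analchain2} gives $[g]=\LineOn(x_1,x_2)=[x_1\invers(d_2)]=[a\invers(d_1)\invers(d_2)d]$, i.e.\ $g=a\invers(d_1)\invers(d_2)d$.

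Part~\eqref{veblen:cas4} is then a direct computation that does not even invoke~\eqref{ass:veblen}: $\bigl(a\invers(d_1)\invers(d_2)d'_1\bigr)^{-1}\cdot a\invers(d_1)\invers(d_2)d'_2 = \invers(d'_1)d'_2 = d'_2\invers(d'_1)\in D\invers(D)$, so by Lemma~\ref{lem:analchain}\eqref{analchain:cas3} the lines $[g_1],[g_2]$ meet, and~\eqref{wz:analchain3} identifies the common point as $(g_1d'_2)=(a\invers(d_1)\invers(d_2)d'_1d'_2)$ (equivalently $(g_2d'_1)$); that this point is incident with each $[g_i]$ may also be checked straight from~\eqref{wz:analinc}. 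I expect the only genuine obstacle to be part~\eqref{veblen:cas1}: one must notice that the coordinate product $\bigl(a\invers(d_1)d'_1\bigr)^{-1}a\invers(d_2)d'_2$ rearranges into the shape $d_1\invers(d_2)\,d'_2\invers(d'_1)$ matching the hypothesis of~\eqref{ass:veblen} for the quadruple $(d_1,d_2,d'_2,d'_1)$, and then verify that each of the three unwanted alternatives in the conclusion of~\eqref{ass:veblen} is excluded by one of the standing assumptions $[b_1]\neq[b_2]$, $(p_1)\neq(a)$, $(p_2)\neq(a)$. Everything after that is bookkeeping, the recurring point being that two distinct lines of a partial linear space meet in at most one point, which is what allows one to pass freely between incidence statements and equalities among the $d$'s.
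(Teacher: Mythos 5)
Your proof is correct and follows essentially the same route as the paper's: part \eqref{veblen:cas1} is obtained by computing $\invers(p_1)p_2$ and applying \eqref{ass:veblen} to rule out all alternatives except $d'_1=d'_2$, and parts \eqref{veblen:cas2}--\eqref{veblen:cas4} are then derived from \eqref{veblen:cas1} together with the formulas of Lemma~\ref{lem:analchain}, exactly as in the paper. You are somewhat more explicit than the paper about the converse direction of \eqref{veblen:cas1} and the bookkeeping in \eqref{veblen:cas2} and \eqref{veblen:cas3}, but there is no substantive difference.
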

\begin{proof}
  Since $\sf G$ is commutative, we have $D^{-1} D = D D^{-1}$,
  and $(x y)^{-1} = x^{-1} y^{-1}$ for all elements $x,y$ of $\sf G$.
 \par 
  \eqref{veblen:cas1}:
  In view of \ref{lem:analchain}\eqref{analchain:cas2}, 
    $(p_1)\collin(p_2)$ iff $\invers(p_1)p_2 \in \invers(D)D$.
  Since 
    $\invers(p_1)p_2 =   
    \invers(d'_1)d_1\invers(d_2)d'_2$,
  we need        
    $\invers(d'_1)d_1\invers(d_2)d'_2 \in D\invers(D)$.
  From \eqref{ass:veblen} we get one of the following:
  \begin{itemize}\itemsep-2pt
  \item
    $d_1\invers(d_2)=1$: in this case $d_1=d_2$, and thus $[b_1]=[b_2]$,
    contrary to the assumptions.
  \item
    $d'_2\invers(d_2)=1$ or $\invers(d'_1)=d_1$: in this case $(p_2)=(a)$ or $(p_1)=(a)$.
  \item
    $\invers(d'_1)d'_2=1$: this is our claim.
  \end{itemize}
 \eqref{veblen:cas2} follows immediately from \eqref{veblen:cas1}.

  \eqref{veblen:cas3}: Let $[g]$ cross $[b_i]$ in a point $(p_i)$. From 
  definition, $p_i=a\invers(d_i)d$ for some $d\in D$,
  and from \eqref{veblen:cas2}, $d\neq d_1,d_2$.
  Then from \eqref{wz:analchain3} we obtain
  $\LineOn(p_1,p_2) = [p_1\invers(d_2)]=[a\invers(d_1)\invers(d_2)d]$.

  \eqref{veblen:cas4}: From \ref{lem:analchain}\eqref{analchain:cas3} we find
  that $[g_1]$ and $[g_2]$ have a common point, since 
  $\invers(g_1)g_2 = d'_2\invers(d'_1)\in D\invers(D)$.
  With \eqref{wz:analchain3} we have
  $\PointOf(g_1,g_2)=(g_1 d'_2)=(a\invers(d_1)\invers(d_2)d'_1d'_2)$
\end{proof}

Then we shall try to establish automorphisms of structures
of the form \eqref{def:mystructure}.
Let us begin with some rigidity properties.
\begin{lem}\label{lem:rigidotocz}
  Let ${\goth D}=\DifSpace({\sf G},D)$, where $D$ satisfies assumption 
  \eqref{ass:veblen}.
  Let $f$ be a collineation of $\goth D$ such that 
  $f(o)=o$ for a point $o$.
  If $f$ satisfies any of the following:
  \begin{enumerate}[a)]\itemsep-2pt
  \item
    $f$ fixes all points on a line through $o$, or
  \item
    $f$ preserves every line through $o$
  \end{enumerate}
  then $f$ fixes all the points on lines through $o$.
\end{lem}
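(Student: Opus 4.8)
The plan is to reduce both cases to a single mechanism: use the neighborhood $\otocz({\goth D},o)$ together with Lemma~\ref{lem:veblen} to "coordinatize" the points on the lines through $o$ by a pair consisting of a line through $o$ and an element of $D$, and then show that either hypothesis pins down this coordinatization pointwise. Throughout I would work in the subgroup generated by $D$, i.e. assume $D$ generates $\sf G$ (already in force after Prop.~\ref{pr:concomp}); translating by $\tau_{\invers(o)}$ I may also assume $o = (1)$, so that the lines through $o$ are exactly the $[\invers(d)]$ with $d \in D$, and a point of $[\invers(d)]$ other than $o$ has the form $(\invers(d)d')$ with $d' \in D$, $d' \neq d$.

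First I would treat case a). Suppose $f$ fixes every point of a line $[\invers(d_1)]$ through $o$. Take any other line $[\invers(d_2)]$ through $o$ and any point $(p) = (\invers(d_2)d')$ on it with $d' \neq d_2, d_1$. By Lemma~\ref{lem:veblen}\eqref{veblen:cas2}, $(p)$ is collinear with exactly one point of $[\invers(d_1)]$ forming a triangle with $o$, namely $(\invers(d_1)d')$; moreover $(p)$ is collinear with $o$ itself. Since $f$ fixes $o$, fixes $(\invers(d_1)d')$, and preserves the line $[\invers(d_2)]$ (as it fixes $o$ and $[\invers(d_2)]$ is the unique line through $o$ meeting $(p)$), the image $f(p)$ must be the unique point of $[\invers(d_2)]$ collinear with both $o$ and $(\invers(d_1)d')$; by \eqref{veblen:cas1} (the "$d'_1 = d'_2$" clause) that point is $(p)$ again. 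The finitely many remaining points $o$ and $(\invers(d_2)d_1)$ on $[\invers(d_2)]$ are then fixed because a bijection of a line fixing all but at most two points fixes those too (or, for $(\invers(d_2)d_1)$, repeat the argument using a third point of $[\invers(d_1)]$ in place of $d'$, which exists once $|D| \geq 3$). Hence $f$ fixes $[\invers(d_2)]$ pointwise, and since $d_2$ was arbitrary, $f$ fixes every point on every line through $o$.

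For case b), suppose instead $f$ permutes the lines through $o$ among themselves. The key observation is that the collinearity pattern inside $\otocz({\goth D},o)$ described by Lemma~\ref{lem:veblen}\eqref{veblen:cas1} is invariant: two points $(\invers(d_1)d')$ and $(\invers(d_2)d'')$ on distinct lines through $o$ are collinear iff $d' = d''$. This "labels" each point on a line through $o$ by its $D$-coordinate $d'$, and the labeling classes are exactly the maximal sets of pairwise-collinear points meeting each line through $o$ at most once. An automorphism fixing $o$ therefore permutes these classes; since a class is determined by its label $d' \in D$ and each line through $o$ carries one point of each label $d' \neq d$, the permutation $f$ induces on the lines through $o$ together with its action on labels determines $f$ completely. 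What remains is to show this permutation of labels is trivial: this is where the argument is most delicate. I would use Lemma~\ref{lem:veblen}\eqref{veblen:cas3}--\eqref{veblen:cas4} to recover, inside $\otocz({\goth D},o)$, the extra structure — the "transversal" lines $[g]$ missing $o$ and their intersection points $(a\invers(d_1)\invers(d_2)d'_1d'_2)$ — which encodes the group multiplication on $D$ and hence rigidifies the labels; concretely, the distinguished label $1 \in D$ is the unique one whose class is "fixed" by the transversal incidences in a way the others are not, and once $1$ is fixed the whole label permutation is forced. This last rigidity step — extracting enough of the group law from the Veblen-type incidences to kill the label permutation — is the main obstacle; cases a) is comparatively routine once the coordinatization is set up.
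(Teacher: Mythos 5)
There is a genuine gap, and it occurs at the crux of both cases. In case a) your argument that $f$ preserves the line $[\invers(d_2)]$ is circular: you justify it by saying $[\invers(d_2)]$ is the unique line through $o$ containing $(p)$, but $f$ carries that line to the unique line through $o$ containing $f(p)$, and $f(p)$ is exactly what you do not yet control. Without knowing $f([\invers(d_2)])=[\invers(d_2)]$ the collinearity constraint does not pin $f(p)$ down, since by Lemma \ref{lem:veblen}\eqref{veblen:cas1} the fixed point $(\invers(d_1)d')$ is collinear with one point of label $d'$ on \emph{every} other line through $o$, so $f(p)$ could a priori be any of them. The missing idea (which is the paper's first step) is to use the exceptional point of Lemma \ref{lem:veblen}\eqref{veblen:cas2}: the point $(\invers(d_1)d_2)$ lies on the pointwise-fixed line and is collinear with \emph{no} point of $[\invers(d_2)]$ other than $o$, while every other line through $o$ does carry a point collinear with it; hence $[\invers(d_2)]$ is the unique line through $o$ all of whose points besides $o$ are non-collinear with the fixed point $(\invers(d_1)d_2)$, and therefore it is preserved. (Your fallback for the leftover point $(\invers(d_2)d_1)$ — "repeat the argument with a third point of $[\invers(d_1)]$" — also fails, because by \eqref{veblen:cas2} that point is collinear with no point of $[\invers(d_1)]\setminus\{o\}$ at all; it is recovered instead as the unique unaccounted-for point of the preserved line, or by the non-collinearity characterization.)

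In case b) you correctly set up the label classes, but you then declare the triviality of the label permutation to be "the main obstacle" and only sketch an attack via the transversal lines of Lemma \ref{lem:veblen}\eqref{veblen:cas3}--\eqref{veblen:cas4} and the group law on $D$; this step is not carried out, and it is also unnecessary. Once every line through $o$ is preserved, the same exceptional-point device closes the argument immediately: for any $d_1,d_2\in D$ the point $(-d_1+d_2)$ is the unique point of $[-d_1]\setminus\{o\}$ not collinear with any point of $[-d_2]\setminus\{o\}$ (and symmetrically for $(-d_2+d_1)$), so both are fixed; letting $d_2$ range over $D$ exhausts all points of $[-d_1]$ and hence all points on lines through $o$. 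This is exactly how the paper argues, and it also shows that case a) reduces to case b) rather than needing a separate treatment. So the overall strategy (coordinatize $\otocz({\goth D},o)$ by lines and $D$-labels via Lemma \ref{lem:veblen}) is sound, but the proposal as written does not contain a proof of either implication.
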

\begin{proof}
  Since $\goth D$ has a transitive group of automorphisms, we can assume 
  that $o=(d_0)=(0)$. Let $f=(f',f'')$ be a collineation of $\goth D$ and
  $f'\restriction{l_1} = \id_{l_1}$ for a line $l_1=[-d_1]$ through
  $o$ ($d_1\in D$). Take any line $l_2 = [-d_2]$. 
  Then the points $(-d_1+d_2)$ on $l_1$ and $(-d_2+d_1)$ on $l_2$
  give the unique pair of not collinear points "between" $l_1$ and $l_2$
  (cf. \ref{lem:veblen}(ii)).
  We have 
    $f'(-d_1+d_2)=(-d_1+d_2)$ 
  and
    $f'(-d_2+d_1)\inc[f''(-d_2)]$.
  The only point in $\otocz({\goth D},o)$ non-collinear with $(-d_1+d_2)$
  lies on $[-d_2]$;
  therefore $f''(-d_2)=-d_2$.
  Thus $f$ preserves every line through $o$.

  Now, let $f(l)=l$ for every line $l$ through $o$.
  Take arbitrary $l_1=[-d_1]$, $l_2=[-d_2]$, $d_1,d_2\in D$.
  As above, we show that $(-d_1+d_2)$ and $(-d_2+d_1)$ are preserved by $f'$.
  This yields $f'(-d_1+d_2)=(-d_1+d_2)$ for all $d_1,d_2\in D$, which is our 
  claim.
\end{proof}
As an immediate  consequence we get
\begin{cor}\label{cor:rigidotocz}
  Let $f$ be a collineation which fixes a line $l$ of $\goth D$ point-wise. 
  Under assumption  \eqref{ass:veblen} 
  $f$ fixes all points on every line which crosses $l$.
  Consequently, if $\goth D$ is connected then $f = \id$.
\end{cor}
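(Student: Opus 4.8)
The plan is to deduce the statement from Lemma~\ref{lem:rigidotocz} by a connectedness/propagation argument. First I would observe that the hypothesis is exactly case a) of Lemma~\ref{lem:rigidotocz}: $f$ fixes a line $l$ point-wise, and in particular $f$ fixes any point $o$ lying on $l$. So by Lemma~\ref{lem:rigidotocz} applied at such a point $o$, $f$ fixes all the points on every line through $o$. Since this holds for every $o\in l$, and every line $k$ crossing $l$ meets $l$ in some point $o_k\in l$, we conclude that $f$ fixes $k$ point-wise as well. This already gives the first assertion.

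Next I would bootstrap this to get the "consequently" clause. Having shown that $f$ fixes point-wise not only $l$ but every line crossing $l$, I now have a whole family of point-wise fixed lines. Pick any such line $k$ (one crossing $l$); it again falls under case a) of Lemma~\ref{lem:rigidotocz}, so $f$ fixes point-wise every line crossing $k$ too. Iterating, the set of point-wise fixed lines is closed under "crossing", and hence the set $F$ of fixed points of $f'$ contains, together with any line, all lines meeting it, so $F$ is a union of connected components in the collinearity graph of $\goth D$. More precisely, if $(x)\in F$ and $(y)\collin(x)$, then $(y)$ lies on $\LineOn(x,y)$, a line crossing some already-fixed line through $(x)$; but actually it is cleaner to argue: the line $\LineOn(x,y)$ passes through the fixed point $(x)$, and by Lemma~\ref{lem:rigidotocz} (case a) applied with $o=(x)$, using that $f$ fixes point-wise a line through $(x)$ — namely any line crossing $l$ and passing through $(x)$, or if $(x)\in l$ then $l$ itself) $f$ fixes $\LineOn(x,y)$ point-wise, so $(y)\in F$. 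Thus $F$ is closed under collinearity.

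Finally, if $\goth D$ is connected, its collinearity graph is connected (every two points are joined by a polygonal path, cf. Prop.~\ref{pr:concomp}), so a nonempty set of points closed under collinearity must be all of $G$; hence $f'=\id$. To see that also $f''=\id$: every line $m$ of $\goth D$ contains at least two points (point rank $\ge 2$, as $D$ generates $\sf G$ and $\DifSpace({\sf G},D)$ is a connected configuration), and those two points are fixed by $f'$; since $f''(m)$ is the unique line through those two fixed points — uniqueness because $\goth D$ is a partial linear space — we get $f''(m)=m$ for every line $m$. Therefore $f=(f',f'')=\id$.

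The one point requiring a little care — and the main (very mild) obstacle — is the bookkeeping in the bootstrap step: one must make sure that when invoking Lemma~\ref{lem:rigidotocz} at a point $o=(x)$, there genuinely is a line through $(x)$ that $f$ already fixes point-wise (so that hypothesis a) is satisfied). This is immediate once $(x)\in l$ or $(x)$ lies on some line crossing $l$, which covers every point reachable from $l$ in one collinearity step; induction on the distance from $l$ in the connected collinearity graph then closes the argument. Everything else is a routine application of the partial-linear-space axioms and of Lemma~\ref{lem:rigidotocz}.
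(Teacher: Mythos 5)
Your proposal is correct and follows exactly the route the paper intends: the corollary is stated there as an immediate consequence of Lemma~\ref{lem:rigidotocz}, obtained by applying case a) of that lemma at each point $o$ of $l$ (and then at each point of every newly fixed line), and propagating through the connected collinearity graph. Your additional care about verifying hypothesis a) at each step and about deducing $f''=\id$ from $f'=\id$ via partial linearity is sound and fills in precisely the routine details the paper leaves implicit.
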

\begin{cor}\label{cor:fixautisomor}
  Under assumption  \eqref{ass:veblen} 
  every automorphism of $\goth D$ which has a fixed point $o$ is uniquely 
  determined by its action on the lines through $o$.
  Consequently, the point stabilizer $\otocz({\Aut({\goth D})},o)$ of the automorphism group
  of $\goth D$ is isomorphic to a subgroup of $S_{r}$, where $r=|D|$.
\end{cor}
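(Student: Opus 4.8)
The statement is Corollary~\ref{cor:fixautisomor}, which I read as two assertions. First, under~\eqref{ass:veblen}, any automorphism $\varphi = (\varphi',\varphi'')$ of $\goth D$ fixing a point $o$ is determined by $\varphi''$ restricted to $\chain(o)$, the set of lines through $o$. Second, the stabilizer $\otocz{(\Aut(\goth D))}{o}$ embeds into the symmetric group $S_r$ with $r = |D|$.

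**Key steps.**

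First I would extract the first assertion essentially for free from Lemma~\ref{lem:rigidotocz}. Suppose $\varphi_1 = (f_1',f_1'')$ and $\varphi_2 = (f_2',f_2'')$ both fix $o$ and agree on the lines through $o$, i.e. $f_1''(l) = f_2''(l)$ for every $l \in \chain(o)$. Then $\psi := \varphi_2^{-1} \circ \varphi_1$ is a collineation fixing $o$ and preserving every line through $o$; by Lemma~\ref{lem:rigidotocz} (case~b), $\psi$ fixes all points on all lines through $o$, hence $\psi' = \id$ on $\otocz{\goth D}{o}$. Running the argument of Lemma~\ref{lem:rigidotocz} once more — or invoking Corollary~\ref{cor:rigidotocz} applied to one line through $o$ — propagates this to the whole connected component of $o$; since by the standing assumption $D$ generates $\sf G$, Proposition~\ref{pr:concomp} tells us $\goth D$ is connected, so $\psi = \id$ and $\varphi_1 = \varphi_2$. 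This gives the first claim.

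Next, for the embedding into $S_r$: the map $\Phi\colon \otocz{(\Aut(\goth D))}{o} \longrightarrow \mathrm{Sym}\big(\chain(o)\big)$ sending $\varphi = (\varphi',\varphi'')$ to $\varphi'' \restriction \chain(o)$ is well-defined (an automorphism fixing $o$ permutes the lines through $o$, since $a \inc l \Leftrightarrow \varphi'(a)\inc\varphi''(l)$ and $\varphi'(o) = o$), and it is a group homomorphism by coordinate-wise composition. By the first claim, $\Phi$ is injective. Since the point rank of $\goth D$ equals $r = |D|$ (recall from the excerpt that the rank of a point is $|D|/|G_D|$, and under~\eqref{ass:veblen} together with the QDS condition one has $G_D = \{1\}$, or more simply the cardinality $|\chain(o)|$ is the point rank), we have $|\chain(o)| = r$, so $\mathrm{Sym}(\chain(o)) \cong S_r$ and the corollary follows.

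**Main obstacle.**

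The only delicate point is the appeal to connectedness: Lemma~\ref{lem:rigidotocz} and Corollary~\ref{cor:rigidotocz} give rigidity only within the connected component of $o$, so the reduction genuinely needs the standing assumption (made just before~\eqref{ass:veblen}) that $D$ generates $\sf G$, combined with Proposition~\ref{pr:concomp}, to conclude $\goth D$ is connected; without that the corollary is false. A minor secondary point is bookkeeping about the exact value $r = |D|$ versus $|D|/|G_D|$, but under~\eqref{ass:veblen} the quasi difference set condition forces $G_D$ trivial (a nontrivial $c \in G_D$ would give two representations $c = a\invers(b)$), so $r = |D|$ as stated. Everything else is routine group-homomorphism and injectivity verification.
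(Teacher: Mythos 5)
Your proof is correct and follows exactly the route the paper intends: the corollary is stated as an immediate consequence of Lemma~\ref{lem:rigidotocz}(b) and Corollary~\ref{cor:rigidotocz}, which is precisely how you argue (compose two automorphisms agreeing on $\chain(o)$, apply the rigidity lemma, propagate by connectedness via the standing assumption that $D$ generates $\sf G$). Your added observations — that the quasi difference set condition forces $G_D$ to be trivial so that the point rank is indeed $|D|$, and that connectedness is genuinely needed — are correct and fill in details the paper leaves implicit.
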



\section{Products of difference sets}

Let ${\sf G}_i = \struct{G_i,\cdot_i,1_i}$ be a group for $i\in I$,
and $1_i\in D_i\subset G_i$ for every $i\in I$.

Let $G=\prod_{i\in I} G_i$, i.e. let $G$ be the set of all functions
$g\colon I\longrightarrow \bigcup\{ G_i\colon i\in I \}$ with $g(i)\in G_i$.
Then the product $\prod_{i\in I}{\sf G}_i$ is the structure
$\struct{G,\cdot,1}$, where $(g_1\cdot g_2)(i)=g_1(i)\cdot_i g_2(i)$
for $g_1,g_2\in G$, and $1(i)=1_i$.
It is just the standard construction of the direct product of groups.
The set
\\[1.5ex]
\centerline{
  $\sum_{i\in I}G_i = \{g\in G\colon g(i)\neq 1_i \text{ for a finite number of }i\in I \}$}
\vskip1.5ex\noindent
is a subgroup of $\prod_{i\in I}{\sf G}_i$, denoted by 
  $\sum_{i\in I}{\sf G}_i$.
If $I=\{ 1,\ldots,r \}$ is finite then 
  ${\sf G}_1\oplus\ldots\oplus{\sf G}_{r}:= \prod_{i\in I}{\sf G}_i = 
    \sum_{i\in I}{\sf G}_i =: \sum_{i=1}^{r}{\sf G}_i$.

For every $j\in I$ we define the standard projection 
  $\pi_j\colon\prod_{i\in I}G_i \longrightarrow G_j$ by $\pi_j(g)=g(j)$,
and the standard inclusion 
  $\embfunc_j\colon G_j\longrightarrow \sum_{i\in I}G_i$
by the conditions 
  $(\embfunc_j(a))(j)=a$ and $(\embfunc_j(a))(i)=1_i$ for $i\neq j$
and $a\in G_j$.
Recall that $\pi_j$ and $\embfunc_j$ are group homomorphisms.

We set $\sum_{i\in I}D_i=\bigcup\{ \embfunc_i(D_i)\colon i\in I \}$.
For a finite set $I=\{ 1,\ldots r \}$ we write
$\sum_{i\in I}D_i = \sum_{i=1}^{r}D_i = D_1\uplus\ldots\uplus D_{r}$.

\begin{prop}
  If $D_i$ is a quasi difference set in ${\sf G}_i$ for every $i\in I$ then
  $\sum_{i\in I}D_i$ is a quasi difference set in $\sum_{i\in I}{\sf G}_i$.
\end{prop}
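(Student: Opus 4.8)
The plan is to verify directly that $D := \sum_{i\in I} D_i$ satisfies condition \QDS{} in the group ${\sf G} := \sum_{i\in I}{\sf G}_i$, using the coordinate-wise structure of the direct sum. Recall that an element $d$ of $D$ is, by definition, of the form $\embfunc_i(d_i)$ for some $i\in I$ and some $d_i \in D_i$; write $\supp(d)$ for the (single-point, or empty) support of $d$, i.e. the index $i$ at which $d$ may differ from $1_i$. Note $1 = \embfunc_i(1_i)$ lies in $D$ since $1_i \in D_i$ for each $i$. So every $d\in D$ is ``concentrated'' in at most one coordinate.

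First I would take $c \in G$ with $c \neq 1$ and suppose $c = a\cdot\invers(b)$ and $c = a'\cdot\invers(b')$ with $a,b,a',b' \in D$; the goal is $(a,b) = (a',b')$. The key observation is a case split on the supports of the four elements. Since $c\neq 1$, at least one of $a,b$ is nontrivial; say $j \in \supp(a)\cup\supp(b)$ is an index where $c$ might be nontrivial — more precisely, since $a,b$ are each concentrated in at most one coordinate, $c = a\invers(b)$ is concentrated in at most the two coordinates $\supp(a)\cup\supp(b)$, and similarly for $a',b'$. First consider the generic case $\supp(a) \neq \supp(b)$, say $\supp(a) = \{j\}$, $\supp(b) = \{k\}$ with $j \neq k$: then $c(j) = a(j) \neq 1_j$ and $c(k) = \invers{b(k)} \neq 1_k$, so $c$ is nontrivial in exactly the two coordinates $j,k$. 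Comparing with the representation via $a',b'$ forces $\{j,k\} \subseteq \supp(a')\cup\supp(b')$, hence (as each of $a',b'$ has a singleton support) $\{\supp(a'),\supp(b')\} = \{\{j\},\{k\}\}$; reading off the $j$-th and $k$-th coordinates then pins down $a',b'$ coordinatewise, giving $a=a'$, $b=b'$. Then handle the remaining case $\supp(a) = \supp(b) = \{j\}$ (including the subcase where one of them is $1$, forcing the other's support to be the unique index where $c$ is nontrivial): here $c = \embfunc_j(a(j)\invers{b(j)})$ is concentrated in coordinate $j$ alone, with $c(j)\neq 1_j$, so any other representation $a'\invers{b'}$ must also be concentrated in $j$, forcing $\supp(a') = \supp(b') = \{j\}$; projecting to the $j$-th coordinate, $a(j)\invers{b(j)} = a'(j)\invers{b'(j)} = c(j) \neq 1_j$, and now \QDS{} applied to $D_j$ in ${\sf G}_j$ yields $(a(j),b(j)) = (a'(j),b'(j))$, i.e. $a=a'$, $b=b'$.

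I expect the main obstacle to be organizing the case analysis cleanly — in particular being careful that ``support'' of an element of $D$ is a singleton or empty (the $1$ case), and that when two of the four elements $a,b,a',b'$ are trivial the nontrivial ones are still forced into the same coordinate by $c \neq 1$. Once the bookkeeping is set up, each case reduces either to trivial coordinate-wise comparison or to a single application of \QDS{} in one factor ${\sf G}_j$. I would also remark at the end that, by Proposition~\ref{pr:concomp}, one may harmlessly assume each $D_i$ generates ${\sf G}_i$, but this is not needed for the \QDS{} verification itself. No further regularity of the $D_i$ is used.
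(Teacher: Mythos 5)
Your overall strategy is exactly the paper's: encode each element of $\sum_{i\in I}D_i$ by its (at most singleton) support, observe that a difference of two such elements is supported on at most two coordinates, match supports, and finish by applying \QDS{} in a single factor. You are in fact somewhat more careful than the paper about the ambiguity of the support of the identity, and your second case ($\supp(a)=\supp(b)$, including the degenerate subcases) is complete.

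There is, however, one genuine gap in the generic case $\supp(a)=\{j\}$, $\supp(b)=\{k\}$, $j\neq k$: after concluding $\{\supp(a'),\supp(b')\}=\{\{j\},\{k\}\}$ you say that reading off the $j$-th and $k$-th coordinates ``pins down'' $a',b'$, which silently discards the crossed assignment $\supp(a')=\{k\}$, $\supp(b')=\{j\}$. That assignment is consistent with everything used up to this point; it yields $a(j)={b'(j)}^{-1}$ and ${b(k)}^{-1}=a'(k)$, and excluding it requires a further argument using the standing hypothesis $1_j\in D_j$ together with \QDS{} in ${\sf G}_j$: if $a(j)={b'(j)}^{-1}$ with $a(j),b'(j)\in D_j\setminus\{1_j\}$, then the element $a(j)\neq 1_j$ admits the two distinct representations $a(j)\cdot{1_j}^{-1}$ and $1_j\cdot{b'(j)}^{-1}$ as a difference of elements of $D_j$, contradicting \QDS{} for $D_j$. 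This is not mere bookkeeping: if one drops the assumption $1_i\in D_i$ the crossed case actually occurs and the proposition fails. For instance $D_1=D_2=\{2,5\}$ is a quasi difference set in $C_7$, yet in $C_7\oplus C_7$ the element $(2,5)$ equals both $(2,0)-(0,2)$ and $(0,5)-(5,0)$, so $D_1\uplus D_2$ violates \QDS. To be fair, the paper's own proof is equally terse at exactly this spot (``analogously, we come to $j_1=j_3$ and $j_2=j_4$''), but a complete write-up must make this extra use of $1_j\in D_j$ and of \QDS{} in the factors explicit.
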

\begin{proof}
  We set $D = \sum_{i\in I}D_i$. The aim is to prove that $D$ satisfies \QDS.
  Let $g_1,g_2,g_3,g_4\in D$ and assume that
    $g_1\invers(g_2) = g_3\invers(g_4)$.
  Let $g_i\in \embfunc_{j_i}(D_{j_i})$. If $j_1=j_2$ then $\pi_j(g_1\invers(g_2)) = 1_j$
  for every $j\neq j_1$; 
  thus $\pi_j(g_3\invers(g_4))=1_j$, and thus $j_3 = j_4=j_1$.
  From assumption we infer that $g_1 = g_3$ and $g_2 = g_4$.
  If $j_1\neq j_2$, analogously, we come to $j_1 = j_3$ and $j_2 = j_4$.
  Then we obtain $g_1 = g_3$ and $\invers(g_2) = \invers(g_4)$,
  which yields our claim.
\end{proof}

\par\noindent
Let ${\goth D}_i = \DifSpace({\sf G}_i,D_i)$. We write 
  $$
    \textstyle{\sum_{i\in I}{\goth D}_i}:=
    \DifSpace(\textstyle{\sum_{i\in I}{\sf G}_i},\textstyle{\sum_{i\in I}D_i}).$$
For $I = \{1,\ldots,r\}$ we write also 
  $\sum_{i=1}^{r}{\goth D}_i := 
   \DifSpace({\sf G}_1,D_1)\oplus\ldots\oplus\DifSpace({\sf G}_r,D_r)$.
This terminology can be somewhat misleading (it is not true
that $\DifSpace({\sf G},D)$ determines the set $D$ in some standard way);
we hope this will not lead to misunderstanding, 
since in any case suitable quasi difference sets will be explicitly given.
\par
Let us examine two particular cases of the above construction.
First, let $\goth D=\DifSpace(C_k,{\{0,1\}})\oplus\DifSpace({\sf G}',D')$.
Set $D=\{0,1\}\uplus D'$ and denote ${\goth D}' = \DifSpace({\sf G}',D')$;
then ${\goth D} = \DifSpace(C_k\oplus {\sf G}',D)$.
Let $a = (i,a')\in C_k\times G'$. Then the points of $a\cdot D$ are of the form 
$(i,a'\cdot p)$ with $p\in D'$, and -- one point -- $(i+1,a')$.
Somewhat informally we can say that the line with the coordinates 
$[i,a']$ consists of the points $(i,p)$, where $(p)\inc [a']$ and one 
"new" point $(i+1,a')$.
\par
In other words, 
we have a function $f_i$ which assigns to every line of ${\goth D}'$ a point
of ${\goth D}'$ such that the lines of ${\goth D}$ are of the form
${i}\times l' \cup \{(i+1,f_i(l))\}$, where $l'$ is a line of ${\goth D}'$.
Thus we can consider $\goth D$ as a space ${\goth D}'$ $k$-times inscribed
cyclically into itself.
In the above construction, the function $f_i$ is defined by $f_i([a])=(a)$.
\par
Let $\goth D =\DifSpace({\sf G}_1,D_1)\oplus\DifSpace({\sf G}_2,D_2)$. 
The lines of $\goth D$ are of the form
$(a_1,a_2)+D_1\uplus D_2$, which, on the other hand, can be written
as
$\chain([a_1])\times\{(a_2)\}\cup\{(a_1)\}\times\chain([a_2])$. Recall, 
that the lines of the Segre product 
$\chain({\goth D})=\DifSpace({\sf G}_1,D_1)\otimes\DifSpace({\sf G}_2,D_2)$
(cf. \cite{Segre}) are the sets of one of two forms:
$\chain([a_1])\times\{(a_2)\}$ or $\{(a_1)\}\times\chain([a_2])$.
Therefore, the lines of $\goth D$ are unions of some pairs of the lines
of $\chain({\goth D})$.

Immediately from \ref{pr:concomp} we have the following
\begin{cor}
  Let $D_i$ be a quasi difference set in a group ${\sf G}_i$ 
  such that $\struct{D_i}_{{\sf G}_i} = G_i$ for $i\in I$.
  Then $\sum_{i\in I}D_i$ generates $\sum_{i\in I}{\sf G}_i$.
  Consequently, if every one of the structures 
  ${\goth D}_i=\DifSpace({\sf G}_i,D_i)$ is connected then 
  $\sum_{i\in I}{\goth D}_i$ is connected as well.
\end{cor}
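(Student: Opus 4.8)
The plan is to establish first the purely group-theoretic assertion of the corollary --- that $\sum_{i\in I}D_i$ generates $\sum_{i\in I}{\sf G}_i$ --- and then to deduce the connectedness statement from Proposition~\ref{pr:concomp}. For the generation claim, write $D=\sum_{i\in I}D_i=\bigcup\{\embfunc_i(D_i)\colon i\in I\}$ and let $H=\struct{D}$ denote the subgroup of $\sum_{i\in I}{\sf G}_i$ that it generates. Fix $j\in I$. Since the inclusion $\embfunc_j$ is a group homomorphism, it maps the subgroup $\struct{D_j}_{{\sf G}_j}$ onto $\struct{\embfunc_j(D_j)}$; by hypothesis $\struct{D_j}_{{\sf G}_j}=G_j$, so $\struct{\embfunc_j(D_j)}=\embfunc_j(G_j)$, and since $\embfunc_j(D_j)\subseteq D$ this gives $\embfunc_j(G_j)\subseteq H$ for every $j\in I$. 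It then remains to note that the subgroups $\embfunc_j(G_j)$, $j\in I$, jointly generate the whole direct sum: an arbitrary $g\in\sum_{i\in I}{\sf G}_i$ has finite support, say $\supp(g)=\{j_1,\ldots,j_m\}$, and then $g=\embfunc_{j_1}(g(j_1))\cdots\embfunc_{j_m}(g(j_m))$ is a product of elements of $H$. Hence $H=\sum_{i\in I}{\sf G}_i$, as required.

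For the connectedness statement I would first record the equivalence, immediate from the polygonal-path criterion in Proposition~\ref{pr:concomp}, that a structure $\DifSpace({\sf H},E)$ (with $E$ a quasi difference set in a group $\sf H$ and $1\in E$) is connected if and only if $\struct{E}_{\sf H}=H$: by that criterion the point set of the connected component of $(1)$ is exactly $\struct{\invers(E)E}_{\sf H}$, which coincides with $\struct{E}_{\sf H}$ because $1\in E$. Applying this to each ${\goth D}_i$, connectedness of ${\goth D}_i$ yields $\struct{D_i}_{{\sf G}_i}=G_i$ --- precisely the hypothesis of the first part of the corollary. By the argument above, $\sum_{i\in I}D_i$ --- which, by the proposition above, is a quasi difference set in $\sum_{i\in I}{\sf G}_i$ --- then generates $\sum_{i\in I}{\sf G}_i$, and the same equivalence applied to $\sum_{i\in I}{\goth D}_i=\DifSpace(\sum_{i\in I}{\sf G}_i,\sum_{i\in I}D_i)$ shows that this structure is connected.

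I do not anticipate a genuine obstacle: once Proposition~\ref{pr:concomp} is in hand, both halves are routine. The two points that warrant a word of care are (a) that the index set $I$ may be infinite, so one must use that the elements of the direct sum $\sum_{i\in I}{\sf G}_i$ --- as opposed to those of the full product $\prod_{i\in I}{\sf G}_i$ --- have finite support; and (b) spelling out that connectedness of $\DifSpace({\sf G},D)$ is genuinely equivalent to ``$D$ generates $\sf G$'', since the corollary uses this equivalence in one direction (to turn ``${\goth D}_i$ connected'' into the usable hypothesis $\struct{D_i}_{{\sf G}_i}=G_i$) and in the other direction (to conclude connectedness of the sum).
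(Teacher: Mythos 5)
Your proof is correct and follows exactly the route the paper intends: the paper states this corollary with no written proof beyond ``immediately from Proposition~\ref{pr:concomp}'', and your argument simply fills in the two routine details (that the $\embfunc_j(G_j)$ generate the direct sum because its elements have finite support, and that connectedness of $\DifSpace({\sf G},D)$ with $1\in D$ is equivalent to $\struct{D}_{\sf G}=G$ via the polygonal-path criterion). Both points, including your remark on why $1\in E$ is needed to identify $\struct{\invers(E)E}$ with $\struct{E}$, are accurate.
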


\par
Let $J \subset I$; we extend the inclusions $\embfunc_i$ to the map
$\embfunc_J \colon \sum_{j \in J} G_j \longrightarrow \sum_{i \in I} G_i$
by the condition
\begin{equation}\label{wz:embed:gen}
  (\embfunc_J(a))(i) = \left\{ \begin{array}{ll}
                         1_i  & \text{for } i \in I \setminus J
                         \\
                         a(i) & \text{for } i \in J
                     \end{array} \right.
  \text{ for arbitrary } a \in \textstyle{\sum_{j \in J} G_j}.
\end{equation}
Let us denote
  ${\goth M} = \sum_{i \in I} {\goth D}_i$
and 
  ${\goth N} = \sum_{j \in J} {\goth D}_j$.
Next, let $c \in \sum_{i \in I \setminus J} G_i$.
Let 
  ${\goth M} \ciach{I \setminus J} c$   
be the substructure of $\goth M$ determined by the set 
  $\{ p \in \sum_{i \in I} G_i \colon p(i) = c(i), \; \forall \, i \in I \setminus J  \}$.
The following is immediate from definitions.
\begin{prop}\label{prop:partsofprod}
  The map $\tau_{\embfunc_{I \setminus J}(c)} \circ \embfunc_{J}$ 
  is an isomorphism of $\goth N$ and the structure ${\goth M} \ciach{I \setminus J} a$.
  Consequently, for any $c',c'' \in \sum_{i \in I \setminus J} G_i$ the map
  $\tau_{\embfunc_{I \setminus J}(c'')} \circ \invers({\tau_{\embfunc_{I \setminus J}(c')}})$
  is an isomorphism of 
  ${\goth M} \ciach{I \setminus J} c'$ and ${\goth M} \ciach{I \setminus J} c''$.
\end{prop}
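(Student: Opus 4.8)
The plan is to realize $\Phi:=\tau_{\embfunc_{I\setminus J}(c)}\circ\embfunc_J$ as a composite of two manifestly incidence-preserving maps and then simply identify its image. Write $D:=\sum_{i\in I}D_i$, $t:=\embfunc_{I\setminus J}(c)$, and let $H:=\img(\embfunc_J)=\{\,p\in\sum_{i\in I}G_i : p(i)=1_i \text{ for all } i\in I\setminus J\,\}$, a subgroup of $\sum_{i\in I}{\sf G}_i$; let $X$ denote the point set of ${\goth M} \ciach{I \setminus J} c$. First I would record a few facts that are immediate from the definitions: the coset identity $t\cdot H=X$ (because $t(i)=c(i)$ for $i\in I\setminus J$ and $t(i)=1_i$ for $i\in J$); that $\tau_t$ is an automorphism of $\goth M$ (as noted earlier for all structures of this form); and that $\embfunc_J$ is an injective group homomorphism with $\embfunc_J\big(\sum_{j\in J}D_j\big)=\bigcup_{j\in J}\embfunc_j(D_j)$. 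I would then check the key equality
\[
  E\;:=\;\embfunc_J\!\left(\sum_{j\in J}D_j\right)\;=\;D\cap H ,
\]
which holds because $\embfunc_i(D_i)\subseteq H$ for $i\in J$ while $\embfunc_i(D_i)\cap H=\{1\}$ for $i\in I\setminus J$ (here one uses $1_i\in D_i$).

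Next I would handle points and images of lines. Since $\embfunc_J$ maps $\sum_{j\in J}G_j$ bijectively onto $H$ and $\tau_t$ maps $H$ bijectively onto $t\cdot H=X$, the map $\Phi$ is a bijection of the point set of $\goth N$ onto $X$. For a line $b\cdot\big(\sum_{j\in J}D_j\big)$ of $\goth N$, using that $\embfunc_J$ is a homomorphism and $\tau_t$ a translation,
\[
  \Phi\!\left(b\cdot\sum_{j\in J}D_j\right)\;=\;\big(t\cdot\embfunc_J(b)\big)\cdot E\;=\;a\cdot E ,\qquad a:=t\cdot\embfunc_J(b)\in X ,
\]
and since left translation by $a$ distributes over intersection and $a\cdot H=t\cdot H=X$, this equals $a\cdot(D\cap H)=(a\cdot D)\cap X$; that is, $\Phi$ carries a line of $\goth N$ to the trace on $X$ of a line of $\goth M$. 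Incidence is then preserved in both directions for free, because $\Phi$ is a bijection of the whole ambient point sets: a point lies on a line of $\goth N$ exactly when its $\Phi$-image lies on the corresponding trace.

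The one step that is not pure bookkeeping --- and the one I expect to be the real, if minor, obstacle --- is the surjectivity of $\Phi$ onto the lines of ${\goth M} \ciach{I \setminus J} c$, which, following the convention already used for neighborhoods, are the traces $\ell\cap X$ with $\ell\in\lines_{\goth M}$ and $|\ell\cap X|\ge 2$. Fix such an $\ell$ and a base point $a_0$ with $\ell=a_0\cdot D$. Since each element of $D=\bigcup_{i\in I}\embfunc_i(D_i)$ is nontrivial in at most one coordinate, $\ell\cap X=\emptyset$ whenever $a_0$ disagrees with $c$ at two or more coordinates of $I\setminus J$, and $|\ell\cap X|\le 1$ whenever $a_0$ disagrees at exactly one such coordinate (the direction correcting it being then uniquely determined); hence $|\ell\cap X|\ge 2$ forces $a_0\in X$. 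Given $a_0\in X$, the same coordinate-wise computation yields $\ell\cap X=a_0\cdot(D\cap H)=a_0\cdot E$, which by the previous paragraph is precisely $\Phi$ applied to the line $b\cdot\sum_{j\in J}D_j$ of $\goth N$ with $b=\embfunc_J^{-1}\big(\invers(t)\cdot a_0\big)$ (this preimage exists since $a_0\in t\cdot H$). Thus $\Phi$ is a bijection of $\lines_{\goth N}$ onto the lines of ${\goth M} \ciach{I \setminus J} c$, and hence an isomorphism $\goth N\to{\goth M} \ciach{I \setminus J} c$. (The degenerate case where every $D_j$, $j\in J$, is a singleton --- so that the ``lines'' in play reduce to single points --- is trivial under the corresponding reading of ``substructure''.)

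Finally, to obtain the concluding clause I would apply the first part to $c'$ and to $c''$: the maps $\tau_{\embfunc_{I\setminus J}(c')}\circ\embfunc_J$ and $\tau_{\embfunc_{I\setminus J}(c'')}\circ\embfunc_J$ are isomorphisms of $\goth N$ onto ${\goth M} \ciach{I \setminus J} c'$ and onto ${\goth M} \ciach{I \setminus J} c''$ respectively; composing the second with the inverse of the first, the two occurrences of $\embfunc_J$ cancel and one is left with $\tau_{\embfunc_{I\setminus J}(c'')}\circ\invers({\tau_{\embfunc_{I\setminus J}(c')}})$, which is therefore an isomorphism of ${\goth M} \ciach{I \setminus J} c'$ onto ${\goth M} \ciach{I \setminus J} c''$, as claimed.
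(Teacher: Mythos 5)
Your proof is correct. The paper offers no argument at all for this proposition (it is introduced with ``The following is immediate from definitions''), and your write-up is a faithful unwinding of exactly those definitions: the coset identity $t\cdot H=X$, the equality $\embfunc_J(\sum_{j\in J}D_j)=D\cap H$, and --- the one point that is genuinely not pure bookkeeping --- the verification that the traces $\ell\cap X$ with at least two points are precisely the sets $a_0\cdot(D\cap H)$ with $a_0\in X$, which correctly pins down the lines of the $J$-part under the paper's convention for induced substructures.
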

A substructure of $\goth M$ of the form ${\goth M} \ciach{I \setminus J} c$ 
will be referred to as a {\em $J$-part of $\goth M$}; in fact, it is 
a Baer substructure of $\goth M$.

\par
With similar techniques we can prove
\begin{prop}\label{pr:associatprod}
  Let $J$ be a nonempty proper subset of $I$. Then
  \begin{eqnarray*}
  \textstyle{\sum_{i \in I} {\goth D}_i} 
  & \cong &
   \textstyle{\sum_{i \in J} {\goth D}_i} \oplus
   \textstyle{\sum_{i \in I \setminus J} {\goth D}_i}.
  \end{eqnarray*}
\end{prop}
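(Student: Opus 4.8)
The plan is to reduce the claimed isomorphism to associativity of the direct sum of groups, in the same spirit as the proof of Proposition~\ref{prop:partsofprod}. Write ${\sf G}=\sum_{i\in I}{\sf G}_i$, $D=\sum_{i\in I}D_i$, and set ${\sf H}_1=\sum_{i\in J}{\sf G}_i$, $E_1=\sum_{i\in J}D_i$, ${\sf H}_2=\sum_{i\in I\setminus J}{\sf G}_i$, $E_2=\sum_{i\in I\setminus J}D_i$. By the definitions introduced before the statement, the left-hand side of the assertion is $\DifSpace({\sf G},D)$ and the right-hand side is $\DifSpace({\sf H}_1\oplus{\sf H}_2,E_1\uplus E_2)$. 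The first observation I would record is the elementary fact that any group isomorphism $\Phi\colon {\sf G}\to{\sf G}'$ with $\Phi(D)=D'$ induces an isomorphism of incidence structures $\DifSpace({\sf G},D)\to\DifSpace({\sf G}',D')$: indeed $\Phi(a\cdot D)=\Phi(a)\cdot D'$, so $a\cdot D\mapsto\Phi(a)\cdot D'$ is a bijection of line sets, and since incidence is the condition $a\in b\cdot D$, which $\Phi$ respects, the pair $(\Phi,\,a\cdot D\mapsto\Phi(a)\cdot D')$ is an isomorphism. So it suffices to exhibit a group isomorphism $\Phi\colon{\sf G}\to{\sf H}_1\oplus{\sf H}_2$ with $\Phi(D)=E_1\uplus E_2$.

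First I would define $\Phi$ by splitting the index set: for $g\in\sum_{i\in I}G_i$ put $\Phi(g)=(g_J,g_{I\setminus J})$, where $g_J\in\sum_{i\in J}G_i$ is the partial function agreeing with $g$ on $J$, and $g_{I\setminus J}$ agrees with $g$ on $I\setminus J$. Since $g$ has finite support in $I$, each of $g_J,g_{I\setminus J}$ has finite support in its index set, and since the group operations on all these products are coordinatewise, $\Phi$ is a homomorphism; it is a bijection, with inverse given by gluing a pair of partial functions into a single function on $I$. This is just the standard associativity isomorphism $\sum_{i\in I}{\sf G}_i\cong\big(\sum_{i\in J}{\sf G}_i\big)\oplus\big(\sum_{i\in I\setminus J}{\sf G}_i\big)$, and amounts to routine bookkeeping.

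Next I would verify $\Phi(D)=E_1\uplus E_2$. Recall $D=\bigcup\{\embfunc_i(D_i)\colon i\in I\}$, so pick $g\in\embfunc_i(D_i)$; then $g$ is supported at the single coordinate $i$ with $g(i)=d\in D_i$. If $i\in J$ then $g_{I\setminus J}=1$ and $g_J$ is precisely the image of $d$ under the inclusion $G_i\to\sum_{j\in J}G_j$, i.e.\ an element of the $i$-summand of $E_1$; hence $\Phi(g)$ lies in the first summand of $E_1\uplus E_2$. The case $i\in I\setminus J$ is symmetric. Conversely, every element of $E_1\uplus E_2$ is of exactly one of these two shapes, hence equals $\Phi\big(\embfunc_i(d)\big)$ for a suitable $i$ and $d\in D_i$. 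Thus $\Phi(D)=E_1\uplus E_2$, and by the opening observation $(\Phi,\,a\cdot D\mapsto\Phi(a)\cdot(E_1\uplus E_2))$ is the desired isomorphism, proving Proposition~\ref{pr:associatprod}.

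I do not expect a genuine obstacle here: no appeal to \QDS\ or to \eqref{ass:veblen} is needed, only manipulation of the inclusions $\embfunc_i$, $\embfunc_J$ of \eqref{wz:embed:gen} and of translations $\tau_a$ in the manner already used for Proposition~\ref{prop:partsofprod}. The one place demanding care is keeping the several inclusions straight --- $G_i\to\sum_{j\in J}G_j$, $G_i\to\sum_{i\in I}G_i$, and ${\sf H}_1\to{\sf H}_1\oplus{\sf H}_2$ --- and checking that $\embfunc_i$ commutes through $\Phi$ with the corresponding composite inclusion; once this is set up the argument closes immediately.
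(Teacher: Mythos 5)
Your argument is correct and is exactly the intended one: the paper offers no written proof of Proposition~\ref{pr:associatprod}, merely remarking that it follows ``with similar techniques'' to Proposition~\ref{prop:partsofprod}, and your reduction to the associativity isomorphism of $\sum_{i\in I}{\sf G}_i$ together with the observation that a group isomorphism carrying one quasi difference set onto another induces an isomorphism of the associated incidence structures is precisely that routine verification. No gap; the bookkeeping with the inclusions $\embfunc_i$ is handled correctly.
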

\begin{prop}\label{pr:corelinprod}
  Let ${\goth D}_i = \DifSpace({\sf G}_i,D_i)$, where 
  $D_i$ is a quasi difference set in a group ${\sf G}_i$ for 
  $i\in I$.
  Assume that there is a pair of bijections
  $\varphi'_i,\varphi_i''\colon G_i\longrightarrow G_i$ 
  such that the pair $\varkappa_i=(\varkappa'_i,\varkappa''_i)$ of maps
  \begin{equation}
    \varkappa'_i\colon (a)\mapsto[\varphi'_i(a)],
    \;\;
    \varkappa''_i\colon [a]\mapsto(\varphi''_i(a))\text{ for }a\in G_i
  \end{equation}
  is a correlation of ${\goth D}_i$ for $i\in I$.
  Set 
    $\varphi'=\varphi'_1\times\ldots\times \varphi'_r $
    and $\varphi''=\varphi''_1\times\ldots\times \varphi''_r $.
  If $\varphi'_i = \varphi''_i$ for every $i\in I$
  (i.e. if $\varkappa_i$ are involutory)
  then the pair $\varkappa = (\varkappa',\varkappa'')$ of maps
  \begin{equation}
    \varkappa'\colon (a)\mapsto[\varphi'(a)],\;\;
    \varkappa''\colon [a]\mapsto(\varphi''(a))\text{ for }a\in
     \textstyle{\sum_{i \in I} G_i} 
  \end{equation}
  is an involutory correlation of 
    $\sum_{i\in I}{\goth D}_i$.
\end{prop}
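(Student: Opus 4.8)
The plan is to verify directly that $\varkappa$ reverses incidence and squares to the identity, reading everything off the coordinate description $\sum_{i\in I}{\goth D}_i = \DifSpace(\sum_{i\in I}{\sf G}_i,\sum_{i\in I}D_i)$; here $I=\{1,\dots,r\}$ is finite, so $\sum_{i\in I}G_i = G_1\times\dots\times G_r$ and $\sum_{i\in I}D_i = \bigcup_{j\in I}\embfunc_j(D_j)$. Put $\varphi_i := \varphi'_i = \varphi''_i$, so that $\varphi' = \varphi'' =: \varphi = \varphi_1\times\dots\times\varphi_r$ acts coordinate-wise, $(\varphi(a))(i)=\varphi_i(a(i))$, and $\varphi$ is a bijection of $\sum_{i\in I}G_i$ because each $\varphi_i$ is a bijection of $G_i$; hence $\varkappa'$ and $\varkappa''$ are bijections. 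As each $\varkappa_i$ is involutory we have $\varphi_i\circ\varphi_i = \id_{G_i}$ for every $i$, whence $\varphi\circ\varphi = \id$; thus, once $\varkappa$ is known to be a correlation, it is automatically involutory, since $\varphi\circ\varphi = \id$ gives $\varkappa(\varkappa(x)) = x$ for every point and every line $x$. So the whole matter reduces to the equivalence $(a)\inc[b]\iff\varkappa''([b])\inc\varkappa'((a))$ for $a,b\in\sum_{i\in I}G_i$.

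To obtain it I would unwind both sides through \eqref{wz:analinc}. On the left, $(a)\inc[b]$ in $\sum_{i\in I}{\goth D}_i$ means $\invers(b)\cdot a\in\bigcup_{j\in I}\embfunc_j(D_j)$, and $\invers(b)\cdot a\in\embfunc_j(D_j)$ holds precisely when $b(j)^{-1}a(j)\in D_j$ while $b(i)^{-1}a(i)=1_i$, i.e.\ $a(i)=b(i)$, for every $i\ne j$; by \eqref{wz:analinc} inside ${\goth D}_j$ this amounts to the existence of an index $j$ with $(a(j))\inc[b(j)]$ in ${\goth D}_j$ and $a(i)=b(i)$ for all $i\ne j$ (the case $a=b$ is subsumed, with $j$ arbitrary and $b(j)^{-1}a(j)=1_j\in D_j$). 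On the right, $\varkappa''([b])\inc\varkappa'((a))$ reads $(\varphi(b))\inc[\varphi(a)]$, that is $\varphi(a)^{-1}\varphi(b)\in\bigcup_{j\in I}\embfunc_j(D_j)$ by \eqref{wz:analinc}; since the $i$-th coordinate of $\varphi(a)^{-1}\varphi(b)$ equals $\varphi_i(a(i))^{-1}\varphi_i(b(i))$, this means there is an index $j$ with $\varphi_j(a(j))^{-1}\varphi_j(b(j))\in D_j$ and $\varphi_i(a(i))=\varphi_i(b(i))$ for all $i\ne j$.

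Finally I would match the two descriptions index by index. For the distinguished index $j$, \eqref{wz:analinc} turns $\varphi_j(a(j))^{-1}\varphi_j(b(j))\in D_j$ into $(\varphi_j(b(j)))\inc[\varphi_j(a(j))]$ in ${\goth D}_j$, which is equivalent to $(a(j))\inc[b(j)]$ precisely because $\varkappa_j$ is a correlation of ${\goth D}_j$, i.e.\ it reverses incidence, exactly as in the proof of \ref{pr:corelindif}. For every other index $i\ne j$, since $\varphi_i$ is a bijection the equality $\varphi_i(a(i))=\varphi_i(b(i))$ is equivalent to $a(i)=b(i)$. Hence the right-hand condition holds for some $j$ if and only if the left-hand one does, which is the desired equivalence; combined with the first paragraph this shows that $\varkappa$ is an involutory correlation of $\sum_{i\in I}{\goth D}_i$. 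There is no real obstacle in this argument; the only point calling for attention is that it is exactly the hypothesis $\varphi'_i=\varphi''_i$ that lets the requirement that all but one coordinate be trivial survive the passage through $\varphi$ --- had $\varphi'_i$ and $\varphi''_i$ differed, the off-index clause would become $\varphi'_i(a(i))=\varphi''_i(b(i))$, which in general is not equivalent to $a(i)=b(i)$, so $\varkappa$ would fail to reverse incidence.
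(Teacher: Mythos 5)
Your proposal is correct and follows essentially the same route as the paper: both reduce incidence in $\sum_{i\in I}{\goth D}_i$ to the coordinatewise description ($(a_i)\inc[b_i]$ in ${\goth D}_i$ for one index, equality elsewhere) and then apply the component correlations at the distinguished index and bijectivity of the $\varphi_i$ at the others. You merely spell out two points the paper leaves implicit — the involutivity via $\varphi\circ\varphi=\id$ and the precise role of the hypothesis $\varphi'_i=\varphi''_i$ in the off-index clause — both of which check out.
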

\begin{proof}
  Let us note that, directly from the definition of $D=\sum_{i\in I} D_i$
  the following conditions  are equivalent
   for $a,b\in \sum_{i \in I} G_i$:
  \begin{enumerate}[a)]\itemsep-1pt
  \item
    $(a)\inc [b]$, and
  \item
    $(a_i)\inc[b_i]$ in ${\goth D}_i$ for some $i\in I$ and $a_j=b_j$ for 
    $i\neq j\in I$.
  \end{enumerate}
  Indeed, $a\in b\cdot D$ iff $a=b\cdot d$ for some 
  $d\in \bigcup\{ \embfunc_i(D_i)\colon i\in I \}$
  i.e. iff $a_i\in b_i\cdot D_i$ and $a_j=b_j$ for all $j\neq i$.
  Therefore,
  \\
  \centerline{
  $\varkappa''([b]) = (\varphi''(b))\inc[\varphi'(a)]
  = \varkappa'((a))$}
  iff the following holds:
  \\
  \centerline{
  $\varkappa''_i([b_i]) = (\varphi''_i(b_i))\inc[\varphi'(a_i)]=\varkappa'_i((a_i))$ 
  and $\varphi''_j(b_j)=\varphi'_j(a_j)$ for $j\neq i$.}
  Now the claim is evident.
\end{proof}

Note that, in particular, if we assume in \ref{pr:corelinprod} that every $\varkappa_i$
is the standard correlation ($\varphi'_i(a) = - a$, cf. \ref{pr:corelindif}),
then $\varkappa$ is the standard correlation as well.
\begin{prop}\label{pr:autinsegre}    
  Let ${\goth D}_i=\DifSpace({\sf G}_i,D_i)$ and
  $f_i=(f'_i,f''_i)$ be bijections of $G_i$ such that 
  $f'_i\colon (a)\longmapsto(f'_i(a))$ and
  $f''_i\colon [a]\longmapsto[f''_i(a)]$ yields a collineation 
  of ${\goth D}_i$ for $i\in I$, and let ${\goth D}=\sum_{i\in I}{\goth D}_i$.
  We set $F'=\prod_{i\in I}f'_i$, 
  $F''=\prod_{i\in I}f''_i$, and $F=(F',F'')$.
  Then the pair $F$ is a collineation of $\goth D$ iff $f'_i=f''_i$
  for every $i\in I$.
\end{prop}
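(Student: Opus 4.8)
The plan is to reduce the statement to the incidence criterion already isolated in the proof of Proposition \ref{pr:corelinprod}, namely that for $a,b\in\sum_{i\in I}G_i$ one has $(a)\inc[b]$ in $\goth D$ iff there is a single index $i\in I$ with $(a_i)\inc[b_i]$ in ${\goth D}_i$ and $a_j=b_j$ for all $j\neq i$. Writing $a_i=\pi_i(a)$, $b_i=\pi_i(b)$, and noting that $F'(a)$ has $i$-th coordinate $f'_i(a_i)$ and $F''(b)$ has $i$-th coordinate $f''_i(b_i)$, the condition $F'(a)\inc F''(b)$ unfolds to: there is an index $i$ with $(f'_i(a_i))\inc[f''_i(b_i)]$ in ${\goth D}_i$ and $f'_j(a_j)=f''_j(b_j)$ for all $j\neq i$. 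Since each $f_i=(f'_i,f''_i)$ is a collineation of ${\goth D}_i$, the first conjunct is equivalent to $(a_i)\inc[b_i]$ in ${\goth D}_i$; the obstacle is entirely in the second conjunct, where $f'_j(a_j)=f''_j(b_j)$ need not be equivalent to $a_j=b_j$ unless $f'_j=f''_j$.

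First I would prove the "if" direction. Assume $f'_i=f''_i$ for every $i$. Then for $j\neq i$ the equality $f'_j(a_j)=f''_j(b_j)$ is $f'_j(a_j)=f'_j(b_j)$, which by injectivity of $f'_j$ is equivalent to $a_j=b_j$. Combining this with the equivalence of the "active-coordinate" conjuncts, $F'(a)\inc F''(b)$ is equivalent to $(a)\inc[b]$; since $F'$ and $F''$ are bijections (as products of bijections), $F$ is a collineation.

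For the "only if" direction I would argue by contraposition: suppose $f'_k\neq f''_k$ for some $k\in I$, and produce a witness violating the incidence equivalence. Pick $u\in G_k$ with $f'_k(u)\neq f''_k(u)$. Since $D_k$ generates ${\sf G}_k$ (our global standing assumption) and ${\goth D}_k$ is connected, choose any line $[c]$ of ${\goth D}_k$ and a point $(v)$ on it; then build $a,b\in\sum_{i\in I}G_i$ supported on two coordinates $k$ and a second index $m\neq k$ (here is where $|I|\geq 2$, implicit in $F=\prod$, is used — if $|I|=1$ the statement is the trivial assertion that the identity relabelling is a collineation, so assume $|I|\ge 2$): set $a_m=1_m$, $b_m=1_m$ on the $m$-coordinate but arrange the $k$-coordinates so that $(a)\inc[b]$ holds via coordinate $k$, namely $a_k=b_k\cdot d$ for some $d\in D_k$ with $a_j=b_j$ elsewhere. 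Applying $F$, the active coordinate is still fine, but I want the passive coordinate $m$ to fail; so instead I would arrange that $a,b$ differ \emph{only} in coordinate $m$, with $a_m=f''_m{}^{-1}(x)$ and $b_m=f'_m{}^{-1}(x)$ chosen via the $k$-index discrepancy transported — more cleanly: take $a=\embfunc_k(p)$ and $b=\embfunc_k(q)$ where $(p)\inc[q]$ in ${\goth D}_k$, together with a perturbation on coordinate $k$ itself so that after applying $f'_k,f''_k$ the "$a_j=b_j$ for $j\neq i$" clause breaks. The cleanest construction: let $[q]$ be a line of ${\goth D}_k$ through a point $(p)$, and set on coordinate $k$: $a_k=p$, $b_k=q$ (so $(a_k)\inc[b_k]$), and on coordinate $m$: $a_m=f''_m{}^{-1}(1_m)$, $b_m=f'_m{}^{-1}(1_m)$ — but this only differs when $f'_m\neq f''_m$, i.e. $m=k$, a contradiction with $m\neq k$. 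So the honest route is: differ only in coordinate $k$. Put $a=b$ on all coordinates except $k$, and on $k$ choose $a_k,b_k$ with $a_k\neq b_k$ but $(a)\inc[b]$ false (easy, since ${\goth D}_k$ is a partial linear space with lines of rank $|D_k|<|G_k|$ when $|G_k|>|D_k|$; if $G_k=D_k$ then $f'_k=f''_k$ is forced because ${\goth D}_k$ has a single line and a collineation must send it to itself, already giving the conclusion), while simultaneously $(f'_k(a_k))\inc[f''_k(b_k)]$ holds — this is arranged by first choosing any line $[w]$ of ${\goth D}_k$ through $(f'_k(a_k))$, setting $f''_k(b_k)=w$ i.e. $b_k=f''_k{}^{-1}(w)$, and checking that for a suitable choice $a_k\neq b_k$ and $(a)\not\inc[b]$; a counting/genericity argument (there are $|G_k|$ choices of $a_k$ and only $|D_k|$ of them lie on $[b_k]$, and only one makes $f'_k(a_k)$ equal to a prescribed point) secures this. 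Then $(a)\not\inc[b]$ but $F'(a)\inc F''(b)$, contradicting that $F$ is a collineation.

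The main obstacle is precisely the bookkeeping in the "only if" direction: making sure the counterexample is supported correctly and that the single coordinate where $f'_k\neq f''_k$ can be exploited without the rank-$|D_k|$ constraint on lines of ${\goth D}_k$ obstructing the choice of $a_k$. I expect the argument to go through cleanly once one treats separately the degenerate case $G_k=\struct{D_k}=D_k$ (forced $f'_k=f''_k$) from the generic case $|G_k|>|D_k|$ (enough room to choose $a_k$ breaking incidence while $f'_k(a_k)$ lands on the prescribed line), invoking connectedness of ${\goth D}_k$ from Proposition \ref{pr:concomp} only to legitimize the standing hypothesis that each $D_i$ generates ${\sf G}_i$.
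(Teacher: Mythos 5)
Your ``if'' direction is correct and coincides with the easy half of the paper's argument. The ``only if'' direction, however, contains a fatal flaw: the witness you finally settle on provably cannot exist. You take $a,b$ agreeing on all coordinates except $k$ and ask for $(a_k)\not\inc[b_k]$ in ${\goth D}_k$ while simultaneously $(f'_k(a_k))\inc[f''_k(b_k)]$. But $f_k=(f'_k,f''_k)$ is by hypothesis a collineation of ${\goth D}_k$, so $(f'_k(a_k))\inc[f''_k(b_k)]$ holds \emph{if and only if} $(a_k)\inc[b_k]$ does; no counting or genericity argument can produce such a pair. This is exactly the observation you make yourself in your first paragraph (``the first conjunct is equivalent to $(a_i)\inc[b_i]$''): the active coordinate never misbehaves, and the defect of $F$ lives entirely in the passive coordinates, where incidence in the sum forces $f'_j(a_j)=f''_j(b_j)$ with $a_j=b_j$, i.e.\ $f'_j(a_j)=f''_j(a_j)$.

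The witness therefore has to be built with the discrepancy coordinate $k$ \emph{passive}. Fix $u\in G_k$ with $f'_k(u)\neq f''_k(u)$, pick $i\neq k$, and take a line $[p]$ with $p_k=u$ together with two distinct points $g_1,g_2$ on it, obtained by multiplying the $i$-th coordinate of $p$ by two distinct $d^1,d^2\in D_i$. Then $F'(g_s)$ and $F''(p)$ already disagree in coordinate $k$, so $F'(g_s)\inc F''(p)$ would force equality in coordinate $i$, i.e.\ $f'_i(p_i\cdot d^s)=f''_i(p_i)$ --- and by injectivity of $f'_i$ this can happen for at most one $s\in\{1,2\}$. This second step is genuinely needed: with a single point the ``accidental'' incidence of $F'(g)$ with $[F''(p)]$ via coordinate $k$ (namely $f''_k(u)^{-1}f'_k(u)\in D_k$, which does occur, e.g.\ when the only remaining discrepancy is in coordinate $k$ and $(f'_k(u))\inc[f''_k(u)]$) cannot be excluded; this is why your earlier single-point attempts kept collapsing. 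The two-points-on-one-line device is precisely what the paper's proof uses. Note finally that both the reduction and the correct argument rely on the non-degeneracy assumptions $|I|\geq 2$ and $|D_i|\geq 2$ for the auxiliary index $i$; you flag only the first.
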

\begin{proof}
  Let $p\in \sum_{i\in I} G_i$, take arbitrary $i\in I$.
  Let the points $g_1,g_2$ of the form
  \begin{eqnarray}
    g_s(j)=\left\{ \begin{array}{ll}
    p(j) & \textrm{for $j\neq i$}\\
    p(j)+ d^s & \textrm{for $j=i$}
    \end{array} \right.
  \end{eqnarray}
  lie on the line $[p]$. Consequently, their images are $F(g_s)$ with
  \begin{eqnarray}\label{imofprod}
    F(g_s)(j)=\left\{ \begin{array}{ll}
    f'(p_j) & \textrm{for $j\neq i$}\\
    f'(p_j+d^s) & \textrm{for $j=i$} ,
    \end{array} \right.
  \end{eqnarray}
  for $d^1,d^2\in D_i$.
  Besides, from the assumption, the points $F(g_1),F(g_2)$ lie on $[f''(p)]$ 
  and $f''(p)(j)=f''_j(p_j)$ as well. Furthermore, if $F(g_s)\in [f''(p)]$ then 
  according to \eqref{imofprod} we obtain $f''(p)(j)=f'_j(p_j)$. 
  Finally, we get that
  $F$ is a collineation of $\goth D$ iff $[f''_i(a)] = f''_i([a]) = [f'_i(a)]$
  for every $a\in G_i$.
\end{proof}
Obviously, the pair $(f'_i,f''_i)$, where $f'_i=f''_i={\tau}_{a_i}$ and $a_i \in G_i$,
is a collineation of ${\goth D}_i$;
therefore, the pair 
$(\prod_{i \in I} f'_i , \prod_{i \in I} f''_i)$
is an automorphism of $\goth D$.
But this is a rather trivial result, as
$\prod_{i \in I} \tau_{a_i} = \tau_{a}$.
We have also some automorphisms of another type:
\begin {lem}\label{lem:syminsegre}
  Let $\beta \in S_n$ and let the map $h \colon G^n \longrightarrow G^n$
  be defined by the condition
  $h((x_1,\ldots,x_n))=(x_{\beta(1)},\ldots,x_{\beta(n)})$.
  Then the pair $F=(f'_i,f''_i)=(h,h)$
  is a collineation of 
    $$\underbrace{\DifSpace({\sf G},D)\oplus \DifSpace({\sf G},D)\oplus \ldots
       \DifSpace({\sf G},D)}_{n \; \mathrm{times}} = \DifSpace({\sf G}^n,D^n).$$
\end{lem}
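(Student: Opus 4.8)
The plan is to verify directly from the definition of incidence that the permutation $h$ of coordinates respects the line set of $\DifSpace({\sf G}^n,D^n)$. First I would recall, using the characterization of incidence in a sum of difference-set structures established in the proof of \ref{pr:corelinprod}, that for $x,y\in G^n$ we have $(x)\inc[y]$ in $\DifSpace({\sf G}^n,D^n)$ if and only if there is an index $i$ with $(x_i)\inc[y_i]$ in $\DifSpace({\sf G},D)$ and $x_j=y_j$ for all $j\neq i$. Equivalently, writing $y^{-1}x$ coordinatewise, this says $(y^{-1}x)(j)=1$ for all $j\neq i$ and $(y^{-1}x)(i)\in D$, i.e. $y^{-1}x\in D^n=\sum_{k=1}^n\embfunc_k(D)$.

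Next I would observe that $h$ is a group automorphism of ${\sf G}^n$: indeed $h$ is nothing but the coordinate permutation induced by $\beta$, and $h(x\cdot y)=h(x)\cdot h(y)$, $h(1)=1$ are immediate since the group operation is coordinatewise. Moreover $h$ maps the set $\embfunc_k(D)$ onto $\embfunc_{\beta^{-1}(k)}(D)$ (a tuple supported on coordinate $k$ with entry in $D$ is sent to one supported on coordinate $\beta^{-1}(k)$ with the same entry), hence $h(D^n)=D^n$. Therefore $h\in\Aut({\sf G}^n)$ satisfies $h(D^n)=q\cdot D^n$ with $q=1$, and by \ref{rem:whenautline} the pair $(h,h'')$ with $h''([a])=[h(a)]$ is an automorphism of $\DifSpace({\sf G}^n,D^n)$. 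Since here $h''$ coincides with $h$ acting on coordinates of lines, this is exactly the pair $F=(h,h)$ claimed.

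Alternatively, and perhaps more transparently, I would argue without invoking \ref{rem:whenautline}: given $(x)\inc[y]$, pick the witnessing index $i$ with $x_i\in y_iD$ and $x_j=y_j$ for $j\neq i$. Applying $h$ permutes the coordinates by $\beta$, so $h(x)$ and $h(y)$ agree in every coordinate except the coordinate $k$ with $\beta(k)=i$, and in that coordinate $h(x)_k=x_{\beta(k)}=x_i\in y_iD=y_{\beta(k)}D=h(y)_kD$; hence $(h(x))\inc[h(y)]$. Running the same computation with $\beta^{-1}$ in place of $\beta$ gives the converse implication, so $F=(h,h)$ is indeed a collineation. I do not expect any serious obstacle here: the only thing to be a little careful about is bookkeeping of which coordinate the ``active'' index moves to under $\beta$ versus $\beta^{-1}$, and checking that $h$ is a bijection of $G^n$ (clear, with inverse given by $\beta^{-1}$) so that $F$ is a genuine automorphism rather than merely an incidence-preserving map.
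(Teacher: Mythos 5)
Your proposal is correct. The direct computation in your second paragraph is essentially the paper's own proof: the paper takes $(x)\inc[y]$, extracts the witnessing index $i$ with $x_i=y_i+d^i$ and $x_j=y_j$ for $j\neq i$, and observes that $h(x),h(y)$ satisfy the analogous condition with the active index permuted --- in fact you are more careful than the paper here, which writes the new active index as $\beta(i)$ where, under the convention $h(x)_k=x_{\beta(k)}$, it is really $\beta^{-1}(i)$. Your first route, via the observation that $h\in\Aut({\sf G}^n)$ with $h(D^n)=D^n$ together with Remark \ref{rem:whenautline} applied with $q=1$, is an equally valid repackaging that the paper does not use for this lemma; its small advantage is that it delivers bijectivity and the converse implication for free from the remark, instead of rerunning the incidence computation with $\beta^{-1}$.
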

\begin {proof}
  Take a point $(x) = (x_1,\ldots,x_n)$ and a line $[y] = [y_1,\ldots,y_n]$, such that
  $(x)\inc [y]$. Then, there exists $i\in I$ with $x_i = y_i+d^i$, $d^i\in D$ and 
  $x_j = y_j$ for all $j\neq i$. Images of $x$ and $y$ under $h$ satisfy analogous condition,
  with $i$ replaced by $\beta(i)$
  and thus $(h(x)) \inc [h(y)]$.
\end{proof}



\subsection{Cyclic multiplying} 
In this section we shall be mainly concerned with structures of the
form $\DifSpace({\sf G},{\cal D}_r)$, determined by
${\sf G} = C_{n_1}\oplus\ldots\oplus C_{n_{r}}$ and 
${\cal D}_r = \{ e_0,e_1,\ldots,e_r \}$,
where $e_0=(0,0,\ldots,0)$ and 
$(e_i)_j=0$ for $i\neq j$, $(e_i)_i=1$. Such a set ${\cal D}_r$ will be called
{\em canonical}. 
One can observe that 
  $\DifSpace({\sf G},{\cal D}_r) \cong \sum_{i=1}^r\DifSpace(C_{n_i},\{ 0,1 \})$
and, on the other hand 
  $\DifSpace({\sf G},{\cal D}_r) \cong 
    \DifSpace(C_{n_1},\{ 0,1 \})\oplus
    \DifSpace(C_{n_2}\oplus\ldots\oplus C_{n_r},{\cal D}_{r-1})$.
Thus defining a structure $\DifSpace({\sf G},{\cal D}_r)$ we generalize
a construction of cyclically inscribed polygons.
Figure \ref{fig:multpap0} illustrates the structure
$\DifSpace(C_3\oplus C_3\oplus C_3,{\cal D}_3)$ which, on the 
other hand can be considered as consisting of three copies of Pappus configuration 
cyclically inscribed.

\begin{figure}[!h]
    \begin{center}
    \includegraphics[scale=0.7]{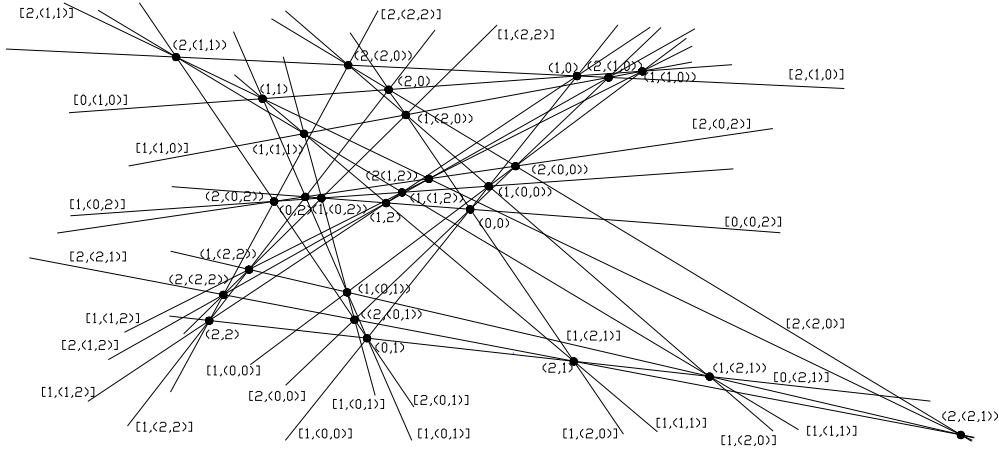}
    \end{center}
\caption{Multiplied Pappus configuration}
\label{fig:multpap0}
\end{figure}


\begin{lem}\label{lem:permtocollin}
  Let $\goth M= \DifSpace((C_k)^n, {\cal D}_n)$, where ${\cal D}_n$ is the canonical
  quasi difference set in abelian group $(C_k)^n$.
  \begin{sentences}
  \item\label{permtocollin:cas1}
    For every permutation $\alpha$ of the set $\{0,\dots ,n\}$, such that 
    $\alpha(0)=0$, there
    exists a collineation $f=(f',f'')$ of the structure $\goth M$ such that 
      $f'(e_0)=e_0$ and $f''(-e_i)=-e_{\alpha(i)}$ for $i= \{0,\dots ,n\}$.
  \item\label{permtocollin:cas2}
    For every transposition $\alpha$ of the set $\{0,\dots ,n\}$, such that 
    $\alpha(0)=s\neq 0$, there
    exists a collineation $f=(f',f'')$ of the structure $\goth M$ such that 
      $f'(e_0)=e_0$ and $f''(-e_i)=-e_{\alpha(i)}$ for $i= \{0,\dots ,n\}$.  
   \item \label{permcol}\label{permtocollin:cas3}
    For every permutation $\alpha$ of the set $\{0,\dots ,n\}$ there
    exists a collineation $f=(f',f'')$ of the structure $\goth M$ such that 
      $f'(e_0)=e_0$ and $f''(-e_i)=-e_{\alpha(i)}$ for $i= \{0,\dots ,n\}$.  
  \item \label{eftauef}\label{permtocollin:cas4}
    If $\alpha$ is a permutation of the set $\{0,\dots ,n\}$ and $f=(f',f'')$ is a
    collineation of the structure $\goth M$ such that 
      $f'(e_0)=e_0$, $f''(-e_i)=-e_{\alpha(i)}$ for $i= \{0,\dots ,n\}$, then
      $f' \tau_v (f')^{-1}= \tau_{f'(v)}$.
  \end{sentences}
\end{lem}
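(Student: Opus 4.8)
The plan is to prove the four parts in the order (i), (ii), (iii), (iv), since (iii) follows from (i) and (ii) by decomposing a permutation into one fixing $0$ composed with a transposition moving $0$, and (iv) is a general fact about any collineation with the stated action on the distinguished lines.

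For part (i): since $\alpha$ fixes $0$, it permutes $\{1,\dots,n\}$, i.e. it is (the extension of) a permutation $\beta \in S_n$. Here I would invoke Lemma~\ref{lem:syminsegre} directly: the structure $\goth M = \DifSpace((C_k)^n,{\cal D}_n)$ is isomorphic to $\DifSpace(C_k,\{0,1\})^{\oplus n} = \DifSpace((C_k)^n,\{0,1\}^n)$ (this identification was noted just before the lemma, via $\DifSpace({\sf G},{\cal D}_r) \cong \sum_{i=1}^r \DifSpace(C_{n_i},\{0,1\})$ with all $n_i = k$), and the permutation map $h((x_1,\dots,x_n)) = (x_{\beta(1)},\dots,x_{\beta(n)})$ is a collineation by that lemma. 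One then checks that $h(e_0) = e_0$ (the zero vector is fixed by any coordinate permutation) and that $h$ sends $-e_i$ to $-e_{\beta^{-1}(i)}$ or $-e_{\beta(i)}$ depending on the convention for how $h$ acts on coordinates versus on indices; I would simply choose $\beta$ (or $\beta^{-1}$) so that the prescribed relation $f''(-e_i) = -e_{\alpha(i)}$ holds. The only care needed is bookkeeping on whether $h$ permutes the slots or the labels, and adjusting $\beta$ accordingly.

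For part (ii): here $\alpha$ is a transposition interchanging $0$ and some $s \neq 0$ (and fixing the rest). This is the genuinely new case — the coordinate-permutation trick of Lemma~\ref{lem:syminsegre} cannot be used, because $e_0$ (the zero vector of $(C_k)^n$) is not one of the coordinate basis vectors $e_1,\dots,e_n$, so no permutation of coordinates can swap its role with that of $e_s$. I expect this to be the main obstacle. The approach is to build the collineation by hand using the recursive presentation $\goth M \cong \DifSpace(C_k,\{0,1\}) \oplus \DifSpace((C_k)^{n-1},{\cal D}_{n-1})$, isolating the $s$-th coordinate: write a point as $(x_s; x')$ with $x' \in (C_k)^{n-1}$ the remaining coordinates. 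I would look for $f'$ of the form "an affine-type map on the $C_k \times (\text{component indexed by }s)$ data combined with an appropriate reindexing", essentially exploiting the self-inscribing structure — the map which, on the sub-configuration $\DifSpace(C_k,\{0,1\})$, realizes the correlation/duality interchanging the point $(1)$ and the point $(0)$ with suitable twisting. Concretely, I anticipate $f'$ will be built from a translation-like shift in the $s$-coordinate together with the "new point/old line" swap coming from the structure of lines $i \times l' \cup \{(i+1,f_i(l))\}$ described in the text, verified to be a collineation by direct checking on incidences using \eqref{wz:analinc}. The verification that $f'(e_0)=e_0$ and $f''(-e_i)=-e_{\alpha(i)}$ then pins down the construction.

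For part (iii): given arbitrary $\alpha \in S_{\{0,\dots,n\}}$, if $\alpha(0)=0$ apply (i); otherwise let $s = \alpha(0)$, let $\sigma$ be the transposition $(0\;s)$, and write $\alpha = (\sigma\alpha)$ where $\sigma\alpha$ fixes... no — rather write $\alpha = \sigma \cdot \gamma$ where $\gamma = \sigma\alpha$ satisfies $\gamma(0) = \sigma(\alpha(0)) = \sigma(s) = 0$, so $\gamma$ is covered by (i) and $\sigma$ by (ii); composing the two collineations (collineations are closed under composition) gives one with $f''(-e_i) = -e_{\sigma\gamma(i)} = -e_{\alpha(i)}$ and $f'(e_0)=e_0$. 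For part (iv): let $f=(f',f'')$ be any collineation with $f'(e_0)=e_0$ and $f''(-e_i)=-e_{\alpha(i)}$, and let $v \in (C_k)^n$. Both $f'\tau_v(f')^{-1}$ and $\tau_{f'(v)}$ are collineations of $\goth M$ (the former is a conjugate of a collineation by a collineation; the latter a translation), and both send $e_0$ to $f'(v)$. By Corollary~\ref{cor:fixautisomor} (applicable once \eqref{ass:veblen} is checked for ${\cal D}_n$, or more directly by Lemma~\ref{lem:rigidotocz} together with connectedness from Corollary after \ref{pr:concomp}), a collineation fixing a point is determined by its action on the lines through that point; so after translating back it suffices to show $f'\tau_v(f')^{-1}$ and $\tau_{f'(v)}$ agree on the lines through $f'(v)$, equivalently that $(f')^{-1}$ applied to those lines, shifted by $v$, then by $f'$, matches — which reduces to the fact that $\tau_v$ permutes lines through $e_0$ to lines through $v$ in the canonical way and $f''$ is already known on the lines $[-e_i]$ through $e_0$. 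I would spell this out by evaluating both maps on a point $p$ with $(p)\inc[f'(v)]$ and using that incidence in $\goth M$ is translation-invariant.
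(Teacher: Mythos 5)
Parts \eqref{permtocollin:cas1} and \eqref{permtocollin:cas3} of your plan are fine and coincide with the paper's argument (modulo the $\beta$ versus $\beta^{-1}$ bookkeeping, which is harmless). The genuine gap is exactly where you predicted it: part \eqref{permtocollin:cas2}. You correctly identify that no coordinate permutation can swap the role of $e_0=\theta$ with that of $e_s$, but you then offer only a programme ("an affine-type map \ldots combined with an appropriate reindexing \ldots the new point/old line swap") rather than a map; nothing in your sketch pins down a well-defined bijection, let alone verifies it is a collineation. The missing idea is a single explicit group automorphism of $(C_k)^n$, namely
$f'(x_1,\dots,x_n)=(x_1,\dots,x_{s-1},-\sum_{i=1}^n x_i,x_{s+1},\dots,x_n)$,
for which one checks $f'({\cal D}_n)=-e_s+{\cal D}_n$; Remark \ref{rem:whenautline} then yields the collineation with $f''([a])=[-e_s+f'(a)]$, and a direct computation gives $f''(-e_0)=[-e_s]$, $f''(-e_s)=[-e_0]$ and $f''(-e_i)=[-e_i]$ for $i\neq 0,s$. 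Your route through the decomposition $\DifSpace(C_k,\{0,1\})\oplus\DifSpace((C_k)^{n-1},{\cal D}_{n-1})$ and a "correlation/duality" on the $C_k$ factor does not obviously lead there, because the needed map mixes \emph{all} coordinates into the $s$-th slot; it is not a manipulation of the $s$-th factor alone.

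Part \eqref{permtocollin:cas4} as you propose it also has a hole. Reducing to "both maps fix $e_0$ after translating back, so compare them on lines through $e_0$" is the right use of rigidity, but the comparison requires evaluating $f''\circ\tau_v\circ\invers{(f'')}$ on $[-e_i]$, i.e.\ evaluating $f''$ on the translated line $[v-e_{\alpha^{-1}(i)}]$ --- and the hypothesis only tells you $f''$ on the $n+1$ lines through $e_0$. (Also, \ref{cor:fixautisomor} rests on \eqref{ass:veblen}, which fails for $k=3$ while the lemma is stated for all $k$.) The paper avoids all of this: since the collineations produced in \eqref{permtocollin:cas1}--\eqref{permtocollin:cas3} have $f'$ a group automorphism of $(C_k)^n$, the identity $f'\tau_v\invers{(f')}=\tau_{f'(v)}$ is just additivity of $f'$, i.e.\ $f'(\invers{(f')}(x)+v)=x+f'(v)$. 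If you insist on proving \eqref{permtocollin:cas4} for an \emph{arbitrary} collineation with the stated normalization, you must first use rigidity to identify it with the constructed one and only then compute --- which again presupposes having the explicit map from part \eqref{permtocollin:cas2}.
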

\begin{proof}
\eqref{permtocollin:cas1}:
  Let us define a function $f': G \longrightarrow G$ by following formula:
  \begin{equation}\label{permzerofix}
     f'(x_1,\dots,x_n)=(x_{\alpha(1)},\dots,x_{\alpha(n)}).
  \end{equation}
  Then $f'\in \Aut({\sf G})$ and $f'({\cal D}_n)={\cal D}_n$, thus $f'$ determines an 
  automorphism of $\goth M$. 
  In view of \ref{rem:whenautline} we get 
    $f''(y_1,\dots,y_n)=(y_{\alpha(1)},\dots,y_{\alpha(n)})$ 
  so,
    $f''(-e_i)=-e_{\alpha(i)}$.

\eqref{permtocollin:cas2}:
  We define a function $f': G \longmapsto G$ by formula:
  \begin{equation}\label{permtransp}
     f'(x_1,\dots,x_n)=(x_1,\dots,x_{s-1},-\sum_{i=1}^n x_i,x_{s+1},\dots,x_n),
  \end{equation}
  then $f'\in \Aut({\sf G})$. It is easy to notice that $f'({\cal D}_n)=-e_s+{\cal D}_n$. 
  From \ref{rem:whenautline}, $f'$ gives a collineation and 
    $f''(y_1,\dots,y_n)=(y_1,\dots,y_{s-1},-\sum_{i=1}^n y_i-1,y_{s+1},\dots,y_n)$,
  and thus
    $f''(-e_i) = -e_{\alpha(i)}$. 

\eqref{permtocollin:cas3} follows immediately from \eqref{permtocollin:cas1} 
and \eqref{permtocollin:cas2}.

\eqref{permtocollin:cas4}:
  Let $\alpha$ be a permutation such that $\alpha(0)=0$, then 
    $f'$ is given by \eqref{permzerofix}
  and 
    $(f')^{-1}(x_1,\dots,x_n)=(x_{\alpha^{-1}(1)},\dots,x_{\alpha^{-1}(n)})$.
  Therefore 
    $f' \tau_v (f')^{-1}(x_1,\dots,x_n)= (x_1,\dots,x_n) + f'(v)=
       \tau_{f'(v)}(x_1,\dots,x_n)$.

  If $\alpha$ is a transposition then $f'$ is given by \eqref{permtransp}, thus 
  $f'=(f')^{-1}$ and, analogously, after simple calculation we get 
  our claim.
\end{proof}
\begin{lem}\label{lem:multpapneb}
  Let ${\goth M} = \DifSpace((C_3)^n, {\cal D}_n)$ and 
  $\theta = (0,\ldots,0)$.
  For every  $k,i,j \in \{ 0,\ldots,n \}$ with $k\neq j,i$  
  there is exactly one point $q_{k,i;j}$ such that 
  $\theta \neq q_{k,i;j} \inc [-e_i]$ 
  and $q_{k,i;j}$ is collinear with $(-e_k + e_j)\inc [-e_k]$.
  We have
  \begin{equation}\label{wz:multpapneb}
    q_{k,i;j} = \left\{
              \begin{array}{ll}
                (-e_i + e_j)   & \text{ if }\; i \neq j
                \\
                (-e_i + e_k)   & \text{ if }\; i = j
              \end{array}
              \right.
  \end{equation}
  Consequently, for every point $o$ of $\goth M$, any two distinct lines $L_1,L_2$
  through $o$ and every point $p$ with  $o \neq p \inc L_1$ there is the unique point
  $q$ such that $o\neq q \inc L_2$ and $p \collin q$.
\end{lem}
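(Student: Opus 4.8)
The claim has two parts: first a concrete computation of the point $q_{k,i;j}$ in the very specific structure $\DifSpace((C_3)^n,{\cal D}_n)$, and then a transfer to an arbitrary point $o$ and arbitrary pair of lines through $o$. I would separate these cleanly.

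\medskip
The plan is as follows. First I would verify the explicit formula \eqref{wz:multpapneb}. Since the structure is $\DifSpace((C_3)^n,{\cal D}_n)$ with canonical quasi difference set ${\cal D}_n = \{e_0,e_1,\ldots,e_n\}$ and $e_0 = \theta = (0,\ldots,0)$, the line $[-e_i]$ has point set $\{(-e_i + e_t)\colon t = 0,\ldots,n\}$, and similarly $[-e_k]$ has point set $\{(-e_k + e_t)\colon t=0,\ldots,n\}$; note $(-e_k+e_0) = (-e_k) $ and $(-e_i + e_0) = (-e_i)$, while $(-e_i+e_i) = \theta$. The point singled out on $[-e_k]$ is $(-e_k+e_j)$, which is $\neq\theta$ precisely because $j\neq k$. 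Now I need the unique point on $[-e_i]$, other than $\theta$, collinear with $(-e_k+e_j)$. By Lemma~\ref{lem:veblen}\eqref{veblen:cas2} (which applies because ${\cal D}_n$ satisfies \eqref{ass:veblen} — this should be checked or cited; for the canonical set the hypothesis of \eqref{ass:veblen} holds since any product $d_1 - d_2 + d_3 - d_4$ of canonical vectors lying in ${\cal D}_n - {\cal D}_n$ forces one of the listed coincidences, as the $e_t$ are distinct standard basis vectors and coordinates live in $C_3$), taking $o = \theta$, $[b_1] = [-e_k]$, $[b_2] = [-e_i]$: for the point $(-e_k + e_j)$ on $[b_1]$ with $j \neq k, i$, the unique completing point on $[b_2]$ is $(-e_i + e_j)$; and the exceptional point (the one on $[b_1]$ that cannot be completed) is $(-e_k + e_i)$, i.e. the case "$d = d_2$" in \eqref{veblen:cas2}. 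This handles $i\neq j$. For $i = j$: then $(-e_k+e_j) = (-e_k+e_i)$ is exactly the exceptional point of the previous paragraph with the roles symmetric — but we need the completing point on $[-e_i]$ for the point $(-e_k+e_i)$ on $[-e_k]$. Reversing the roles of $[b_1]$ and $[b_2]$ in \eqref{veblen:cas2} and applying it with the point $(-e_i + e_k)$ on $[b_2] = [-e_i]$ (which has the index $k \neq i, $ and $k\neq k$? no — here the relevant constraint is that $k\neq i$, true, and we need $k$ different from the "$d_2$-index" which is $k$ itself... ) — more directly: I would just check by hand via Lemma~\ref{lem:analchain}\eqref{analchain:cas2} that $(-e_i+e_k)\collin(-e_k+e_i)$, i.e. that $(-e_i+e_k) - (-e_k+e_i) = 2e_k - 2e_i = -e_k + e_i \in {\cal D}_n - {\cal D}_n$ (using $2 = -1$ in $C_3$), which holds since $e_i - e_k \in {\cal D}_n - {\cal D}_n$; and uniqueness follows from the quasi difference set property together with the fact that all other points of $[-e_i]$ are ruled out. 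So the formula is confirmed.

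\medskip
The second, "Consequently", part is a reduction to the first. Given an arbitrary point $o$ of $\goth M$, since $\DifSpace((C_3)^n,{\cal D}_n)$ has a point-transitive automorphism group (all translations $\tau_a$ are automorphisms, Remark preceding \eqref{wz:analinc}), I may assume $o = \theta$. Two distinct lines through $\theta$ are of the form $[-e_k]$ and $[-e_i]$ with $k\neq i$, by Lemma~\ref{lem:analchain}\eqref{analchain:cas1}: the lines through $\theta$ are $\{[\theta - e_t]\colon t = 0,\ldots,n\} = \{[-e_t]\}$. A point $p\neq\theta$ on $[-e_k]$ is of the form $(-e_k + e_j)$ with $j\neq k$. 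If $j\neq i$ as well, the first part gives the unique $q = (-e_i+e_j)$ on $[-e_i]$ collinear with $p$ and $\neq\theta$; note $q\neq\theta$ since $j\neq i$. If $j = i$, the first part gives $q = (-e_i + e_k)$, and $q\neq\theta$ since $k\neq i$. In both cases existence and uniqueness are exactly what the first part asserts. This completes the argument.

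\medskip
The main obstacle I anticipate is not conceptual but bookkeeping: getting the two cases $i\neq j$ and $i=j$ exactly right when invoking Lemma~\ref{lem:veblen}\eqref{veblen:cas2}, because that lemma is stated with a fixed "first" line $[b_1]$ and the exceptional point on it is the one whose index coincides with the index $d_2$ of the "second" line — so one must carefully match $\{d_1,d_2\}$ with $\{-e_k,-e_i\}$ in the right order in each case, and in the case $i=j$ apply the lemma with $[b_1]$ and $[b_2]$ swapped. I would double-check the $C_3$-specific identity $2e = -e$ that makes the $i=j$ case work; it is precisely here that the hypothesis $(C_3)^n$ (rather than a general $(C_k)^n$) is used, so the proof should flag this explicitly.
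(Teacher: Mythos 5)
Your reduction in the ``Consequently'' part (translations to move $o$ to $\theta$, then the explicit formula) is fine and matches the paper, but the core of your argument rests on a false premise: the canonical set ${\cal D}_n$ in $(C_3)^n$ does \emph{not} satisfy \eqref{ass:veblen}, so Lemma \ref{lem:veblen}\eqref{veblen:cas2} is not available here. Take $d_1=d_3=e_1$ and $d_2=d_4=e_2$ (for $n\geq 2$): then $d_1-d_2+d_3-d_4=2(e_1-e_2)=e_2-e_1\in{\cal D}_n-{\cal D}_n$ because $2=-1$ in $C_3$, yet none of the four required coincidences holds. The paper states this explicitly just before Corollary \ref{cor:vebdes}: $\DifSpace((C_k)^n,{\cal D}_n)$ satisfies \eqref{ass:veblen} iff $k>3$ (and Proposition \ref{prop:veb3} shows the $k=3$ structure is not even Veblenian). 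Worse, the conclusion of Lemma \ref{lem:veblen}\eqref{veblen:cas2} is actually \emph{false} in $(C_3)^n$: it asserts that the point $(p_{d_2})$ on $[b_1]$ cannot be completed to a triangle, whereas the $i=j$ case of the very formula you are proving says it can, by $q=(-e_i+e_k)$ --- your own direct verification of that case, using $2e=-e$, exhibits exactly the failure. So for $i\neq j$ both the existence and, more seriously, the uniqueness of $q_{k,i;j}$ are left unproved, and in the $i=j$ case your uniqueness claim (``all other points of $[-e_i]$ are ruled out'') is asserted without argument.

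What is needed --- and what the paper does --- is a direct computation that bypasses \eqref{ass:veblen}: a point of $[-e_i]$ distinct from $\theta$ is $(-e_i+e_t)$ with $t\neq i$, and by Lemma \ref{lem:analchain}\eqref{analchain:cas2} collinearity with $(-e_k+e_j)$ means $-e_i+e_t+e_k-e_j\in{\cal D}_n-{\cal D}_n$; since every element of ${\cal D}_n-{\cal D}_n$ is a difference of at most two basis vectors, a case check on $t$ shows exactly one value survives, namely $t=j$ when $i\neq j$ and $t=k$ when $i=j$. (The paper carries this out for $k=0$ and transfers to general $k$ using the transitivity of the stabilizer of $\theta$ on the lines through $\theta$, Lemma \ref{lem:permtocollin}, plus point-transitivity of translations.) Your instinct to flag the $C_3$-specific identity $2e=-e$ was right; you needed to push it one step further and notice that this identity is precisely what destroys \eqref{ass:veblen} and forces this lemma to be proved from scratch.
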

\begin{proof}
  Let us consider, first, the case $k=0$.
  Clearly, $(e_j),(-e_i+e_j) \inc [-e_i+e_j]$ and $o \neq (-e_i+e_j) \inc [-e_i]$
  for $i\neq j$ (cf. \ref{lem:veblen}).
  If $(e_j) \collin (-e_i+e_k) = q \inc [-e_i]$ and $i\neq j$ then, by 
  \ref{lem:analchain}\eqref{analchain:cas2} we have 
  $u := e_j + e_i - e_k \in {\cal D}_n - {\cal D}_n$.
  Then $u$ is a combination of at most two elements of ${\cal D}_n$
  and thus  
  $i=k$ or $j=k$.
  If $i=k$ then $q = \theta$; if $j=k$ we obtain the claim.

  Further, since $2e_i = -e_i$ we have $(e_i),(-e_i) \inc [e_i]$ and, clearly,
  $(-e_i) \inc [-e_i]$.
  The requirement $(e_i)\collin (-e_i+e_k)\inc [-e_i]$ in view of
  \ref{lem:analchain}\eqref{analchain:cas2} gives $e_i+e_k\in{\cal D}_n-{\cal D}_n$,
  which has only two solutions: $e_k = e_0 = \theta$ and $e_k = e_i$.
  This proves the formula \eqref{wz:multpapneb} for $k=0$.

  For arbitrary $k$ we can see that 
  $(-e_k + e_j),(-e_i + e_j) \inc [-e_k - e_i + e_j]$ and 
  $\theta \neq (-e_i + e_j) \inc [-e_i]$ for $i\neq j$, and
  $(-e_k + e_i),(-e_i + e_k) \inc [e_k + e_i]$ and
  $\theta \neq (-e_i + e_k) \inc [-e_i]$.
  Since the stabilizer of the point $\theta$ in the automorphism group of $\goth M$
  acts transitively on lines through $\theta$ (cf. \ref{lem:permtocollin})
  we proved \eqref{wz:multpapneb}.

  To close the proof it suffices to recall that $\goth M$ is homogeneous:
  translations form a transitive group of automorphisms of $\goth M$.
\end{proof}
\begin{prop}\label{cor:semidirprod}
  Let $k > 3$.
  Under the notation from \ref{lem:permtocollin} 
  the group $\otocz({\Aut({\goth M})},o)$ with
  $o=(0,\dots,0)$ is isomorphic to $S_{n+1}$, while 
  $\Aut({\goth M})\cong S_{n+1} \ltimes (C_k)^n$.
\end{prop}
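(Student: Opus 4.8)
The plan is to split the statement into two parts: identifying the point stabilizer $\otocz({\Aut({\goth M})},o)$ with $S_{n+1}$, and then splitting the whole group $\Aut({\goth M})$ as a semidirect product. For the first part, I would use Corollary \ref{cor:fixautisomor}: since ${\cal D}_n$ visibly satisfies the Veblen-type assumption \eqref{ass:veblen} in $(C_k)^n$ for $k>3$ (no accidental coincidences of the form $d_1-d_2+d_3-d_4\in{\cal D}_n-{\cal D}_n$ forcing more than the four listed cancellations — this is where $k>3$ matters, as $2e_i=-e_i$ would otherwise intervene, cf. the computation in \ref{lem:multpapneb}), every automorphism fixing $o=(0,\dots,0)$ is determined by its action on the $n+1$ lines $[-e_0],[-e_1],\dots,[-e_n]$ through $o$. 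Hence $\otocz({\Aut({\goth M})},o)$ embeds in $S_{\{0,\dots,n\}}\cong S_{n+1}$. Conversely, Lemma \ref{lem:permtocollin}\eqref{permtocollin:cas3} produces, for every permutation $\alpha$ of $\{0,\dots,n\}$, a collineation $f=(f',f'')$ with $f'(e_0)=e_0$ and $f''(-e_i)=-e_{\alpha(i)}$; this collineation fixes $o$ (since $f'(\theta)=\theta$) and realizes $\alpha$ on the pencil of lines through $o$. So the embedding is onto, giving $\otocz({\Aut({\goth M})},o)\cong S_{n+1}$.

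For the second part, I would first observe that the translation group $T=\{\tau_v\colon v\in(C_k)^n\}$ is a normal subgroup of $\Aut({\goth M})$ isomorphic to $(C_k)^n$: normality is exactly the content of Lemma \ref{lem:permtocollin}\eqref{permtocollin:cas4}, extended to arbitrary $\phi\in\Aut({\goth M})$ by writing $\phi=\tau_w\circ g$ with $g$ fixing $o$ (possible since $T$ acts transitively on points) and computing $\phi\tau_v\phi^{-1}=\tau_w g\tau_v g^{-1}\tau_w^{-1}=\tau_w\tau_{g'(v)}\tau_w^{-1}=\tau_{g'(v)}$, which lies in $T$. Next, $T$ acts transitively on the point set, so $\Aut({\goth M})=T\cdot\otocz({\Aut({\goth M})},o)$, and $T\cap\otocz({\Aut({\goth M})},o)=\{\id\}$ because a nontrivial translation fixes no point. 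Therefore $\Aut({\goth M})=T\rtimes\otocz({\Aut({\goth M})},o)\cong(C_k)^n\rtimes S_{n+1}$, which is what the proposition claims (written in the order $S_{n+1}\ltimes(C_k)^n$).

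I expect the main obstacle to be the first part, specifically verifying carefully that \eqref{ass:veblen} genuinely holds for $({\sf G},{\cal D}_n)=((C_k)^n,\{e_0,\dots,e_n\})$ when $k>3$, so that Corollary \ref{cor:fixautisomor} applies and the stabilizer is \emph{faithfully} represented on the $n+1$ lines through $o$. The point is that $d_1-d_2+d_3-d_4$ with $d_i\in{\cal D}_n$ is a $\pm$-combination supported on at most four coordinates, and membership in ${\cal D}_n-{\cal D}_n$ (a combination supported on at most two coordinates, with coefficients in $\{0,\pm1\}$ modulo $k$) forces one of the four listed pairwise cancellations — but only if $k$ is large enough that $2\not\equiv -1\pmod k$ and $2\not\equiv 0$, i.e. $k>3$; for $k=3$ the relation $2e_i=-e_i$ creates extra incidences (as exploited in \ref{lem:multpapneb}) and the stabilizer genuinely differs, which is why the hypothesis $k>3$ is imposed here. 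Once \eqref{ass:veblen} is in hand, the rest is the bookkeeping sketched above, and the semidirect-product splitting is routine given Lemma \ref{lem:permtocollin}\eqref{permtocollin:cas4}.
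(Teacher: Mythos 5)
Your proposal is correct and follows essentially the same route as the paper: injectivity of the map to $S_{n+1}$ via Corollary \ref{cor:fixautisomor} (which requires checking \eqref{ass:veblen} for $k>3$, as you do), surjectivity via Lemma \ref{lem:permtocollin}\eqref{permtocollin:cas3}, and the semidirect-product structure from transitivity of translations together with the conjugation formula of Lemma \ref{lem:permtocollin}\eqref{permtocollin:cas4}. Your explicit verification of normality of the translation group and of the trivial intersection with the stabilizer merely spells out what the paper leaves implicit in its multiplication rule $(\tau_u f_{\alpha})(\tau_v f_{\beta})= \tau_u \tau_{f_{\alpha}(v)} f_{\alpha \beta}$.
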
       
\begin{proof}
  To every collineation $f=(f',f'')$, which fixes $o$, there is 
  a permutation $\alpha= \alpha_f$ uniquely assigned such that $f''(-e_i)=-e_{\alpha(i)}$. 
  Clearly, from \ref{cor:fixautisomor} the map 
    $f \mappedby{{\xi}}   \alpha_{f}$ 
  is a group monomorphism. 
  From \eqref{permcol} of \ref{lem:permtocollin}, $\xi$ is an epimorphism so, 
  $\xi$ is an isomorphism of $S_{n+1}$ and 
  $\otocz({\Aut({\goth M})},o)$. 
  We know that 
    $\text{Tr}({\sf G})\cong {\sf G}$ and 
    $\Aut({\goth M}) = \text{Tr}\big((C_k)^n\big)\circ \otocz({\Aut({\goth M})},o)$. 
  Based on \ref{lem:permtocollin}\eqref{eftauef}  we have 
   $(\tau_u f_{\alpha})(\tau_v f_{\beta})= \tau_u \tau_{f_{\alpha}(v)} f_{\alpha \beta}$, 
  which yields our claim.
\end{proof}

Then, we shall pay some attention to the more general case.
Namely, we describe the neighborhood of a point $q$ in the configuration
of the form
${\goth M} = \DifSpace(C_k,{\cal D}_2)\oplus \DifSpace({\sf G},D)$,
i.e. 
$\DifSpace({C_k\oplus {\sf G}},{\{ 0,1 \} \uplus D})$,
where $D$ is a quasi difference set in an abelian group $\sf G$.
Since $\goth M$ has a point-transitive automorphisms group, 
without loss of generality we can assume 
that $q = (0,\theta)$, where $\theta$ is the zero of $\sf G$.
Immediately from definitions we calculate the following
\begin{lem}\label{lem:papinscneighb}
  Let $D = \{ d_0,\ldots,d_n  \}$ 
  be a quasi difference set in an abelian group ${\sf G} = \struct{G,+,\theta}$, 
  and let $q = (0,\theta) \in C_k \times G$.
  Set ${\goth M} = \DifSpace(C_k,{\cal D}_2)\oplus \DifSpace({\sf G},D)$.
  The lines of $\goth M$ through $q$ are the following:
  \begin{enumerate}[1:]
  \item
    $l_i = [0,- d_i]$ for $i = 0,\ldots,n$.
    \par
    Each line $[0,-d_i]$ contains $q$ and the following points:
    \begin{enumerate}[a:\quad]
    \item
      $q_{i,j} = (0,-d_i + d_j)$ for $j=0,\ldots,i$, $i\neq j$;
    \item
      $p'_i = (1,-d_i)$.
    \end{enumerate}
  \item
    $l'' = [-1,\theta] = [k-1,\theta]$.
    \par
    Its points are $q$ and the following
    \begin{enumerate}[a:\quad]\setcounter{enumii}{2}
    \item
      $p''_i = (-1,d_i)$ for $i=0,\ldots,n$.
    \end{enumerate}
  \end{enumerate}
  Then the points $q_{i,j}$ form a substructure isomorphic under the map 
  $(0,a) \longmapsto (a)$ to the neighborhood of $\theta$ in $\DifSpace({\sf G},D)$.
  Moreover, the following additional connecting lines appear:
  \begin{enumerate}[1:]\setcounter{enumi}{2}
  \item\label{extramult:cas3}
    For every $i,j=0,\ldots,n$, $i\neq j$
    the line $l''_{i,j} = [-1,-d_j + d_i]$
    joins $p''_i = (-1,d_i) \inc [-1,\theta] = l''$ 
    with $q_{j,i} = (0,-d_j + d_i) \inc [0,-d_j] = l_j$.
  \item\label{extramult:cas4}
    For every $i,j$ as above, 
    the line $l'_{i,j} = l'_{j,i} = [1,-(d_i + d_j)]$
    joins $p'_i = (1,-d_i) \inc [0,-d_i] = l_i$
    with $p'_j = (1,-d_j) \inc [0,-d_j] = l_j$.
  \end{enumerate}
  The lines listed above are pairwise distinct.
  \begin{sentences}
  \item
    If $k > 3$, then no other connecting line appears.
  \item
    Let $k = 3$. Then $-1 = 2$ holds in $C_k$, and then another connections
    are associated with triples $(d_i,d_j,d_r)\in D^3$  satisfying
    \begin{equation}\label{wz:extrajoin}
      d_i + d_j + d_r = \theta.
    \end{equation}
    Namely, let \eqref{wz:extrajoin} be satisfied. 
    Evidently, $-(d_i+d_j) = d_r$.
    \begin{enumerate}[1:]\setcounter{enumi}{4}
    \item
      The line $l'_{i,j} = [1,-(d_j+d_i)] = [1,d_r]$
      connects $p''_r = (-1,d_r) \inc [-1,\theta] = l''$
      with $p'_j = (1,-d_j) \inc [0,-d_j] = l_j$.
    \item
      If, moreover, $j\neq i$, then the above line passes through $p'_i$ as well so,
      it coincides with the line defined in \eqref{extramult:cas4}.
    \end{enumerate}
  \end{sentences}
\end{lem}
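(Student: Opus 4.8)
The plan is to push everything down to arithmetic in $C_k \oplus {\sf G}$ and to keep invoking the defining property \QDS{} of $D$. First I would fix the incidence description: writing $\overline D = \{0,1\}\uplus D$ and using that the identity of $\sf G$ lies in $D$, one has $\overline D = \{(1,\theta)\}\cup\{(0,d)\colon d\in D\}$, so by \eqref{wz:analinc} incidence in ${\goth M}=\DifSpace(C_k\oplus{\sf G},\overline D)$ reads: $(i,a)\inc[j,b]$ iff either $i=j+1$ and $a=b$, or $i=j$ and $a-b\in D$. Equivalently, the line $[m,b']$ is made up of the $|D|$ points $(m,b'+d)$, $d\in D$, together with the single point $(m+1,b')$, and it meets no other ``slice'' $\{(c,x)\colon c=c_0\}$. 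Solving $(0,\theta)\inc[j,b']$ with $q=(0,\theta)$ then gives exactly the lines $l''=[-1,\theta]$ and $l_i=[0,-d_i]$, $i=0,\dots,n$; reading off the points of each of them yields the lists 1a, 1b, 2c verbatim. For the substructure statement I would invoke \ref{prop:partsofprod}: the slice $\{c=0\}$ is, under $(0,a)\leftrightarrow(a)$, a $\{2\}$-part of $\goth M$ isomorphic to $\DifSpace({\sf G},D)$; by \ref{lem:analchain}\eqref{analchain:cas2} its points collinear with $q$ are precisely the $q_{i,j}=(0,d_j-d_i)$, $i\neq j$ (with $d_j-d_i$ uniquely determined by the pair $(i,j)$ by \QDS), corresponding to the points collinear with $\theta$ in $\DifSpace({\sf G},D)$; and by the ``slice'' remark any line of $\goth M$ carrying two such points has a label with first coordinate $0$, hence restricts to a line of $\DifSpace({\sf G},D)$. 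So the two neighbourhoods match. That $l''_{i,j}=[-1,d_i-d_j]$ runs through $p''_i$ and $q_{j,i}$, and $l'_{i,j}=[1,-(d_i+d_j)]=l'_{j,i}$ through $p'_i$ and $p'_j$, is a one-line check from the incidence formula.

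Next I would treat pairwise distinctness. Since $[m,b']=[m',b'']$ forces $m-m'\in\{0,1\}$ and $m'-m\in\{0,1\}$, for $k\ge 3$ it forces $m=m'$; hence lines whose labels have different first coordinates ($0$ for the $l_i$, $-1$ for $l''$ and the $l''_{i,j}$, $1$ for the $l'_{i,j}$) are automatically distinct, and within a fixed first coordinate one simply compares point sets, each non-coincidence reducing to injectivity of $\{i,j\}\mapsto d_i-d_j$ or of $\{i,j\}\mapsto d_i+d_j$, i.e. to \QDS. For the exhaustiveness claims I would classify the points collinear with $q$ by their first coordinate ($0$ for $q_{i,j}$, $1$ for $p'_i$, $-1$ for $p''_i$); for each of the at most six types of pair of such points the ``slice'' remark forces a joining line of $\goth M$ to carry both first coordinates $c_1,c_2$ inside a set of the form $\{m,m+1\}$, which pins $m$ down, after which \QDS{} pins $b'$ down, and one matches the outcome with the displayed list (the pairs of $q_{i,j}$'s being already absorbed into the neighbourhood of $\theta$).

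The only genuinely load-bearing step — and the main obstacle — is the pair $\{c_1,c_2\}=\{-1,1\}$, that is, a joining line between a $p'_i$ and a $p''_j$. When $k>3$ the set $\{-1,1\}$ is never of the form $\{m,m+1\}$ (that would require $2=1$ in $C_k$), so no such line exists, and since every other type of pair matches the list, no connecting line beyond those displayed occurs: this is (i). When $k=3$ one has $\{1,-1\}=\{1,2\}=\{m,m+1\}$ with $m=1$, and now the unique slice-$(m+1)$ point $(2,b')=(-1,b')$ of the line $[1,b']$ may coincide with some $p''_r$, forcing $b'=d_r$ and, since $p'_i$ must lie on the line as well, $b'=-(d_i+d_j)$ — i.e. a triple with $d_i+d_j+d_r=\theta$; conversely each such triple produces the line $[1,d_r]$ through $p''_r$ and $p'_j$, and through $p'_i$ too when $i\neq j$. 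This is precisely (ii):5--6, and it accounts for all the extra incidences. Everything not highlighted here is routine computation in $C_k\oplus{\sf G}$ combined with the uniqueness clause of \QDS.
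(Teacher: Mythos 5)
Your proof is correct and follows exactly the route the paper intends: the paper prefaces this lemma with ``Immediately from definitions we calculate the following'' and supplies no written proof, and your computation (incidence formula for $\{0,1\}\uplus D$, classification of neighbourhood points by first coordinate, reduction of each coincidence/joining question to the uniqueness clause of \QDS, and the observation that a $p'_i$--$p''_j$ join forces $\{1,-1\}=\{m,m+1\}$, hence $k=3$) is precisely the calculation being left to the reader. Nothing further is needed.
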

\noindent
The cases presented above are illustrated in Figures
\ref{fig:multneighb:gen} and \ref{fig:multneighb:gen3}.

\begin{figure}[!h]
    \begin{center}
    \includegraphics[scale=0.6]{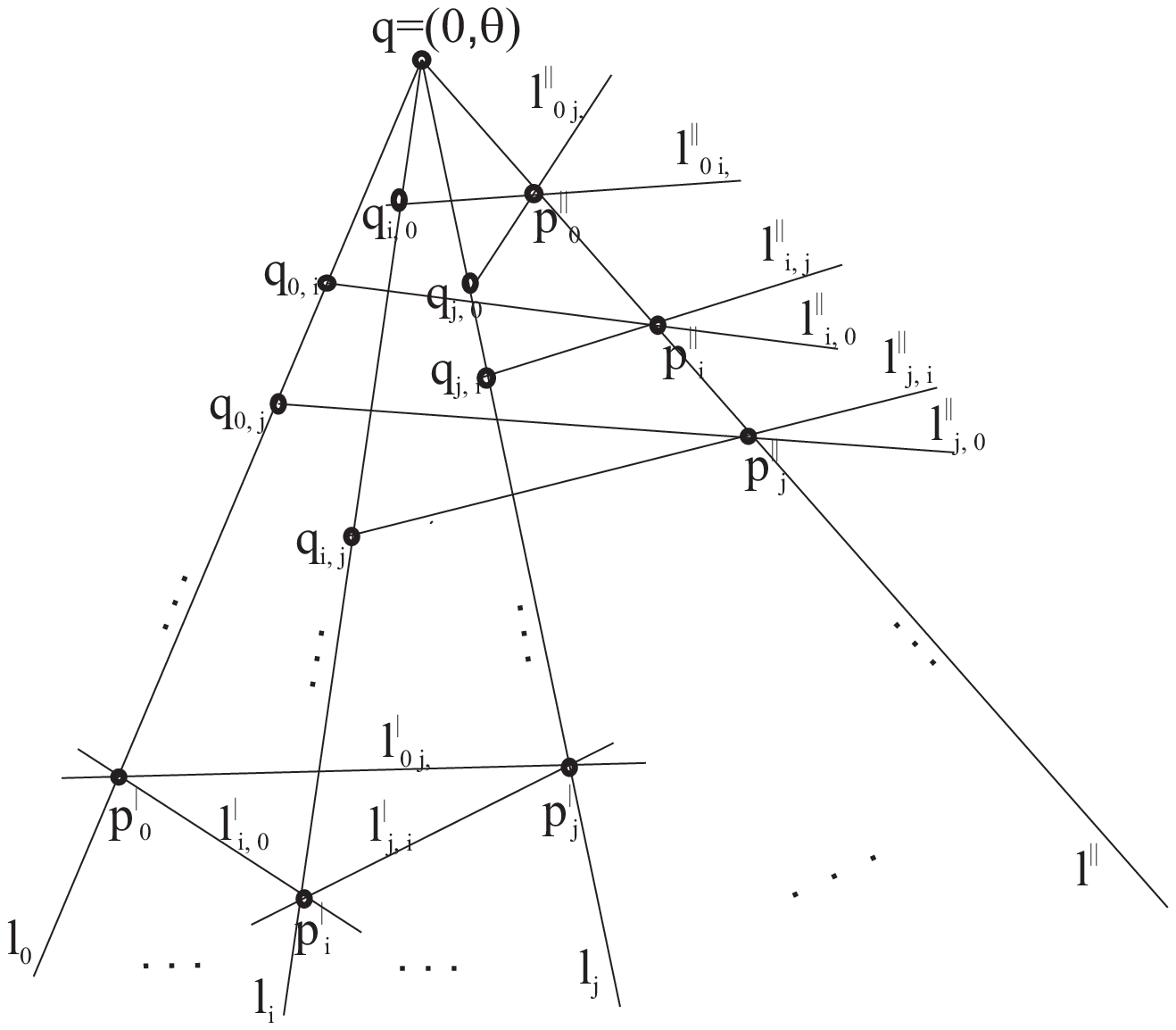}
    \end{center}
\caption{The neighborhood of the point $(0,\theta)$ in a configuration %
$\DifSpace(C_k,{\cal D}_2)\oplus \DifSpace({\sf G},D)$}
\label{fig:multneighb:gen}
\end{figure} 

\begin{figure}[!h]
    \begin{center}
    \includegraphics[scale=0.6]{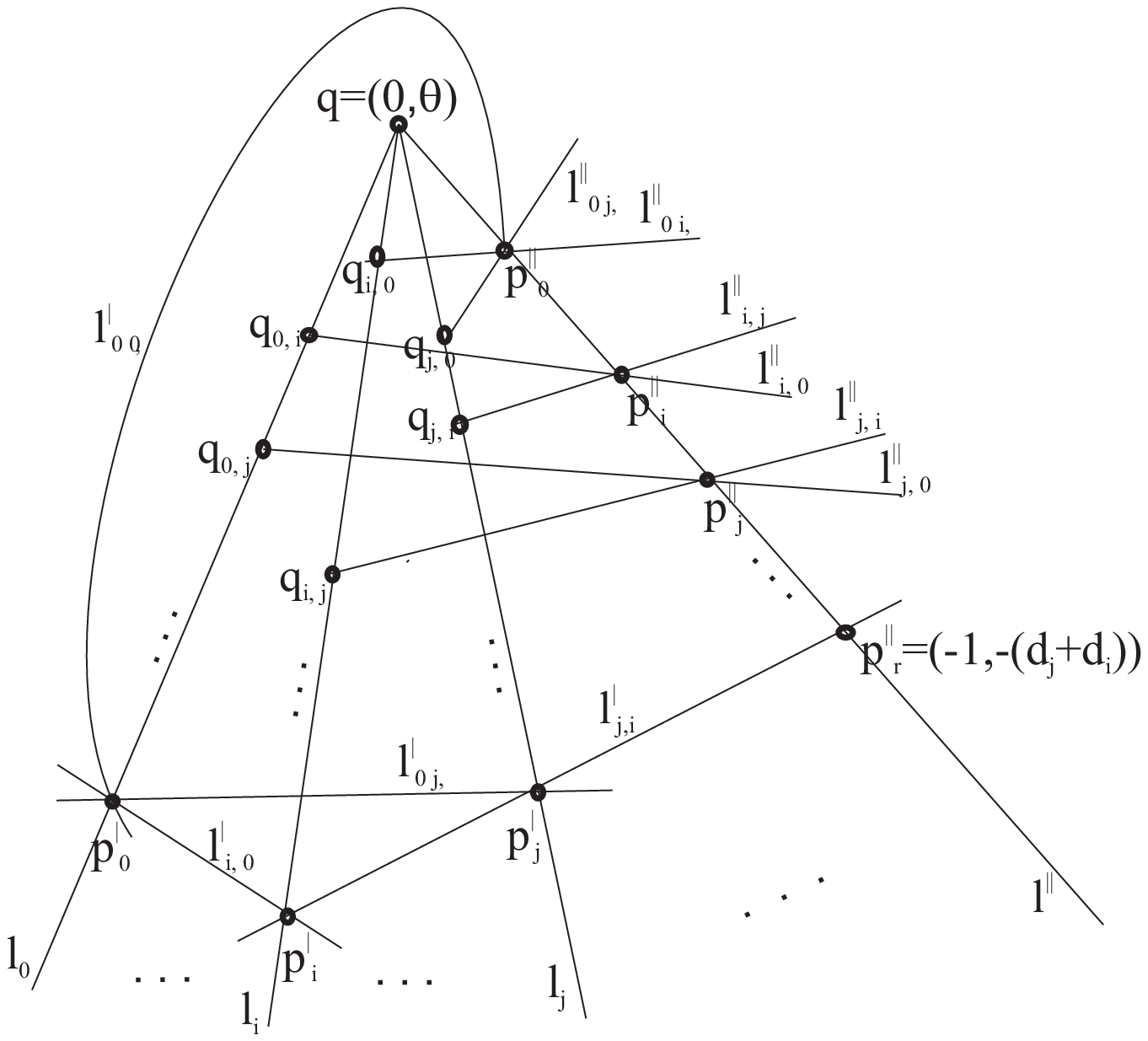}
    \end{center}
\caption{The neighborhood of the point $(0,\theta)$ in a configuration %
$\DifSpace(C_k,{\cal D}_2)\oplus \DifSpace({\sf G},D)$ for $k=3$}
\label{fig:multneighb:gen3}
\end{figure} 

In the case of configurations determined by quasi difference sets we 
can apply also some other techniques generalizing those of \cite{petel}.
Let ${\goth D}_0=\DifSpace({\sf G}_0,D)$ for a quasi difference set in a 
group ${\sf G}_0$, let $k$ be an integer, and
${\goth D}=\DifSpace(C_k\oplus{\sf G}_0,D\uplus\{0,1\})$.
Then, let $f=(f',f'')$ be a collineation of ${\goth D}_0$.
Recall that a collineation of $\DifSpace(C_k,\{0,1\})$ is simply an element of
the dihedral group $D_k$, i.e. it is any map 
$\alpha_{\varepsilon,q} \colon i\mapsto \varepsilon i + q$, where 
$\varepsilon\in\{1,-1\}$.
Proposition \ref{pr:autinsegre} determines all the automorphisms of $\goth D$ 
of the form $(i,a)\mapsto(\alpha_{\varepsilon,q}(i),f'(a))$.
Still, in this case we should look for automorphisms defined with  more 
complicated formulas.

\begin{prop}\label{pr:extautinmult}
  Let ${\goth D}$ be defined as above and $f=(f',f'')\in \Aut({\goth D}_0)$.
  The following conditions are equivalent:
  \begin{sentences}
  \item\label{extautinmult:cas1}
    There is a collineation $\varphi=(\varphi',\varphi'')$ of $\goth D$
    such that $\varphi'((0,a))=(0,f'(a))$ and $\varphi''([0,b])=[0,f''(b)]$.
  \item\label{extautinmult:cas2}
    There is a sequence $f_i$ ($i\in C_k$) of collineations of ${\goth D}_0$
    defined recursively by the formulas:
    $f_0=f$ and $f'_{i+1}=f''_{i}$, where $f_i=(f'_i,f''_i)$.
  \end{sentences}
  In the case \eqref{extautinmult:cas2} we have 
  $\varphi'((i,a))=(i,f'_i(a))$ and $\varphi''([i,b])=[i,f''_i(b)]$.
\end{prop}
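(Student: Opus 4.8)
The plan is to analyze how a collineation $\varphi$ of $\goth D$ acts on the natural "layers" $C_k \times \{0\} \times G_0$, using the structure of the lines described in the discussion preceding Lemma \ref{lem:papinscneighb} (the decomposition of a line $[i,b]$ of $\goth D$ into the "old" points $\{(i,p) : (p)\inc[b]\text{ in }{\goth D}_0\}$ together with the single "new" point $(i+1,b)$). First I would prove the implication \eqref{extautinmult:cas2}$\Rightarrow$\eqref{extautinmult:cas1}: given the sequence $f_i$, define $\varphi'((i,a))=(i,f'_i(a))$ and $\varphi''([i,b])=[i,f''_i(b)]$ and verify directly that incidence is preserved. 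By \eqref{wz:analinc}, $(i,a)\inc[j,b]$ in $\goth D$ means $(-j,-b)+(i,a)=(i-j,a-b)\in\{0,1\}\uplus D$, which splits into two cases: either $i=j$ and $a-b\in D$ (i.e. $(a)\inc[b]$ in ${\goth D}_0$), or $i=j+1$ and $a=b$. In the first case $\varphi$ sends this to $(i,f'_i(a))$ and $[i,f''_i(b)]$, and since $f_i=(f'_i,f''_i)$ is a collineation of ${\goth D}_0$ this incidence is preserved. In the second case, $i=j+1$ and $a=b$, and I must check $(i,f'_i(a))\inc[j,f''_j(b)]=[i-1,f''_{i-1}(a)]$, i.e. that $f'_i(a)=f''_{i-1}(a)$; this is exactly the recursion $f'_{i+1}=f''_i$ (with index shifted). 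Bijectivity of $\varphi',\varphi''$ follows since each $f_i$ is a collineation of ${\goth D}_0$, hence each coordinate map is a bijection of $G_0$.

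For the converse \eqref{extautinmult:cas1}$\Rightarrow$\eqref{extautinmult:cas2}, the key observation is that $\varphi$ must permute the layers in a structured way. Given that $\varphi'((0,a))=(0,f'(a))$ preserves the $0$-th point-layer and $\varphi''([0,b])=[0,f''(b)]$ the $0$-th line-layer, I would argue inductively that $\varphi$ carries the $i$-th layer to the $i$-th layer: the point $(i+1,b)$ is the unique point on $[i,b]$ not lying in layer $i$ (when $k>3$ this is immediate from the line structure; the small cases $k=2,3$ need the extra care of Lemma \ref{lem:papinscneighb}, but the argument here is about a fixed line, not neighborhoods, so the "extra joins" do not interfere), so $\varphi$ maps the "new-point" function consistently, forcing $\varphi'((i,a))=(i,g_i(a))$ and $\varphi''([i,b])=[i,h_i(b)]$ for some bijections $g_i,h_i$ of $G_0$ with $g_0=f'$, $h_0=f''$. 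Restricting $\varphi$ to the incidences within layer $i$ (case $i=j$, $a-b\in D$ above) shows $f_i:=(g_i,h_i)$ is a collineation of ${\goth D}_0$; restricting to the "new-point" incidences (case $i=j+1$, $a=b$) gives $g_{i+1}(a)=h_i(a)$ for all $a$, i.e. $f'_{i+1}=f''_i$. Thus the sequence $f_i$ is exactly the one produced by the stated recursion, and the closing formula for $\varphi$ is read off.

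The main obstacle I anticipate is twofold. First, in the converse direction one must rule out that $\varphi$ could mix layers in a way not of the form "shift by a constant" — i.e. genuinely establishing that a collineation fixing layers $0$ setwise must fix every layer setwise. This uses connectedness of $\goth D$ (via Corollary to \ref{pr:concomp}, as $D$ and hence $\{0,1\}\uplus D$ generates) together with the local rigidity from Corollary \ref{cor:rigidotocz}/\ref{cor:fixautisomor}, or more directly the observation that passing from a line $[i,b]$ to its unique new point $(i+1,b)$ and back is a $\varphi$-equivariant operation that increments $i$ by one. Second, one must confirm that the recursion in \eqref{extautinmult:cas2} is well-defined over the \emph{cyclic} index set $C_k$: the maps $f_0,f_1,\dots,f_{k-1}$ are determined successively, but there is a consistency condition closing the cycle, namely $f'_0=f''_{k-1}$, equivalently $f'=f''_{k-1}$. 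This is automatically forced in the $\eqref{extautinmult:cas1}\Rightarrow\eqref{extautinmult:cas2}$ direction (since $\varphi$ is well-defined on the genuinely cyclic structure), and conversely the existence of such a terminating sequence is precisely what \eqref{extautinmult:cas2} asserts; I would make this cyclic-consistency point explicit so the equivalence is not merely an equivalence "up to the unstated closure condition."
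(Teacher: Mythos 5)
Your proposal is correct and follows essentially the same route as the paper's (much terser) proof: the paper likewise observes that $(1,a)\inc[0,a]$ forces $\varphi'((1,a))=(1,f''(a))$, so that $f''$ must itself act as a point-collineation of ${\goth D}_0$, and the recursion $f'_{i+1}=f''_i$ with its cyclic closure propagates layer by layer exactly as you describe. Your explicit treatment of the converse direction and of the closure condition $f'_0=f''_{k-1}$ merely spells out what the paper leaves implicit.
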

\begin{proof}
  It suffices to note that if \eqref{extautinmult:cas1} holds then 
  $(1,a)\inc[0,a]$ for every $a\in G_0$, which gives, necessarily, 
  $\varphi'((1,a))\inc\varphi''([0,a])=[0,f''(a)]$ and thus 
  $\varphi'((1,a))=(1,f''(a))$.
  Therefore, $f''$ (as a transformation of points)
  must determine a collineation of ${\goth D}_0$.
\end{proof}


\section{Elementary properties}

Lemma \ref{lem:veblen} enables us to discuss some elementary axiomatic 
properties of the structures $\DifSpace({\sf G},D)$.
Let $D$ be a quasi difference set in a commutative group ${\sf G}$.

\begin{prop}\label{pr:veblen}
  Under assumption \eqref{ass:veblen} 
  the structure $\DifSpace({\sf G},D)$ is Veblenian.  
\end{prop}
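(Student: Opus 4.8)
The plan is to show that $\DifSpace({\sf G},D)$ satisfies the Veblen (Veblen--Young) axiom: if a line $M$ meets two distinct lines $L_1, L_2$ which themselves meet in a point $o$, then $M$ meets every line through $o$ — equivalently, any two lines through a common point $o$, when ``spanned'' by a transversal, force further transversals to close up. Concretely, the statement I would verify is that whenever two lines $[g_1]$ and $[g_2]$ each cross both $[b_1]$ and $[b_2]$ (two distinct lines through a point $(a)$) and both miss $(a)$, then $[g_1]$ and $[g_2]$ themselves intersect. This is \emph{precisely} Lemma \ref{lem:veblen}, so the proof is essentially an unpacking of that lemma plus a reduction to the generic configuration.

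First I would fix a point $(a)$ and two distinct lines through it; by Proposition \ref{pr:concomp} and point-transitivity (translations are automorphisms) I may normalize, but in fact no normalization is needed since Lemma \ref{lem:veblen} is already stated for an arbitrary point $(a)$. Write the two lines as $[b_1] = [a\invers(d_1)]$ and $[b_2] = [a\invers(d_2)]$ with $d_1,d_2 \in D$, $d_1 \neq d_2$, using Lemma \ref{lem:analchain}\eqref{analchain:cas1}. Then I would invoke Lemma \ref{lem:veblen}\eqref{veblen:cas3}: any line $[g]$ crossing both $[b_1]$ and $[b_2]$ and missing $(a)$ has the form $g = a\invers(d_1)\invers(d_2)d$ for some $d \in D$ with $d \neq d_1, d_2$. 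So two such transversals are $[g_i] = [a\invers(d_1)\invers(d_2)d'_i]$, $i = 1,2$, with $d'_i \in D \setminus \{d_1,d_2\}$. Finally, Lemma \ref{lem:veblen}\eqref{veblen:cas4} tells us directly that $[g_1]$ and $[g_2]$ meet, in the point $(a\invers(d_1)\invers(d_2)d'_1d'_2)$. That establishes the Veblen condition.

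The one genuine subtlety I would address explicitly is the degenerate/boundary behaviour of the Veblen axiom: the axiom only asks for a common point of $M$ and a line $N$ through $o$ when $M$ actually misses $o$ and $N$ is a ``third'' line, and when $[g_1] = [g_2]$ the conclusion is trivial. So I would note that Lemma \ref{lem:veblen}\eqref{veblen:cas2} guarantees the transversals really do avoid $(a)$ exactly when $d \neq d_1,d_2$, which matches the hypotheses of parts \eqref{veblen:cas3}--\eqref{veblen:cas4}; and if $d'_1 = d'_2$ the two transversals coincide. The main (really the only) obstacle is bookkeeping: making sure the indices $d_i, d'_i$ range correctly so that the hypotheses of \ref{lem:veblen}\eqref{veblen:cas3} and \eqref{veblen:cas4} are met, and that commutativity of $\sf G$ (used throughout Lemma \ref{lem:veblen}, where $D\invers(D) = \invers(D)D$) is in force — which it is, by the standing hypothesis of this section that $\sf G$ is commutative, together with assumption \eqref{ass:veblen}. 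Thus the proposition is an immediate corollary of Lemma \ref{lem:veblen}, and the write-up is short.
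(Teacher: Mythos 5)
Your proposal is correct and follows exactly the paper's own argument: parametrize the two lines through $(a)$ via Lemma \ref{lem:analchain}\eqref{analchain:cas1}, use Lemma \ref{lem:veblen}\eqref{veblen:cas3} to put the transversals in the form $[a\invers(d_1)\invers(d_2)d'_i]$, and conclude with Lemma \ref{lem:veblen}\eqref{veblen:cas4} that they meet. The extra remarks on degenerate cases and on commutativity are sound but not needed beyond what the paper already assumes.
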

\begin{proof}
  Set ${\goth D}=\DifSpace({\sf G},D)$.
  Let $(a)$ be a point of $\goth D$ and $[b_1],[b_2]$ be two distinct lines of
  $\goth D$ through $(a)$.
  From \eqref{wz:analchain1}, $b_i=a\invers(d_i)$ for some $d_1,d_2\in D$ 
  with $d_1\neq d_2$.
  Consider any two lines $[g_1],[g_2]$ which cross $[b_1]$ and $[b_2]$
  and do not pass through $(a)$.
  From \ref{lem:veblen}(iii) 
  $g_i = a\invers(d_1)\invers(d_2)d'_i$ for some $d'_1,d'_2\in D$
  and then \ref{lem:veblen}(iv) yields 
  that the lines $[g_1],[g_2]$ intersect each other,
  which proves our claim.
\end{proof}

\begin{prop}\label{pr:desargues}
  Under assumption  \eqref{ass:veblen} 
  the structure $\DifSpace({\sf G},D)$ is Desarguesian.  
\end{prop}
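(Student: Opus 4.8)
The plan is to verify the Desargues axiom directly, working coordinate-wise in $\sf G$ and exploiting the strong uniqueness consequences of assumption \eqref{ass:veblen} already extracted in Lemma \ref{lem:veblen}. Fix a center $(o)$ of perspectivity and two triangles with vertices $(a_1),(a_2),(a_3)$ and $(b_1),(b_2),(b_3)$, where $(o),(a_i),(b_i)$ are collinear for $i=1,2,3$ and the six vertices are distinct. By point-transitivity (translations are automorphisms, see the remarks after \eqref{def:mystructure}) I may normalize $o=1$. Then, using \eqref{wz:analchain1} and \eqref{wz:analchain2}, each line $\LineOn(o,a_i)$ through $o$ has the form $[\invers(d_i)]$ for some $d_i\in D$, and $(a_i)=(\invers(d_i)e_i)$, $(b_i)=(\invers(d_i)e'_i)$ with $e_i,e'_i\in D$; the three lines $[\invers(d_1)],[\invers(d_2)],[\invers(d_3)]$ are pairwise distinct, so the $d_i$ are pairwise distinct.

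The key step is to locate the three ``diagonal'' intersection points $P_{ij} := \PointOf(\LineOn(a_i,a_j),\LineOn(b_i,b_j))$ using Lemma \ref{lem:analchain}\eqref{analchain:cas2}--\eqref{analchain:cas3}. First, collinearity of $(a_i)$ and $(a_j)$ forces, via \ref{lem:veblen}\eqref{veblen:cas1} applied to the pencil at $o$, a relation among $e_i,e_j$ and $d_i,d_j$; the same applies to $(b_i),(b_j)$. Writing $\invers(a_i)a_j = \invers(e_i)d_id_j^{-1}e_j \in \invers(D)D$ and invoking \eqref{ass:veblen}, the four ``trivializing'' alternatives either collapse two vertices (excluded) or collapse $[\invers(d_i)]=[\invers(d_j)]$ (excluded), so I get the precise form $\LineOn(a_i,a_j)=[\invers(d_i)\invers(d_j)d^{(a)}_{ij}]$ for a unique $d^{(a)}_{ij}\in D$ — this is exactly the computation behind \ref{lem:veblen}\eqref{veblen:cas3}. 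Similarly $\LineOn(b_i,b_j)=[\invers(d_i)\invers(d_j)d^{(b)}_{ij}]$. Then \ref{lem:analchain}\eqref{analchain:cas3} gives $P_{ij}=(\invers(d_i)\invers(d_j)d^{(a)}_{ij}d^{(b)}_{ij})$ provided $d^{(a)}_{ij}\invers(d^{(b)}_{ij})\in D\invers(D)$, which holds automatically since both lie in $D$. At this point all three $P_{12},P_{13},P_{23}$ have explicit coordinates symmetric in the indices.

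It remains to show $P_{12},P_{13},P_{23}$ are collinear, equivalently (by \ref{lem:analchain}\eqref{analchain:cas2}) that $\invers(P_{12})P_{13}\in\invers(D)D$ and that the line joining $P_{12},P_{13}$ also passes through $P_{23}$. Here I expect the main obstacle: the expression $\invers(P_{12})P_{13} = \invers(d^{(a)}_{12})\invers(d^{(b)}_{12})d_2 d_3^{-1} d^{(a)}_{13}d^{(b)}_{13}$ (after cancellation in the abelian group) must be recognized as a difference $\invers(d)d'$ of two elements of $D$, and this cannot be purely formal — it has to come from a further application of \eqref{ass:veblen}, used to force the auxiliary $d^{(a)}_{ij},d^{(b)}_{ij}$ into a rigid pattern (analogous to the ``$d'_1=d'_2$'' conclusion of \ref{lem:veblen}\eqref{veblen:cas1} and the concurrence statement of \ref{lem:veblen}\eqref{veblen:cas4}). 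Concretely I would observe that the three side-lines of each triangle, being three lines no two of which pass through $o$ yet each crossing two fixed lines of the pencil at $o$, are governed by \ref{lem:veblen}\eqref{veblen:cas4}: any two of them meet, and their meet has the product form. Applying this twice inside each triangle pins down $d^{(a)}_{13}$ in terms of $d^{(a)}_{12},d^{(a)}_{23}$ (and likewise for $b$), after which the coordinates of the three $P_{ij}$ visibly satisfy a single linear relation, so they are collinear. The Veblen property (Prop.\ \ref{pr:veblen}) may be invoked to guarantee the relevant intersection points exist in the first place. Finally, one checks the axis line $\LineOn(P_{12},P_{13})$ indeed has the form $[\invers(d)\invers(d')d'']$ demanded by \eqref{wz:analchain2}, completing the verification.
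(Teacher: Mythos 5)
Your proposal is correct and follows essentially the same route as the paper: normalize the center, write each vertex on the pencil line $[\invers(d_i)]$ with an offset in $D$, use Lemma \ref{lem:veblen} to rigidify the side-lines and compute the three diagonal points, and exhibit the axis explicitly. The step you flag as ``the main obstacle'' in fact dissolves immediately: Lemma \ref{lem:veblen}\eqref{veblen:cas1} applied to all three pairs of vertices forces a \emph{single} common offset $d'$ for one triangle and $d''$ for the other, so every $d^{(a)}_{ij}=d'$ and $d^{(b)}_{ij}=d''$, the meets are $(a\invers(d_i)\invers(d_j)d'd'')$, and all three are incident with $[a\invers(d_1)\invers(d_2)\invers(d_3)d'd'']$ by \eqref{wz:analinc} -- which is exactly how the paper concludes.
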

\begin{proof}
  Let $[b_i]=[a\invers(d_i)]$ with $d_i\in D$ be three lines through a point $(a)$, and 
  let $(p'_i),(p''_i)$ be two triples of points such that 
  $(p'_i),(p''_i)\inc[b_i]$ and $p'_i,p''_i\neq a$
  for $i=1,2,3$,
  and $((p'_1),(p'_2),(p'_3))$, $((p''_1),(p''_2),(p''_3))$ are triangles.
  With \ref{lem:veblen} we get for $\{i,j,k\}=\{1,2,3\}$ that
    $$[g'_k] := \LineOn(p'_i,p'_j) = [a\invers(d_i)\invers(d_j)d']
    \;\text{  and  }\; 
    [g''_k] := \LineOn(p''_i,p''_j) = [a\invers(d_i)\invers(d_j)d'']$$
  for some $d',d''\in D$.
  From \ref{lem:veblen} we get
    $$(q_k) := \PointOf(g'_k,g''_k) = (a\invers(d_i)\invers(d_j)d'd'').$$
  To close the proof it suffices to observe that
  $(q_1),(q_2),(q_3)\inc[a\invers(d_1)\invers(d_2)\invers(d_3)d'd'']$.
\end{proof}

Let us note that under assumptions of \ref{lem:veblen} the structure 
$\DifSpace({\sf G},D)$ does not contain any Pappus configuration.
Indeed, (cf. e.g. \cite{mald} or \cite{petel})
the Pappus configuration can be considered as 
$\DifSpace(C_3\oplus C_3,D_0)$, where $D_0=\{(0,0),(0,1),(1,0)\}$,
see Figure \ref{fig:pap0}. Then 
$(0,0),(1,0),(0,1)\inc[0,0]$;\space $(0,0),(1,2),(0,2)\inc[0,2]$; $(1,0)\collin(1,2)$, and 
$(0,1)\collin(0,2)$, which contradicts \ref{lem:veblen}(ii).

\begin{figure}[!h]
    \begin{center}
    \includegraphics[scale=0.6]{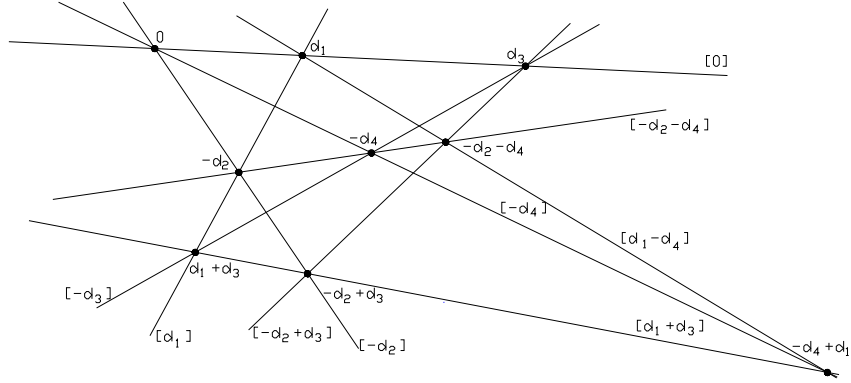}
    \end{center}
\caption{Pappus configuration}
\label{fig:pap0}
\end{figure} 

\begin{lem}\label{lem:pappus}
  Let $D$ be a quasi difference set in an abelian group $\sf G$ such that
  $1\in D$. Assume that there are $d_1,d_2,d_3,d_4\in D\setminus\{1\}$ with 
  $d_1 \neq d_3$, $d_1^2=\invers(d_2)$, and $d_3^2=\invers(d_4)$.
  Then ${\goth D}=\DifSpace({\sf G},D)$ contains Pappus configurations.
\end{lem}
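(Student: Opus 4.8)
The plan is to exhibit a Pappus subconfiguration of ${\goth D}$ directly, working with the presentation $\DifSpace(C_3\oplus C_3,D_0)$ of the Pappus configuration, where $D_0=\{(0,0),(0,1),(1,0)\}$ (cf.\ Figure~\ref{fig:pap0} and the discussion preceding it): in that structure a point $(i,j)$ is incident with a line $[i',j']$ precisely when $(i,j)-(i',j')\in D_0$, the subtraction being carried out in $C_3\oplus C_3$. Since $\sf G$ is abelian, the hypothesis gives $d_2=d_1^{-2}$ and $d_4=d_3^{-2}$, so all of $1,d_1,d_1^{-2},d_3,d_3^{-2}$ belong to $D$. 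For $i,j\in\{0,1,2\}$ put $p_{ij}:=d_3^{i}d_1^{j}\in G$, the powers taken in $\sf G$ and \emph{not} reduced modulo $3$. I claim that the assignments $(i,j)\mapsto(p_{ij})$ on points and $[i,j]\mapsto[p_{ij}]$ on lines constitute an injective, incidence-reflecting embedding of $\DifSpace(C_3\oplus C_3,D_0)$ into ${\goth D}$; this yields the required Pappus configuration, carried by the nine points $(p_{ij})$ and the nine lines $[p_{ij}]$.

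The incidences are immediate. By \eqref{wz:analinc}, $(p_{ij})\inc[p_{i'j'}]$ iff $\invers(p_{i'j'})\,p_{ij}=d_3^{i-i'}d_1^{j-j'}\in D$, where now the exponents lie in $\{-2,\dots,2\}$. If $(i,j)$ lies on $[i',j']$ in $\DifSpace(C_3\oplus C_3,D_0)$, then $(i-i',j-j')$ is congruent modulo $3$ to one of $(0,0),(0,1),(1,0)$, hence as a pair of integers it is one of $(0,0),(0,1),(0,-2),(1,0),(-2,0)$; correspondingly $d_3^{i-i'}d_1^{j-j'}$ equals $1$, $d_1$, $d_1^{-2}=d_2$, $d_3$ or $d_3^{-2}=d_4$, each of which is in $D$ by assumption. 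So every incidence of the Pappus configuration is carried to an incidence of ${\goth D}$.

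What genuinely needs checking — and what I expect to be the crux — is that the embedding is \emph{proper}: the nine $(p_{ij})$ are pairwise distinct, the nine $[p_{ij}]$ are pairwise distinct, and no incidence among these points and lines occurs beyond the twenty-seven just listed. Here \QDS must be used. First, $G_D=\{1\}$: an element $g\neq 1$ of $G_D$ would lie in $D$ and satisfy $(gd)\invers(d)=g$ for every $d\in D$, so $g\neq 1$ would have $\ge 2$ representations as a difference of elements of $D$, contradicting \QDS; consequently $[p_{ij}]=[p_{i'j'}]$ iff $p_{ij}=p_{i'j'}$, and distinctness of the lines follows from that of the points. Next, a coincidence $p_{ij}=p_{i'j'}$ with $(i,j)\neq(i',j')$, or an unwanted incidence $(p_{ij})\inc[p_{i'j'}]$, amounts to a relation $d_3^{a}d_1^{b}\in\{1\}\cup D$ with $(a,b)\in\{-2,\dots,2\}^2$ outside the five admissible exponent pairs found above. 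Running through these finitely many cases, each such relation either forces one of $d_1=1$, $d_3=1$, $d_1^{2}=1$, $d_3^{2}=1$, $d_1=d_3$ — all impossible since $d_1,d_2,d_3,d_4\neq 1$ and $d_1\neq d_3$ — or else it produces a $c\neq 1$ in $\sf G$ with two distinct representations $c=x\invers(y)$, $x,y\in D$ (for instance $d_1 d_3\in D$ would give $d_1=d_1\invers(1)=(d_1 d_3)\invers(d_3)$), again contradicting \QDS. With all cases dispatched, the image is a genuine Pappus subconfiguration of ${\goth D}$.

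So the shape of the argument is: an explicit nine-point, twenty-seven-incidence picture read off from the hypothesis, followed by a finite case check ruling out degeneracies. The latter is the only nonroutine ingredient: the bare relations $d_1^{2}=\invers(d_2)$, $d_3^{2}=\invers(d_4)$, $d_1\neq d_3$ do not by themselves exclude the degenerate configurations, and it is the quasi-difference-set condition — through the principle that a nonidentity element has at most one representation as $x\invers(y)$ with $x,y\in D$ — that does.
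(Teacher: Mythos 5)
Your construction coincides with the paper's: since $\invers(d_2)=d_1^2$ and $\invers(d_4)=d_3^2$, your nine points $(d_3^id_1^j)$ and nine lines $[d_3^id_1^j]$, $i,j\in\{0,1,2\}$, are exactly the entries of the incidence table in the paper's proof, with the same labelling by $\DifSpace(C_3\oplus C_3,D_0)$. The paper merely exhibits the table and the correspondence, leaving the distinctness and non-incidence checks implicit, so your route is the same and is in fact more explicit about the degeneracy analysis — though be aware that a few of those cases (e.g.\ ruling out $\invers(d_1d_3)\in D$) need two successive applications of \QDS\ rather than the single one your sketch promises.
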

\begin{proof}
  From the assumptions we get $d_2 \neq d_4$ as well.
  Note that incidences indicated in the following table hold in $\goth D$:
  
\begin{footnotesize}
\incsign{$\times$}
$$\left[
  \begin{inctab}{r||c|c|c|c|c|c|c|c|c}
  \strut & 
    $(1)$ & $(d_1)$ & $(d_3)$ & 
    $(\invers(d_2))$ & $(\invers(d_4))$ & $(\invers(d_2)\invers(d_4))$ & 
    $d_1d_3$ & $\invers(d_2)d_3$ & $\invers(d_4)d_1$
  \\
  \hline\hline
  \makeincline
  {$[1]$} 
      +   +   +   -   -   -   -   -   -;
  {$[\invers(d_2)\invers(d_4)]$}
      -   -   -   +   +   +   -   -   -;
  {$[d_1d_3]$}
      -   -   -   -   -   -   +   +   +;
  {$[\invers(d_2)]$}
      +   -   -   +   -   -   -   +   -;
  {$[\invers(d_4)]$}
      +   -   -   -   +   -   -   -   +;
  {$[d_1]$}
      -   +   -   +   -   -   +   -   -;
  {$[d_1\invers(d_4)]$}
      -   +   -   -   -   +   -   -   +;
  {$[d_3]$}
      -   -   +   -   +   -   +   -   -;
  {$[d_3\invers(d_2)]$}
      -   -   +   -   -   +   -   +   -;.
  \end{inctab}
\right]$$
\end{footnotesize}

Then the map defined for the points by
$$
\begin{tabular}{ccccccccc}
    $(0,0)$ & $(1,0)$ & $(0,1)$ & 
    $(1,2)$ & $(2,1)$ & $(1,1)$ & 
    $(2,2)$ & $(0,2)$ & $(2,0)$
\\ \hline
$(1)$ & $(d_1)$ & $(d_3)$ & 
    $(\invers(d_2))$ & $(\invers(d_4))$ & $(\invers(d_2)\invers(d_4))$ & 
    $d_1d_3$ & $\invers(d_2)d_3$ & $\invers(d_4)d_1$
\end{tabular}$$
and for the lines by 
$$
\begin{tabular}{ccccccccc}
$[0,0]$ & $[1,1]$ & $[2,2]$ & 
    $[0,2]$ & $[2,0]$ & $[1,2]$ & 
    $[1,0]$ & $[2,1]$ & $[0,1]$
\\ \hline
$[1]$ & $[\invers(d_2)\invers(d_4)]$ & $[d_1d_3]$ & 
    $[\invers(d_2)]$ & $[\invers(d_4)]$ & $[d_1]$ & 
    $[d_1\invers(d_4)]$ & $[d_3]$ & $[d_3\invers(d_2)]$
\end{tabular}$$
embeds the Pappus configuration into the structure $\goth D$.
\end{proof}

Note that if ${\cal D}_n$ is the canonical quasi difference set in
the abelian group $(C_k)^n$ then the structure
  $\goth M = \DifSpace((C_k)^n, {\cal D}_n)$
satisfies \eqref{ass:veblen}  iff $k>3$. 
Then, as a consequence of \ref{pr:veblen} and \ref{pr:desargues}
we get
\begin{cor}\label{cor:vebdes}
  Let $k > 3$.
  Then the structure $\goth M = \DifSpace((C_k)^n, {\cal D}_n)$ is Veblenian and
  Desarguesian.
\end{cor}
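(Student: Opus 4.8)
The plan is to reduce the corollary to the remark that immediately precedes it: the canonical quasi difference set ${\cal D}_n = \{e_0, e_1, \ldots, e_n\}$ satisfies assumption \eqref{ass:veblen} in $(C_k)^n$ whenever $k > 3$. Once this is established, $\goth M$ is Veblenian by Proposition \ref{pr:veblen} and Desarguesian by Proposition \ref{pr:desargues}, both of which need only \eqref{ass:veblen} together with the standing hypothesis that the quasi difference set generates the group — and ${\cal D}_n$ clearly generates $(C_k)^n$. So the real work is the verification of \eqref{ass:veblen}.

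First I would exploit that the entries of ${\cal D}_n$ are "small": each $d \in {\cal D}_n$ is either $\theta = e_0$ or a standard basis vector of $(C_k)^n$. Hence, for any $d_1, \ldots, d_6 \in {\cal D}_n$, the element $d_1 - d_2 + d_3 - d_4 - d_5 + d_6$ has, in every coordinate, an integer value lying in $\{-3, \ldots, 3\}$; since $k \geq 4$, such a value is $\equiv 0 \pmod{k}$ iff it equals $0$. Writing $d_i = e_{a_i}$ with $a_i \in \{0, 1, \ldots, n\}$, the hypothesis of \eqref{ass:veblen} — that $d_1\invers(d_2)d_3\invers(d_4) = d_5\invers(d_6)$ for suitable $d_5, d_6 \in {\cal D}_n$ — therefore amounts to the identity $e_{a_1} + e_{a_3} + e_{a_6} = e_{a_2} + e_{a_4} + e_{a_5}$ holding already in $\mathbb{Z}^n$. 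In particular the two sides then have the same number of nonzero summands, and for each $t \in \{1, \ldots, n\}$ the index $t$ occurs equally often among $a_1, a_3, a_6$ as among $a_2, a_4, a_5$.

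From this I would derive the conclusion of \eqref{ass:veblen} — that $a_1 = a_2$, $a_3 = a_4$, $a_1 = a_4$ or $a_3 = a_2$ — by a short case analysis on how many of $a_1, a_3$ equal $0$. If $a_1, a_3 \geq 1$, then $a_1$ must occur among $a_2, a_4, a_5$; if it occurs as $a_2$ or $a_4$ we are done, so assume $a_1 = a_5 \notin \{a_2, a_4\}$, and likewise $a_3 = a_5$, whence $a_1 = a_3$ and, after cancelling, $e_{a_1} + e_{a_6} = e_{a_2} + e_{a_4}$ with $a_1 \notin \{a_2, a_4\}$ — impossible by comparing coefficients of $e_{a_1}$. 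If $a_3 = 0 < a_1$ (which is without loss of generality, \eqref{ass:veblen} being symmetric under $(d_1, d_2) \leftrightarrow (d_3, d_4)$), then again $a_1 \in \{a_2, a_4, a_5\}$, and the only case not immediately done is $a_1 = a_5 \notin \{a_2, a_4\}$, which leaves $e_{a_6} = e_{a_2} + e_{a_4}$; the count of nonzero summands then forces $a_2 = 0$ or $a_4 = 0$, i.e. $a_3 = a_2$ or $a_3 = a_4$. Finally, if $a_1 = a_3 = 0$, the identity is $e_{a_6} = e_{a_2} + e_{a_4} + e_{a_5}$, and the same count forces $a_2 = 0$ or $a_4 = 0$.

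The only delicate point is keeping track of repeated indices and of the index $0$ (the identity element), so I would organize the case split precisely around those; everything else is mechanical. With \eqref{ass:veblen} verified for $\goth M$, the corollary is immediate from \ref{pr:veblen} and \ref{pr:desargues}.
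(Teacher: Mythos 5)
Your proposal is correct and follows exactly the paper's route: the paper derives the corollary from Propositions \ref{pr:veblen} and \ref{pr:desargues} via the (unproved) note that ${\cal D}_n$ satisfies \eqref{ass:veblen} in $(C_k)^n$ precisely when $k>3$. Your case analysis correctly supplies the verification of that note which the paper omits (and the lifting to $\mathbb{Z}^n$ via the bound $|{\cdot}|\le 3 < k$ is exactly the right reason the hypothesis $k>3$ enters).
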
       
\begin{prop}\label{prop:veb3}
  For $k=3$ the structure $\goth M = \DifSpace((C_k)^n, {\cal D}_n)$ is 
 not Veblenian. 
\end{prop}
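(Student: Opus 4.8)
The plan is to produce an explicit witness for the failure of Veblen's axiom in $\goth M = \DifSpace((C_3)^n,{\cal D}_n)$: a triangle together with a line that passes through none of its vertices, meets the two sides through one vertex, and misses the opposite side. We may assume $n\ge 2$ ($\DifSpace(C_3,{\cal D}_1)$ being a single triangle). Everything hinges on the identity $2=-1$ holding in $C_3$, which is exactly what distinguishes the case $k=3$ from $k>3$; the same phenomenon is behind the failure of \eqref{ass:veblen} for ${\cal D}_n$ over $(C_3)^n$ and behind the extra connecting lines of \ref{lem:papinscneighb}. (For $n=2$ the structure $\goth M$ is the Pappus configuration of Figure~\ref{fig:pap0}, already seen above to violate \ref{lem:veblen}(ii); the computation below is a coordinate version of that, carried out uniformly for all $n\ge 2$.)

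Concretely, I would take the three points $A=(e_1-e_2)$, $B=(\theta)$, $C=(e_1)$. Using \ref{lem:analchain}(ii) one checks that they are pairwise collinear and that no line contains all three of them, so that they form a triangle, with sides
$$\LineOn(A,B)=[-e_2],\qquad \LineOn(A,C)=[e_1-e_2],\qquad \LineOn(B,C)=[\theta].$$
As transversal I take $G=[-e_1-e_2]$. Recalling that $\chain([c])=\{(c+e_i)\colon i=0,\dots,n\}$, the facts to verify are: (a) $G$ meets $[-e_2]$ exactly in $(-e_2)$ and meets $[e_1-e_2]$ exactly in $(-e_1-e_2)$; (b) the points $(-e_2)$ and $(-e_1-e_2)$ are distinct from $A,B,C$, and none of $A,B,C$ lies on $G$; (c) $G\cap\chain([\theta])=\emptyset$. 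Granting (a)--(c), the line $G$ crosses the two sides $\LineOn(A,B),\LineOn(A,C)$ through the vertex $A$ at points other than the vertices, while it misses the opposite side $\LineOn(B,C)$ and contains no vertex; hence Veblen's axiom fails, so $\goth M$ is not Veblenian.

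I do not expect a genuine obstacle, only careful bookkeeping; the one thing to watch is that one and the same line $G$ be simultaneously secant to the two sides through $A$ and external to the third. Each clause of (a)--(c) reduces, via \ref{lem:analchain}, to deciding when a difference $e_l-e_m$ of two canonical generators ($l,m\in\{0,\dots,n\}$, with $e_0=\theta$) can equal a prescribed element, namely $e_1$, $-e_1$, or $e_1+e_2$. In $(C_3)^n$ such a difference is $0$ when $l=m$, and otherwise has entry $1$ in slot $l$ and entry $2=-1$ in slot $m$ (an $e_0$ contributing the empty support); comparing entries slot by slot either pins $l,m$ down uniquely or shows that no solution exists, and it is precisely the identity $2=-1$ that is invoked each time. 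For $k>3$ the corresponding equations would instead demand $-1\equiv 1$, which is false, so the same triangle is harmless there --- consistently with \ref{cor:vebdes}. Filling in these short coordinatewise checks completes the proof.
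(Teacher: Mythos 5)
Your proof is correct and follows essentially the same route as the paper: both exhibit an explicit Veblen configuration whose two transversals fail to meet, with the computation resting on $2=-1$ in $C_3$. The only difference is the choice of witness --- the paper centres the configuration at $(\theta)$ with transversals $[e_1+e_2]$ and $[-e_1-e_2+e_3]$ (so its formulas literally require $n\ge 3$), whereas your triangle with the transversal $[-e_1-e_2]$ works uniformly for all $n\ge 2$, which you rightly single out as the correct range since for $n=1$ the structure is a single triangle and the axiom holds vacuously.
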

\begin{proof}
  Let $(e_0)$, $(-e_1+e_2)$, $(-e_2+e_1)$ be a triangle in $\goth M$.
  We take $L_i=[-e_i]$ for $i=1,2$, and $K_1=[e_1+e_2]$, $K_2=[-e_1-e_2+e_3]$.
  Then $(e_0)\inc L_1,L_2$, and $K_1$ crosses $L_1$ in $(-e_1+e_2)$ and $L_2$ in
  $(-e_2+e_1)$. Furthermore, the lines $K_2$, $L_1$ meet in $(-e_1+e_3)$, and
  $(-e_1+e_3)$ is the common point of $K_2$, $L_2$.
 \par
  Suppose that $K_1\cap K_2\neq \{\emptyset\}$; then 
  $(e_1+e_2)+e_t = (-e_1-e_2+e_3)+e_s$ for some $s,t=\{1,\ldots,n\}$, $s\neq t$,
  which implies a contradiction: $e_1+e_2+e_3+e_s = e_t$. 
\end{proof} 
\begin{prop}\label{prop:des3}
  For $k=3$ the structure $\goth M = \DifSpace((C_k)^n, {\cal D}_n)$ is 
  Desarguesian. 
\end{prop}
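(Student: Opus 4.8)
The plan is to carry out essentially the same computation as in the proof of \ref{pr:desargues}; the only genuinely new ingredient is a control on the possible shape of the triangles, which is needed because for $k=3$ assumption \eqref{ass:veblen} fails --- indeed even the Veblen axiom fails, by \ref{prop:veb3} --- so \ref{lem:veblen} is no longer available. A Desargues configuration consists of a centre $o$, three distinct lines $\ell_1,\ell_2,\ell_3$ through $o$, and two triangles $(A_1,A_2,A_3)$, $(B_1,B_2,B_3)$ with $A_i,B_i\inc\ell_i$, $A_i,B_i\neq o$, $A_i\neq B_i$, subject to the hypothesis that for each pair $i\neq j$ the sides $\LineOn(A_i,A_j)$ and $\LineOn(B_i,B_j)$ meet; one must prove that the three meeting points are collinear. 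Since the translations of $(C_3)^n$ act on $\goth M=\DifSpace((C_3)^n,{\cal D}_n)$ as a transitive group of automorphisms (see the remark following \eqref{def:mystructure}), I would first move $o$ to $\theta=(0,\dots,0)$; by \ref{lem:analchain}\eqref{analchain:cas1} the lines through $\theta$ are exactly $[-e_0],\dots,[-e_n]$, and by \ref{lem:permtocollin}\eqref{permcol} the point stabiliser of $\theta$ acts on them as the full symmetric group of $\{0,\dots,n\}$, hence $3$-transitively; so I may assume $\ell_i=[-e_i]$ for $i=1,2,3$. This already requires $n\ge 3$, and one checks that for $n\le 3$ no Desargues configuration occurs at all (for $n=1$ each point lies on only two lines; for $n=2$, $\goth M$ is the Pappus configuration, with only $9$ points; for $n=3$ the classification below leaves a single admissible triangle for each centre and triple of rays), so the statement is vacuous there and genuine configurations appear only for $n\ge 4$.

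The core of the argument is the following classification. A point of $[-e_i]$ other than $\theta$ has the form $(-e_i+e_a)$ with $a\in\{0,\dots,n\}$, $a\neq i$. By \ref{lem:analchain}\eqref{analchain:cas2} the points $(-e_i+e_a)$ and $(-e_j+e_b)$ (for $i\neq j$) are collinear iff $e_i-e_j+e_b-e_a\in{\cal D}_n-{\cal D}_n=\{e_p-e_q\colon p,q\in\{0,\dots,n\}\}$; a short inspection of the possible coordinate patterns in $(C_3)^n$ (where $-e_j=2e_j$, so a vector with two coordinates equal to $2$ is never such a difference) shows that this holds precisely when $a=b$ or $(a,b)=(j,i)$ --- the second alternative being exactly the ``degenerate'' collinearity which, together with the triples of \eqref{wz:extrajoin}, causes the Veblen failure of \ref{prop:veb3}. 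From this dichotomy a brief case analysis shows that a triangle $(A_1,A_2,A_3)$ with $A_i=(-e_i+e_{a_i})\neq\theta$ inscribed in the rays $[-e_1],[-e_2],[-e_3]$ must be \emph{homogeneous}, $a_1=a_2=a_3=:a$: if, say, $a_1\neq a_2$ then necessarily $(a_1,a_2)=(2,1)$, so $a_1=2$, which by the dichotomy applied to the pair $\{1,3\}$ forces $a_3=2$ as well, and then $a_2=1\neq 2=a_3$ with $(a_2,a_3)\neq(3,2)$ contradicts $A_2\collin A_3$. Hence $a\notin\{1,2,3\}$, and the two triangles of the configuration are necessarily $A_i=(-e_i+e_a)$ and $B_i=(-e_i+e_b)$ for some distinct $a,b\in\{0,\dots,n\}\setminus\{1,2,3\}$.

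With the triangles in this form the computation is literally that of \ref{pr:desargues}. By \ref{lem:analchain}\eqref{analchain:cas2}, $\LineOn(A_1,A_2)=[-e_1-e_2+e_a]$ and $\LineOn(B_1,B_2)=[-e_1-e_2+e_b]$, and cyclically in $1,2,3$; these two lines are distinct (since $a\neq b$), and by \ref{lem:analchain}\eqref{analchain:cas3} they meet in $(q_3):=(-e_1-e_2+e_a+e_b)$ --- so in $\goth M$ corresponding sides of such triangles automatically meet. Cyclically $(q_1)=(-e_2-e_3+e_a+e_b)$ and $(q_2)=(-e_1-e_3+e_a+e_b)$, and \eqref{wz:analinc} shows at once that $(q_1),(q_2),(q_3)$ all lie on the line $[-e_1-e_2-e_3+e_a+e_b]$, which is therefore the Desargues axis. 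Hence $\goth M$ is Desarguesian.

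I expect the only step requiring real care to be the classification of triangles: the point is that although \eqref{ass:veblen}, and the Veblen axiom itself, fail for $k=3$, the offending collinearities --- those exploiting $-e_j=2e_j$ in $C_3$ --- can never relate all three pairs of vertices of a triangle inscribed in three concurrent lines off their common point, so the triangles of a Desargues configuration are forced to be homogeneous; everything downstream is then formally identical to the $k>3$ case of \ref{pr:desargues}. The remaining work is the coordinate bookkeeping in $(C_3)^n$ and the disposal of the small cases $n\le 3$.
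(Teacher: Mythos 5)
Your proof is correct and follows essentially the same route as the paper's: normalize the centre to $\theta$ by a translation, show that any triangle inscribed in three concurrent lines $[-e_{i_r}]$ off the centre must be homogeneous of the form $(-e_{i_r}+e_s)$ (the paper excludes the degenerate sides $[e_i+e_j]$ by observing they cannot meet the corresponding side of the second triangle, whereas you exclude them already at the level of a single triangle), and then compute the sides $[-e_{i}-e_{j}+e_s]$, their intersections, and the axis $[-e_{i_1}-e_{i_2}-e_{i_3}+e_s+e_t]$ exactly as in \ref{pr:desargues}. Your explicit disposal of the small cases $n\le 3$, where the statement is vacuous, is a minor addition the paper does not spell out.
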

\begin{proof}  
  Let $k = 3$.
  We gather some simple facts, useful in further part of the proof.
  The structure $\goth M $ has the following properties:
  \begin{enumerate}[1)]\itemsep-2pt
   \item 
     all the lines through the point $(e_0)$ are of the form 
     $[-e_i]$, where $i=1,\ldots,n$;\label{linethrough}
   \item 
     $(-e_i+e_s),(-e_j+e_s)\inc [-e_i-e_j+e_s]$, where 
     $j,s=1,\ldots,n$, $s\neq i,j$;\label{cross} 
   \item 
     $(-e_i+e_j),(-e_j+e_i)\inc [e_i+e_j]$;\label{cross3}
   \item 
     $[-e_i-e_j+e_s]$ crosses $[-e_i-e_j+e_t]$ in $(-e_i-e_j+e_s+e_t)$, where
        $t=1,\ldots,n$, $t\neq i,j$;\label{posvebindes}
   \item 
     $[-e_i-e_j+e_s]$ does not cross $[e_i+e_j]$;\label{negvebindes} 
   \item 
     there is no other line which crosses both $[-e_i]$ and $[-e_j]$ except
        $[-e_i-e_j+e_s]$ or $[e_i+e_j]$.\label{onlylines}
  \end{enumerate}
 %
 Indeed:
 \ref{linethrough}), \ref{cross}), \ref{cross3}), \ref{posvebindes})
 -- follow immediately from \ref{lem:veblen}, \ref{lem:analchain}.
 
 Ad \ref{negvebindes}): If $[-e_i-e_j+e_s]$ intersects $[e_i+e_j]$ then for some
 $r,t$ we have $e_i+e_j+e_s = e_r-e_t$ -- a contradiction.
 
 \ref{onlylines})  follows from \ref{lem:multpapneb}.

\par
  Without loss of generality we can assume that $o = (e_0)$ is the perspective center of two 
 triangles $T_1$, $T_2$, inscribed into
 three lines $L_1,L_2,L_3$ of $\goth M$ such that the corresponding pairs of their
 sides intersect each other. From \ref{linethrough}), the $L_r$ for $r = 1,2,3$ 
 are of the form $L_r = [-e_{i_r}]$.  From \ref{cross}), \ref{cross3}), and
 \ref{onlylines}) we get that  the sides of these triangles can be the 
 lines $[-e_i-e_j+e_s]$ or $[e_i+e_j]$.  
 On the other hand, none of the sides of the 
 triangles inscribed into the $L_r$ can have the form $[e_i+e_j]$, since, 
 as a consequence of \ref{negvebindes}),
 the line $[e_i+e_j]$ (joining two points: one on $L_i$, and the second on $L_j$)
 does not intersect any other line 
 which crosses both $L_i$ and $L_j$, and which should be a side of the
 second corresponding triangle.
 Then the sides are lines of the type $[-e_i-e_j+e_s]$, which pairwise intersect 
 each other respectively, in view of \ref{posvebindes}). 
 Consequently, we find that the triangles $T_1$, $T_2$ 
 have as their vertices the  points 
 \newline\centerline{
 $T_1: \; (-e_{i_1} + e_s),(-e_{i_2} + e_s),(-e_{i_1} + e_s)$, \;
 $T_2: \; (-e_{i_1} + e_t),(-e_{i_2} + e_t),(-e_{i_1} + e_t)$, }
 for some $e_s,e_t$.
 With the help of \ref{cross}) and \ref{posvebindes}) 
 we calculate that the points of  intersection of the corresponding sides of $T_1$ and $T_2$ 
 are
 $c_1 = (-e_{i_1}-e_{i_2}+e_s+e_t)$, $c_2 = (-e_{i_2}-e_{i_3}+e_s+e_t)$, and
 $c_3 = (-e_{i_3}-e_{i_1}+e_s+e_t)$. 
 Evidently, the points $c_1,c_2,c_3$ lie on the line     
 $[-e_{i_1}-e_{i_2}-e_{i_3}+e_s+e_t]$, which proves the claim.
\end{proof}


\section{Examples}
\subsection{Multi-Fano configuration}


Let us consider the Fano configuration ${\goth F}$ as a configuration given by
a quasi difference set 
${\goth F}=\DifSpace(C_7,\{0,1,3\})$  (cf. \cite{mald, Lipski}). 
Now we present {\em multi-Fano} configuration  
${\goth F^+}=\DifSpace(C_k \oplus C_7,\{ (0,0),(0,1),(0,3),(1,0) \})
\cong \DifSpace(C_k,{\{ 0,1 \}})\oplus {\goth F}$ 
as a particular case of multiplied Fano configuration. The configuration 
${\goth F^+}$
for $k=3$ is presented in Figure \ref{fig:multfano}. One can observe that 
this structure contains Pappus configuration.
We refer to a $\{ 2 \}$-part $\{ i \} \times C_7 = {\goth F}^+ \ciach{\{ 1 \}}(i)$ 
of ${\goth F}^+$ as to a Fano part of it
(cf. \ref{prop:partsofprod}).
\begin{figure}[!h]
    \begin{center}
    \includegraphics[scale=0.6]{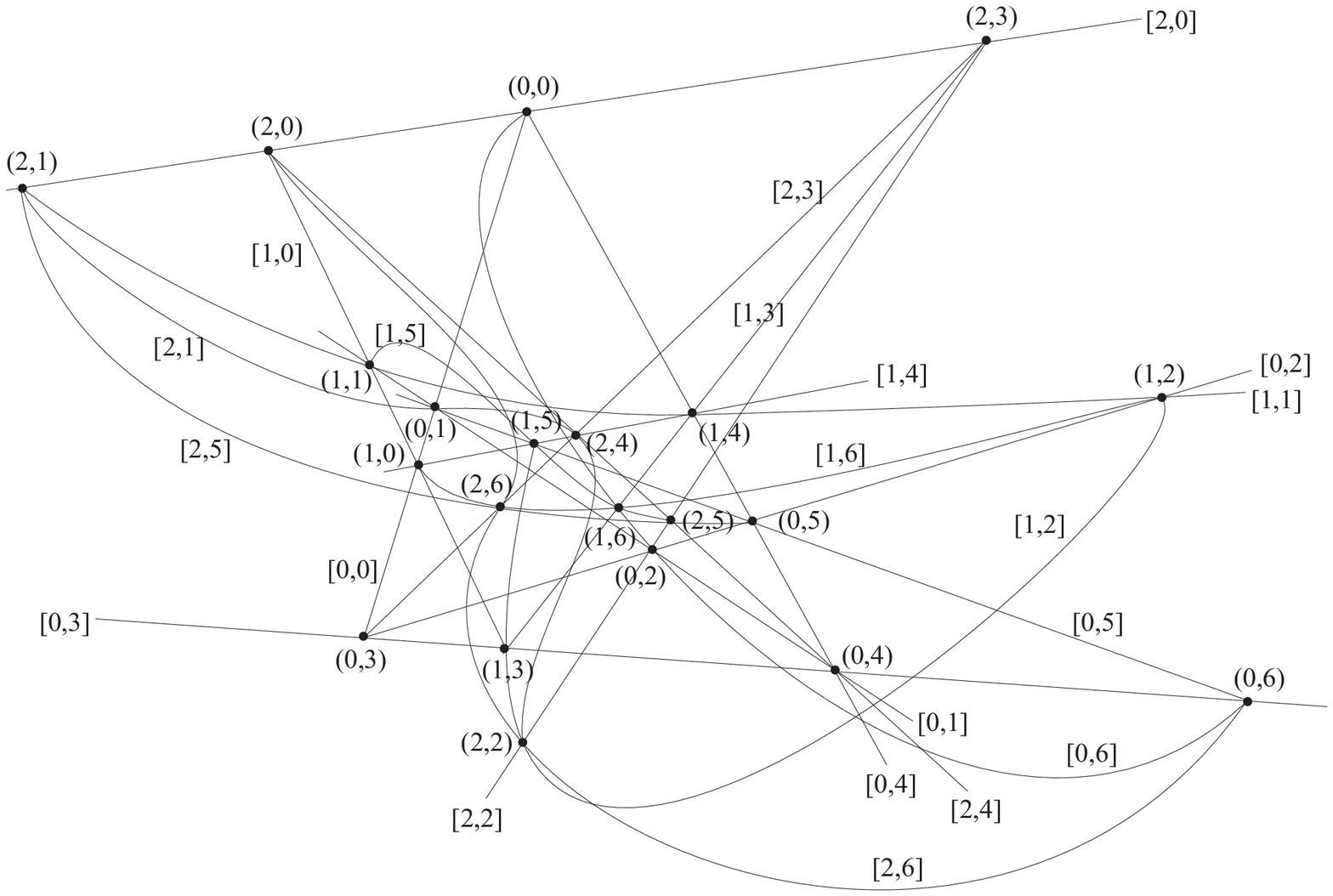}
    \end{center}
\caption{The multi-Fano configuration 
$\DifSpace(C_3 \oplus C_7,\{ (0,0),(0,1),(0,3),(1,0) \})$}
\label{fig:multfano}
\end{figure} 
\begin{lem}\label{lem:fanotransforms}
  If $f \in \Aut(\goth F^+)$, and $x,y$ are two points of $\goth M$, such that
  $f(x)=y$, then $f$ transforms the Fano's part of the neighborhood of $x$ into the 
  Fano's part of the neighborhood of $y$.
\end{lem}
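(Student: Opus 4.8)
The plan is to describe the Fano part of the neighborhood of a point purely in terms of the incidence structure of that neighborhood, and then to invoke the observation of Section~1 that any collineation $f=(f',f'')$ of ${\goth F^+}$ with $f'(x)=y$ restricts to an isomorphism of $\otocz({\goth F^+},x)$ onto $\otocz({\goth F^+},y)$. Since the translations of $C_k\oplus C_7$ form a point-transitive group of automorphisms of ${\goth F^+}$, it suffices to carry this out at the single point $p=(0,\theta)$, where the neighborhood is described explicitly by Lemma~\ref{lem:papinscneighb}; the same description then holds at every point, and $f$ is forced to respect it.

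Call a line of $\otocz({\goth F^+},p)$ \emph{complete} if it carries at least three points of $\otocz({\goth F^+},p)$; this is the largest possible number, since blocks of ${\goth F^+}$ have four points and $p$ itself is excluded from its neighborhood. The claim is that the Fano part of $\otocz({\goth F^+},p)$ is the substructure of $\otocz({\goth F^+},p)$ induced on the set of points that lie on at least three complete lines. For a Fano-part point $q_{i,j}$ this follows at once from Lemma~\ref{lem:papinscneighb}: it lies on the trace of the line $l_i$ through $p$, which is complete (carrying $q_{i,j}$, one further Fano-part point, and $p'_i$), and on the two Fano lines of the Fano copy that miss $p$, each of which carries three Fano-part points; there is no other complete line through $q_{i,j}$, so it lies on exactly three.

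It remains to check that none of the points $p'_i$, $p''_i$ lies on three complete lines. For $k>3$ this is immediate from the last part of Lemma~\ref{lem:papinscneighb}: the only complete lines are the $l_i$, the line $l''$, and the Fano lines missing $p$ (the lines $l'_{i,j}$ and $l''_{i,j}$ carrying only two points of the neighborhood), so $p'_i$ lies on the single complete line $l_i$ and $p''_i$ on the single complete line $l''$. For $k=3$ one must also account for the additional connecting lines listed in the case $k=3$ of Lemma~\ref{lem:papinscneighb}; a direct inspection of them (using $D=\{0,1,3\}$ and the relation $d_i+d_j+d_r=\theta$) shows that exactly one more complete line arises, and that it joins two of the points $p'_i$ with one point $p''_i$; hence each $p'_i$ and each $p''_i$ still lies on at most two complete lines. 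This proves the characterization in both cases.

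Since the isomorphism $\otocz({\goth F^+},x)\to\otocz({\goth F^+},y)$ determined by $f$ preserves completeness of lines, and hence the property of a point lying on at least three complete lines, it maps the point set of the Fano part of $\otocz({\goth F^+},x)$ onto that of the Fano part of $\otocz({\goth F^+},y)$; as $f$ is a collineation, the whole induced substructure is carried over, which is the assertion. The only delicate step is the case $k=3$: there the extra connecting lines of Lemma~\ref{lem:papinscneighb} make the coarser criterion ``lying on at least two complete lines'' fail, so one has to verify that raising the threshold to three restores the correct characterization.
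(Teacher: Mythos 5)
Your proof is correct, and it follows the same overall strategy as the paper's: reduce by point-transitivity to a single base point, read the neighborhood off Lemma \ref{lem:papinscneighb}, and single out the Fano part of $\otocz({{\goth F}^+},x)$ by an incidence invariant that any collineation must respect. The difference is the choice of invariant. The paper uses the point rank inside the neighborhood: for $k>3$ the Fano-part points are exactly the points of rank $4$ there, which settles that case immediately; but this invariant degrades for $k=3$ (the extra connecting lines raise the ranks of most of the points $p'_i,p''_i$ to $4$ as well), so the paper switches to a separate argument that pins down the triangle of level-$(i+1)$ points via their line-rank profiles and recovers the Fano part from that. You instead count the lines of size at least $3$ through a point: Fano-part points lie on exactly three such lines, while the remaining points lie on one (for $k>3$) or at most two (for $k=3$, because exactly one additional complete line, namely $l'_{i,j}=[1,d_r]$ coming from the relation $d_i+d_j+d_r=\theta$, appears and it carries only points $p'_i,p''_r$). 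This buys a single criterion covering both cases uniformly, at the price of the explicit check that only one new complete line arises when $k=3$; your verification of that check is accurate. One cosmetic remark: your justification that $3$ is the maximal size of a line in the neighborhood only covers lines through the base point — a line missing it could a priori carry four neighborhood points. This does not affect the argument, since you define complete as ``at least three points'' and you determine the sizes of all relevant lines case by case anyway.
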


\begin{proof}
  Let us assume $k>3$. Then the structure determined by points which are collinear 
  with $x = (i,a)$ is shown in Figure \ref{fig:multneighb}. 
  Let us observe, that the points of rank $4$ in the $(i,a)$ neighborhood form
  the Fano part ${\goth F}^+ \ciach{\{ 1 \}}(i)$, which yields our claim.

  If $k=3$, then three new lines appear: $[i+1,a],[i+1,a+1],[i+1,a+3]$ 
  in the neighborhood of $x$, and 
  the point $(i+1,a+6)$ is the only one of the rank $3$. There are three rank $2$ 
  lines and one rank $4$ line through the point $(i+1,a)$; and one rank $3$, 
  two rank $2$, one rank $4$ line through the point $(i+1,a+4)$. Thus the triangle 
  $\{ (i+1,a+d)\colon d=0,4,6 \}$ is uniquely determined by the geometry, 
  and as a consequence the lines 
  $\{[i+1,a+d]\colon d=0,1,3 \}$ are determined as well. Now the claim is evident. 
\end{proof}
\begin{figure}[!h]
    \begin{center}
    \includegraphics[scale=0.6]{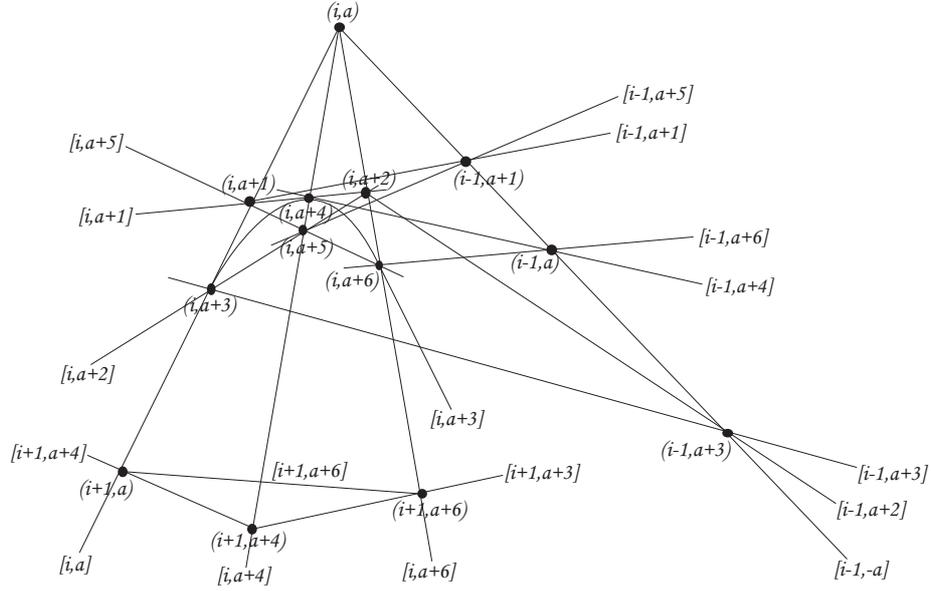}
    \end{center}
\caption{Neighborhoods of a point $(i,a)$ in multi-Fano configuration}
\label{fig:multneighb}
\end{figure} 

\begin{lem}\label{lem:fanopreserves}
  If $f \in \Aut(\goth F^+)$  and $f$ preserves 
  $\{0\}\times C_7$ then $f$ preserves 
  $\{i\}\times C_7$ for every $i \in \{0,1,\ldots,k-1\}$.
\end{lem}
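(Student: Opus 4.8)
The plan is to show that $f$ permutes the Fano parts $\{i\}\times C_7$, $i\in\{0,1,\dots,k-1\}$, among themselves, and then that the induced permutation of the index set must be the identity because the Fano parts are joined to one another in a geometrically \emph{oriented} cyclic pattern. First recall that every line of $\goth F^{+}$ is of the form $[i,a]=\{(i,a),(i,a+1),(i,a+3),(i+1,a)\}$ for some $(i,a)\in C_k\times C_7$; hence each line has exactly three of its points in a single Fano part $\{i\}\times C_7$ and its remaining point in $\{i+1\}\times C_7$. In particular the seven points of $\{i\}\times C_7$, together with the lines meeting them in at least two points, form a copy of the Fano configuration $\goth F$ (this is the $\{2\}$-part $\goth F^{+}\ciach{\{1\}}(i)$, cf. \ref{prop:partsofprod}), and the Fano parts partition the point set of $\goth F^{+}$.

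Next I would invoke \ref{lem:fanotransforms}. For an arbitrary point $x=(i,a)$, putting $y=f(x)$, that lemma says $f$ carries the Fano part of the neighborhood of $x$ onto the Fano part of the neighborhood of $y$; since the Fano part of the neighborhood of a point, together with that point itself, is exactly the Fano part containing it, and $f(x)=y$, we conclude $f(\{i\}\times C_7)=\{j\}\times C_7$, where $y\in\{j\}\times C_7$. Thus $f$ maps Fano parts onto Fano parts and induces a permutation $\bar f$ of $\{0,\dots,k-1\}$ with $f(\{i\}\times C_7)=\{\bar f(i)\}\times C_7$. The hypothesis that $f$ preserves $\{0\}\times C_7$ is precisely $\bar f(0)=0$.

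Now I would pin down the cyclic structure. Call a Fano part $P'$ the \emph{successor} of a Fano part $P$ if some line of $\goth F^{+}$ has three of its points in $P$ and its fourth point in $P'$. By the description of the lines above, every line having three points in $\{i\}\times C_7$ is of the form $[i,a]$ and has its fourth point $(i+1,a)$ in $\{i+1\}\times C_7$; so the successor of $\{i\}\times C_7$ is well defined and equals $\{i+1\}\times C_7$. Since $f$ preserves incidence and maps Fano parts onto Fano parts, it sends the successor of $P$ to the successor of $f(P)$; hence $\bar f(i+1)=\bar f(i)+1$ for every $i$. Together with $\bar f(0)=0$ this forces $\bar f(i)=i$ for all $i$ by induction, i.e.\ $f(\{i\}\times C_7)=\{i\}\times C_7$ for every $i\in\{0,1,\dots,k-1\}$, which is the assertion.

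The substantive step is the second one, that $f$ respects the partition into Fano parts; this is exactly what \ref{lem:fanotransforms} was established for, and its proof (the case analysis of point neighborhoods, with the extra incidences that occur when $k=3$, illustrated in Figure \ref{fig:multneighb}) is where the real work lies. Once the Fano parts are known to be preserved as a family, what remains is the elementary observation that the ``three points here, one point there'' asymmetry of the connecting lines orients the adjacency of consecutive Fano parts into a directed $k$-cycle, whose only automorphism fixing the index $0$ is the identity.
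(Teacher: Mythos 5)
Your proof is correct, and its decisive step is the same one the paper uses: a line with three points in $\{i\}\times C_7$ is necessarily of the form $[i,a]$ and has its fourth point in $\{i+1\}\times C_7$, so preservation of one level propagates to the next and an induction (your ``successor'' bookkeeping is just a repackaging of this) finishes the argument. The one genuine difference is your preliminary appeal to \ref{lem:fanotransforms} to show that $f$ permutes the Fano parts globally; the paper's proof does not need this and is self-contained, since the hypothesis that $\{0\}\times C_7$ is preserved already forces the lines $[0,l]$ (the only lines with three points at level $0$) to map among themselves, hence their fourth points --- which exhaust level $1$ --- to map into level $1$. Your route is harmless but imports the nontrivial neighborhood analysis of \ref{lem:fanotransforms} (including the extra $k=3$ incidences) where the paper gets by with the elementary $3$-versus-$1$ counting alone; what it buys you in exchange is the slightly stronger intermediate statement that \emph{any} automorphism induces a permutation of the Fano parts commuting with the successor map, which is in the spirit of how \ref{pr:multautomorf} later combines the two lemmas.
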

\begin{proof}
  Let $f \in \Aut(\goth F^+)$; assume that $f$ preserves 
  $\{0\}\times C_7 = {\goth F}^+ \ciach{\{ 1 \}}(0)$. 
  Then every line $[0,l]$ is transformed by $f$ onto a line of the form
  $[0,l']$. Thus the image of the point $(1,a)\inc [0,l]$  is $(1,a')\inc [0,l']$
  for some $a',l'$. 
  Consequently, $f$ maps the substructure ${\goth F}^+ \ciach{\{ 1 \}}(i)$  
  of points of the level $i = 1$ onto itself. 
  Inductively, we get the same result for every $i = 0,1,\ldots,k-1$.
\end{proof}

\begin{lem}\label{lem:fanoautform}
  If $f \in \Aut({\goth F})$ then $f= \tau \circ g$, where $\tau$ is a translation of
  the group $C_7$, 
  and $g \in \otocz({\Aut({\goth F})},a)$. All the
  elements of $\otocz({\Aut({\goth F})},a)$
  are written in the table below.
\end{lem}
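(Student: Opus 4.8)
The plan is to exhibit the full automorphism group of the Fano configuration $\goth F = \DifSpace(C_7,\{0,1,3\})$ concretely, by first determining the point stabilizer $\otocz({\Aut({\goth F})},a)$ and then using the homogeneity of $\goth F$. Since $\goth F$ is the projective plane of order $2$, we know abstractly that $\Aut({\goth F}) \cong \mathrm{PGL}(3,2) = \mathrm{GL}(3,2)$ has order $168$, and the stabilizer of a point has order $24$; but working inside the difference-set presentation we must produce these $24$ maps explicitly as pairs $(g',g'')$ fixing the point $(a)$. By point-transitivity (translations $\tau_c$ act transitively, cf. the remarks after \eqref{def:mystructure} and \ref{pr:concomp}) it suffices to take $a = 0$, so $g'$ fixes $(0)$. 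First I would list the three lines through $(0)$: by \eqref{wz:analchain1} these are $[0\cdot\invers(d)] = [-d]$ for $d \in \{0,1,3\}$, namely $[0]$, $[-1]=[6]$, $[-3]=[4]$, containing the point-triples $\{(0),(1),(3)\}$, $\{(6),(0),(2)\}$, $\{(4),(0),(5)\}$ respectively (reading off $\chain([-d]) = \{(-d + d')\colon d' \in D\}$).

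Next I would invoke \ref{cor:fixautisomor}: since $\{0,1,3\}$ satisfies \eqref{ass:veblen} is \emph{not} true here (Fano fails it, as the remark before \ref{lem:pappus} shows Pappus-free structures satisfy it and Fano contains Pappus), so I cannot literally apply \ref{cor:fixautisomor}. Instead I would argue directly: an element of the stabilizer permutes the three lines $[0],[6],[4]$ through $(0)$, giving a homomorphism to $S_3$; the kernel consists of collineations fixing $(0)$ and each line through it, and one checks by the incidence structure of the Fano plane that the kernel has order $4$ (it is the group of elations / homologies with center $(0)$), while the image is all of $S_3$ — realized by suitable multiplier maps $x \mapsto mx$ on $C_7$ composed with translations, using \ref{rem:whenautline} to see which multipliers $m$ satisfy $m\cdot\{0,1,3\} = q + \{0,1,3\}$. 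The multiplier $2$ works since $2\cdot\{0,1,3\} = \{0,2,6\} = \{0,1,3\} + ?$ — actually $\{0,2,6\}$ is a translate of $\{0,1,3\}$? One has $\{0,1,3\} - 3 = \{4,5,0\} = \{0,4,5\}$, and $\{0,2,6\}$; checking, $\{0,1,3\}\cdot 2 = \{0,2,6\}$ and $\{0,2,6\} = \{0,1,3\} + 6 \pmod 7 = \{6,0,2\}$ — yes. So $m=2$ (hence $m=4$) gives automorphisms, and together with translations and the four central elations one fills the $24$-element stabilizer; then $\Aut({\goth F}) = \mathrm{Tr}(C_7)\circ\otocz({\Aut({\goth F})},0)$ gives the factorization $f = \tau\circ g$ claimed.

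The main obstacle I anticipate is the explicit bookkeeping needed to produce \emph{all} $24$ stabilizer elements in the difference-set coordinates and verify they are genuinely distinct collineations — in particular identifying the order-$4$ kernel (the group of $(0)$-central collineations), which in the $C_7$-presentation is \emph{not} given by group automorphisms or multipliers and must be written with ad hoc formulas on the seven points, checked incidence-by-incidence against the nine lines of $\goth F$. A convenient shortcut is to use the known action: label the seven points $0,\dots,6$, the lines as the seven translates of $\{0,1,3\}$, and simply exhibit generators of $\otocz({\Aut({\goth F})},0) \cong S_4$ — for instance the multiplier $x\mapsto 2x$ (order $3$, cyclically permuting $[6],[4],[0]$-nontrivially, order $3$ in $S_4$ up to the translation correction) together with one involution swapping two of the lines through $(0)$ — and then note $|S_4| = 24$ forces these to generate everything, populating the table. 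Finally, I would lay out the promised table: rows indexed by the $24$ elements $g\in\otocz({\Aut({\goth F})},0)$, each row giving the permutation $g'$ of $\{0,\dots,6\}$ and the induced $g''$ on the seven line-labels, which is the content the statement defers to "the table below."
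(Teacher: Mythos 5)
Your factorization argument is exactly the paper's: the printed proof consists of the single observation that translations are automorphisms of any structure defined by a quasi difference set, so $g=\tau_{a-f'(a)}\circ f$ lands in the point stabilizer. Where you differ is in the treatment of the table: the paper simply lists the $24$ stabilizer elements as data, with no derivation, whereas you sketch a structural justification --- the stabilizer acts on the three lines $[0],[6],[4]$ through $(0)$ with image $S_3$ and kernel the order-$4$ group of perspectivities centered at $(0)$, giving $24$ elements in total, of which the multiplier $x\mapsto 2x$ (legitimate by \ref{rem:whenautline}, since $2\cdot\{0,1,3\}=\{0,2,6\}=6+\{0,1,3\}$) realizes the $3$-cycle on the lines through $(0)$. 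This buys a completeness check on the table that the paper leaves entirely to the reader, at the cost of importing the classical facts $|\Aut({\goth F})|=168$ and $|{\rm PGL}(3,2)_P|=24$; either way the $24$ individual incidence verifications are deferred, just as in the paper. One correction: your stated reason that \eqref{ass:veblen} fails for ${\goth F}$ --- that ``Fano contains Pappus'' --- cannot be right, since a $7$-point structure contains no $9_3$ configuration. The condition \eqref{ass:veblen} does fail, but for the direct reason that $D-D=C_7$ when $D=\{0,1,3\}$, so e.g.\ $d_1=1$, $d_2=0$, $d_3=3$, $d_4=0$ violates it; your conclusion that \ref{cor:fixautisomor} is inapplicable (as it must be, the stabilizer having order $24>|S_3|$) therefore stands.
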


Collineations of the Fano configuration $\DifSpace(C_7,\{0,1,3\})$
which fix the point $(a)$

\noindent
\begin{tiny}
\begin{tabular}{cccccc|ccccccc}
$a+1$&$a+2$&$a+3$&$a+4$&$a+5$&$a+6$&$[a  ]$&$[a+1]$&$[a+2]$&$[a+3]$&$[a+4]$&$[a+5]$&$[a+6]$\\
\hline
$a+1$&$a+2$&$a+3$&$a+4$&$a+5$&$a+6$&$[a  ]$&$[a+1]$&$[a+2]$&$[a+3]$&$[a+4]$&$[a+5]$&$[a+6]$\\
$a+1$&$a+4$&$a+3$&$a+2$&$a+6$&$a+5$&$[a  ]$&$[a+1]$&$[a+3]$&$[a+2]$&$[a+6]$&$[a+5]$&$[a+4]$\\
$a+1$&$a+5$&$a+3$&$a+6$&$a+2$&$a+4$&$[a  ]$&$[a+5]$&$[a+2]$&$[a+3]$&$[a+6]$&$[a+1]$&$[a+4]$\\
$a+1$&$a+6$&$a+3$&$a+5$&$a+4$&$a+2$&$[a  ]$&$[a+5]$&$[a+3]$&$[a+2]$&$[a+4]$&$[a+1]$&$[a+6]$\\
$a+2$&$a+1$&$a+6$&$a+4$&$a+5$&$a+3$&$[a+6]$&$[a+1]$&$[a+5]$&$[a+3]$&$[a+4]$&$[a+2]$&$[a  ]$\\
$a+6$&$a+3$&$a+2$&$a+4$&$a+5$&$a+1$&$[a+6]$&$[a+3]$&$[a+2]$&$[a+1]$&$[a+4]$&$[a+5]$&$[a  ]$\\
$a+3$&$a+6$&$a+1$&$a+4$&$a+5$&$a+2$&$[a  ]$&$[a+3]$&$[a+5]$&$[a+1]$&$[a+4]$&$[a+2]$&$[a+6]$\\
$a+4$&$a+2$&$a+5$&$a+1$&$a+3$&$a+6$&$[a+4]$&$[a+1]$&$[a+2]$&$[a+5]$&$[a  ]$&$[a+3]$&$[a+6]$\\
$a+5$&$a+2$&$a+4$&$a+3$&$a+1$&$a+6$&$[a+4]$&$[a+2]$&$[a+1]$&$[a+3]$&$[a  ]$&$[a+5]$&$[a+6]$\\
$a+3$&$a+2$&$a+1$&$a+5$&$a+4$&$a+6$&$[a  ]$&$[a+2]$&$[a+1]$&$[a+5]$&$[a+4]$&$[a+3]$&$[a+6]$\\
$a+2$&$a+4$&$a+6$&$a+1$&$a+3$&$a+5$&$[a+6]$&$[a+1]$&$[a+3]$&$[a+5]$&$[a  ]$&$[a+2]$&$[a+4]$\\
$a+6$&$a+5$&$a+2$&$a+1$&$a+3$&$a+4$&$[a+6]$&$[a+5]$&$[a+2]$&$[a+1]$&$[a  ]$&$[a+3]$&$[a+4]$\\
$a+6$&$a+4$&$a+2$&$a+3$&$a+1$&$a+5$&$[a+6]$&$[a+3]$&$[a+1]$&$[a+2]$&$[a  ]$&$[a+5]$&$[a+4]$\\
$a+2$&$a+5$&$a+6$&$a+3$&$a+1$&$a+4$&$[a+6]$&$[a+2]$&$[a+5]$&$[a+3]$&$[a  ]$&$[a+1]$&$[a+4]$\\
$a+4$&$a+1$&$a+5$&$a+2$&$a+6$&$a+3$&$[a+4]$&$[a+1]$&$[a+5]$&$[a+2]$&$[a+6]$&$[a+3]$&$[a  ]$\\
$a+4$&$a+3$&$a+5$&$a+6$&$a+2$&$a+1$&$[a+4]$&$[a+3]$&$[a+2]$&$[a+5]$&$[a+6]$&$[a+1]$&$[a  ]$\\
$a+5$&$a+3$&$a+4$&$a+2$&$a+6$&$a+1$&$[a+4]$&$[a+2]$&$[a+3]$&$[a+1]$&$[a+6]$&$[a+5]$&$[a  ]$\\
$a+5$&$a+1$&$a+4$&$a+6$&$a+2$&$a+3$&$[a+4]$&$[a+5]$&$[a+1]$&$[a+3]$&$[a+6]$&$[a+2]$&$[a  ]$\\
$a+3$&$a+5$&$a+1$&$a+2$&$a+6$&$a+4$&$[a]$&$[a+2]$&$[a+5]$&$[a+1]$&$[a+6]$&$[a+3]$&$[a+4]$ \\
$a+3$&$a+4$&$a+1$&$a+6$&$a+2$&$a+5$&$[a]$&$[a+3]$&$[a+1]$&$[a+5]$&$[a+6]$&$[a+2]$&$[a+4]$\\
$a+2$&$a+3$&$a+6$&$a+5$&$a+4$&$a+1$&$[a+6]$&$[a+2]$&$[a+3]$&$[a+5]$&$[a+4]$&$[a+1]$&$[a]$\\
$a+6$&$a+1$&$a+2$&$a+5$&$a+6$&$a+3$&$[a+6]$&$[a+5]$&$[a+1]$&$[a+2]$&$[a+4]$&$[a+3]$&$[a]$\\
$a+4$&$a+6$&$a+5$&$a+3$&$a+1$&$a+2$&$[a+4]$&$[a+3]$&$[a+5]$&$[a+2]$&$[a]$&$[a+1]$&$[a+6]$\\
$a+5$&$a+6$&$a+4$&$a+1$&$a+3$&$a+2$&$[a+4]$&$[a+5]$&$[a+3]$&$[a+1]$&$[a]$&$[a+2]$&$[a+6]$
\end{tabular}
\end{tiny}

\begin{proof}
  It suffices to recall that the translations are automorphisms
  of every structure, which is defined by a quasi difference set.
\end{proof}

\begin{prop}\label{pr:multautomorf}
  Let ${\goth F^+}=\DifSpace(C_k \oplus C_7,\{(0,0),(0,1),(0,3),(1,0)\})$.
  \begin{sentences}
  \item
   If $7 \nmid k$, then the group $\Aut(\goth F^+)$ is isomorphic to $C_k \oplus C_7$.
  \item
    If $7 \mid k$, then the group $\Aut(\goth F^+)$ is isomorphic to 
    $C_3 \ltimes (C_k \oplus C_7)$.
  \end{sentences}
\end{prop}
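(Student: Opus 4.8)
The plan is to show that every automorphism of $\goth F^+$, after composing with a suitable translation of the cyclic factor $C_k$, preserves each Fano part $\{i\}\times C_7$ setwise, then to use Proposition \ref{pr:extautinmult} to reduce the classification of such layer-preserving automorphisms to a recursion inside $\Aut(\goth F)$, and finally to read off the group. For the first reduction: by Lemma \ref{lem:fanotransforms} an automorphism $f$ carries the Fano part of the neighbourhood of a point $(i,a)$ — which as a point set is $(\{i\}\times C_7)\setminus\{(i,a)\}$ — onto the corresponding set around $f(i,a)$, so $f$ maps $\{i\}\times C_7$ onto some Fano part $\{j\}\times C_7$ and hence induces a permutation $\bar f$ of the index set $C_k$. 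Since every line of $\goth F^+$ has exactly three of its points in one Fano part $\{i\}\times C_7$ and its fourth point in the next part $\{i+1\}\times C_7$, and this "three--then--one'' profile is preserved by $f$, one gets $\bar f(i+1)=\bar f(i)+1$ for all $i$, so $\bar f$ is a rotation $i\mapsto i+c$. Replacing $f$ by $\tau_{(-c,0)}\circ f$ (or appealing to Lemma \ref{lem:fanopreserves} once $\bar f$ fixes $0$) we may assume $f$ fixes every Fano part setwise.

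Such an $f$ sends each line $[0,b]$ — the lines having three points in $\{0\}\times C_7$ — to a line $[0,b']$, so its restriction to $\{0\}\times C_7$ is, under $(0,a)\mapsto(a)$, a collineation $f_0=(g_0,g_0^{\ast})$ of $\goth F=\DifSpace(C_7,\{0,1,3\})$, where $g_0^{\ast}$ is the line map induced by $g_0$, read as a self-map of $C_7$ via $[b]\leftrightarrow b$. By Proposition \ref{pr:extautinmult} (with $\goth D_0=\goth F$) the automorphism $f$ is determined by $f_0$ and exists exactly when the sequence $f_i=(g_i,g_i^{\ast})$ with $g_{i+1}=g_i^{\ast}$ is well defined for all $i\in C_k$; in that case $f$ acts by $(i,a)\mapsto(i,g_i(a))$. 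So everything comes down to the dynamics of the partial injective operation $\sigma\colon g\mapsto g^{\ast}$ on $\Aut(\goth F)$: I need the $g_0$ whose entire forward $\sigma$-orbit stays in $\Aut(\goth F)$ and which satisfy $\sigma^k(g_0)=g_0$. I claim the first property holds precisely on the normaliser $N$ of the Singer cycle, i.e.\ on the $21$ point maps $x\mapsto tx+c$ with $t$ a multiplier of $\{0,1,3\}$ (so $t\in\{1,2,4\}$) and $c\in C_7$. Writing $tD=D+m_t$, so $m_1=0,\ m_2=6,\ m_4=4$, Remark \ref{rem:whenautline} gives $\sigma(x\mapsto tx+c)=(x\mapsto tx+c+m_t)$; thus $\sigma$ fixes every translation of $C_7$ and permutes the other two cosets of $\langle x\mapsto x+1\rangle$ in $N$ as $7$-cycles ($c\mapsto c-1$, resp.\ $c\mapsto c+4$), so $\sigma(N)=N$. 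That no $g\notin N$ can begin such an orbit follows because "$\sigma(g)\in\Aut(\goth F)$'' is unchanged under pre- and post-composition with a translation, so one may assume $g$ fixes the point $(0)$ and then inspect the $24$ collineations in the table of Lemma \ref{lem:fanoautform}: only $\id,\ x\mapsto 2x,\ x\mapsto 4x$ lie in $N$, and for each of the other $21$ some line $\{b,b+1,b+3\}$ has a non-line image under the line map. (Alternatively, the correlation identity that $\varkappa g\varkappa$ has point map $x\mapsto -g^{\ast}(-x)$, with $\varkappa$ the standard correlation of Proposition \ref{pr:corelindif}, together with the fact that $x\mapsto -x$ normalises the Singer cycle, pins the domain of $\sigma$ down to $N$ without the table.)

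It remains to assemble $\Aut(\goth F^+)$. If $7\nmid k$, the requirement $\sigma^k(g_0)=g_0$ forces $k\,m_t\equiv 0\pmod 7$, hence $t=1$, so the admissible $g_0$ are exactly the translations $x\mapsto x+s$ of $C_7$; each yields the layer-preserving automorphism $(i,a)\mapsto(i,a+s)$, which is the translation $\tau_{(0,s)}$ of $\goth F^+$. Together with the $C_k$-translations used in the first reduction these exhaust $\Aut(\goth F^+)$, which therefore equals its translation group, so $\Aut(\goth F^+)\cong C_k\oplus C_7$. If $7\mid k$, then $\sigma^k$ is the identity on $N$, so all of $N$ is admissible; for $g_0\colon x\mapsto tx+c$ one gets $g_i\colon x\mapsto tx+c+i\,m_t$ (meaningful because $7\mid k$), giving the layer-preserving automorphism $(i,a)\mapsto(i,\ ta+c+i\,m_t)$. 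These $21$ maps and the translation group $C_k\oplus C_7$ generate $\Aut(\goth F^+)$; a short computation of the composition law shows the translations form a normal subgroup with quotient the multiplier group $\cong C_3$, that the element with $t=2,\ c=0$ has order $3$ and conjugates $\tau_{(e,s)}$ to $\tau_{(e,\,2s-e)}$, and that the extension splits — so $\Aut(\goth F^+)\cong C_3\ltimes(C_k\oplus C_7)$.

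The crux is the claim in the second paragraph that the domain of $\sigma$ is exactly $N$: the inclusion $\sigma(N)=N$ is a one-line multiplier computation, but excluding the remaining $147$ collineations of $\goth F$ requires either the finite inspection of the $24$-row table of Lemma \ref{lem:fanoautform} or the correlation argument sketched there; once that is in hand, everything else is bookkeeping with Proposition \ref{pr:extautinmult} and with the line structure of $\goth F^+$.
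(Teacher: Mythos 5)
Your proposal is correct and follows essentially the same route as the paper: reduce by translations to automorphisms preserving each Fano part (Lemmas \ref{lem:fanotransforms}, \ref{lem:fanopreserves}), apply Proposition \ref{pr:extautinmult} to turn the problem into the recursion $f'_{i+1}=f''_i$ inside $\Aut({\goth F})$, and then use the table of Lemma \ref{lem:fanoautform} together with the multiplier computations $2D=D+6$, $4D=D+4$ to see that only $x\mapsto 2x$ and $x\mapsto 4x$ (besides translations) can sustain the recursion, with closure forcing $7\mid k$. Your packaging of the admissible seeds as the normaliser of the Singer cycle and your explicit conjugation formula $\tau_{(e,s)}\mapsto\tau_{(e,2s-e)}$ are a slightly more detailed rendering of the same argument, not a different one.
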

\begin{proof}
  Generally, $\text{Tr}(C_k \oplus C_7) \subseteq \Aut(\goth F^+)$. 
  Let us take $g= \tau_{-f(0,0)} \circ f$, where $f \in \Aut(\goth F^+)$. 
  Then $g((0,0))=(0,0)$. 
  From \ref{lem:fanotransforms}, 
  every collineation $g \in \otocz({\Aut({\goth F^+})},{(0,0)})$ 
  preserves the Fano substructure
  in the neighborhood of the point $(0,0)$. From \ref{lem:fanopreserves},
  the Fano substructure is preserved on every of $i$ levels, where $i= 0,1,\ldots,k-1 $. 
  In view of \ref{pr:extautinmult}, there is a map $\varphi = (\varphi',\varphi'')$, 
  such that 
    $\varphi'(i,a)=(i,f'_i(a))$ and $\varphi''([i,b])=[i,f''_i(b)]$, 
  where $(f_i',f_i'')=f_i \in \Aut({\goth F})$, $i\in C_k$.
  From  \ref{pr:extautinmult}\eqref{extautinmult:cas2}, $\varphi$ is a 
  collineation of $\goth F^+$ iff $f'_{i+1}=f''_i$. Analyzing the table in 
  \ref{lem:fanoautform} we find that there are two different collineations,
  which have a chance to satisfy the condition above;
  these are: 
  $\varphi'_1(x)=2x$, $\varphi'_2(x)=4x$. 
  It is easy to compute that
    $\varphi''_1 = \tau_6\varphi'_1$, 
    $\varphi''_2 = \tau_4\varphi'_2$. 
  Let $f'_0 = \varphi'_1$, then $f''_0 = \tau_6\varphi'_1$. By induction we get
  that $f'_i = \tau_{6i}\varphi'_1$, and then $f''_i = \tau_{6(i+1)}\varphi'_1$.
  In particular,
    $f''_{k-1} = \tau_{6k}\varphi'_1 = f'_0 = \varphi'_1$. 
  Therefore $\tau_{6k} = \id$ i.e. 
  $6k\equiv 0$ holds in $C_7$ and thus $7 \mid k$. 
  We will get the same result if we consider $f'_0 = \varphi'_2$. 
  To close the proof we observe that 
    $G = \{ \varphi'_1,\varphi'_2,\id \} \cong C_3$ 
  and 
    $f\tau_{(j,b)}f^{-1}(i,a) = \tau_{f(j,b)}(i,a)$ for $f \in G$.
\end{proof}


\subsection{Multi-Pappus configuration}

Since $\DifSpace((C_3)^2, {\cal D}_2)$ is simply the Pappus configuration,
a structure of the form $\DifSpace((C_3)^n, {\cal D}_n)$
will be called a {\em multi-Pappus} configuration.
Figure \ref{fig:multpap0} illustrates the structure
$\DifSpace((C_3)^3, {\cal D}_3)$.
Note that \ref{cor:semidirprod} cannot be used to characterize the group 
of automorphisms of a multiplied Pappus configuration.
Below, we shall describe the automorphism group of the structure 
  $\DifSpace((C_3)^n, {\cal D}_n)$, 
  where ${\cal D}_n$ is the canonical quasi difference set in the group 
  $(C_3)^n$.


\begin{lem}\label{lem:multpaprigid}
  Let ${\goth M} = \DifSpace((C_3)^n, {\cal D}_n)$,
  $f = (f',f'')\in\Aut({\goth M})$.
  If 
  \begin{enumerate}[\rm(a)]
  \item\label{multpaprigid:cond}
    $f''$ fixes every line through $o$ and $f'$ fixes every point on $L$
  \end{enumerate}
  for some  point $o$ of $\goth M$ and some line $L$ through $o$,
  then $f$ is the identity automorphism.
\end{lem}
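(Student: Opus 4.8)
The plan is to combine the rigidity results of Section~2 with the special structure of the multi-Pappus configuration, in particular with Lemma~\ref{lem:multpapneb}. Since $\goth M = \DifSpace((C_3)^n,{\cal D}_n)$ has a point-transitive group of automorphisms, I may assume $o = (e_0) = \theta$, the zero of $(C_3)^n$; translating $L$ if necessary I may also assume $L = [-e_1]$. The hypothesis~\eqref{multpaprigid:cond} then says: $f''$ fixes every line $[-e_i]$ through $o$, and $f'$ fixes every point of $[-e_1]$. Note that Corollary~\ref{cor:rigidotocz} is \emph{not} directly available here, because $\goth M$ need not satisfy~\eqref{ass:veblen} when $k=3$; this is exactly the point of proving a separate lemma. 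So the argument must instead run through Lemma~\ref{lem:multpapneb}, which is the $k=3$ substitute for the relevant part of Lemma~\ref{lem:veblen}.

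The key steps are as follows. \emph{Step 1:} Show $f'$ fixes the point $\theta$ and $f''$ fixes the line $[-e_1]$. Indeed $\theta \inc [-e_1]$ and $f'$ fixes all points of $[-e_1]$, so $f'(\theta)=\theta$; and $f''([-e_1])=[-e_1]$ by hypothesis. \emph{Step 2:} Fix an arbitrary index $i\neq 1$ and show that $f'$ fixes every point on $[-e_i]$. The points on $[-e_i]$ distinct from $\theta$ are of the form $(-e_i+e_j)$ for $j\in\{0,\dots,n\}$, $j\neq i$. By Lemma~\ref{lem:multpapneb} applied with $o=\theta$, the two lines $L_1=[-e_1]$, $L_2=[-e_i]$, and the point $p=(-e_1+e_j)$ on $L_1$ (for $j\neq 1,i$), there is a \emph{unique} point $q$ on $L_2$ with $q\neq\theta$ and $p\collin q$; by the formula \eqref{wz:multpapneb} this $q$ is $(-e_i+e_j)$. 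Since $f'(p)=p$ (as $p\in L_1$) and $f''(L_2)=L_2$, and $f$ is a collineation, $f'$ must send the unique neighbour $q$ on $L_2$ to the unique neighbour of $p$ on $L_2$, i.e. $f'(-e_i+e_j)=(-e_i+e_j)$ for all $j\neq 1,i$. The two remaining points $(-e_i+e_1)$ and $(-e_i+e_0)=(-e_i)$ on $[-e_i]$ are handled by the second clause of \eqref{wz:multpapneb}: taking $p=(-e_1+e_i)\in L_1$ (the case $i=j$ there, with the roles of the indices set appropriately) pins down $(-e_i+e_1)$, and a similar instance pins down $(-e_i)$; one may also recover $(-e_i)$ from the collinearities recorded in facts \ref{cross3}) of Proposition~\ref{prop:des3}, namely $(-e_i+e_1),(-e_1+e_i)\inc[e_i+e_1]$, since $f'$ already fixes $(-e_1+e_i)$. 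Either way $f'$ fixes $[-e_i]$ pointwise.

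\emph{Step 3:} Conclude. By Step~2, $f'$ fixes pointwise every line through $o$, so $f$ restricted to the neighborhood $\otocz({\goth M},o)$ is the identity on points, and hence $f''$ fixes every line through $o$ together with all its points collinear with $o$. Now $\goth M$ is connected (it is $\DifSpace((C_3)^n,{\cal D}_n)$ with ${\cal D}_n$ generating $(C_3)^n$, cf. the corollary after Proposition~\ref{pr:associatprod}), so a standard propagation argument finishes the job: any point $x$ is joined to $o$ by a polygonal path, and at each step we know $f'$ fixes an entire line, so invoking Lemma~\ref{lem:multpapneb} (its last sentence, the ``unique completing point'' property) repeatedly along crossing lines forces $f'$ to fix every point of every line crossing a pointwise-fixed line; by induction $f'=\id$ on all of $\goth M$, and then $f''=\id$ as well since a point-fixing collineation of a connected partial linear space with lines of rank $\geq 2$ is determined by its action on points. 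Hence $f = \id$.

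The main obstacle is Step~2 in the degenerate subcase where the index $j$ coincides with one of the excluded indices: the formula \eqref{wz:multpapneb} branches, and one must be careful that the points $(-e_i+e_1)$ and $(-e_i)=(-e_i+e_0)$ on $[-e_i]$ — which cannot be reached as neighbours of a point $(-e_1+e_j)$ with $j\neq 1,i$ — are nonetheless forced to be fixed. This is where the arithmetic special to $C_3$ (namely $2e_i=-e_i$, used in the proof of Lemma~\ref{lem:multpapneb}) and the auxiliary collinearities \ref{cross3}), \ref{onlylines}) of Proposition~\ref{prop:des3} do the real work; once those two points are handled, the rest is bookkeeping plus connectedness.
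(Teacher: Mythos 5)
Your Steps 1 and 2 are correct and reproduce, in coordinates, exactly the first half of the paper's argument: the uniqueness clause of Lemma \ref{lem:multpapneb}, combined with $f''(L_2)=L_2$ (which holds because $L_2$ passes through $o$) and the pointwise fixing of $L_1=[-e_1]$, forces $f'$ to fix every point collinear with $o$. The paper does the same thing coordinate-free in two lines. The problem is Step 3. You assert that Lemma \ref{lem:multpapneb} ``forces $f'$ to fix every point of every line crossing a pointwise-fixed line.'' It does not, by itself: to pin down $f'(q)$ for $q$ on a line $N$ crossing the pointwise-fixed line $M$ at $p$, you must first know that $f''(N)=N$; otherwise $f'(q)$ is only identified as the unique completing point on the line $f''(N)$, which could a priori be a different line through $p$. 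Hypothesis \eqref{multpaprigid:cond} gives $f''$-invariance only for lines through $o$, and nothing in your Steps 1--2 gives it for the lines through a point $x\collin o$ that you need at the next stage of the induction (and such lines can meet the set of already-fixed points in very few points). This is precisely the step the paper supplies and you omit: after showing that $f'$ fixes every point collinear with $o$, the paper invokes the duality principle ($\goth M$ is self-dual by \ref{pr:corelindif}, and condition \eqref{multpaprigid:cond} is self-dual in the flag $(o,L)$) to conclude that $f''$ fixes every line meeting a line through $o$; only then does \eqref{multpaprigid:cond} hold again at each point $x \collin o$ with the line $\LineOn(o,x)$, so that the induction along polygonal paths can run.

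The gap is repairable without appealing to duality, but you have to say how: apply the last sentence of \ref{lem:multpapneb} at the point $x$ with $L_1=\LineOn(o,x)$, $L_2=N$ and $p=o$; this produces a point $q\neq x$ on $N$ with $q\collin o$, so $N$ carries two points already fixed by $f'$ (namely $x$ and $q$), and since in a partial linear space two points lie on at most one line, $f''(N)=N$. With that inserted, condition \eqref{multpaprigid:cond} is re-established at $x$ and your propagation goes through. As written, however, Step 3 is not a proof.
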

\begin{proof}
  Let $M$ be any line through $o$, $L\neq M$, and $o\neq x\inc M$. 
  From \eqref{multpaprigid:cond} there is the unique point $y\inc L$ with $o\neq y\collin x$.
  From assumptions, $f''(M)=M$ and $f'(y)=y$, which gives $f'(x)=x$.
  Thus we proved that $f'$ fixes every point collinear with $o$.

  By the duality principle ($\goth M$ is self-dual!), 
  $f''$ fixes every line which crosses $L$.
  Therefore $f$ satisfies \eqref{multpaprigid:cond} for every
  point $x$ collinear with $o$ and every line through $x$.
  From the connectedness of $\goth M$ we get our statement.
\end{proof}

\begin{prop}\label{prop:multpapaut}
  Let ${\goth M} = \DifSpace((C_3)^n,{\cal D}_n)$ with $n > 2$
  and let ${\goth G} = \Aut({\goth M})$.
  Then ${\goth G}\cong S_{n+1}\ltimes (C_3)^n$,
    where the action of $S_{n+1}$ on $(C_3)^n$ is defined in 
    \ref{cor:semidirprod}.
\end{prop}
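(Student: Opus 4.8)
The plan is to exhibit ${\goth G} = \Aut({\goth M})$ as an internal semidirect product $\otocz({\goth G},o) \ltimes \text{Tr}((C_3)^n)$, to show that the point stabilizer $\otocz({\goth G},o)$ is isomorphic to $S_{n+1}$, and to check that the conjugation action of the stabilizer on $\text{Tr}((C_3)^n) \cong (C_3)^n$ is the one described in \ref{cor:semidirprod}. Fix $o = e_0 = (0,\dots,0)$. Since every translation $\tau_u$ is a collineation of $\goth M$ and $\text{Tr}((C_3)^n)$ acts regularly on the points, each $f \in {\goth G}$ factors as $f = \tau_u \circ g$ with $g \in \otocz({\goth G},o)$, while $\text{Tr}((C_3)^n) \cap \otocz({\goth G},o) = \{\id\}$; hence ${\goth G} = \text{Tr}((C_3)^n) \cdot \otocz({\goth G},o)$, and it remains to analyse the stabilizer and the normality of $\text{Tr}((C_3)^n)$.

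For the stabilizer, note that the lines through $o$ are exactly the $n+1$ pairwise distinct lines $[-e_i]$, $i = 0,\dots,n$, so each $g = (g',g'') \in \otocz({\goth G},o)$ induces a permutation $\alpha_g$ of $\{0,\dots,n\}$ with $g''(-e_i) = -e_{\alpha_g(i)}$, and $g \mapsto \alpha_g$ is a group homomorphism $\Phi\colon \otocz({\goth G},o) \to S_{n+1}$. By \ref{lem:permtocollin}\eqref{permcol}, $\Phi$ is onto. The crux is injectivity of $\Phi$, i.e.\ that a collineation $f = (f',f'')$ with $f'(o) = o$ which fixes every line through $o$ must be the identity; observe that \ref{cor:fixautisomor} is of no help here, since $(C_3)^n$ with ${\cal D}_n$ violates \eqref{ass:veblen}. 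Instead I would work inside the neighborhood of $o$: for each $k$, $f'$ permutes the $n$ points $\neq o$ of $[-e_k]$, which I write as $(-e_k + e_j)$ with $j \neq k$, say $f'(-e_k + e_j) = (-e_k + e_{\pi_k(j)})$. By \ref{lem:multpapneb}, for $i \neq k$ the unique point $\neq o$ on $[-e_i]$ collinear with $(-e_k + e_j)$ is $(-e_i + e_j)$ when $j \neq i$ and $(-e_i + e_k)$ when $j = i$; since $f$ preserves this partner relation, one obtains $\pi_i(j) = \pi_k(j)$ whenever $i,j,k$ are distinct and $\pi_k(j) \neq i$. Using $n > 2$ (so that a third index is always available), a short case analysis rules out the exceptional situation $\pi_k(j) = i$, so $\pi_k(j)$ does not depend on $k$; being $\neq k$ for every $k \neq j$, it must equal $j$. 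Thus $f'$ fixes every point collinear with $o$, and then \ref{lem:multpaprigid}, applied with $L$ any line through $o$ (now fixed pointwise), gives $f = \id$. Hence $\Phi$ is an isomorphism and $\otocz({\goth G},o) \cong S_{n+1}$.

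I expect this injectivity step to be the main obstacle: it is precisely the place where the failure of the Veblen-type condition \eqref{ass:veblen} for $k = 3$ forces one to replace the generic rigidity of \ref{cor:fixautisomor} by the ad hoc neighborhood analysis of \ref{lem:multpapneb} combined with \ref{lem:multpaprigid}, and it is also the only point where the hypothesis $n > 2$ is used.

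Finally, by the previous step every $g \in \otocz({\goth G},o)$ is a collineation with $g'(e_0) = e_0$ and $g''(-e_i) = -e_{\alpha_g(i)}$, so \ref{lem:permtocollin}\eqref{eftauef} yields $g\, \tau_v\, g^{-1} = \tau_{g'(v)}$; since $\text{Tr}((C_3)^n)$ is abelian, this shows $\text{Tr}((C_3)^n) \trianglelefteq {\goth G}$. Together with $\text{Tr}((C_3)^n) \cap \otocz({\goth G},o) = \{\id\}$ and ${\goth G} = \text{Tr}((C_3)^n) \cdot \otocz({\goth G},o)$, we conclude ${\goth G} = \otocz({\goth G},o) \ltimes \text{Tr}((C_3)^n) \cong S_{n+1} \ltimes (C_3)^n$, with $S_{n+1}$ acting through $v \mapsto g'(v)$. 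As the collineations realizing $S_{n+1}$ are exactly those constructed in \ref{lem:permtocollin} — coordinate permutations together with the transposition substitutions — this action coincides with the one of \ref{cor:semidirprod}, which completes the proof.
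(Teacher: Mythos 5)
Your proposal is correct and follows essentially the same route as the paper: decompose $\Aut({\goth M})$ as translations times the stabilizer of $\theta$, use \ref{lem:permtocollin} for surjectivity onto $S_{n+1}$ and for the conjugation formula, and reduce injectivity to the rigidity statement \ref{lem:multpaprigid} via the partner relation of \ref{lem:multpapneb}. The only (immaterial) difference is in the injectivity step: the paper tracks a single permutation $\beta$ of the points on $[e_0]$ and derives a contradiction from $\beta\neq\id$ using a third line $[-e_k]$, whereas you track the induced permutations $\pi_k$ on all lines through $o$ and show they are forced to be trivial -- both arguments hinge on the same use of \eqref{wz:multpapneb} and on $n>2$ supplying the extra index.
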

\begin{proof}
  Let $f\in{\goth G}$.
  Clearly, $\goth M$ has a point transitive group of automorphisms: translations.
  Therefore there is $q \in (C_3)^n$ such that $f = \tau_q\circ f_0$
  and $f_0 \in \otocz({\goth G},\theta)$.
  Then $f_0 = (f'_0,f''_0)$ determines a permutation $\alpha\in S_{n+1}$ such that 
  $f''_0$ maps the line $[-e_i]$ onto $[-e_{\alpha(i)}]$ for 
  $i=0,\ldots,n$.
  From \ref{lem:permtocollin} there is an automorphism 
  $f_{\alpha^{-1}}\in \otocz({\goth G},\theta)$
  associated with $\alpha^{-1}$; we set $g = f_{\alpha^{-1}} \circ f_0$
  and then $g\in \otocz({\goth G},\theta)$ 
  preserves every line through $\theta$.
  In particular, $g = (g',g'')$ permutes points on $[e_0]$ so,
  it determines a permutation $\beta\in S_n$ such that
  $g'(e_i) = (e_{\beta(i)})$ for $i = 1,\ldots,n$.

  If $\beta = \id$, then from \ref{lem:multpaprigid} we infer that $g$ is
  the identity automorphism.
  Assume that $\beta \neq \id$ so, $i \neq j = \beta(i)$ for some $i,j$.
  and thus $g'(e_i) = e_j\neq e_i$.
  Considering pairs of lines $[e_0],[-e_i]$ and $[e_0],[-e_j]$ 
  from \eqref{wz:multpapneb} we infer that 
  $g'(-e_i) = (-e_i + e_j)$ and 
  $g'(-e_j + e_i) = (-e_j)$.
  Again from \eqref{wz:multpapneb} considering $[-e_i],[-e_j]$ we get 
  ${g'}^{-1}(-e_j + e_i) = (-e_j)$, which gives, finally, $g'(e_j)=e_i$.
  Thus we proved: $\beta$ interchanges $i$ with $j$.

  Let $k\neq i,j,0$ and $k \leq n$.  
  Consider the lines $[-e_k]$, $[e_0]$, and $[-e_i]$.
  From \eqref{wz:multpapneb} we get 
  $g'(-e_k + e_i) = -e_k + e_j = {g'}^{-1}(-e_k + e_i)$
  and 
  $g'(-e_i) = (-e_i + e_j)$.
  Note that $(-e_i + e_j) \collin (-e_k + e_j)$ and $(-e_i) \collin (-e_k)$
  and thus $g'(-e_k) = (-e_k + e_j)$ which yields
  a contradiction: $(-e_k) = (-e_k + e_i)$.
  Finally, we come to $\beta = \id$, which closes the proof.
\end{proof} 


\subsection{Splitting of the multi-Pappus configuration and a 
cyclic projective plane}

Let us consider the structure
\begin{equation}\label{str:papplusproj}
  {\goth M} = \DifSpace((C_3)^k,{\cal D}_k)\oplus \DifSpace(C_n,D),
\end{equation}
where $\DifSpace((C_3)^k,{\cal D}_k)$ is a configuration given in 
\ref{prop:multpapaut} and ${\goth N} = \DifSpace(C_n,D)$ is the 
cyclic projective plane $PG(2,q)$, given by a Singer difference set 
  $D = \{ d_0,d_1,\ldots,d_q \}$ 
in the group 
  $C_n$, $n=q^2+q+1$
(cf. \cite{sing, Lipski}).  
We usually adopt $d_0=0$.
According to the general theory, the projective cyclic plane $PG(2,q)$ may be
determined by $q+1$ distinct difference sets $D^1,D^2,\ldots ,D^{q+1}$.
One can notice, that configurations 
  ${\goth M}^i = \DifSpace((C_3)^k,{\cal D}_k)\oplus \DifSpace(C_n,D^i), 
  i=1,2,\ldots ,q+1$ 
are not necessarily isomorphic. Indeed, for instance:
  $$\DifSpace(C_3^2,{\cal D}_2)\oplus \DifSpace(C_{13},\{0,1,3,9\})\ncong
     \DifSpace(C_3^2,{\cal D}_2)\oplus \DifSpace(C_{13},\{0,2,8,12\}).$$
The structure $\goth M$ can be also represented in the form:
  $$\underbrace{\DifSpace(C_3,{\cal D}_1) \oplus (\ldots \oplus
    (\DifSpace(C_3,{\cal D}_1)}_{k \; \mathrm{times}} 
     \oplus \DifSpace(C_n,D))\ldots )$$ 
(cf. \ref{pr:associatprod}); therefore, 
we can use several times \ref{lem:papinscneighb} to get an exact description 
of the neighborhood of a point of $\goth M$.
In particular,  we can present every point $p \in \goth M$ in the form 
  $$p=(x_k,\ldots,x_1,y),$$ 
where $x_k,\ldots,x_1\in C_3$, $y\in C_n$.  
In this notation, the points and the lines of 
$\otocz({\goth M},\theta)$ -- the neighborhood of 
$\theta = (0,\ldots,0) \in (C_3)^k\times C_n$, are the following:
\begin{eqnarray*}
   q_{i,j}=(0,\ldots,0,-d_i+d_j), & 
  \\ 
   p'_{m,i}=(0,\ldots,1_m,0,\ldots,-d_i), & &
   p''_{m,i}=(0,\ldots,2_m,0,\ldots,d_i)  
  \\
   l_i = [0,\ldots,0,-d_i], & &
   l''_m = [0,\ldots,2_m,0,\ldots,0]
  \\      
   \textrm{for}\; d_i,d_j\in D; \; i,j=0,\ldots,q; \; i\neq, j; & &
   m=1,\ldots,k   
  \\
   p'_{s,q+r}=(0,\ldots,1_s,0,\ldots,2_r,0,\ldots,0), & &
   p''_{s,q+r}=(0,\ldots,2_s,0,\ldots,1_r,0,\ldots,0)    
  \\
   \textrm{for} \; 
   s=2,\ldots,k;\; r=1,\ldots,s-q+2 .
\end{eqnarray*} 
\begin{lem}\label{lem:prespapandpg}
  Let ${\goth M}$ be the structure defined in \eqref{str:papplusproj}, and
  $F\in \otocz({\Aut(\goth M)},\theta)$.  
  Then, $F$ leaves invariant the multi-Pappus subconfiguration 
  (a $\{ 1,\ldots,k \}$-part, or multi-Pappian part of $\goth M$)
  \\[1.5ex]
  \centerline{
    $(C_3)^k \times \{0\} = {\goth M} \ciach{\{ k+1 \}} (0)
    \cong \DifSpace((C_3)^k,{\cal D}_k)$,}
  and $F$ leaves invariant the cyclic projective subplane
  (a $k+1$-part, or "projective" part of $\goth M$)
  \\[1.5ex]
  \centerline{
    $\{(0,\ldots,0)\}\times C_n = 
    {\goth M}  \ciach{\{ 1,\ldots,k \}}  \underbrace{(0,\ldots,0)}_{k \; \mathrm{times}}
    \cong \DifSpace(C_n,D)$} 
  (cf. \ref{prop:partsofprod}) as well.
  \par
  Moreover, $F$ determines two permutations: $\alpha \in S_q$ and $\beta \in S_k$
  such that 
  $F(l''_m)=l''_{\beta(m)}$ and then $F(p'_{m,0})=p'_{\beta(m),0}$,
  and $F(l_i) = l_{\alpha(i)}$ for $m=1,\ldots,k$, $i=1,\ldots,q$.
\end{lem}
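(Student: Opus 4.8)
The argument is entirely local: everything takes place in the neighbourhood $\otocz({\goth M},\theta)$, whose points and lines have just been displayed (they are obtained by applying \ref{lem:papinscneighb} repeatedly along the re-association of \ref{pr:associatprod}). Since $F\in\otocz({\Aut({\goth M})},\theta)$ fixes $\theta$, it restricts to an automorphism of $\otocz({\goth M},\theta)$; hence it permutes the $q+k+1$ lines through $\theta$ and, for each of them, preserves the collinearity pattern of its points inside the neighbourhood together with all point- and line-ranks computed there.

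The crucial step is to show that $F$ respects the splitting of the lines through $\theta$ into the \emph{projective family} $l_0,l_1,\dots,l_q$ --- the lines through $\theta$ lying in the projective part ${\goth N}=\{(0,\dots,0)\}\times C_n\cong\DifSpace(C_n,D)$ --- and the \emph{multi-Pappian family} $l_0,l''_1,\dots,l''_k$ --- those lying in the multi-Pappus part ${\goth P}=(C_3)^k\times\{0\}\cong\DifSpace((C_3)^k,{\cal D}_k)$. The point is that these two families carry geometrically incompatible local patterns. Inside ${\goth N}$, a projective plane, \emph{any} two points on distinct lines through $\theta$ are collinear, so the points $q_{i,j}$ span a large clique in the collinearity graph of $\otocz({\goth M},\theta)$ and the connecting lines joining them are numerous and heavily overlapping (a connecting line of ${\goth N}$ meets \emph{every} line of ${\goth N}$ through $\theta$). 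Inside ${\goth P}$, by \ref{lem:multpapneb} every point $\neq\theta$ on a line through $\theta$ is collinear with exactly \emph{one} point $\neq\theta$ on each other line through $\theta$, so the connecting lines form matchings and a connecting line of ${\goth P}$ meets at most two lines of ${\goth P}$ through $\theta$. Comparing the resulting invariants of $\otocz({\goth M},\theta)$ --- the clique sizes supported on the various lines, how many lines through $\theta$ are joined to a given one by connecting lines, and the overlap pattern of those connecting lines --- separates the two families. I expect this to be the main difficulty: the ``mixed'' points $p'_{m,i},p''_{m,i}$ ($1\le i\le q$) lie on lines of both families and blur the naive counts, so the separation has to rest on the qualitative ``linear-space versus matching'' behaviour rather than on mere sizes, which also forces some care for small parameters (in particular $q=k$ and the smallest planes).

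Granting this, $F$ permutes $\{l_1,\dots,l_q\}$ and permutes $\{l''_1,\dots,l''_k\}$, and it fixes $l_0$, this being the unique line through $\theta$ that is simultaneously a line of ${\goth N}$ and a line of ${\goth P}$; this yields $\alpha\in S_q$ with $F(l_i)=l_{\alpha(i)}$ and $\beta\in S_k$ with $F(l''_m)=l''_{\beta(m)}$. Invariance of ${\goth N}$ is now immediate: being a projective plane, every point of ${\goth N}$ is collinear with $\theta$, so the point set of ${\goth N}$ is $\{\theta\}$ together with the points incident with $l_0,l_1,\dots,l_q$ --- a set fixed by $F$ --- and a line of ${\goth M}$ is a line of ${\goth N}$ exactly when it carries at least two such points, so $F({\goth N})={\goth N}$. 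Invariance of ${\goth P}$ follows by propagation: $F$ already fixes, as a set, the neighbourhood of $\theta$ inside ${\goth P}$ and preserves the ``multi-Pappian type'' of the connecting lines there, and a line of ${\goth M}$ with two points in ${\goth P}$ is a line of ${\goth P}$; since ${\goth P}$ is connected, a rigidity argument in the spirit of \ref{cor:rigidotocz} and \ref{lem:multpaprigid} then gives $F({\goth P})={\goth P}$.

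It remains to identify $F(p'_{m,0})$. Among the points of $l_0$ lying in ${\goth P}$, the point $p'_{m,0}$ is the only one collinear, inside ${\goth P}$, with a point of $l''_m$ --- namely with $p''_{m,0}$, which by \ref{lem:multpapneb} applied to the two lines $l_0,l''_m$ of ${\goth P}$ through $\theta$ is the unique such point. Since $F$ preserves ${\goth P}$, fixes $l_0$ and sends $l''_m$ to $l''_{\beta(m)}$, it must therefore send $p'_{m,0}$ to $p'_{\beta(m),0}$, completing the proof.
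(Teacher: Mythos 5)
Your overall plan --- fix $\theta$, work inside the neighbourhood $\otocz({\goth M},\theta)$, separate the two families of lines through $\theta$ by local incidence invariants, then read off $\alpha$, $\beta$ and the invariance of the two parts --- coincides with the paper's. However, the step that carries the real content is exactly the one you label ``the main difficulty'' and then leave undone: you list candidate invariants (clique sizes, the number of lines through $\theta$ crossed by a connecting line, overlap patterns) and rightly warn that the mixed points $p'_{m,i},p''_{m,i}$ blur them and that small $q$ needs care, but you never verify that any invariant actually separates $\{l_0,\dots,l_q\}$ from $\{l''_1,\dots,l''_k\}$. The paper does this concretely: for $q\ge 3$ the projective part $S=\{(0,\dots,0)\}\times C_n$ is the only place in $\otocz({\goth M},\theta)$ where lines not passing through $\theta$ of size greater than $3$ occur, while for $q=2$ it is the only part whose lines not through $\theta$ carry three points of the maximal rank $3+k$; invariance of $S$ then yields invariance of the family $\{l_i\}$, and the points $p'_{m,0}$ are singled out among the points of maximal rank as those lying on some $l_i$, on no rank-$3$ line, and outside $S$, which forces $F(l_0)=l_0$. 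Without such a verified criterion your argument does not get off the ground.

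Second, your identification of $F(p'_{m,0})$ rests on a false claim. You assert that among the points of $l_0$ lying in the multi-Pappian part $(C_3)^k\times\{0\}$, the point $p'_{m,0}$ is the only one collinear (inside that part) with a point of $l''_m$. Lemma \ref{lem:multpapneb}, which you yourself invoke, says the opposite: for the two lines $l_0$ and $l''_m$ through $\theta$, \emph{every} point $\neq\theta$ of $l_0$ in that part has exactly one collinear partner on $l''_m$; the partner of $p'_{m,0}=(e_m)$ is indeed $p''_{m,0}=(-e_m)$, but the partner of $p'_{m',0}$ with $m'\neq m$ is $(-e_m+e_{m'})$, i.e.\ one of the mixed points $p''_{s,q+r}$. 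Hence collinearity with $l''_m$ does not by itself distinguish $p'_{m,0}$ from the other $p'_{m',0}$; one must first separate $p''_{m,0}$ from the mixed points on $l''_m$, which the paper does by examining the sizes of the rank-$2$ lines through these points. This additional step is genuinely needed and is absent from your proof.
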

\begin{proof} 
  Let us take a closer look at the neighborhood $\otocz({\goth M},\theta)$.
  Recall, that every line and point of $PG(2,q)$ is of the rank $q+1$. 
  Then, the maximal rank of the point or line in $\otocz({\goth M},\theta)$
  equals $q+k+1$. Assume $q\ge 3$.
  The only part of $\otocz({\goth M},\theta)$, 
  where not passing through $\theta$ lines of the rank greater than $3$ appear, 
  is precisely the projective cyclic subplane 
    $S=\{(0,\ldots,0)\}\times \DifSpace(C_n,D)$.
  If $q=2$ then only this subplane consists of not passing 
  through $\theta$ lines, which are incident with three points of the rank $3+k$. 
  Therefore, it must be invariant under $F$ so, in particular, the family of lines 
    $l_i=[0,\ldots,-d_i]$ for $i=0,\ldots,q,\; d_i\in D$ 
  is preserved.
  
  With the help of \ref{lem:papinscneighb} we see that 
  $p'_{m,0}\inc l'_0$ are the only points of rank $q+k+1$ 
  which simultaneusly: lie on one of the lines 
  $l_i$, do not belong to some rank $3$ line, and are not from $S$. 
  Thus $F$ must preserve these points and, consequently, the line $l_0$. 
  Therefore, there exists $\alpha \in S_q$ such that $F(l_i) = l_{\alpha(i)}$
  for $i > 0$.
  Furthermore, $F$ leaves the set
  $\{l''_m=[0,\ldots,2_m,0,\ldots,0]\colon m=1,\ldots,k\}$  
  invariant  so, there exists
  $\beta \in S_k$ such that $F(l''_m)=l''_{\beta(m)}$. 
  
  The points of $\otocz({\goth M},\theta)$ which belong to 
    $\DifSpace((C_3)^k,{\cal D}_k)\times \{0\}$
  are the following: 
    $p'_{m,0}$, $p''_{m,0}$, $p'_{s,q+r}$, and $p''_{s,q+r}$, where 
    $m=1,\ldots,k$; $s=2,\ldots,k$; $r=1,\ldots,s-q+2 $. 
  Note, that $p''_{m,0}, p'_{s,q+r}, p''_{s,q+r}$ are the only points on 
  $l''_m$, which are connected with one of $p'_{m,0}$. 
  Besides, every line of the rank $2$ passing through $p''_{m,0}\inc l''_m$
  consists of $q+k+1$ rank points, but some lines passing through 
  $p'_{s,q+r}$ or through $p''_{s,q+r}$ contain elements of the rank less 
  than $q+k+1$. It means that the points $p''_{m,0}$ and then
  $p'_{m,0}$ are permuted under the same permutation $\beta \in S_k$ which
  acts on the set of the lines $l''_m$.   
\end{proof}
\begin{lem}\label{lem:syminpapplane}
  Let $\beta\in S_k$. We define the map $G_{\beta}$ on $C_3^k\oplus C_n$ 
  by the formula
  $G_{\beta}((x_k,\ldots,x_1,y)) = (x_{\beta(k)},\ldots,x_{\beta(1)},y)$.
  Then $G_{\beta}\in \Aut(\goth M)$, $G_{\beta}(\theta) = \theta$, 
  and $G_{\beta}(p'_{m,0})=p'_{\beta(m),0}$. 
\end{lem}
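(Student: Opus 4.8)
The plan is to recognise $G_\beta$ as a group automorphism of $(C_3)^k\oplus C_n$ which stabilises the quasi difference set defining $\goth M$, and then to invoke Remark~\ref{rem:whenautline}.

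First I would spell out that $\goth M=\DifSpace((C_3)^k\oplus C_n,{\cal D}_k\uplus D)$, so that its defining quasi difference set ${\cal D}_k\uplus D$ consists of the elements $\embfunc_1(e_0),\embfunc_1(e_1),\ldots,\embfunc_1(e_k)$ together with $\embfunc_2(d_0),\ldots,\embfunc_2(d_q)$, where $\embfunc_1,\embfunc_2$ are the standard inclusions of the two factors and $\embfunc_1(e_0)=\embfunc_2(d_0)=\theta$. The map $G_\beta$ permutes the first $k$ coordinates according to $\beta$ and acts as the identity on the $C_n$-coordinate, hence it is a bijective group homomorphism, so $G_\beta\in\Aut((C_3)^k\oplus C_n)$; in particular $G_\beta(\theta)=\theta$, since a homomorphism fixes the neutral element. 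Moreover $G_\beta$ carries each ``unit'' element $\embfunc_1(e_i)$ with $i\geq 1$ to another such element, fixes $\embfunc_1(e_0)=\theta$, and fixes every $\embfunc_2(d_j)$, the latter being supported on the last coordinate only; consequently $G_\beta$ stabilises ${\cal D}_k\uplus D$ set-wise.

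Next I would apply Remark~\ref{rem:whenautline} with the translating element taken to be the neutral element: since $G_\beta\in\Aut((C_3)^k\oplus C_n)$ and $G_\beta({\cal D}_k\uplus D)={\cal D}_k\uplus D$ (a trivial translate of itself), the map $G_\beta$ determines an automorphism of $\goth M$ whose point-part is $G_\beta$ and whose line-part sends $[a]$ to $[G_\beta(a)]$; this is precisely the assertion $G_\beta\in\Aut(\goth M)$. Finally, since $d_0=0$ the point $p'_{m,0}$ has coordinates $\embfunc_1(e_m)$, and substituting these coordinates into the formula defining $G_\beta$ returns the coordinates of $p'_{\beta(m),0}$, which gives the last claim.

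The only matters needing a little care are verifying that $G_\beta$ stabilises ${\cal D}_k\uplus D$ set-wise (so that the translating element in Remark~\ref{rem:whenautline} is trivial) and checking the index bookkeeping so that the coordinate permutation reads as $\beta$, and not $\beta^{-1}$, on the points $p'_{m,0}$ --- both routine once the coordinate convention $(x_k,\ldots,x_1,y)$ is fixed. An essentially equivalent route, which I would also mention, is to use Proposition~\ref{pr:associatprod} to present $\goth M$ as the iterated sum of $k$ copies of $\DifSpace(C_3,{\cal D}_1)$ with $\DifSpace(C_n,D)$, and then to repeat almost verbatim the argument of Lemma~\ref{lem:syminsegre} that permuting equal direct summands is a collineation; but passing through Remark~\ref{rem:whenautline} is the shortest path.
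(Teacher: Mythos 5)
Your argument is correct, but it travels a different road than the paper. The paper's proof writes $G_{\beta} = G_1 \oplus \id_{C_n}$ with $G_1 = (h,h)$ the coordinate permutation on $(C_3)^k$, invokes Lemma~\ref{lem:syminsegre} to see that $G_1$ is a collineation of $\DifSpace(C_3^k,{\cal D}_k)$, and then combines the two factors via Proposition~\ref{pr:autinsegre} (which applies because the point- and line-parts coincide in each factor). You instead verify directly that $G_{\beta}$ is a group automorphism of $(C_3)^k\oplus C_n$ stabilising the quasi difference set ${\cal D}_k\uplus D$ set-wise, and conclude by Remark~\ref{rem:whenautline} with trivial translating element; this is a legitimate shortcut, consistent with how the paper itself uses that remark in Lemma~\ref{lem:permtocollin}, and it has the minor advantage of producing the line-part formula $[a]\mapsto[G_{\beta}(a)]$ explicitly in one step. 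The paper's route is the one that generalises when the permuted summands carry automorphisms that are not induced by group automorphisms, but for this particular $G_{\beta}$ both arguments are equally rigorous. One caveat you flag but dismiss as routine deserves a word: with the formula $G_{\beta}((x_k,\ldots,x_1,y)) = (x_{\beta(k)},\ldots,x_{\beta(1)},y)$ as written, the image of $p'_{m,0}=\embfunc_1(e_m)$ is literally $p'_{\beta^{-1}(m),0}$, so the displayed conclusion $G_{\beta}(p'_{m,0})=p'_{\beta(m),0}$ holds only up to replacing $\beta$ by its inverse (or reinterpreting the action); this indexing wrinkle is already present in the statement and in Lemma~\ref{lem:syminsegre}, and the paper's own proof does not address the last claim at all, so it is not a defect of your proposal.
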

\begin{proof}
  We can write 
    $G_{\beta} = G_1\oplus {\id}_{C_n}$, 
  where 
    $G_1 = (f',f'')$ with $f' = f'' = h$ and 
    $h((x_k,\ldots,x_1)) = (x_{\beta(k)},\ldots,x_{\beta(1)})$. 
  From \ref{lem:syminsegre}, $G_1\in \Aut(\DifSpace(C_3^k,{\cal D}_k))$, and 
  from \ref{pr:autinsegre},
    $G_{\beta}\in \Aut({\goth M})$.
\end{proof}
\begin{lem}\label{lem:symetrofpapfan}
  Let $G_{\beta}$ be the map defined in \ref{lem:syminpapplane}, let  
  ${\cal G}_0 = \{G_{\beta}\colon \beta\in S_k\}$,
  and let 
  ${\cal G} = \{\tau_a \circ g \colon g\in {\cal G}_0, a \in (C_3)^k \times C_n\}$.
  Then ${\cal G}_0 \cong S_k$ and $\cal G$ is the group isomorphic 
  to the semidirect product $S_k\ltimes (C_3^k\oplus C_n)$.
\end{lem}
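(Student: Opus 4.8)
The plan is to exhibit $\cal G$ as an internal semidirect product of the translation subgroup $\text{Tr}\big((C_3)^k\oplus C_n\big)$ and the complement ${\cal G}_0$. First I would verify that $\beta\mapsto G_\beta$ is an isomorphism of $S_k$ onto ${\cal G}_0$. Evaluating the action $G_\beta((x_k,\ldots,x_1,y))=(x_{\beta(k)},\ldots,x_{\beta(1)},y)$ componentwise gives $G_\beta\circ G_{\beta'}=G_{\beta'\circ\beta}$, so the correspondence is a homomorphism of $S_k$ (up to the harmless identification $S_k\cong S_k^{\mathrm{op}}$), and it is onto by definition of ${\cal G}_0$. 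It is injective because, if $\beta\neq\id$, picking $m$ with $\beta(m)\neq m$ and the point $x$ with $x_m=1_m$ and all other $C_3$-coordinates $0$ yields $G_\beta(x)_m=x_{\beta(m)}=0\neq 1_m$, so $G_\beta\neq\id$. By \ref{lem:syminpapplane} every $G_\beta$ is an automorphism of $\goth M$ fixing $\theta$, hence ${\cal G}_0$ is a subgroup of $\otocz({\Aut({\goth M})},\theta)$ isomorphic to $S_k$.

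Next I would note that each $G_\beta$, restricted to the point set $(C_3)^k\times C_n$, is a group automorphism of $(C_3)^k\oplus C_n$: it merely permutes the $C_3$-summands and fixes $C_n$ pointwise, so it is additive and bijective. Since (as recalled in Section 1) every translation of $(C_3)^k\oplus C_n$ is an automorphism of $\goth M$, and $\text{Tr}\big((C_3)^k\oplus C_n\big)\cong(C_3)^k\oplus C_n$, evaluating on an arbitrary point yields the conjugation formula
$$G_\beta\circ\tau_a\circ G_\beta^{-1}=\tau_{G_\beta(a)}.$$
This shows that ${\cal G}_0$ normalizes the translation group, and a direct computation then gives $(\tau_a\circ G_\beta)\circ(\tau_b\circ G_{\beta'})=\tau_{a+G_\beta(b)}\circ G_{\beta'\circ\beta}$, together with $(\tau_a\circ G_\beta)^{-1}=\tau_{-G_\beta^{-1}(a)}\circ G_{\beta^{-1}}$. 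Hence $\cal G$ is closed under composition and inverses and contains $\id=\tau_\theta\circ G_{\id}$, so $\cal G$ is a subgroup of $\Aut({\goth M})$ in which $\text{Tr}\big((C_3)^k\oplus C_n\big)$ is a normal subgroup isomorphic to $(C_3)^k\oplus C_n$.

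Finally I would observe that the factorization of an element of $\cal G$ as $\tau_a\circ G_\beta$ is unique: applying it to $\theta$ recovers $a$ (because $G_\beta(\theta)=\theta$), and $\beta$ is then recovered via the isomorphism ${\cal G}_0\cong S_k$. In particular $\text{Tr}\big((C_3)^k\oplus C_n\big)\cap{\cal G}_0=\{\id\}$ while ${\cal G}=\text{Tr}\big((C_3)^k\oplus C_n\big)\cdot{\cal G}_0$, so $\cal G$ is the internal semidirect product of its normal translation subgroup by the complement ${\cal G}_0\cong S_k$; reading the action off the conjugation formula, $S_k$ acts by permuting the $(C_3)^k$-coordinates and trivially on $C_n$. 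This is precisely ${\cal G}\cong S_k\ltimes\big(C_3^k\oplus C_n\big)$. All the computations are routine; the only points needing care are the bookkeeping of composition order (so that one sees an $S_k$-action rather than its opposite) and the observation that $G_\beta$ is genuinely a group automorphism of $(C_3)^k\oplus C_n$, which is what makes the conjugation formula, and hence the semidirect-product structure, go through.
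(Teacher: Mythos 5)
Your proof is correct and follows essentially the same route as the paper, whose entire argument is to record the two identities $G_{\beta_1}\circ G_{\beta_2}=G_{\beta_1\beta_2}$ and $G_{\beta}\circ\tau_a\circ G_{\beta}^{-1}=\tau_{G_{\beta}(a)}$; you simply spell out the routine verifications (injectivity, unique factorization, closure) that the paper leaves implicit, and you are right that the composition convention only matters up to the harmless identification $S_k\cong S_k^{\mathrm{op}}$.
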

\begin{proof}
  It suffices to note that $G_{\beta_1}\circ G_{\beta_2}=G_{\beta_1 \beta_2}$ 
  and 
  $G_{\beta}\circ \tau_a\circ G_{\beta}^{-1}(u)=
   \tau_{G_{\beta}(a)}(u)$, 
  where
    $a,u\in C_3^k\oplus C_n$.
\end{proof}
\begin{lem}\label{lem:permpointprim}
  Under assumptions of \ref{lem:prespapandpg} if, additionally, $\beta = \id$  
  (i.e. the permutation $\beta$ defined in \ref{lem:prespapandpg} is the identity), 
  then every point $p''_{m,0}, p'_{s,q+r}, p''_{s,q+r}$ is fixed by $F$,
  for all $m=1,\ldots,k$; $s=2,\ldots,k$; $r=1,\ldots,k-1$. 
  Moreover, the permutation $\alpha$ of $\{1,\ldots,q\}$ 
  satisfies: $F(p'_{m,i})= p'_{m,\alpha(i)}$  
  for $i=1,\ldots,q$.
\end{lem}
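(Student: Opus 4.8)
The key observation I would start from is that the points named in the first assertion, namely the $p''_{m,0}$, the $p'_{s,q+r}$, and the $p''_{s,q+r}$, all have zero last ($C_n$-)coordinate, so they all lie inside the multi-Pappian part
$${\goth M}_0 := {\goth M} \ciach{\{ k+1 \}} (0) \;\cong\; \DifSpace((C_3)^k,{\cal D}_k)$$
of ${\goth M}$ (this is the $\{1,\dots,k\}$-part of ${\goth M}$ identified in \ref{lem:prespapandpg}; cf. also \ref{prop:partsofprod}). Hence the whole first assertion will follow once I show that $F$ acts as the identity on ${\goth M}_0$. Since, by \ref{lem:prespapandpg}, ${\goth M}_0$ is $F$-invariant, $F$ restricts there to an automorphism $F_0$ of $\DifSpace((C_3)^k,{\cal D}_k)$ fixing $\theta=(e_0)$; and $\beta=\id$ together with \ref{lem:prespapandpg} yields $F(p'_{m,0})=p'_{m,0}$, i.e. $F_0$ fixes each of the points $(e_1),\dots,(e_k)$ as well.

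To obtain $F_0=\id$ I would argue as follows. The points $(e_0),(e_1),\dots,(e_k)$ are precisely the points of the line $l_0=[e_0]$ of ${\goth M}_0$, so $F_0$ fixes $l_0$ pointwise; in particular $F_0''(l_0)=l_0$. Assuming $k\ge 3$, \ref{prop:multpapaut} gives $\otocz({\Aut({\goth M}_0)},\theta)\cong S_{k+1}$, with $F_0$ corresponding to the permutation $\gamma$ determined by $F_0''([-e_i])=[-e_{\gamma(i)}]$ (the action described in \ref{cor:semidirprod}), and $F_0''(l_0)=l_0$ forces $\gamma(0)=0$. For a $\gamma$ with $\gamma(0)=0$, \ref{lem:permtocollin}\eqref{permtocollin:cas1} exhibits the (by the above, unique) $\theta$-fixing collineation realising it, acting on points by $(x_1,\dots,x_k)\mapsto(x_{\gamma(1)},\dots,x_{\gamma(k)})$; hence $F_0'((e_m))=(e_{\gamma^{-1}(m)})$, and comparison with $F_0'((e_m))=(e_m)$ gives $\gamma=\id$, so $F_0=\id$. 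Since $p''_{m,0}$, $p'_{s,q+r}$, $p''_{s,q+r}$ all lie in ${\goth M}_0$, they are fixed by $F$, which is the first assertion.

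For the second assertion, fix $i\in\{1,\dots,q\}$. I would first check, directly from the shape of the quasi difference set ${\cal D}_k\uplus D$, that $p'_{m,i}$ lies on exactly one line through $\theta$, namely $l_i$: indeed $p'_{m,i}-l_i=e_m\in{\cal D}_k\uplus D$, whereas subtracting the coordinates of $l_0$, of any $l_j$ with $j\neq i$, or of any $l''_{m'}$ produces an element with two non-zero coordinates lying in distinct cyclic summands, hence not in ${\cal D}_k\uplus D$. Therefore $F(p'_{m,i})$ lies on $F(l_i)=l_{\alpha(i)}$. Among the points of $l_{\alpha(i)}$ other than $\theta$, the points $q_{\alpha(i),j}$ lie in the projective part $\{(0,\dots,0)\}\times C_n$, which is $F$-invariant by \ref{lem:prespapandpg}, while the points $p'_{m',\alpha(i)}$, $m'=1,\dots,k$, do not; as $p'_{m,i}$ itself is not in the projective part, it follows that $F(p'_{m,i})=p'_{m',\alpha(i)}$ for some $m'$. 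Finally, to identify $m'$ I would use collinearity with the (now fixed) point $p'_{m,0}$: the coordinate difference $p'_{m,i}-p'_{m,0}$ equals $(0,\dots,0,-d_i)$, a difference of two elements of ${\cal D}_k\uplus D$, so $p'_{m,i}\collin p'_{m,0}$; but for $m''\neq m$ the difference $p'_{m,i}-p'_{m'',0}$ has three non-zero coordinates and is not a difference of two elements of ${\cal D}_k\uplus D$ (such differences have at most two non-zero coordinates), so $p'_{m,i}$ is not collinear with $p'_{m'',0}$; the same alternative distinguishes $p'_{m,\alpha(i)}$ from $p'_{m'',\alpha(i)}$ with $m''\neq m$. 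Since $F$ fixes $p'_{m,0}$ and preserves collinearity, $m'=m$, i.e. $F(p'_{m,i})=p'_{m,\alpha(i)}$.

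I expect the conceptual part --- realising that the distinguished points of~(1) live in ${\goth M}_0$ and that the restriction $F_0$ is then pinned down to the identity via \ref{prop:multpapaut} --- to be short; the real work, and the main place to be careful, is the incidence and collinearity bookkeeping in ${\goth M}=\DifSpace((C_3)^k\oplus C_n,{\cal D}_k\uplus D)$ in the last paragraph (determining which lines through $\theta$ pass through a given neighbourhood point, and which of the $p'_{m'',0}$ are collinear with it), together with keeping the hypothesis $k\ge 3$ in force so that \ref{prop:multpapaut} may be invoked.
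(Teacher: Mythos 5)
Your treatment of the second assertion (that $F(p'_{m,i})=p'_{m,\alpha(i)}$) is sound and is essentially the paper's own argument: $p'_{m,i}$ is the unique point of $l_i$ outside the ($F$-invariant) projective part that is collinear with the already-fixed point $p'_{m,0}$, and this characterization transports along $F$ to $l_{\alpha(i)}$. The first assertion is where you diverge, and where the gap lies. The paper pins down each of $p''_{m,0}$, $p'_{s,q+r}$, $p''_{s,q+r}$ individually by the same kind of local characterization (each is the unique point other than $\theta$ on a fixed line $l''_{\cdot}$ collinear with a fixed point $p'_{\cdot,0}$), whereas you prove the much stronger statement that $F$ restricts to the identity on the whole multi-Pappian part ${\goth M}_0\cong\DifSpace((C_3)^k,{\cal D}_k)$, by feeding the restriction $F_0$ (an automorphism of ${\goth M}_0$ fixing the line $[e_0]$ pointwise) into Proposition \ref{prop:multpapaut}. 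For $k\ge 3$ this is correct and arguably cleaner.

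The problem is $k=2$. Proposition \ref{prop:multpapaut} is stated only for $n>2$, and that hypothesis is not removable: the Pappus configuration $\DifSpace((C_3)^2,{\cal D}_2)$ has automorphism group of order $108$ and admits a non-identity collineation fixing a line pointwise --- for instance the involution fixing $(0,0),(1,0),(0,1)$ and interchanging $(2,0)\leftrightarrow(1,2)$, $(0,2)\leftrightarrow(2,1)$, $(1,1)\leftrightarrow(2,2)$ is an automorphism. This map moves $(2,0)$, which is exactly the point playing the role of $p''_{1,0}$; so knowing only that $F_0$ is an automorphism of ${\goth M}_0$ fixing $[e_0]$ pointwise cannot yield the first assertion when $k=2$, and one must use the ambient structure $\goth M$, as the paper does. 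Since the lemma is subsequently applied with $k\ge 2$ (Proposition \ref{prop:autofpapusfano} and the $PG(2,3)$ examples), the case $k=2$ cannot be discarded. The repair is precisely the paper's local argument: $p''_{m,0}$ is the unique point $\neq\theta$ on $l''_m$ collinear with $p'_{m,0}$, and $p'_{s,q+r}$, $p''_{s,q+r}$ are the unique such points on $l''_r$, $l''_s$ collinear with $p'_{s,0}$, $p'_{r,0}$ respectively; all the data entering these characterizations is fixed by $F$ once $\beta=\id$, so the points themselves are fixed, for every $k\ge 2$.
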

\begin{proof}
  Let $m\in \{1,\ldots,k\}$. Note, that the point 
  $p''_{m,0}$ is the only one on the line $l''_m$, which is collinear with
  $p'_{m,0}$. Now, let
  $s\in \{2,\ldots,k\}$, $r\in \{1,\ldots,s-q+2\}$.
  Then,  
  $p'_{s,q+r}\inc l''_r$ and $p''_{s,q+r}\inc l''_s$ can be described
  as the (unique) points connected with $p'_{s,0}$ and $p'_{r,0}$ respectively.
  Thus, with the help of \ref{lem:prespapandpg}, we obtain
  \\[1.5ex]
  \centerline{
   $p'_{m,0}= F(p'_{m,0})\collin F(p''_{m,0})\inc F(l''_m)=l''_m$,} 
  \\[1.2ex]
  \centerline{
   $p'_{s,0}= F(p'_{s,0})\collin F(p'_{s,q+r})\inc F(l''_r)=l''_r$,}
  \\[1.2ex]
  \centerline{
   $p'_{r,0}= F(p'_{r,0})\collin F(p''_{s,q+r})\inc F(l''_s)=l''_s$.}
  \vskip1.5ex\noindent
  Hence,
  $F(p''_{m,0})=p''_{m,0}$,  $F(p'_{s,q+r})=p'_{s,q+r}$ and 
  $F(p''_{s,q+r})=p''_{s,q+r}$.
  
  The point $p'_{m,i}$ can be characterized as the unique 
  point connected with $p'_{m,0}$, which lies on $l_i$. Consequently,
  \\[1.5ex]
  \centerline{
  $p'_{m,0}= F(p'_{m,0})\collin F(p'_{m,i})\inc F(l_i)=l_{\alpha(i)}$,}
  \vskip1.5ex\noindent
  and then $F(p'_{m,i})= p'_{m,\alpha(i)}$.
%
\end{proof}
\begin{lem}\label{lem:permpintbis}
  Under assumptions of \ref{lem:permpointprim}, and the condition
  \begin{enumerate}[\rm(a)]
  \item\label{triangle}
    for every, except at most one, $d_i\in D$ there exist $d_j,d_r\in D$
    such that 
    $$d_i + d_j + d_r = 0$$ 
  \end{enumerate}
  the permutation $\alpha$ given in \ref{lem:prespapandpg} satisfies the following: 
    $F(p''_{m,i})= p''_{m,\alpha(i)}$, 
    $F(q_{i,j})= q_{\alpha(i), \alpha(j)}$, 
  for $m=1,\ldots,k$; $i,j=1,\ldots,q$, $i\neq j$. 
\par\noindent
  Consequently, if $\alpha = \id$, 
  then $F$ is the identity on $\otocz({\goth M},\theta)$.
\end{lem}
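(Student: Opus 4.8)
The plan is to determine the action of $F$ on the points of $\otocz({\goth M},\theta)$ not yet located — the points $p''_{m,i}$ on the lines $l''_m$ and the points $q_{i,j}$ of the projective subplane — by feeding the data already obtained in \ref{lem:prespapandpg} and \ref{lem:permpointprim} (namely $\beta=\id$, $F(l_i)=l_{\alpha(i)}$, $F(l''_m)=l''_m$, $F$ fixes $\theta$, every $p''_{m,0}$ and every mixed point $p'_{s,q+r},p''_{s,q+r}$, and $F(p'_{m,i})=p'_{m,\alpha(i)}$) into the connecting lines supplied by \ref{lem:papinscneighb}, applied with one of the multi-Pappus $C_3$-factors in the role of $C_k$ and $(C_3)^{k-1}\oplus C_n$ in the role of $\sf G$, so that its $k=3$ clauses are available. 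The zero-sum triples of the corresponding quasi difference set that enter the argument are exactly the triples $d_i+d_j+d_r=0$ coming from $D$, which are furnished by hypothesis \eqref{triangle}.

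First I would pin down $F$ on each line $l''_m$. Since $\beta=\id$ we have $F(l''_m)=l''_m$; its points are $\theta$, $p''_{m,0}$, the $k-1$ mixed points lying on it, and $p''_{m,1},\dots,p''_{m,q}$, and by \ref{lem:permpointprim} all but the last family are fixed, so $F$ permutes $\{p''_{m,i}:i=1,\dots,q\}$, say $F(p''_{m,i})=p''_{m,\rho_m(i)}$. Now \ref{lem:papinscneighb} (clause \eqref{extramult:cas3}) produces, for each $i$, connecting lines joining $p''_{m,i}$ to $q_{j,i}$ for every $j\ne i$ and to $p''_{l,i}$ for every $l\ne m$; conversely, by the same description a connecting line meets $p''_{m,a}$ and $p''_{l,b}$, or $p''_{m,a}$ and $q_{j,b}$, only when $a=b$. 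Applying $F$ and matching second indices first forces $\rho_m=\rho_l$ for all $m,l$ — write $\rho$ for the common permutation — and then, reading off the first index from $F(l_j)=l_{\alpha(j)}$, gives $F(q_{j,i})=q_{\alpha(j),\rho(i)}$.

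It remains to prove $\rho=\alpha$. Here I would use the $k=3$ connectors of \ref{lem:papinscneighb}: whenever $d_i+d_j+d_r=0$ the line $[1_m,d_r]$ carries $p''_{m,r}$, $p'_{m,i}$ and $p'_{m,j}$. Since $F(p'_{m,t})=p'_{m,\alpha(t)}$ and $F(p''_{m,r})=p''_{m,\rho(r)}$, the image of this line must be the corresponding connector $[1_m,d_{\rho(r)}]$, which gives $d_{\alpha(i)}+d_{\alpha(j)}+d_{\rho(r)}=0$. Cycling the roles of $i,j,r$ inside the triple yields $d_{\alpha(i)}-d_{\rho(i)}=d_{\alpha(j)}-d_{\rho(j)}=d_{\alpha(r)}-d_{\rho(r)}$, so $t\mapsto d_{\alpha(t)}-d_{\rho(t)}$ is constant along every zero-sum triple; since by \eqref{triangle} all but at most one index lies in such a triple and these triples link up, this defect equals a single constant $c$ on all but at most one index. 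Summing $d_{\alpha(t)}-d_{\rho(t)}$ over $t=1,\dots,q$ (both $\alpha$ and $\rho$ being permutations of $\{1,\dots,q\}$, the two sums of the $d_t$ cancel) leaves $qc=0$ in $C_n$; as $\gcd(q,q^2+q+1)=1$ this forces $c=0$, hence $d_{\alpha(t)}=d_{\rho(t)}$ and $\rho=\alpha$ on those indices, and then $\rho=\alpha$ everywhere because $\rho$ is a permutation. This yields $F(p''_{m,i})=p''_{m,\alpha(i)}$ and $F(q_{i,j})=q_{\alpha(i),\alpha(j)}$. For the closing assertion, if $\alpha=\id$ then every point of $\otocz({\goth M},\theta)$ is fixed — the $p'$'s, the mixed points, $p''_{m,0}$ and $\theta$ by \ref{lem:permpointprim}, and the remaining $p''$'s and the $q$'s by what we have just shown — and the lines $l_i$, $l''_m$, hence all connecting lines (being determined by the points they join), are fixed too, so $F$ is the identity on $\otocz({\goth M},\theta)$.

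The step I expect to be the genuine obstacle is the final bootstrap $\rho=\alpha$: one must argue carefully that the defect $d_{\alpha(t)}-d_{\rho(t)}$ really is the same constant across the different zero-sum triples (watching the single possibly-exceptional index of \eqref{triangle}) and then extract $c=0$ from $qc=0$ via $\gcd(q,q^2+q+1)=1$; an alternative route, if the overlap bookkeeping becomes awkward, is to exploit that $F$ restricts to a bona fide collineation of the projective plane $\DifSpace(C_n,D)$, which is highly rigid and whose collineations fixing $\theta$ and the pencil through it are very constrained. It is worth noting that for the weaker consequence alone ($\alpha=\id\Rightarrow F=\id$ on the neighbourhood) no arithmetic is needed: with $\alpha=\id$ the relation $d_i+d_j+d_{\rho(r)}=0=d_i+d_j+d_r$ immediately gives $\rho(r)=r$ for every $r$ lying in a triple, whence $\rho=\id$.
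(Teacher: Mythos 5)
Your argument follows the paper's proof in its essentials: both locate $F(p''_{m,i})$ by observing that (except for at most one index) $p''_{m,i}$ is joined to exactly the two points $p'_{m,j},p'_{m,r}$ of $P'$ with $d_i+d_j+d_r=0$, and both then locate $F(q_{i,j})$ as the unique point of $l_{\alpha(i)}$, other than $\theta$ and the $p'$'s, collinear with the image of $p''_{m,j}$; your preliminary step showing that the permutation $\rho$ induced on $\{p''_{m,i}\colon i=1,\dots,q\}$ does not depend on $m$ is implicit in the paper. Where you genuinely add something is at the point the paper passes over in silence: from ``$F(p''_{m,i})$ lies on $l''_m$ and is collinear with $p'_{m,\alpha(j)}$ and $p'_{m,\alpha(r)}$'' the paper concludes $F(p''_{m,i})=p''_{m,\alpha(i)}$ outright, which tacitly uses that $d_{\alpha(i)}+d_{\alpha(j)}+d_{\alpha(r)}=0$, i.e.\ that $\alpha$ carries zero-sum triples to zero-sum triples. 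You correctly isolate this bootstrap $\rho=\alpha$ as the real content of the general statement.

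Your proof of that step is, however, not complete, as you yourself suspect. The relations $d_{\alpha(i)}+d_{\alpha(j)}+d_{\rho(r)}=0$ and their cyclic variants do show that $t\mapsto d_{\alpha(t)}-d_{\rho(t)}$ is constant on each zero-sum triple, but hypothesis (a) does not make the triples ``link up'': they may split into several disjoint blocks, each carrying its own constant, and summing over $t=1,\dots,q$ then yields only a single linear relation among these constants. Even in the connected case the arithmetic needs care: if the exceptional index exists it is fixed by both $\alpha$ and $\rho$ (it indexes the unique unconnected point of $P'$ and of $P''$), so the sum becomes $(q-1)c$ rather than $qc$, and $\gcd(q-1,q^2+q+1)=\gcd(q-1,3)$ may equal $3$. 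So neither your argument nor the paper's fully establishes the lemma for arbitrary $\alpha$ under (a) alone. What does survive intact --- and is all that the subsequent propositions actually invoke --- is your closing observation: for $\alpha=\id$ the relation $d_i+d_j+d_{\rho(r)}=0=d_i+d_j+d_r$ forces $\rho=\id$ with no arithmetic whatsoever, and in the concrete difference sets treated later ($\{0,1,3\}$ in $C_7$, $\{0,1,3,9\}$ and its companions in $C_{13}$) all relevant indices lie in a single zero-sum triple, so the bootstrap is immediate there in any case.
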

\begin{proof}
  Consider arbitrary  
  $m\in \{1,\ldots,k\}$.
  Let us draw attention to the lines, which join the points from the set  
  $\{ p''_{m,i}\colon i=1,\ldots,q \} =: P''$ 
  with the points  
  $\{ p'_{m,i}\colon i=1,\ldots,q \} =: P'$.
  The condition \eqref{triangle} yields that there is at most one 
  pair $(x,y) \in P'' \times P'$ such that $x$ is unconnected with the points
  from $P'$ and $y$ is unconnected with the points from $P''$.
  What is more, every remaining 
  point $p''_{m,i}$ from $P''$ is collinear with two points from $P'$:
  $p'_{m,j}$ and $p'_{m,r}$, where $d_i + d_j + d_r = 0$. 
  Note that, if $d_i + d_{j_t} + d_{r_t} = 0$ for $t = 1,2$ then 
  $\{d_{j_1},d_{r_1} \} = \{ d_{j_2}, d_{r_2} \}$
  and thus the above points $p'_{m,j}$ and $p'_{m,r}$ are uniquely determined by $p''_i$.
  Therefore, 
  \\[1.5ex]
  \centerline{
  $l''_m = F(l''_m)\inc F(p''_{m,i})\collin F(p'_{m,j}) = p'_{m,\alpha(j)},$}
   \\[1.2ex]
  \centerline{
  $l''_m = F(l''_m)\inc F(p''_{m,i})\collin F(p'_{m,r}) = p'_{m,\alpha(r)},$}
  \vskip1.5ex\noindent
  and then $F(p''_{m,i})= p''_{m,\alpha(i)}$.

 \par 
  From \ref{lem:papinscneighb} we get, that $q_{i,j}\inc l_i$ and
  $q_{i,j}\collin p''_{m,j}$. Consequently, 
  \\[1.5ex]
  \centerline{
  $F(q_{i,j})\inc F(l_i)=l_{\alpha(i)}$ and
  $F(q_{i,j})\collin F(p''_{m,j})=p''_{m,\alpha(j)}$;}
  \vskip1.5ex\noindent
  thus $F(q_{i,j}) = q_{\alpha(i), \alpha(j)}$.  
\end{proof}
%
%
Remind, that the symbol $\otocz({\goth M},q)$ with $q\in (C_3)^k \times C_n$
means the substructure of $\goth M$, 
which contains the points collinear with $q$ and the lines among them. 
If $F(q) = q$, then we write 
$\alf(F,q)$ for the permutation $\alpha$  of $\{ 1,\ldots,q \}$ 
and $\bet(F,q)$ for the permutation $\beta$ of $\{ 1,\ldots,k \}$
in accordance with \ref{lem:prespapandpg} determined by   
the permutation $F\restriction{\otocz({\goth M},q)}$ of the points of $\otocz({\goth M},q)$.
\begin{lem}\label{lem:preslinethru}
  Let the condition \eqref{triangle} of \ref{lem:permpintbis} be satisfied for
  $\goth M$ defined in \eqref{str:papplusproj}.
  If $F\in \Aut(\goth M)$ preserves every line passing through $q$ 
  (in particular, $F(q) = q$),
  then the permutations $\bet(F,s)$ and $\alf(F,q)$ determined by $F$, defined in
  \ref{lem:permpointprim} and \ref{lem:permpintbis}, are identities,
  and thus $F$ is the identity on $\otocz({\goth M},q)$.
\end{lem}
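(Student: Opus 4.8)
The plan is to derive this as a short consequence of the three preceding lemmas. Indeed, \ref{lem:permpointprim} and \ref{lem:permpintbis} already pin down the action of $F$ on the whole of $\otocz({\goth M},q)$ in terms of the two permutations $\alf(F,q)$ and $\bet(F,q)$, and \ref{lem:permpintbis} explicitly says that as soon as the relevant $\alpha$ is the identity the map $F$ is the identity on that neighbourhood. So the entire argument reduces to showing that, under the present hypothesis, $\alf(F,q)=\id$ and $\bet(F,q)=\id$.

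First I would reduce to the base point. Since the translations of $(C_3)^k\oplus C_n$ form a point-transitive group of automorphisms of ${\goth M}$, pick the translation $\tau$ with $\tau(\theta)=q$, where $\theta=(0,\ldots,0)$. Conjugation by $\tau$ carries an automorphism preserving every line through $q$ to one preserving every line through $\theta$, maps $\otocz({\goth M},\theta)$ onto $\otocz({\goth M},q)$, and matches up the labelled points and lines of these two neighbourhoods; hence it is enough to treat $q=\theta$. In particular $F\in\otocz({\Aut(\goth M)},\theta)$, so \ref{lem:prespapandpg}, \ref{lem:permpointprim} and \ref{lem:permpintbis} all apply to $F$.

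Next I would enumerate the lines of ${\goth M}$ through $\theta$. By \ref{lem:analchain}\eqref{analchain:cas1} -- equivalently, from the explicit description of $\otocz({\goth M},\theta)$ given just before \ref{lem:prespapandpg} -- these are exactly the lines $l_j=[0,\ldots,0,-d_j]$ for $j=0,\ldots,q$ (recall $d_0=0$, and $-1=2$ in $C_3$ for the next family) together with the lines $l''_m=[0,\ldots,2_m,0,\ldots,0]$ for $m=1,\ldots,k$, and nothing else. The hypothesis that $F$ preserves every line through $\theta$ therefore forces $F(l_i)=l_i$ and $F(l''_m)=l''_m$ for all admissible $i,m$; comparing with the defining relations $F(l_i)=l_{\alpha(i)}$ and $F(l''_m)=l''_{\beta(m)}$ of \ref{lem:prespapandpg} we read off $\bet(F,\theta)=\id$ and $\alf(F,\theta)=\id$.

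Finally I would feed this into the earlier lemmas. Since $\bet(F,\theta)=\id$, \ref{lem:permpointprim} gives that $F$ fixes each of the points $p''_{m,0}$, $p'_{s,q+r}$, $p''_{s,q+r}$, and that $F(p'_{m,i})=p'_{m,\alpha(i)}=p'_{m,i}$. Then \ref{lem:permpintbis} applies, its hypotheses being precisely those of \ref{lem:permpointprim} together with condition \eqref{triangle} of \ref{lem:permpintbis}, which is assumed here; its closing assertion ($\alpha=\id$ implies $F$ is the identity on $\otocz({\goth M},\theta)$) yields the claim for $q=\theta$, and transporting back along $\tau$ gives $F\restriction\otocz({\goth M},q)=\id$. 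I do not expect a genuine obstacle; the only step needing care is the enumeration of the lines through $\theta$, since the argument relies on these exhausting both families $l_j$ and $l''_m$, so that the single hypothesis simultaneously trivialises $\alpha$ and $\beta$.
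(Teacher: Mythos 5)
Your proposal is correct and follows essentially the same route as the paper: reduce to $q=\theta$ by conjugating with the translation $\tau_q$, observe that preservation of every line through $\theta$ forces both $\alf(F,\theta)$ and $\bet(F,\theta)$ to be the identity, and then invoke \ref{lem:permpointprim} and \ref{lem:permpintbis} to conclude. Your explicit enumeration of the two families $l_j$ and $l''_m$ of lines through $\theta$ is a slightly more careful spelling-out of the step the paper dispatches with ``in particular, the lines $l''_m$ are preserved,'' but the argument is the same.
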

\begin{proof}
  Clearly, the translation $g = \tau_q$ maps $\theta$ to $q$
  and thus it maps $\otocz({\goth M},\theta)$ onto $\otocz({\goth M},q)$.
  We set $G = g^{-1} \circ F \circ g$. 
  Then   $G \in \Aut({\goth M})$ and $G(\theta) = \theta$. 
  From assumption,  $G$ maps every line through $\theta$ onto itself.
  In particular, the lines $l''_m$ with $m=1,\ldots,k$ are preserved under $G$,
  so $\bet(G,\theta) = \id$, $\alf(G,\theta) = \id$, and thus, by \ref{lem:permpintbis}, 
  $G$ is the identity on $\otocz({\goth M},\theta)$.
  Therefore, $F = g \circ G \circ g^{-1}$ is the identity on $\otocz({\goth M},q)$.
\end{proof}
\begin{lem}\label{lem:presallneighb}
  Let the condition \eqref{triangle} of \ref{lem:permpintbis} be satisfied for
  $\goth M$ defined in \eqref{str:papplusproj},
  and let $F\in \Aut({\goth M})$  fix all the points of $\otocz({\goth M},q)$.
  If $q'\in \otocz({\goth M},q)$, then $F$ fixes the points of $\otocz({\goth M},q')$ as well. 
\end{lem}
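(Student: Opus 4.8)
The statement is a "propagation" lemma: fixing all points in the neighborhood of $q$ forces fixing all points in the neighborhood of every neighbor $q'$ of $q$. The natural strategy is to exploit the transitivity of the translation group, reduce to the case $q=\theta$, and then combine \ref{lem:preslinethru} with the structure of $\otocz({\goth M},\theta)$ described in \ref{lem:papinscneighb} (applied iteratively via \ref{pr:associatprod}) and the incidences recorded in \ref{lem:permpointprim} and \ref{lem:permpintbis}.

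\begin{proof}
  Since translations are automorphisms of $\goth M$ and act transitively on its points,
  conjugating $F$ by $\tau_{-q}$ we may assume $q = \theta$; then $q' \in \otocz({\goth M},\theta)$
  is one of the points $q_{i,j}$, $p'_{m,i}$, $p''_{m,i}$, $p'_{s,q+r}$, $p''_{s,q+r}$
  listed before \ref{lem:prespapandpg}.
  By hypothesis $F$ fixes every point of $\otocz({\goth M},\theta)$, hence in particular
  $F(q') = q'$ and $F$ fixes every point that is collinear with $\theta$ and with $q'$
  simultaneously, i.e. every point of $\otocz({\goth M},\theta) \cap \otocz({\goth M},q')$.

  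The key observation is that for each such $q'$ the set
  $\otocz({\goth M},\theta) \cap \otocz({\goth M},q')$
  already contains, for \emph{every} line $L$ through $q'$, at least two points of $L$ distinct from $q'$
  --- so $F$ fixes at least two points on each line through $q'$, and a collineation fixing two points
  of a line fixes that line.
  Thus $F$ preserves every line through $q'$.
  Now one applies \ref{lem:preslinethru} (with $q'$ in place of $q$), whose hypotheses --- condition
  \eqref{triangle} and $F$ preserving every line through $q'$ --- are exactly what we have established;
  it yields that $F$ is the identity on $\otocz({\goth M},q')$, which is the assertion.

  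The main obstacle is the bookkeeping verification that for each of the listed types of point $q'$,
  the common neighborhood $\otocz({\goth M},\theta)\cap\otocz({\goth M},q')$ meets every line through $q'$ in
  two non-$q'$ points.
  This is handled by the explicit description of $\otocz({\goth M},\theta)$ and of its incidences:
  using \ref{pr:associatprod} we view $\goth M$ as an iterated sum and apply \ref{lem:papinscneighb}
  repeatedly, so that the lines through $q'$ are of the tabulated forms $l_i$, $l''_m$, $l'_{m,i}$,
  $l''_{i,j}$, $l'_{i,j}$, etc.; for each of them the connecting lines recorded in \ref{lem:papinscneighb}
  (together with the collinearities $q_{i,j}\collin p''_{m,j}$, $p'_{m,i}\collin p'_{m,0}$ used already in
  \ref{lem:permpointprim} and \ref{lem:permpintbis}, and condition \eqref{triangle} supplying the extra
  joins $d_i+d_j+d_r=0$) exhibit the required two fixed points on that line.
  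Because \ref{lem:veblen} guarantees that two distinct points on a line have at most one join,
  a collineation fixing two of its points fixes the line set-wise, so no finer analysis of $F$
  off $\otocz({\goth M},\theta)$ is needed before invoking \ref{lem:preslinethru}.
\end{proof}
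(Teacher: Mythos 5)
Your overall strategy --- show that $F$ preserves every line through $q'$ and then invoke \ref{lem:preslinethru} --- is the same as the paper's, but your ``key observation'' is false, and it fails precisely at the place where the paper has to do extra work. It is not true that every line through $q'$ meets $\otocz({\goth M},q)$ in two points besides $q'$; in fact some line through $q'$ may meet $\otocz({\goth M},q)$ in \emph{no} point besides $q'$. Concretely, take $q=\theta$, ${\goth N}=\DifSpace(C_{13},\{0,1,3,9\})$, $q'=p'_{m,i}=(0,\ldots,1_m,0,\ldots,-d_i)$ with $d_i=1$, and the line $L=[q'-(0,\ldots,0,d_i)]=[(0,\ldots,1_m,0,\ldots,-2)]$. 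A point of the form $(0,\ldots,1_m,0,\ldots,c)$ belongs to $\otocz({\goth M},\theta)$ only when $c\in -D$, and running through the $q+k+1$ points of $L$ one finds that only $q'$ qualifies: the candidates $(0,\ldots,1_m,0,\ldots,d_l-2)$ require $2-d_l\in D$, which holds only for $d_l=1$ (giving $q'$ back), and the candidate $(0,\ldots,2_m,0,\ldots,-2)$ requires $-2\in D$, which fails. So $L$ is not a line of $\otocz({\goth M},\theta)$ at all, $F$ is not known to fix a second point on it, and your argument cannot conclude that $F$ preserves $L$. Condition \eqref{triangle} does not repair this: it provides a triple $d_i+d_j+d_r=0$ for (all but one) $d_i$, not for the particular repeated pair occurring here.

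The paper closes exactly this hole with a counting/elimination step that your proposal omits: every point $q'$ of the neighborhood has rank at least $q+k$ \emph{inside} $\otocz({\goth M},q)$, while its rank in $\goth M$ is exactly $q+k+1$; hence at least $q+k$ of the lines through $q'$ carry two fixed points and are invariant, and since $F(q')=q'$ and $F$ permutes the pencil of $q+k+1$ lines through $q'$, the one possibly remaining line is forced to be invariant as well. Only then is \ref{lem:preslinethru} applicable. To fix your proof you must (i) weaken your universal claim to ``all but at most one line through $q'$ lies in $\otocz({\goth M},q)$'' and actually verify this rank estimate for each point type $q_{i,j}$, $p'_{m,i}$, $p''_{m,i}$, $p'_{s,q+r}$, $p''_{s,q+r}$ rather than deferring it to ``bookkeeping,'' and (ii) add the pigeonhole argument on the pencil through the fixed point $q'$ to capture the last line. (A minor further overstatement: even on the ``good'' lines there is typically exactly one neighborhood point besides $q'$, not two; that is harmless only because $q'$ itself is already fixed.)
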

\begin{proof}
  Observe, that every point $q'\in \otocz({\goth M},q)$ has rank greater or equal
  than $q + k$ and the rank of each point in $\goth M$ equals $q + k + 1$.
  It means, that $q + k$ from $q+ k + 1$ lines passing through $q'$ are contained in
  $\otocz({\goth M},q)$. Since $F$ fixes $\otocz({\goth M},q)$, these $q + k$ lines 
  remain invariant under $F$. Then, the last line through $q'$ is also fixed. 
  Applying \ref{lem:preslinethru} we obtain that $F$ is the identity on 
  $\otocz({\goth M},{q'})$.
\end{proof}
Since $\goth M$ is connected, 
combining \ref{lem:permpintbis} and \ref{lem:presallneighb} we obtain
\begin{cor}\label{cor:presallneighb}
  Let the condition \eqref{triangle} of \ref{lem:permpintbis} be satisfied for
  $\goth M$ defined in \eqref{str:papplusproj}
  and let $F\in \Aut({\goth M})$ and $q$ be a point of $\goth M$.
  If $F(q) = q$ and $F$ preserves every line through $q$,
  then $F = \id$.
\end{cor}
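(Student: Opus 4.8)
\emph{Proof.}
The plan is to bootstrap the two preceding lemmas along the connected component of $q$. First, apply \ref{lem:preslinethru} directly: since $F(q)=q$ and $F$ preserves every line through $q$, that lemma yields that $F$ is the identity on $\otocz({\goth M},q)$; in particular $F'$ fixes every point collinear with $q$, i.e.\ $F$ fixes \emph{all} the points of $\otocz({\goth M},q)$.

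Next I would iterate \ref{lem:presallneighb}. Let $X$ be the set of points of $\goth M$ fixed by $F'$. By the step just carried out, $F$ fixes all points of $\otocz({\goth M},q)$, so \ref{lem:presallneighb} gives, for each $q'\in\otocz({\goth M},q)$, that $F$ fixes all points of $\otocz({\goth M},{q'})$ as well. This conclusion is precisely the hypothesis of \ref{lem:presallneighb} with $q'$ in the role of $q$, so the lemma may be reapplied. Proceeding by induction on the length of a polygonal path joining $q$ to a given point (the successor of a point on such a path lies in the neighborhood already shown to be fixed pointwise), we obtain that $F'$ fixes every point lying in the connected component of $q$. Since $\goth M$ is connected (it is a sum of the connected structures $\DifSpace((C_3)^k,{\cal D}_k)$ and $\DifSpace(C_n,D)$, cf.\ \ref{pr:concomp}), it follows that $X$ is the whole point set, i.e.\ $F'=\id$.

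It remains to check that $F''=\id$. Every line of $\goth M$ carries at least two points ($\goth M$ is a configuration) and $\goth M$ is a partial linear space, so a line is recovered from its set of points. For a line $l$ and any point $p$ we have $p\inc l$ iff $p=F'(p)\inc F''(l)$; hence $F''(l)$ contains every point of $l$, and since $l$ has two distinct points this forces $F''(l)=l$. Therefore $F=(F',F'')=\id$. There is essentially no genuine obstacle beyond arranging the induction correctly: the two points that need care are verifying that the output of \ref{lem:presallneighb} matches its own hypothesis so that the iteration is legitimate, and invoking the connectedness of $\goth M$ (established earlier) to reach every point of the structure. $\square$
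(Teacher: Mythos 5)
Your proposal is correct and follows essentially the same route the paper intends: the paper derives the corollary in one line by "combining" the preceding lemmas with the connectedness of $\goth M$, and you simply make that combination explicit (start from \ref{lem:preslinethru} to kill $F$ on $\otocz({\goth M},q)$, propagate by \ref{lem:presallneighb} along polygonal paths, then recover $F''=\id$ from $F'=\id$ since lines of a partial linear space are determined by their point sets). The extra care you take in checking that the conclusion of \ref{lem:presallneighb} matches its own hypothesis is exactly the point the paper leaves implicit.
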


Now, we determine the automorphisms group of $\goth M$ defined in
\eqref{str:papplusproj} 
with  ${\goth N} = {\goth F} = \DifSpace(C_7,\{0,1,3\})$ being the cyclic projective 
plane $PG(2,2)$.
The obtained structure may be considered as a composition of
multi-Pappus and Fano configurations.
Note, that other difference sets in $C_7$ give us structures
isomorphic to $\goth M$.
\begin{prop}\label{prop:autofpapusfano}
  Let 
    ${\goth M} = \DifSpace(C_3^k,{\cal D}_k)\oplus \DifSpace(C_7,\{ 0,1,3 \})$
  with $k \geq 2$.
  Then the group $\Aut({\goth M})$ is isomorphic to $S_k\ltimes (C_3^k\oplus C_7)$.
\end{prop}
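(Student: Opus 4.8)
The plan is to show that $\Aut({\goth M})$ coincides with the subgroup ${\cal G}$ exhibited in \ref{lem:symetrofpapfan}, which is already known to be contained in $\Aut({\goth M})$ and to be isomorphic to $S_k\ltimes(C_3^k\oplus C_7)$. I would first record that the present $\goth M$ is the instance of \eqref{str:papplusproj} with ${\goth N}=\DifSpace(C_7,\{0,1,3\})=PG(2,2)$, so that $q=2$ here, and that the Singer set $D=\{0,1,3\}$ satisfies condition \eqref{triangle} of \ref{lem:permpintbis} (indeed $0+0+0=1+3+3=3+1+3=0$ in $C_7$); hence \ref{lem:prespapandpg}, \ref{lem:permpointprim}, \ref{lem:permpintbis} and \ref{cor:presallneighb} are all available for $\goth M$.

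For the inclusion $\Aut({\goth M})\subseteq{\cal G}$ I would argue as follows. Let $F\in\Aut({\goth M})$. Since the translations form a point--transitive group of automorphisms, after replacing $F$ by $\tau_{-F(\theta)}\circ F$ (multiplying by an element of ${\cal G}$) we may assume $F(\theta)=\theta$, $\theta=(0,\dots,0)$. By \ref{lem:prespapandpg} such an $F$ determines $\beta\in S_k$ with $F(l''_m)=l''_{\beta(m)}$ and $\alpha\in S_2$ with $F(l_i)=l_{\alpha(i)}$ for $i=1,2$, while $F(l_0)=l_0$. Replacing $F$ once more by $G_{\beta}\circ F$ with $G_{\beta}\in{\cal G}_0$ the automorphism of \ref{lem:syminpapplane}, which fixes $\theta$ and every line $l_i$ of the Fano part (and so does not affect $\alpha$), we may assume in addition $\beta=\id$; then $F$ fixes $\theta$ and each of the $k+1$ lines $l_0,l''_1,\dots,l''_k$ through $\theta$, and permutes $\{l_1,l_2\}$ by $\alpha$.

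The crux is to prove $\alpha=\id$; suppose instead $\alpha=(1\,2)$. Since $\beta=\id$, \ref{lem:permpointprim} and \ref{lem:permpintbis} apply and give that $F$ fixes each $p'_{m,0}$ and $p''_{m,0}$ and that $F(p'_{m,i})=p'_{m,\alpha(i)}$, $F(p''_{m,i})=p''_{m,\alpha(i)}$, $F(q_{i,j})=q_{\alpha(i),\alpha(j)}$ for $i,j\in\{1,2\}$. By \ref{lem:prespapandpg}, $F$ leaves invariant the Fano part $S=\{(0,\dots,0)\}\times C_7\cong{\goth F}$ and restricts to a collineation of it; the six points $q_{i,j}$ ($i\neq j$) are precisely the points of $S$ other than $\theta$, the point $q_{i,j}$ carrying $C_7$--coordinate $-d_i+d_j$ (with $d_0=0$, $d_1=1$, $d_2=3$). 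Combining the formulas above with the facts that $F\restriction S$ carries the Fano line $l_0\cap S$ onto itself and interchanges $l_1\cap S$ with $l_2\cap S$ (and resolving the pair $q_{0,1},q_{0,2}$ on $l_0\cap S$ by noting that $q_{0,1}$ is collinear in $\goth M$ with $p''_{m,1}$ and with no other $p''_{m,\cdot}$, while $p''_{m,1}\mapsto p''_{m,2}$), one computes the permutation of $C_7$ induced by $F\restriction S$ completely: it is $\sigma=(1\,3)(2\,5)(4\,6)$, fixing $0$. But $\sigma$ carries the Fano line $1+\{0,1,3\}=\{1,2,4\}$ onto $\{3,5,6\}$, which is not a line of $\DifSpace(C_7,\{0,1,3\})$, contradicting that $F\restriction S$ is a collineation. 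Hence $\alpha=\id$.

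With $\alpha=\beta=\id$ the reduced $F$ fixes $\theta$ and every one of the $k+3$ lines through $\theta$, so \ref{cor:presallneighb} forces $F=\id$; since the replacements above only multiplied $F$ by elements of ${\cal G}$, the original automorphism lies in ${\cal G}$. Together with ${\cal G}\subseteq\Aut({\goth M})$ this gives $\Aut({\goth M})={\cal G}\cong S_k\ltimes(C_3^k\oplus C_7)$. I expect the hard part to be the third step: one must see that the only candidate for an extra symmetry fixing $\theta$ is the transposition of the two Fano lines $l_1,l_2$ through it, and then use the $q_{i,j}$--labelling supplied by \ref{lem:permpintbis} --- the one link between the Fano part and the multi--Pappus part --- to pin down the exact permutation of $C_7$ such a symmetry would induce and to check that it does not preserve the Singer difference set; the remainder is bookkeeping with the neighbourhood description of \ref{lem:papinscneighb} and the rigidity corollary \ref{cor:presallneighb}.
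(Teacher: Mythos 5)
Your proposal is correct and its skeleton coincides with the paper's: reduce to a $\theta$-fixing automorphism by a translation, kill the permutation $\beta=\bet(F,\theta)$ of the multi-Pappian directions by composing with $G_\beta$ from \ref{lem:syminpapplane}, prove that the residual permutation $\alpha=\alf(F,\theta)$ of $\{1,2\}$ is trivial, and then use \ref{lem:permpointprim}, \ref{lem:permpintbis} and the rigidity statement \ref{cor:presallneighb} to conclude $F=\id$, hence $\Aut({\goth M})={\cal G}$ via \ref{lem:symetrofpapfan}. The only genuine divergence is in the crux step $\alpha=\id$. The paper disposes of it in one line by observing that the points $p'_{m,1}$ and $p'_{m,2}$ have different ranks in $\otocz({\goth M},\theta)$, so no automorphism fixing $\theta$ can realize the transposition. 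You instead assume $\alpha=(1\,2)$, use \ref{lem:permpintbis} together with the collinearities $q_{0,i}\collin p''_{m,i}$ to pin down the full permutation $(1\,3)(2\,5)(4\,6)$ of $C_7$ that $F$ would induce on the invariant Fano part, and check that it destroys the line $\{1,2,4\}$ of $\DifSpace(C_7,\{0,1,3\})$. Your route is longer but more explicit: it does not rest on a rank count in the iterated neighbourhood, and the care you take in deciding whether $q_{0,1}$ and $q_{0,2}$ are swapped is genuinely necessary, since the permutation $(2\,5)(4\,6)$ one would obtain without that swap \emph{is} a collineation of the Fano plane, so the contradiction would evaporate. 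Both arguments are sound; the paper's is shorter, yours is self-contained at the level of the stated lemmas.
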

\begin{proof}
  Let $F$ be an automorphism of $\goth M$ and $g=\tau_{-F(\theta))}\circ F$. Then
  $g(\theta) = \theta$ and $g\in \Aut({\goth M})$. In view of 
  \ref{lem:prespapandpg}, $g$ leaves $\{p'_{m,0}\colon m=1,\ldots,k\}$ invariant. 
  We set 
    $h = g \circ G^{-1}_{\beta}$,   
  where $G_{\beta}$ is the  map defined in \ref{lem:syminpapplane}
  and $\beta = \bet(g,\theta) \in S_k$.
  Now, $h\in \Aut({\goth M})$, $h(\theta) = \theta$ and 
  $\bet(h,\theta) = \id$.

  Consider the permutation $\alpha = \alf(h,\theta)$ of $\{1,2\}$ 
  determined by $h$,
  in accordance with \ref{lem:permpointprim} and \ref{lem:permpintbis}. 
  It follows from \ref{lem:permpointprim}, that $h$ permutes the points from 
  $\{p'_{m,i}\colon i=1,2\}$ for all $m=1,\ldots,k$.
  Note, that each of the points $p'_{m,1},p'_{m,2}$ has different rank, 
  and thus $\alpha$ is the identity on $\{1,2\}$.
  This together with \ref{lem:permpointprim} yields that $h$ preserves
  every line through $\theta$.
  From \ref{cor:presallneighb} we get $h = \id$ so, $g = G_{\beta}$.
  With \ref{lem:symetrofpapfan} we close the proof.
\end{proof}

Adopt ${\goth N} = PG(2,3)$ instead of Fano configuration. Then, we obtain
\\[1ex]
\centerline{
${\goth M}^i = \DifSpace(C_3^k,{\cal D}_k)\oplus \DifSpace(C_{13},D^i)$,}
\vskip1ex\noindent
where $i=1,2,3,4$ and 
  $D^1 = \{0,1,3,9\}$, $D^2 = \{0,2,8,12\}$, $D^3 = \{0,6,10,11\}$, $D^4 = \{0,4,5,7\}$. 
Note, that the condition \eqref{triangle} from \ref{lem:permpintbis} is satisfied
for all of the $D^i$, but in different manners: 
for every $d_i\in D^1$ there exist $d_j,d_r\in D^1$ such that $d_i+d_j+d_r = 0$, 
and in $D^2,D^3,D^4$ there is exactly one $d_s$ for which do not exist such $d_j,d_r$.
Thus, the configurations ${\goth M}^j$, $j=2,3,4$ are isomorphic to each 
other (for $\phi(x)=3x$ we get $\phi(D^2)=D^3,\phi(D^3)=D^4,\phi(D^4)=D^1$), 
but are not isomorphic to ${\goth M}^1$. Then, it suffices to consider 
${\goth M}^1$ and ${\goth M}^2$. 
\begin{prop}
  Let 
    ${\goth M}^i = \DifSpace(C_3^k,{\cal D}_k)\oplus \DifSpace(C_{13},D^i)$,
  where $i=1,2$ and $D^1 = \{0,1,3,9\}$, $D^2 = \{0,2,8,12\}$.
  Then, the group $\Aut({\goth M}^i)$ is isomorphic to
     $S_k\ltimes (C_3^k\oplus C_{13})$.
\end{prop}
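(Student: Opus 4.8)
The plan is to mimic the proof of Proposition~\ref{prop:autofpapusfano}, with the cyclic projective plane $\DifSpace(C_{13},D^i)=PG(2,3)$ now in the rôle played there by the Fano plane. Fix $F\in\Aut({\goth M}^i)$. Translations act transitively on ${\goth M}^i$, so put $g=\tau_{-F(\theta)}\circ F$; then $g\in\Aut({\goth M}^i)$ and $g(\theta)=\theta$, $\theta=(0,\dots,0)$. By Lemma~\ref{lem:prespapandpg}, $g$ leaves invariant both the multi-Pappian part $(C_3)^k\times\{0\}\cong\DifSpace((C_3)^k,{\cal D}_k)$ and the projective part $\{0\}\times\DifSpace(C_{13},D^i)$ through $\theta$, and determines permutations $\beta=\bet(g,\theta)\in S_k$ of the $C_3$-directions $l''_m$ and $\alpha=\alf(g,\theta)\in S_q$ ($q=3$) of the projective directions $l_i$. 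Composing with the inverse $G_{\beta}^{-1}$ of the map of Lemma~\ref{lem:syminpapplane} we obtain $h:=g\circ G_{\beta}^{-1}\in\Aut({\goth M}^i)$ with $h(\theta)=\theta$, $\bet(h,\theta)=\id$: thus $h$ fixes $l_0=[\theta]$, fixes every $l''_m$ and every point $p'_{m,0}$, and still induces $\alpha$ on $\{l_1,\dots,l_q\}$.

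Everything then reduces to showing $\alpha=\id$. Granting this, hypothesis~\eqref{triangle} of Lemma~\ref{lem:permpintbis} holds for both $D^1$ and $D^2$, so that lemma gives $h\restriction{\otocz({\goth M}^i,\theta)}=\id$; Corollary~\ref{cor:presallneighb} then yields $h=\id$, hence $g=G_{\beta}$ and $F=\tau_{F(\theta)}\circ G_{\beta}\in{\cal G}$, which by Lemma~\ref{lem:symetrofpapfan} is precisely the group $S_k\ltimes\big((C_3)^k\oplus C_{13}\big)$. So the only real point is to exclude $\alpha\ne\id$, and here the two difference sets require slightly different treatment — this is the ``different manners'' noted before the statement. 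Note that in Proposition~\ref{prop:autofpapusfano} one had $q=2$ and the two points $p'_{m,1},p'_{m,2}$ carry distinct ranks in the neighbourhood, which settles the matter at once; for $q=3$ this shortcut is no longer available.

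For ${\goth M}^2$ $(D^2=\{0,2,8,12\})$ I would use that the triple-sum relation $d_i+d_j+d_r=0$ is realised in $D^2$ only, up to order, by $2+12+12$: hence $8$ lies in no such triple while $2$ and $12$ do, and the triple is not symmetric in its three slots. By Lemmas~\ref{lem:papinscneighb} and~\ref{lem:permpintbis}, $\alpha$ must carry these triples to triples, respecting which index sits in which slot and which index occurs in none; this forces $\alpha$ to fix the index of $8$ and to leave $\{2,12\}$ pointwise fixed, i.e.\ $\alpha=\id$.

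For ${\goth M}^1$ $(D^1=\{0,1,3,9\})$ this shortcut fails, since $1+3+9=0$ puts the three nonzero elements of $D^1$ into one, completely symmetric, triple, and the triple pattern alone does not separate $l_1,l_2,l_3$. This is the step I expect to be the main obstacle. The route I would try exploits the rigidity of the multi-Pappian factor: $h$ restricts to an automorphism of $\DifSpace((C_3)^k,{\cal D}_k)$ fixing $\theta$ and each of the $k$ directions $l''_m$, so by Proposition~\ref{prop:multpapaut} — its copy of $S_{k+1}$ collapsing to the identity once $k$ of the $k+1$ directions through $\theta$ are fixed — $h$ also fixes the remaining direction $[\theta]$ there; as $h$ moreover fixes every $p'_{m,0}$, and these together with $\theta$ exhaust $[\theta]$ inside the Pappian part, Lemma~\ref{lem:multpaprigid} makes $h$ the identity on the whole multi-Pappian part. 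One then tries to read off $\alpha=\id$ from the incidences of Lemma~\ref{lem:papinscneighb} tying the $p'_{m,0}$ to the extra ``triangle'' lines of the $C_3$-steps (present precisely because $1+3+9=0$) and thence to the points $p'_{m,i},p''_{m,i},q_{i,j}$. I should flag, though, that the multiplier $x\mapsto 3x$ stabilises $D^1$ and cyclically permutes $1,3,9$, hence induces a collineation of $\DifSpace(C_{13},D^1)$ fixing $\theta$ and cyclically permuting $l_1,l_2,l_3$; closing the $D^1$ case amounts exactly to checking that this collineation does \emph{not} extend to ${\goth M}^1$, and this verification is the delicate heart of the argument (if it did extend, the statement for $i=1$ would need adjusting). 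Finally, for $k=2$ the Pappian factor is the Pappus configuration itself, outside the scope of Proposition~\ref{prop:multpapaut}, and there one argues directly on its small neighbourhood.
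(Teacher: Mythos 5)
Your skeleton coincides with the paper's: reduce to $h\in\otocz({\Aut({\goth M}^i)},\theta)$ with $\bet(h,\theta)=\id$ via \ref{lem:prespapandpg} and \ref{lem:syminpapplane}, show $\alf(h,\theta)=\id$, then finish with \ref{cor:presallneighb} and \ref{lem:symetrofpapfan}. For $i=2$ your discriminator (the unique triple $2+12+12\equiv 0$, which isolates $8$ and separates $2$ from $12$ because they occupy different slots) is a sound substitute for the paper's, which instead distinguishes $p'_{m,1},p'_{m,2},p'_{m,3}$ by their ranks and by whether they lie on a rank-$3$ line; both are invariants of $\otocz({\goth M}^2,\theta)$ preserved by $h$, and both force $\alpha=\id$.

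The decisive point is the one you flagged for $i=1$: the multiplier \emph{does} extend. Since $3\cdot D^1=D^1$ holds exactly in $C_{13}$ (not merely up to a shift), the group automorphism $(x,y)\mapsto(x,3y)$ of $C_3^k\oplus C_{13}$ fixes ${\cal D}_k\uplus D^1$, so by \ref{rem:whenautline} (equivalently by \ref{pr:autinsegre}, exactly as the paper itself exploits in \eqref{wz:graut:C13} for ${\goth N}^n$) it is an automorphism of ${\goth M}^1$ fixing $\theta$, acting as the identity on the multi-Pappian part and as a $3$-cycle on $l_1,l_2,l_3$. It is not of the form $\tau_a\circ G_\beta$, so the claim for $i=1$ is false as printed and the group must be enlarged by this $C_3$ of multipliers. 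The paper's own proof breaks precisely here: it asserts that the line $\{q_{1,2},q_{0,2},q_{3,1},q_{2,1}\}=\{(2),(3),(5),(11)\}=2+D^1$ is not carried to a line unless $\alpha=\id$, but for $\alpha(1)=2,\alpha(2)=3,\alpha(3)=1$ the image is $\{(6),(9),(2),(7)\}=6+D^1$, again a line (as it must be, being the $\mu_3$-image); the check eliminates only the transpositions. Consequently your fallback route for $i=1$ cannot succeed either — the multiplier is the identity on the Pappian factor, so the rigidity argument via \ref{prop:multpapaut} and \ref{lem:multpaprigid} yields no constraint on $\alpha$, and the single symmetric triple $1+3+9\equiv 0$ is respected by the $3$-cycles — but the fault lies in the statement, not in your analysis. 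Note for contrast that $3\cdot D^2=11+D^2\neq D^2$, so for $i=2$ the multiplier gives $f'\neq f''$ and by \ref{pr:autinsegre} does not extend; this, rather than the triple-sum pattern alone, is the true source of the asymmetry between the two cases.
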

\begin{proof}
  Let $F$ be an automorphism of ${\goth M}^i$ and $g = \tau_{-F(\theta)}\circ F$. 
  Then $g(\theta) = \theta$ and $g\in \Aut({\goth M}^i)$. 
  In view of \ref{lem:prespapandpg}, $g$ leaves  
  $\{p'_{m,0}\colon m=1,\ldots,k\}$ 
  invariant.  We set 
  $h=g\circ G^{-1}_{\beta}$, 
  where $G_{\beta}$ is the  map defined in \ref{lem:syminpapplane}
  and $\beta = \bet(g,\theta) \in S_k$.
  Then 
    $h\in \Aut({\goth M}^i)$, $h(\theta) = \theta$, and $\bet(h,\theta) = \id$;
  from \ref{lem:permpointprim}, $h$ preserves $\{l''_m\colon m=1,\ldots,k\}$.
  
  Let $\alpha = \alf(h,\theta)$ be the permutation determined by $h$ in 
  accordance with \ref{lem:permpointprim}; then from \ref{lem:permpintbis} we 
  find that 
    $h(q_{i,j}) = h_\alpha(q_{i,j}) := q_{\alpha(i),\alpha(j)}$ for all $i,j$.

  For  ${\goth M}^1$ note that if $\alpha\neq \id$, then
  $h_\alpha$ does not preserve the collinearity in 
    $S = \{(0,\ldots,0)\}\times \DifSpace(C_{13},D^1)$. 
  For example: $q_{1,2}$, $q_{0,2}$, $q_{3,1}$, $q_{2,1}$ form a line in $S$, 
  but 
    $q_{\alpha(1),\alpha(2)}$, $q_{0,\alpha(2)}$, $q_{\alpha(3),\alpha(1)}$, 
    $q_{\alpha(2),\alpha(1)}$ 
  do not, unless $\alpha = \id$.  

  In ${\goth M}^2$ for all $m=1,\ldots,k$ we have: 
  $p'_{m,1}$ are points of rank $5$  laying on a line of rank $3$, 
  $p'_{m,2}$ are points of rank $5$  not laying on any line of rank $3$, and
  $p'_{m,3}$ are points of rank $6$; so $h$ fixes these points
  and thus $\alpha = \id$.

  In both cases, $h$ fixes all the lines through $\theta$ so, 
  from \ref{cor:presallneighb} we get $h = \id$ and thus $g = G_{\beta}$. 
  Finally, applying \ref{lem:symetrofpapfan} we finish the proof.
\end{proof}


\subsection{A power of cyclic projective plane}

Let ${\goth P} = \DifSpace(C_k,{\cal D})$ be a cyclic projective plane
determined by a difference set $\cal D$ in the group $C_k$
($k = q^2+q+1$, where $q + 1$ is both the size of a line 
and the degree of a point of $\goth P$).

Now, let us draw our attention to the following structure
\begin{equation}\label{multisinger}
  {\goth M} := \underbrace{\goth P\oplus \goth P\oplus \ldots
       \oplus\goth P}_{n \; \mathrm{times}} =: {\goth P}^n
\end{equation}
Remind that 
${\goth M} = \DifSpace({(C_k)^n},D)$,
where $D = {\cal D}\uplus\ldots\uplus{\cal D}$.
Let us introduce a few, useful in further, definitions. Namely:
  \begin{eqnarray*}
 & & \supp(x) := \{ i \in\{ 1,\ldots,n \} \colon x_i \neq 0 \} \textrm{ for } x \in (C_k)^n,
 \\
 & & \Xi_{\alpha}^{\gamma}:= 
 \{x\in(C_k)^n\colon |\supp(x)|=\gamma \textrm{ and if } i\in \supp(x) 
 \textrm{ then } x_i=\alpha\}, 
 \\
 & & \Xi_{\{\alpha\},\{\beta\}}^{1}:= 
   \Xi^1_{\alpha} \cup \Xi^1_{\beta},
 \\
 & & \Xi_{\{\alpha,\beta\}}^{2}:= 
 \{x\in(C_k)^n\colon |\supp(x)|=2 ,\;
 \{ i,j \} = \supp(x) 
 \implies \{x_i,x_j\}= \{\alpha,\beta\}\},  
 \\
 & & P_i:= \{x\in(C_k)^n\colon \supp(x)= \{i\}, \textrm{ or } x=\theta\}.  
  \\
 & & S_i = \{(x)\colon x \in P_i \},\;\; \flines_i =\{ [x]\colon x \in P_i \}, \;\; i = 1,\ldots,n.
 \end{eqnarray*}
  For $i_1 \neq i_2$ we have 
    $\flines_{i_1} \cap \flines_{i_2} = \{ [\theta] \}$
  and
    ${S}_{i_1} \cap {S}_{i_2} = \{ (\theta) \}$.
  The sets $S_i$ and $\flines_i$  
  consist of points and lines respectively, which form a projective plane embedded
  in $\otocz({\goth P}^n,{(\theta)})$. There are $n$ such 
  planes with  the common line $[\theta]$, and the common point $(\theta)$ in
  $\otocz({\goth P}^n,{(\theta)})$. 
  Note that, the degree of the point $(\theta)$ in 
    $\otocz({{\goth P}^n},{(\theta)})$
  equals $q n + 1$, and it is equal to the size of every line through $(\theta)$. 

\def\calD{{\cal D}}
\def\mcalD{-{\cal D}}
\def\bezer{{\setminus \{ 0 \}}}
  Remind also that for any point 
   $(x)$ and line $[y]$ of ${\goth P}^n$:
\begin{equation}\label{wz:relofinc}
   (x)\inc [y]  \textrm{ iff } 
    \begin{array}{l}
      x_i-y_i\in {\cal D} \textrm{ and }
      \\
      x_j=y_j  \textrm{ for all } j\neq i, j\in \{1,\ldots,n\}
    \end{array}
      \textrm{ for some }
     i\in \{1,\ldots,n\}. 
\end{equation}
  Recall 
  (cf. \ref{lem:analchain} \eqref{analchain:cas2} and  
  \ref{lem:analchain}\eqref{analchain:cas3})
  that for $x,x' \in (C_k)^n$
  \begin{equation}\label{wz:relocol}
    (x),(x') \textrm{ are collinear } \iff [x],[x'] \textrm{ intersect} \iff x -x' \in D - D
  \end{equation}
  Note that $|\supp(u)|\leq 2$ for every $u \in D - D$.
  Moreover, if $|\supp(u)| = 1$ then $u_i \in \calD - \calD = C_k$
  for $i \in \supp(u)$ and if $|\supp(u)| = 2$ then 
  $u \in \Xi^2_{\{\alpha,\beta\}}$, where 
  $\alpha \in \calD \setminus \{ 0 \}$ and 
  $\beta \in \mcalD \setminus \{ 0 \}$.

  Now, we establish some crucial facts.
The first  is immediately from \eqref{wz:relofinc}.
\begin{lem}\label{line:through:0}
  The line $[y]$ passes through $(\theta)$ iff 
  $[y]\in\flines_i$ for some $i$ and $y_i \in \mcalD$.
\end{lem}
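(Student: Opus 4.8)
The plan is to read the statement straight off the incidence criterion \eqref{wz:relofinc}, specialized to the point $(\theta)$. First I would apply \eqref{wz:relofinc} with $x = \theta$: it says that $(\theta) \inc [y]$ holds if and only if there is an index $i \in \{1,\ldots,n\}$ with $\theta_i - y_i \in \calD$ and $\theta_j = y_j$ for every $j \neq i$. Since $\theta_i = 0$ for all $i$, the first clause becomes $-y_i \in \calD$, i.e. $y_i \in \mcalD$, and the second clause becomes $y_j = 0$ for all $j \neq i$.

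Next I would translate the condition ``$y_j = 0$ for all $j \neq i$'' into the language of the families $\flines_i$: it says exactly that $\supp(y) \subseteq \{i\}$, hence $y \in P_i$, hence $[y] \in \flines_i$. Combining the two clauses yields precisely the asserted equivalence, and since every step is an ``iff'', the converse implication needs no separate work.

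I do not expect a genuine obstacle here; at most one should pause over the degenerate line $[y] = [\theta]$, where $\supp(\theta) = \emptyset$, so $[\theta] \in \flines_i$ for every $i$ and $\theta_i = 0 \in \calD = \mcalD$ (recall $\calD$ is normalized so that $0 \in \calD$), in agreement with $(\theta) \inc [\theta]$. If desired, one can further note that \QDS\ forces the index $i$ to be unique when $y \neq \theta$, but this refinement is not needed for the statement as phrased.
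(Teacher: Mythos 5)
Your proof is correct and is essentially the paper's own: the paper derives this lemma with no argument beyond the remark that it is immediate from \eqref{wz:relofinc}, which is exactly your specialization to $x=\theta$ (so $-y_i\in{\cal D}$ and $y_j=0$ for $j\neq i$, i.e.\ $y\in P_i$ and $y_i\in-{\cal D}$). The only cosmetic slip is in your aside on the degenerate line $[\theta]$, where ``$0\in{\cal D}=-{\cal D}$'' should read $0\in{\cal D}\cap(-{\cal D})$, since ${\cal D}\neq-{\cal D}$ in general; this affects nothing.
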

Analogously, the following is immediate from \eqref{wz:relocol}.
\begin{lem}\label{pty:otocz}
  Let $x \in (C_k)^n$. The point $(x)$ is a point of $\otocz({\goth P}^n,{(\theta)})$
  iff $x \in P_i$ for some $i$ (i.e. $|\supp(x)| = 1$) or
  $x\in \Xi_{\{\alpha,\beta\}}^{2}$ and
  $\alpha \in \mcalD \bezer$,
  $\beta \in \calD \bezer$.
\end{lem}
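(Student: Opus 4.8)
The plan is to reduce the statement to the description of the difference set $D-D$ recorded in the remarks just above, by means of the collinearity criterion \eqref{wz:relocol}. By the definition of the neighborhood, $(x)$ is a point of $\otocz({\goth P}^n,{(\theta)})$ exactly when $(x)$ is collinear with $(\theta)$ (the centre being included since, $1$ lying in $D$, collinearity is reflexive, in agreement with $\theta\in P_i$), and \eqref{wz:relocol} applied to the pair $x,x'=\theta$ says this happens iff $x=x-\theta\in D-D$. So the lemma is nothing more than a computation of the set $D-D$ for $D=\calD\uplus\dots\uplus\calD=\bigcup_{i=1}^{n}\embfunc_i(\calD)$.

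To carry out that computation I would write an arbitrary element of $D-D$ as $d-d'$ with $d\in\embfunc_i(\calD)$, $d'\in\embfunc_j(\calD)$, and split into two cases. If $i=j$, then $d-d'=\embfunc_i(d_i-d'_i)$ has support contained in $\{i\}$, and as $d_i,d'_i$ run through $\calD$ the coordinate $d_i-d'_i$ runs through all of $\calD-\calD=C_k$; conversely, every $x$ with $\supp(x)\subseteq\{i\}$ arises this way, so this case yields precisely $\bigcup_i P_i$. If $i\neq j$, then $d-d'$ is supported on $\{i,j\}$ with $i$-th coordinate $d_i\in\calD$ and $j$-th coordinate $-d'_j\in\mcalD$; when one of these is $0$ we are back in the previous case, and when both are nonzero we obtain exactly the elements of $\Xi^2_{\{\alpha,\beta\}}$ whose (unordered) pair of nonzero coordinates meets both $\calD\bezer$ and $\mcalD\bezer$, while conversely any such $x$ equals $\embfunc_j(x_j)-\embfunc_i(-x_i)\in D-D$ once we name the coordinates so that $x_j\in\calD\bezer$ and $-x_i\in\calD\bezer$. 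Taking the union of the two cases gives the asserted equivalence.

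There is essentially no obstacle: the argument is a direct unwinding of definitions together with the difference-set identity $\calD-\calD=C_k$ (every element of $C_k$, zero included, is a difference of two members of $\calD$), which is why the paper calls the statement immediate. The only point deserving a line of care is the bookkeeping of which of the two nonzero coordinates carries the ``$\calD$'' label and which the ``$\mcalD$'' label, i.e.\ that the indexing in $\Xi^2_{\{\alpha,\beta\}}$ is symmetric, so that ``$\alpha\in\mcalD\bezer$ and $\beta\in\calD\bezer$'' is genuinely the condition that the unordered pair of nonzero coordinates has one member in $\calD\bezer$ and one in $\mcalD\bezer$.
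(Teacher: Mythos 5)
Your proposal is correct and follows exactly the route the paper intends: the paper states the lemma as immediate from \eqref{wz:relocol} together with the remark preceding it that describes $D-D$ (namely $|\supp(u)|\le 2$, with $u_i\in\calD-\calD=C_k$ in the one-coordinate case and $u\in\Xi^2_{\{\alpha,\beta\}}$ with $\alpha\in\calD\bezer$, $\beta\in\mcalD\bezer$ in the two-coordinate case). Your write-up simply carries out that computation of $D-D$ explicitly, including the correct handling of the unordered labelling in $\Xi^2_{\{\alpha,\beta\}}$, so there is nothing to object to.
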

\begin{lem}\label{clofan}
  If $x \in P_i$ then either the line $[x]$ passes through $(\theta)$ and its size in 
  $\otocz({\goth P}^n,{(\theta)})$ is $qn+1$, or the size of $[x]$
  in $\otocz({\goth P}^n,{(\theta)})$ is $q+1$; then, in particular, $[x]$ does not contain
  any point of $P_j$ with $j \neq i$.
\end{lem}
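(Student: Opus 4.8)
The plan is to read the points of $[x]$ directly off $[x]=x+D$, where $D=\calD\uplus\ldots\uplus\calD$ ($n$ copies), and to sort them by the size of their support. The case $x=\theta$ is trivial: $[x]=[\theta]$ passes through $(\theta)$ (recall $0\in\calD$), each of its $qn+1$ points lies in some $S_j$, hence in $\otocz({\goth P}^n,{(\theta)})$ by \ref{pty:otocz}, so its size there is $qn+1$. So assume $x\in P_i$ with $x_i\neq 0$, i.e.\ $x=\embfunc_i(x_i)$. Since $D=\bigcup_{j=1}^n\embfunc_j(\calD)$, the points of $[x]$ are exactly of two kinds: the $q+1$ ``old'' points $\embfunc_i(x_i+a)$ with $a\in\calD$, each of support $\subseteq\{i\}$ and hence in $S_i$; and, for each $j\neq i$ and each $a\in\calD\bezer$, the point $\embfunc_i(x_i)+\embfunc_j(a)$, of support exactly $\{i,j\}$ and hence an element of $\Xi^2_{\{x_i,a\}}$. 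These families have $q+1$ and $(n-1)q$ members, totalling $qn+1$ (the common line size) --- a convenient check that no point of $[x]$ has been overlooked.

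Next I split according to whether $x_i\in\mcalD$; by \ref{line:through:0} (and because, for $x\neq\theta$, the only $\flines_j$ containing $[x]$ is $\flines_i$) this is precisely the condition for $[x]$ to pass through $(\theta)$. If $x_i\in\mcalD$, then every point of $[x]$ lies in $\otocz({\goth P}^n,{(\theta)})$: the old points lie in $S_i$, and each $\embfunc_i(x_i)+\embfunc_j(a)$ lies in $\Xi^2_{\{x_i,a\}}$ with $x_i\in\mcalD\bezer$ and $a\in\calD\bezer$; both conclusions come from \ref{pty:otocz}. Hence the size of $[x]$ in $\otocz({\goth P}^n,{(\theta)})$ is the full $qn+1$.

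Now suppose $x_i\notin\mcalD$, so $[x]$ misses $(\theta)$. The $q+1$ old points still lie in $\otocz({\goth P}^n,{(\theta)})$, and none of them equals $(\theta)$. For a point $\embfunc_i(x_i)+\embfunc_j(a)$ with $j\neq i$, $a\in\calD\bezer$: by \ref{pty:otocz} it lies in $\otocz({\goth P}^n,{(\theta)})$ iff $\{x_i,a\}=\{\alpha,\beta\}$ with $\alpha\in\mcalD\bezer$, $\beta\in\calD\bezer$; since $a\in\calD\bezer$ and $x_i\notin\mcalD$, this would force $a\in\mcalD\bezer$, i.e.\ $a\in(\calD\cap(\mcalD))\bezer$. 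Here is the one genuine point of the argument: $\calD\cap(\mcalD)=\{0\}$, for if $c\neq 0$ lay in the intersection then $c=c-0=0-(-c)$ would be two \emph{distinct} representations of the nonzero $c$ as a difference of members of $\calD$, contradicting \QDS{} for $\calD$ (with $0\in\calD$). Thus no support-$2$ point of $[x]$ lies in the neighborhood, and the size of $[x]$ in $\otocz({\goth P}^n,{(\theta)})$ is exactly $q+1$. Finally, in this case $x\neq\theta$, so the support-$2$ points of $[x]$ really have support of size $2$ while the old points have support $\subseteq\{i\}$; hence $[x]$ contains no point of any $P_j$ with $j\neq i$, which is the ``in particular'' clause.

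The bookkeeping above is routine; the only ingredient beyond the explicit shape of $D$ and Lemmas \ref{line:through:0} and \ref{pty:otocz} is the identity $\calD\cap(\mcalD)=\{0\}$, and I expect that to be the real obstacle --- without it a line avoiding $(\theta)$ could still pick up support-$2$ neighborhood points, and the clean dichotomy would collapse. Everything else amounts to matching the descriptions of ``point of $\otocz({\goth P}^n,{(\theta)})$'' and ``line through $(\theta)$'' against the two families of points of $[x]$.
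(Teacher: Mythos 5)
Your proof is correct. The paper states Lemma \ref{clofan} without proof, treating it as immediate from the incidence description \eqref{wz:relofinc} together with \ref{line:through:0} and \ref{pty:otocz}; your case analysis of the points of $[x]=x+D$ by support supplies exactly the omitted bookkeeping. You are also right to single out ${\cal D}\cap(-{\cal D})=\{0\}$ as the one non-routine ingredient: the paper proves this identity by the same one-line \QDS{} argument, though only later in the text (just after Lemma \ref{soluty}), so it is legitimately available here and is indeed what forces the support-$2$ points off the neighborhood when $[x]$ misses $(\theta)$.
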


\begin{lem}\label{kroski}
  Let $x\in(C_7)^n$. If the line $[x]$ contains a point of $\otocz({\goth P}^n,{(\theta)})$
  (i.e. it intersects a line of the form $[y]$ 
  defined in \ref{line:through:0})
  then $|\supp(x)| \leq 3$. Moreover, if $|\supp(x)| = 3$ then
  the size of $[x]$ in $\otocz({\goth P}^n,{(\theta)})$ is $2$.
\end{lem}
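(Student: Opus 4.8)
The plan is to reduce the statement to a short count on supports and coordinate values, reading everything off the incidence rule \eqref{wz:relofinc} and the description of the neighbourhood $\otocz({\goth P}^n,{(\theta)})$ furnished by \ref{pty:otocz}. Here $\calD = \{0,1,3\}$, so in $C_7$ we have $\calD\setminus\{0\} = \{1,3\}$ and $\mcalD\setminus\{0\} = \{4,6\}$; the one arithmetical input I shall use is that these two sets are disjoint --- equivalently, no nonzero element of $\calD$ has its negative in $\calD$ --- which is the feature special to the case $q = 2$.

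First I would pick a point $(z)\inc[x]$ belonging to $\otocz({\goth P}^n,{(\theta)})$, which exists by hypothesis. By \eqref{wz:relofinc} there is an index $i$ with $z_i - x_i \in \calD$ and $z_j = x_j$ for all $j \neq i$; thus $x$ and $z$ agree off the single coordinate $i$, so $\supp(x)\setminus\{i\} = \supp(z)\setminus\{i\}$. By \ref{pty:otocz} every point of $\otocz({\goth P}^n,{(\theta)})$ has support of size at most $2$ (it lies in some $P_i$, or in some $\Xi^2_{\{\alpha,\beta\}}$), so $|\supp(z)| \leq 2$ and hence $|\supp(x)| \leq |\supp(x)\setminus\{i\}| + 1 = |\supp(z)\setminus\{i\}| + 1 \leq 3$. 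This proves the first assertion.

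Now assume $|\supp(x)| = 3$. Revisiting the dichotomy above: a point $(z)\inc[x]$ can lie in $\otocz({\goth P}^n,{(\theta)})$ only if the changed coordinate $i$ belongs to $\supp(x)$ and $z_i = 0$ --- for otherwise $\supp(z)$ has size $3$ or $4$ --- so that $\supp(z) = \supp(x)\setminus\{i\}$ has size exactly $2$. Then $-x_i = z_i - x_i \in \calD$ forces $x_i \in \mcalD\setminus\{0\}$, and the $\Xi^2_{\{\alpha,\beta\}}$ clause of \ref{pty:otocz} forces the two entries of $x$ on $\supp(x)\setminus\{i\}$ to be one element of $\calD\setminus\{0\}$ and one of $\mcalD\setminus\{0\}$. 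Since $\calD\setminus\{0\}$ and $\mcalD\setminus\{0\}$ are disjoint, the existence of a single such index $i$ (guaranteed by the hypothesis) forces the three nonzero entries of $x$ to be exactly two elements of $\{4,6\} = \mcalD\setminus\{0\}$ and one element of $\{1,3\} = \calD\setminus\{0\}$.

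It then remains to count the points of $\otocz({\goth P}^n,{(\theta)})$ on $[x]$. No coordinate outside $\supp(x)$ contributes one, and for $i \in \supp(x)$ the only choice of $z_i$ that could (namely $z_i = 0$) does yield such a point precisely when $x_i \in \{4,6\}$ (so that $-x_i \in \calD$) and the remaining two entries of $x$ split between $\{1,3\}$ and $\{4,6\}$; by the pattern just established this happens for exactly the two positions of $x$ carrying a value in $\{4,6\}$ and fails for the one carrying a value in $\{1,3\}$. These two positions produce two points of $\otocz({\goth P}^n,{(\theta)})$ on $[x]$, and they are distinct since they have different supports inside $\supp(x)$; hence the size of $[x]$ in $\otocz({\goth P}^n,{(\theta)})$ is exactly $2$. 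I expect this final bookkeeping to be the only genuine obstacle, its crux being the disjointness of $\calD\setminus\{0\}$ and $\mcalD\setminus\{0\}$ in $C_7$, which simultaneously pins down the entries of $x$ and forbids a third neighbourhood point.
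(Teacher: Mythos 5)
Your proof is correct and follows essentially the same route as the paper's: bound $|\supp(x)|$ by comparing supports across the incidence relation \eqref{wz:relofinc} with \ref{pty:otocz}, then show that the three nonzero entries of $x$ must split as two elements of $-{\cal D}\setminus\{0\}$ and one of ${\cal D}\setminus\{0\}$, which yields exactly the two neighbourhood points (the paper counts the two lines through $(\theta)$ crossed by $[x]$, you count the two points of $[x]$ in the neighbourhood --- the same computation). One inaccurate aside: the disjointness of ${\cal D}\setminus\{0\}$ and $-{\cal D}\setminus\{0\}$ is not ``special to the case $q=2$'' --- it holds for any quasi difference set containing $0$ (if $d_1=-d_2$ then $d_1-0=0-d_2$ violates \QDS\ unless $d_1=d_2=0$), which is precisely why this lemma, and your argument for it, applies verbatim to the general cyclic plane ${\goth P}$ (e.g.\ to $PG(2,3)$ over $C_{13}$ later in the paper).
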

\begin{proof}
  Assume that $[x]$ and $[y]$, as above, have common point $(z)$.
  Then $|(\supp({x-y})| \leq 2$. From assumption, $|\supp(y)|=1$
  and thus $|\supp(x)|\leq 3$.
  Let $y_{i_1} \neq 0$. If $|\supp(x)| = 3$ then $x_{i_1} = y_{i_1} \in \mcalD$.
  Write $\supp(x) = \{ i_1,i_2,i_3 \}$. 
  Since 
  $x_{i_1} - y_{i_1} = 0$, $x_{i_2} = x_{i_2}-y_{i_2}$, and $x_{i_3} = x_{i_3} - y_{i_3}$,
  the condition $x-y\in D-D$ gives the description of $x$.
  Consequently, $x_{i_2}\in \mcalD$ and $x_{i_3} \in \calD$
  (or symmetrically, with $i_2,i_3$ interchanged).
  It is seen that among the lines through $(\theta)$ 
  only $[y]$ and $[y']$ are crossed by $[x]$ where 
  $\supp(y') = \{ i_2 \}$,
  $y'_{i_2} = x_{i_2}$.
\end{proof}
There are lines in $\otocz({\goth P}^n,{(\theta)})$ linking points in $P_i$ with
points in $\Xi_{\{\alpha,\beta\}}^{2}$ where $\alpha \in \mcalD \bezer$, $\beta \in \calD \bezer$. Namely:
\begin{auxlem}\label{singer:jointriangle}
 Let   $(x)\in S_i$, $[y]\in \flines_i$, and $(x)\inc [y]$.
 For every $j\neq i$ there are $q$ lines of the 
size $2$ in 
$\otocz({\goth N}^n,{(\theta)})$, such that each of them joins $(x)$ with one of the 
pairwise collinear points
$(z^1),\ldots (z^q)$, where     
$z^1_i=\ldots=z^q_i=x_i-y_i$, $\{z^1_j,\ldots,z^q_j\}= -\calD\setminus \{0\}$, and
$z^1_s=\ldots=z^q_s=0$ for all $s\neq i,j$; $s = 1,\ldots,n$.
\end{auxlem}
\begin{auproof}
Assume $x\in S_i$, $[y]\in \flines_i$, and $(x)\inc [y]$ for some
$i\in\{1,\ldots,n\}$. Take the point $(z)$ with $z_i=x_i-y_i$, $z_j\in -\calD\setminus \{0\}$ and
$z_s=0$ for all $s\neq i,j$. Then, from \eqref{wz:relofinc}, $x_i-y_i\in \calD$, so
$x_i\in \calD + y_i$. One can note that  $z_i\in\calD$, and thus
$x-z\in \calD-\calD$. 
Moreover, $(z)\in \otocz({\goth N}^n,{(\theta)})$ -- in view of 
\ref{pty:otocz}.
Now, the claim follows directly from \eqref{wz:relocol}. 
\end{auproof}
\begin{auxlem}\label{connect:fanem}
  For any two points $a,a'$ of $\goth M$ there is a sequence $b_0,\ldots,b_m$ of points
  of $\goth M$ such that $b_0 = a$, $b_m = a'$, and 
  $b_j$ is a point of a cyclic projective subplane 
  in ${\otocz({\goth P}^n,{b_{j-1}})}$ for $j = 1,\ldots,m$.
\end{auxlem}
\begin{auproof}
  Without loss of generality we consider $a = (\theta)$; let $a' = ((a'_1,\ldots,a'_n))$.
  Take the sequence 
  $\big( u_j = (u_{j,1},\ldots,u_{j,n}) \colon j = 1,\ldots,n \big)$ 
  of elements of $(C_k)^n$ defined by 
  \begin{ctext}
    $u_{j,i} = 0$ for $i\neq j$, $u_{j,j} = a'_j$
  \end{ctext}
  and put $b_0 = a$,
  $b_j = (\tau_{u_j} \circ \tau_{u_{j-1}} \circ \ldots \circ \tau_{u_1})(a)$ 
  for $j =1,\ldots,n$.
\end{auproof}

In the sequel we shall frequently determine the number of solutions of the following
problems:
\def\casal{{\ensuremath{(\alpha)}}}
\def\casbe{{\ensuremath{(\beta)}}}
\begin{ctext}
  given $y \in C_k$, $y\neq 0$ determine $u \in \mcalD$ such that 
  $\begin{array}{ll}
  \casal: & y-u \in \calD \\ \casbe: & y-u \in \mcalD.
  \end{array}$
\end{ctext}
\begin{lem}\label{soluty}
  The problem \casal \ has exactly one solution.
  The problem \casbe \ has exactly one solution when $y \in -2\calD$,
  in the other case it has either two distinct solutions,
  or it has no solution.
\end{lem}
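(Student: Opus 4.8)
The plan is to translate \casal\ and \casbe\ into counting problems about pairs of elements of $\calD$ and then to use the one property of the Singer difference set $\calD$ that matters here: every nonzero element of $C_k$ is a difference $d-d'$ with $(d,d')\in\calD\times\calD$ in exactly one way.

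For \casal: since $\mcalD=\{-d:d\in\calD\}$, every $u\in\mcalD$ can be written uniquely as $u=-d$ with $d\in\calD$, and then the condition $y-u\in\calD$ says $y+d\in\calD$, i.e.\ $y=d'-d$ for some $d'\in\calD$. As $y\neq 0$, there is exactly one such pair $(d,d')$, hence exactly one $u$. This settles \casal.

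For \casbe: writing again $u=-d$ with $d\in\calD$, the requirement $y-u\in\mcalD$ becomes $d+d'=-y$ for some $d'\in\calD$; so the number of solutions $u$ is the number of $d\in\calD$ with $-y-d\in\calD$, i.e.\ it equals $r(z):=\bigl|\{(d,d')\in\calD\times\calD:d+d'=z\}\bigr|$ with $z=-y\neq 0$. The key step I would prove is: for every $z$ there is \emph{at most one} unordered pair $\{d_1,d_2\}$ (a two-element multiset, so possibly $d_1=d_2$) with $d_1,d_2\in\calD$ and $d_1+d_2=z$. Indeed, if $d_1+d_2=d_1'+d_2'=z$ with all four elements in $\calD$, then $d_1-d_1'=d_2'-d_2$; if $d_1=d_1'$ then $d_2=d_2'$, and if $d_1\neq d_1'$ then the nonzero element $d_1-d_1'$ has two difference representations over $\calD$, so uniqueness forces $d_1=d_2'$ and $d_1'=d_2$; in either case $\{d_1,d_2\}=\{d_1',d_2'\}$.

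It remains to observe that $k=q^2+q+1=q(q+1)+1$ is odd, hence $x\mapsto 2x$ is a bijection of $C_k$: a ``diagonal'' admissible pair $\{d,d\}$ with $2d=z$ therefore exists (and is then unique) exactly when $z\in 2\calD$. Combining this with the claim: if $z\in 2\calD$ the unique admissible unordered pair is the diagonal one and $r(z)=1$; if $z\notin 2\calD$ the unique admissible pair --- if it exists at all --- has two distinct entries and contributes two ordered pairs, so $r(z)\in\{0,2\}$. Taking $z=-y$ and noting $z\in 2\calD\iff y\in -2\calD$ gives exactly the stated trichotomy for \casbe. The only thing demanding care is the bookkeeping between ordered and unordered pairs and the verification that a diagonal pair and a genuine two-element pair cannot have the same sum; there is no real obstacle beyond that.
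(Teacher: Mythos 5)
Your proposal is correct and follows essentially the same route as the paper: both parts reduce to the unique representation of a nonzero element of $C_k$ as a difference over $\calD$, applied in case \casbe\ to $d_1-d_1'=d_2'-d_2$ to show the unordered pair summing to $-y$ is unique, with the diagonal case corresponding to $y\in-2\calD$. Your extra remark that $k=q^2+q+1$ is odd is harmless but not actually needed, since the uniqueness of the unordered pair already rules out two distinct diagonal representations.
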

\begin{proof}
  Write $u = -d$, with $d \in \calD$.
  In the case \casal \ we search for $d' \in \calD$ with
  $y - u = d'$ i.e. $y = d' - d$. Since $y \neq 0$, both $d$ and $d'$ are 
  uniquely determined by $y$.

  \par
  Let us pass to \casbe. We need 
  $y - u = -d'$ for some $d' \in \calD$; this means $y = -d - d'$.
  Assume that $-d - d'  = -d_1 - d'_1$.
  Then $d_1 - d' = d - d'_1$ and eiher $d_1 = d'$, $d = d'_1$,
  or $d_1 = d$, $d = d'_1$.
  The two solutions $u=-d$ and $u=-d'$ of \casbe \ coincide iff $y = - 2d \in -2\calD$.
\end{proof}

Recall that $\calD \cap \mcalD = \{ 0 \}$; indeed, suppose that
$d_1 = -d_2$ for some $d_1,d_2 \in \calD$.
Then we write $d_1 - 0 = 0 -d_2$ -- the rest is evident from the definition
of a quasi difference set.

Now, we pass to determining lines $[y]$ of $\otocz({\goth M},{(\theta)})$ of the size 
at least $3$; in view of \ref{kroski} we can assume that
$|\supp(y)| = 2$ and then, 
to simplify formulas without loss of generality we assume that
$\supp(y) = \{ 1,2 \}$.
Let $[u]$ thorugh $(\theta)$ cross $[y]$; 
in view of \eqref{wz:relocol}, $\supp(u) \subset \supp(y)$ and
there are two cases to consider:
$\supp(u) = \{ 1 \}$
or
$\supp(u) = \{ 2 \}$.
Let us start with the first one.
Remember that 
  $$u = - d_1 \text{ and } d_1 \in \calD, \quad \text{and}\quad y_1,y_2 \neq 0.$$
There are three possibilites:
\begin{enumerate}[(a)]\itemsep-2pt
\item\label{moze:a}
  $y_1 - u_1 = 0$ and then $y_2 \in C_k \bezer$ is arbitrary;
\item\label{moze:b}
  $y_1 - u_1 \in \calD \bezer$, $y_2 \in \mcalD \bezer$;
\item\label{moze:c}
  $y_1 - u_1 \in \mcalD \bezer$, $y_2 \in \calD \bezer$.
\end{enumerate}
Assume \eqref{moze:a}. 
We try to find another line $[u']$ through $(\theta)$
that crosses $[y]$.
Assume that $u'_1 \neq 0$. 
If $y_1 - u'_1 = 0$ then $u =u'$.
If $y_1 - u'_1 \in \calD \bezer$ from \ref{soluty} we come to  $u_1 = u'_1$ 
(note: $y_1 - u_1 \in \calD$!).
If $y_1 - u'_1 \in \mcalD \bezer$ write $y_1 - u_1 = -0 \in \mcalD$; 
from \ref{soluty} two possible distinct solutions of the corresponding \casbe \
are $u_1$ or $0$.
Again, this way we cannot obtain $u'\neq u$.
\\
Now, we assume that $u'_2 \neq 0$.
The line $[u']$ crosses $[y]$ in three cases:
\begin{enumerate}[({\ref{moze:a}.}1)]\itemsep-2pt
\item\label{abo:1}
  $y_2 - u'_2 = 0$; then $y_1$ is arbitrary $\neq 0$
  (though we already know that $y_1 \in \mcalD$, since $y_1 = u_1 = -d_1$);
\item\label{abo:2}
  $y_2 - u'_2 \in \calD\bezer$; then $y_1 \in \mcalD\bezer$ which actually is valid.
\item\label{abo:3}
  $y_2 - u'_2 \in \mcalD\bezer$ and $y_1 \in \calD\bezer$, which is impossible,
  since $y_1 \in \mcalD$.
\end{enumerate}
In case (\ref{moze:a}.\ref{abo:1}), $u'_2$ is determined uniquely, but then necessarily
$y_2 \in \mcalD$. 
Moreover, $y_2 - u'_2 \in \calD$,
and if so, there is no $u''$ with $u''_2 \neq u'_2,0$ that may satisfy
$y_2 - u''_2 \in \calD\bezer$.
Finally, if $y_1,y_2 \in \mcalD\bezer$, the size of $[y]$ is $2$.
\newline
In case (\ref{moze:a}.\ref{abo:2}), from \ref{soluty} we find that there is
exactly one possible $u'_2$.
Summing up, we get that the size of $[y]$ is at most $2$.

Assume the case \eqref{moze:b} or \eqref{moze:c}; moreover, assume that $y_1\notin\mcalD$,
since the case where $y_1 \in \mcalD$ was already examined in \eqref{moze:a}. 
By the symmetry, we can also assume that
$y_2 \notin\mcalD$.
Thus it remains to consider the case \eqref{moze:c} only;
then $y_2 \in \calD\bezer$.
From \ref{soluty}, there are at most two $u'\in\mcalD\bezer$ such that
$u'_1 \neq 0,u_1$ and $y_1 - u'_1 \in \mcalD\bezer$
(to this aim we need $y_1 \in -(\calD + \calD)$ but $y_1 \notin -2\calD$).

If we want to have another line $[u']$ passing through $(\theta)$
and intersecting $[y]$
we must have $u'_2\in\mcalD\bezer$.
If $y_2 - u'_2 \in \calD\bezer$, then $y_1 \in \mcalD\bezer$, which contradicts
assumptions.
Therefore, $y_2 - u'_2 \in\mcalD\bezer$ and $y_1 \in \calD\bezer$.
Again, there are at most two such $u'_2$; there are exactly two
when $y_2 \in -(\calD + \calD)$ but $y_2 \notin -2\calD$.

The size of the line $[y]$ varies from $1$ to $4$.
It is equal to $4$
in the case where
\begin{equation}\label{war:size:4}
  y_i \in (-(\calD + \calD))\cap\calD \setminus (-2\calD \cup \mcalD)
  \text{ for } i=1,2.
\end{equation}
The size of $[y]$ is $3$   
if $y_{i_1}$ as in \eqref{war:size:4}, and 
$y_{i_2}$ satisifes
\begin{equation}\label{war:size:3}
   y_{i_2} \in (-(\calD + \calD))\cap\calD \cap (-2\calD) \setminus \mcalD.
\end{equation}
with $\{ i_1,i_2 \} = \{ 1,2 \}$.


\subsubsection{A power of Fano plane}

Now, let us put 
  $\goth M = {\goth F}^n,
  \textrm{ where } {\goth F}=\DifSpace(C_7,\{0,1,3\})$.
Our goal is to determine the automorphisms group
of ${\goth F}^n$. We claim as follows:
\begin{figure}[!h]
    \begin{center}
    \includegraphics[scale=0.7]{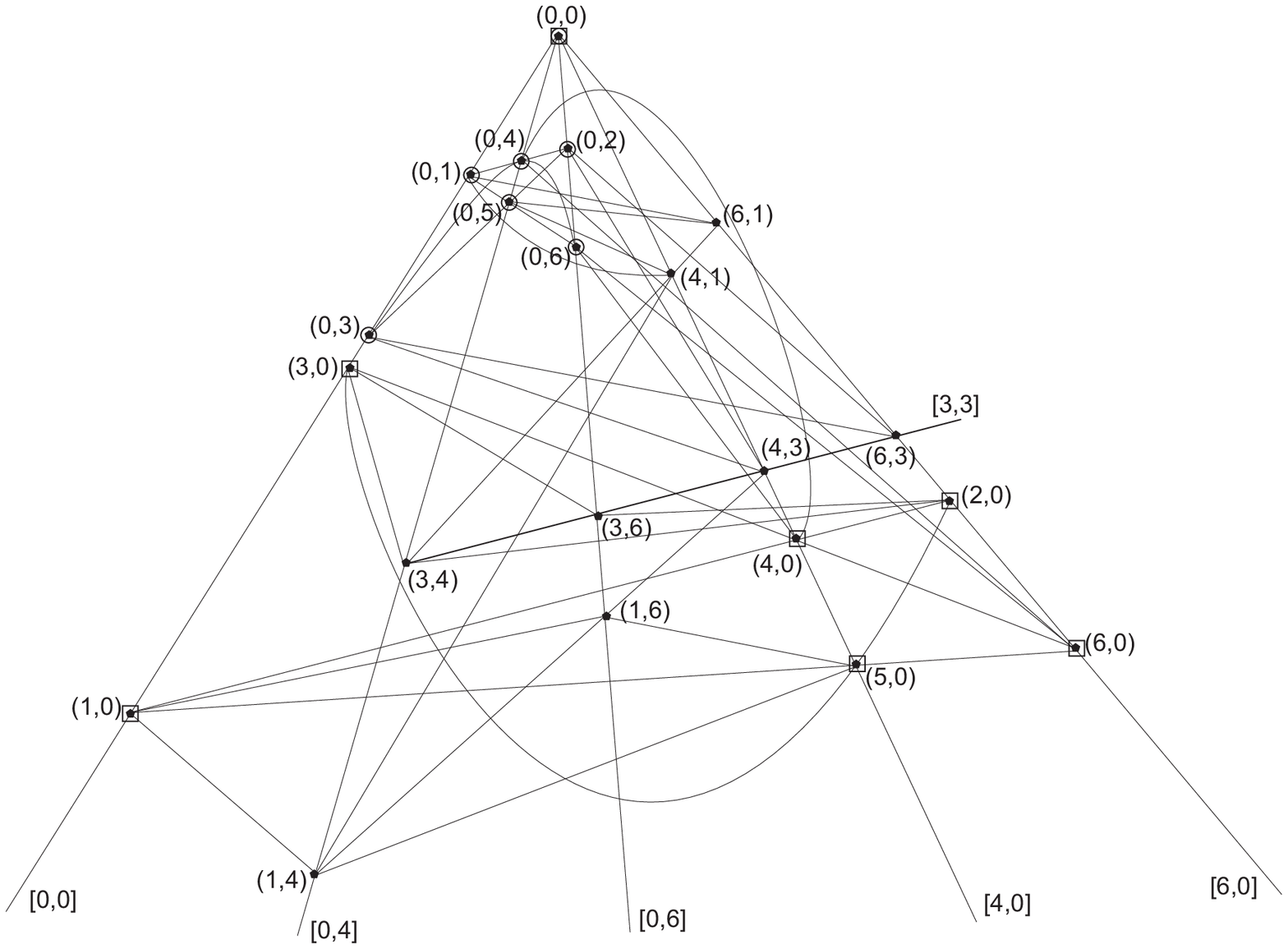}
    \end{center}
\caption{Neighborhood of the point $(0,0)$ in $\goth F \oplus \goth F$}
\label{fig:FFanoneighb}
\end{figure} 

\begin{prop}
  Let ${\goth P}^n$ be the structure defined in \eqref{multisinger} with 
  ${\goth P}:={\goth F}=\DifSpace(C_7,\{0,1,3\})$. 
  Then, the group $\Aut({\goth F}^n)$
  is isomorphic to $  S_n \ltimes (C_7)^n $.
\end{prop}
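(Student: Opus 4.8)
The plan is to follow the pattern of \ref{prop:autofpapusfano}: first exhibit $S_n\ltimes(C_7)^n$ inside $\Aut({\goth F}^n)$, then reduce an arbitrary automorphism to one fixing the base point $(\theta)=((0,\dots,0))$, and finally show that this point stabiliser is exactly the copy of $S_n$ coming from coordinate permutations. For the embedding, recall that the translations form a point--transitive subgroup of $\Aut({\goth F}^n)=\Aut(\DifSpace((C_7)^n,D))$ isomorphic to $(C_7)^n$, while by \ref{lem:syminsegre} each $\beta\in S_n$ yields a collineation $h_\beta$ fixing $(\theta)$; since $h_\beta\circ\tau_a\circ h_\beta^{-1}=\tau_{h_\beta(a)}$, these generate a subgroup isomorphic to $S_n\ltimes(C_7)^n$. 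By point transitivity it remains to prove that every $F\in\Aut({\goth F}^n)$ with $F((\theta))=(\theta)$ coincides with some $h_\beta$; note that the argument forces $n\ge 2$ (for $n=1$ the ``connecting lines'' used below do not exist, and indeed $\Aut({\goth F})=\Aut(PG(2,2))$ is much larger), so we assume $n\ge 2$.

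The first genuine step is that such an $F$ permutes the $n$ Fano subplanes $S_1,\dots,S_n$ of $\otocz({\goth F}^n,{(\theta)})$ --- equivalently the pencils $\flines_1,\dots,\flines_n$ --- and hence induces a permutation $\beta=\beta_F\in S_n$ with $F(S_i)=S_{\beta(i)}$. This comes from the classification of the lines of $\otocz({\goth F}^n,{(\theta)})$ already assembled above. By \ref{line:through:0} the lines through $(\theta)$ are $[\theta]$ together with, for each $i$, the two lines $[y]$ with $y\in P_i$ and $y_i\in\mcalD\bezer$; by \ref{clofan} a line $[x]$ with $x\in P_i$ not through $(\theta)$ has neighbourhood size $q+1=3$ and meets no $S_j$, $j\ne i$; by \ref{kroski} no line of support size $\ge 4$ occurs in the neighbourhood, and among the support--two lines the size analysis (conditions \eqref{war:size:4} and \eqref{war:size:3}, specialised to $\calD=\{0,1,3\}$, where $-(\calD+\calD)=\{0,1,3,4,5,6\}$ and $-2\calD=\{0,1,5\}$) shows that the only lines of neighbourhood size $>2$ not through $(\theta)$ are the connecting lines $[y]$ with $\supp(y)=\{i,j\}$, $i\ne j$, and either $y_i=y_j=3$ (size $4$) or $\{y_i,y_j\}=\{1,3\}$ (size $3$). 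Every Fano subplane through $(\theta)$ must contain $[\theta]$, its three remaining lines through $(\theta)$ must form the pencil of $(\theta)$ in some $\flines_i$, and its four lines missing $(\theta)$ must meet $[\theta]$; since no connecting line and no size--$4$ line meets $[\theta]$, those four must be the size--$3$ lines $[x]$ with $x\in P_i$. Thus each $S_i$ is the unique order--$2$ projective subplane of $\otocz({\goth F}^n,{(\theta)})$ through $(\theta)$ all of whose lines have neighbourhood size $3$ or $2n+1$, so $F$ permutes the $S_i$ and fixes $[\theta]$.

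Replacing $F$ by $h:=h_{\beta^{-1}}\circ F$ we get a collineation fixing $(\theta)$, each $S_i$ setwise, and $[\theta]$; the main work --- and the expected main obstacle --- is to show $h$ is the identity on $\otocz({\goth F}^n,{(\theta)})$. A size--$3$ connecting line with support $\{i,j\}$ meets \emph{both} lines of $\flines_i$ through $(\theta)$ but only \emph{one} line of $\flines_j$ through $(\theta)$ (the asymmetry between the values $3$ and $1$ distinguishing the two sides), and the two size--$3$ connecting lines on $\{i,j\}$ single out one of the two non--$[\theta]$ lines of $\flines_i$ and one of those of $\flines_j$; combining over $j$ (here $n\ge 2$ is used) this forces $h$ to fix every line through $(\theta)$. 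Hence $h\restriction S_i$ is an automorphism of the Fano plane fixing $(\theta)$ and the three lines through it, i.e.\ the identity or a non--trivial elation with centre $(\theta)$. To exclude an elation one tracks, via the size--$2$ connecting lines of \ref{singer:jointriangle} (each point $(x)\in S_i$ with $\supp(x)=\{i\}$ lying on two of them towards every $S_j$) together with the size--$3$ and size--$4$ lines above, the bijections $S_i\to S_i$ and $S_j\to S_j$ induced by $h$: the $\Xi^2$--endpoints of these connecting lines are pinned down by the already--controlled pencils, and reading back along the connecting lines forces $h(x)=x$. Thus $h$ is the identity on every $S_i$, and since every point of $\otocz({\goth F}^n,{(\theta)})$ lies on some $S_i$ or is the unique common point of two connecting lines with fixed endpoints (\ref{pty:otocz}, \ref{kroski}), $h$ is the identity on the whole neighbourhood.

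Finally $h$ fixes $(\theta)$ together with every point and line of $\otocz({\goth F}^n,{(\theta)})$. By the propagation argument of \ref{lem:preslinethru}--\ref{lem:presallneighb}, which transfers directly --- for a point $q'$ collinear with $(\theta)$, all but one of the $2n+1$ lines through $q'$ already lie in $\otocz({\goth F}^n,{(\theta)})$ and are fixed, hence so is the last, hence $h$ is the identity on $\otocz({\goth F}^n,{q'})$ --- together with the connectedness of ${\goth F}^n$ (\ref{connect:fanem}, \ref{pr:concomp}), we conclude $h=\id$. Therefore $F=h_\beta$, so $\Aut({\goth F}^n)=S_n\ltimes(C_7)^n$. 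The delicate point throughout is the book-keeping in the third paragraph: verifying, from the explicit sets $\calD=\{0,1,3\}$ and $\mcalD=\{0,4,6\}$, that the size--$3$ and size--$2$ connecting lines really do rigidify $h$ on each Fano subplane and leave no room for a non-trivial elation.
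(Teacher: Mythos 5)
Your proposal is correct and follows essentially the same route as the paper: classify the lines of $\otocz({\goth F}^n,{(\theta)})$ by their size in the neighbourhood (the auxiliary lemmas on sizes $4$, $3$ and $2$), deduce that a point-stabilizing automorphism permutes the $n$ Fano subplanes and so determines $\sigma\in S_n$, compose with the coordinate permutation $G_\sigma$, rigidify the whole neighbourhood via the asymmetric size-$3$ connecting lines and then the size-$2$ ones, and propagate by connectedness. The only (cosmetic) divergence is that you read $\sigma$ off directly from the permutation of the subplanes and skip the paper's identification of the size-$4$ lines with the dual of $\gras(3,n)$ and the appeal to $\Aut(\gras(3,n))\cong S_n$; you also make explicit the implicit hypothesis $n\ge 2$, which the paper leaves unstated.
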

\begin{proof}
  Let $(0,\ldots,0)=:\theta$, and 
    $\tau_{(u_1,\ldots,u_n)}((x_1,\ldots,x_n))=(x_1+u_1,\ldots,x_n+u_n)$  
   $\textrm{ for } (u_1,\ldots,u_n),(x_1,\ldots,x_n)\in (C_7)^n$.
  Take under consideration 
    $F=\tau_{-f(\theta)}\circ f$, where $f\in \Aut({\goth F}^n)$.
  Clearly, 
    $\tau_{(u_1,\ldots,u_n)}\in \Aut({\goth F}^n)$, 
  and consequently
    $F\in \otocz({\Aut({\goth F}^n)},{(\theta)})$.
\def\linesiii{\flines'_0}
\def\linesiv{\flines''_0}
  To understand better the geometry of ${\goth F}^n$ we
  analyze $\otocz({\goth F}^n,{(\theta)})$  
  (see Figure \ref{fig:FFanoneighb} for the case $n=2$).
\newline
  Let $\linesiv$ be the family of lines of the size $4$ in $\otocz({\goth F}^n,{(\theta)})$
  and $\linesiii$ be the family of the lines of the size $3$ in $\otocz({\goth F}^n,{(\theta)})$
  that are not in any of the $\flines_i$. 
  To find a form of these lines we apply the 
  obtained conditions \eqref{war:size:4} and \eqref{war:size:3}
  to the classical Fano plane. 
  We compute as follows:
  \begin{itemize}\def\labelitemi{\strut}\itemsep-2pt
  \item
    $\mcalD = \{ 0,4,6 \}$;
  \item
    $2\calD  = \{ 0,2,6 \}$; then $-2\calD = \{ 0,1,5 \}$
  \item
    $-2\calD \cup \mcalD = \{ 0,1,4,5,6 \}$;
  \item
    $\calD + \calD = \{ 0,1,2,3,4,6 \}$,
    then $-(\calD +\calD) = \{ 0,1,3,4,5,6 \}$
  \item
    $-(\calD + \calD) \cap \calD = \{ 0,1,3 \}$
  \item
    $-(\calD + \calD) \cap \calD \cap -2\calD = \{ 0,1 \}$.
\end{itemize}
Next we can infer:
\begin{auxlem}\label{line:size:4}
  Let $y \in (C_7)^n$.
  Then $[y]\in\linesiv$ iff $y\in \Xi_{3}^{2}$.
  \par
  Let $y\in \Xi_{3}^{2}$ and $\supp(y) = \{ i_1,i_2 \}$.
  Clearly, $[y]$ does not intersect $[\theta]$, but 
  $[y]$ intersects every of the remaining
  two lines in $\flines_{i_1}$ and in $\flines_{i_2}$.
  Consequently, for any two $i_1,i_2 \in \{ 1,\ldots,n \}$
  there is (exactly one) line in $\linesiv$ that crosses two lines in
  $\flines_{i_1}$ and two lines in $\flines_{i_2}$.
  \par
  No two distinct lines in $\linesiv$ intersect.
\end{auxlem}
\begin{auproof}
 Suffice to see that in this case \eqref{war:size:4} defines the set
  \begin{itemize}\def\labelitemi{\strut}\itemsep-2pt
      \item
      \strut\quad
      $(-(\calD + \calD))\cap\calD \setminus (-2\calD \cup \mcalD) =
    \{ 0,1,3 \} \setminus \{ 0,1,4,5,6 \} = \{ 3 \}$,
  \end{itemize}
  that together with $|\supp(y)|=2$ produces the first claim.
  \par
  To prove the second claim note that if $\supp(y) = \{ i_1,i_2 \}$ 
  then the common points of $[y]$ and lines in 
  $\flines_{i_1}$ are $x,x'$ with $|\supp(x)| = 2$ such that
  $x_{i_1} \in \{ 4,6 \} $ and $x_{i_2} = 3$  
  and similarly for $i_2$.
\end{auproof}
\begin{auxlem}\label{line:size:3}
  If $y\neq\theta$, then $[y]$ is of the size $3$ in 
  $\otocz({\goth F}^n,{(\theta)})$ iff either $y\in \Xi_{\{1,3\}}^{2}$ or
  $y\in P_i$ for some $i\in\{1,\ldots,n\}$. This gives, in particular, that
  $\linesiii = \{ [y]\colon y \in \Xi_{\{1,3\}}^{2} \}$.
 \par
  Let $\supp(y) = \{ i_1,i_2 \}$, $y_{i_1} = 1$, and $y_{i_2} = 3$.
  Clearly, $[y]$ and $[\theta]$ do not intersect.
  The line $[y]$ crosses two other lines in $\flines_{i_2}$
  and it crosses exactly one line in $\flines_{i_1}$.
  Consequently, for every two $i_1,i_2 \in \{ 1,\ldots,n \}$
  there is (exactly one) line in $\linesiii$ hat crosses two lines in $\flines_{i_2}$
  and crosses exactly one line in $\flines_{i_1}$.
 \par 
  No two distinct lines in $\linesiii$ intersect.
\end{auxlem}
\begin{auproof}
As in proof of \ref{line:size:4} we find the set $\{3\}$ defined by \eqref{war:size:4}
and substitute for \eqref{war:size:3} as follows:
\begin{itemize}\def\labelitemi{\strut}\itemsep-2pt
   \item
      \strut\quad
      $(-(\calD + \calD))\cap\calD \cap (-2\calD) \setminus \mcalD
    = \{ 0,1 \} \setminus \{ 0,4,6 \} = \{ 1 \}$.
    \end{itemize}
\par
  To close the proof note that if $y$ is as required, then
  $[y]$ crosses $[y'],[y'']\in\flines_{i_2}$, where
  $y'_{i_2} = 4$ and $y''_{i_2} = 6$ in 
  the points $(x')$ and $(x'')$ resp., such that $x'_{i_2} = 4$, $x''_{i_2} = 6$,
  $x'_{i_1} = 1 = x''_{i_1}$.
  The line $[y]$ crosses exactly one line in $\flines_{i_1}$;
  namely the line $[y']$ such that $y'_{i_1} = 4$
  in the point $(x')$ such that $x'_{i_1} = 4$ and $x'_{i_2} = 3$.
\end{auproof}
  Directly from \ref{line:size:3} we get
\begin{auxlem}\label{lines:iii}
  A line $L$ in $\otocz({\goth F}^n,{(\theta)})$ belongs to $\linesiii$
  iff it is of the size $3$ and no other line of the size $3$ in $\otocz({\goth F}^n,{(\theta)})$
  crosses $L$.
\end{auxlem}
  Observing \ref{line:size:3}, \ref{line:size:4} 
  (and their proofs: formulas for corresponing points of intersection), 
  and \ref{pty:otocz} we obtain immediately
\begin{auxlem}\label{points:fano}
  Let $(x)$ be a point of $\otocz({\goth F}^n,{(\theta)})$.
  Then $x \in P_i$ iff there are two distinct lines of the size $3$ that pass through it.
  The set of points on the lines in $\linesiii \cup \linesiv$ is exactly
  the set of points of the form $(x)$ with $x \notin \cup_{i=1}^{n}P_i$.
\end{auxlem}
  From the above analysis it follows that $\otocz({\goth F}^n,{(\theta)})$
  contains exactly $n$ subconfigurations isomorphic to a Fano plane; these are
  exactly substructures of the form 
    $\struct{S_i,\flines_i,\inc}$.
  Intuitively, we can read  \ref{line:size:4} as a statement
  that any two Fano subplanes of $\otocz({\goth F}^n,{(\theta)})$
  are joined by a line of the size $4$.
  Analogously, \ref{line:size:3} explains how lines of the size $3$ join
  the above Fano subplanes.

  In view of \ref{points:fano} and \ref{lines:iii} our automorphism $F$ preserves 
  the set $\bigcup_{i=1}^n S_i$ and further, it permutes the above Fano
  subplanes (clearly, a Fano subplane must be mapped onto a Fano subplane).
  That gives that $F$ determines a permutation $\sigma$ such that
  $F$ maps the set $S_i$ onto $S_{\sigma(i)}$ and it 
  maps the family $\flines_i$ onto $\flines_{\sigma(i)}$
  for every $i =1,\ldots,n$.

  Obviously, $F$ preserves the set of lines of the size $4$ in 
  $\otocz({\goth F}^n,{(\theta)})$. Since these lines are of the form $[y]$ with 
  $y\in \Xi_{3}^{2}$, we can identify every such a line $[y]$ with the set 
    $\supp(y) \in \sub_2(\{ 1,\ldots,n \})$.
  Every point $(x)$, where $x\in \Xi_3^3$, is in ${\goth F}^n$ the
  meet of three lines $[y_t]$, $y_t\in\Xi_3^2$, and $t=1,2,3$  iff
  $\supp(y_t)\subset\supp(x)$. Therefore, lines
  $\{[y]\colon y\in\Xi_3^2\}$ together with their intersection points form the  
  structure dual to $\gras(3,n)$. 
  The map $F$ determines a permutation $F_0$ of the lines in $\linesiv$
  which, in view of the above, is an automorphism of $\gras(3,n)$.
  The automorphisms group of $\gras(3,n)$ 
  is the group $S_n$ -- compare \cite{corset}
  and thus there is $\sigma'\in S_n$ which determines $F_0$.
  It is seen (cf. \ref{line:size:4}) that $\sigma' = \sigma$.
  Let $G = G_{\sigma}$ be the automorphism of ${\goth F}^n$ 
  determined by the permutation $\sigma$ (cf. \ref{lem:syminsegre}) and
  let $\varphi=G^{-1}\circ F$. 
  Clearly, $\varphi$ is an automorphism of
  ${\goth F}^n$, and $\varphi$ maps every line in $\linesiv$ onto itself.
  Consequently, $\varphi$ maps every family 
    ${\cal J}_i \setminus \{ [\theta] \}$ 
  onto itself and thus it leaves the line $[\theta]$ invariant.

  From \ref{lines:iii},
  the map $\varphi$  preserves the family $\linesiii$; 
  observing intersections
  of the lines of this family and the lines in the families $\flines_i$ 
  (cf. \ref{line:size:3})
  we get that every line through $(\theta)$ remains invariant under $\varphi$.
  \par
  Now, we need three other global properties.
\begin{auxlem}\label{au:1}
  Let $F$ be an automorphism of $\goth M$ such that $F$ leaves every line 
  through a point $a$ invariant.
  Then 
  $F\restriction{\otocz({\goth F}^n,{a})}=\id$.
\end{auxlem}
\begin{auproof}
  Without loss of generality we can assume that $a = \theta$
  (consider $F':= \tau_{-a} \circ F \circ \tau_a$, if necessary) and then we can apply 
  characterizations proved before.
 \par
  From \ref{line:size:4} we get that $F$ fixes every point on an arbitrary line in
  $\linesiv$ and it fixes every point on an arbitrary line in $\linesiii$.
  In view of \ref{points:fano} this gives that $F$ fixes every point outside
  the $S_i$.
  Considering the set of lines of the size $2$ in $\otocz({\goth F}^n,{(\theta)})$
  (compare Figure \ref{fig:FFanoneighb})
  we conclude that every point $(x)$ with $x\in P_i$ is fixed as well
  which closes the proof.
\end{auproof}
\begin{auxlem}\label{au:2}
  Let $a,b$ be two points of ${\goth F}^n$ such that 
  $b$ is a point of a Fano subconfiguration 
  in ${\otocz({\goth F}^n,{a})}$.
  If $F$ is an automorphism of ${\goth F}^n$ such that
  $F\restriction{\otocz({\goth F}^n,{a})}=\id$
  then
  $F\restriction{\otocz({\goth F}^n,{b})}=\id$ as well.
\end{auxlem}
\begin{auproof}
  Again, we assume that $a = (\theta)$.
  The degree of the point $b$ in
  $\otocz({\goth F}^n,{(\theta)})$ amount to $2n+1$, so every line through $b$ is
  preserved and the claim follows from \ref{au:1}.
\end{auproof}
  Now, we return to the proof of the theorem.
  Recall that we have already proved that $\varphi$ preserves every line
  through $(\theta)$. From  \ref{au:2} and \ref{connect:fanem} 
  by a straightforward induction we come to $\varphi = \id$ and thus
  $F = G_{\sigma}$ and $f = \tau_{f(\theta)} \circ G_{\sigma}$.
 \par
  To close the proof, note that for elements
  $x = (x_1,\ldots,x_n)$, $u = (u_1,\ldots,u_n)$ of $(C_7)^n$  
  we have 
\begin{multline}\nonumber
  G_{\sigma} \circ \tau_u ((x)) = 
  G_{\sigma}((x_1+u_1,\ldots,x_n+u_n)) =
  ((x_{\sigma(1)}+u_{\sigma(1)},\ldots,x_{\sigma(n)}+u_{\sigma(n)})) =
  \\
  = G_\sigma(x_1,\ldots,x_n)+ G_\sigma(u_1,\ldots,u_n) =
    \tau_{G_\sigma((u_1,\ldots,u_n))}\circ G_\sigma((x_1,\ldots,x_n)),
\end{multline}
  i.e. $G_\sigma \circ \tau_u \circ G_{\sigma}^{-1} = \tau_{G_\sigma(u)}$, as required.
\end{proof}


\subsubsection{A power of cyclic projective plane $PG(2,3)$}

Let us adopt 
  $\goth M = {\goth N}^n,
  \textrm{ where } {\goth N}=\DifSpace(C_{13},\{0,1,3,9\})$ -- cyclic projective plane
  $PG(2,3)$.

Note that the map 
   $$\mu_\alpha\colon C_{13} \ni x \longmapsto \alpha\cdot x \quad \text{with} \quad
   \alpha = 3$$
 is an automorphism of the group $C_{13}$ that leaves the set $\calD$ invariant,
 and therefore it determines an automorphism of $\goth N$. 
 Moreover for the induced automorphism we have 
 $\mu_3[y] = [\mu_3(y)]$ for every $y \in C_{13}$ (cf. \ref{rem:whenautline}). 
 Clearly, $\mu_3$ generates the $C_3$ group 
 \newline
 \begin{equation}\label{wz:graut:C13}
 H:= \{ \mu_1 = \id,\mu_3,\mu_9 \} \subset \otocz({\Aut({\goth N})},{(\theta)}).
 \end{equation}
 We establish the automorphisms group of $\goth M$.
\begin{prop}
  Let ${\goth P}^n$ be the structure defined in \eqref{multisinger} with 
  ${\goth P}:={\goth N}=\DifSpace(C_{13},\{0,1,3,9\})$. 
  Then, the group $\Aut({\goth N}^n)$
  is isomorphic to $S_n\ltimes ( (C_3)^n \ltimes (C_{13})^n ) $.
\end{prop}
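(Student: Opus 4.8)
The plan is to follow the three-stage pattern of the preceding power-of-plane results, the one genuinely new feature being that the stabiliser of $(\theta)$ inside each embedded copy of $PG(2,3)$ is no longer trivial. \emph{First stage (the easy inclusion).} By \ref{rem:whenautline} the multiplier $\mu_3$ is a collineation of $\goth N$ whose point map coincides with its line map, so the whole group $H$ of \eqref{wz:graut:C13} has this property; hence by \ref{pr:autinsegre} every coordinatewise product $\prod_{i=1}^{n}\mu_3^{t_i}$ ($0\le t_i<3$) is a collineation of $\goth N^n$, and these form a subgroup isomorphic to $(C_3)^n$. Together with the translation group $(C_{13})^n$ and the coordinate permutations $G_\sigma$ of \ref{lem:syminsegre} (which realise $S_n$) they generate a subgroup $K\le\Aut(\goth N^n)$; since $\mu_3\tau_u\mu_3^{-1}=\tau_{3u}$, the $\mu$-part and the translation part combine as the asserted twisted product, and $G_\sigma$ permutes the coordinates of both, so $K\cong S_n\ltimes((C_3)^n\ltimes(C_{13})^n)$. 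It then remains to prove $\Aut(\goth N^n)\subseteq K$.

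\emph{Second stage (locating the $n$ subplanes).} Given $F\in\Aut(\goth M)$ with $\goth M=\goth N^n$, I would compose with a translation so that $F((\theta))=(\theta)$, hence $F\in\otocz(\Aut(\goth M),(\theta))$, and then rerun the general analysis of $\otocz(\goth M,(\theta))$ via \ref{line:through:0}, \ref{pty:otocz}, \ref{clofan}, \ref{kroski}, \ref{soluty} and the size criteria \eqref{war:size:4}, \eqref{war:size:3} for $\calD=\{0,1,3,9\}$. Here $\mcalD=\{0,4,10,12\}$, $-2\calD=\{0,7,8,11\}$, $\calD+\calD=C_{13}\setminus\{7,8,11\}$, so the set in \eqref{war:size:4} equals $\{1,3,9\}$ while that in \eqref{war:size:3} is empty. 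I expect the analogues of \ref{line:size:4} and \ref{line:size:3} to read: in $\otocz(\goth M,(\theta))$ the lines of size $3n+1$ are exactly the lines through $(\theta)$; the lines of size $4$ are exactly the lines of the $n$ embedded planes $\langle S_i,\flines_i\rangle\cong PG(2,3)$ that miss $(\theta)$, together with the joining lines $[y]$ with $|\supp y|=2$ and both nonzero entries in $\{1,3,9\}$, of which there are exactly $9$ over each pair $\{i,j\}$ and no two of which meet; and there are no lines of size $3$ (the analogue of $\flines'_0$ being empty). From the incidence pattern (any two internal size-$4$ lines of one $S_i$ meet, distinct joining lines never meet, and a point of $\bigcup_i S_i$ lies on no joining line) the family of subplanes and the line $[\theta]$ are recovered intrinsically, so $F$ permutes them: $F(S_i)=S_{\sigma(i)}$, $F(\flines_i)=\flines_{\sigma(i)}$ and $F([\theta])=[\theta]$ for some $\sigma\in S_n$.

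\emph{Third stage (the residual map).} Put $\varphi=G_\sigma^{-1}\circ F$; then $\varphi((\theta))=(\theta)$, $\varphi$ fixes $[\theta]$ and each $S_i,\flines_i$ setwise, so $\varphi$ restricts on $\langle S_i,\flines_i\rangle$ to a flag-preserving collineation of $PG(2,3)$, in particular permuting the three lines $[4e_i],[10e_i],[12e_i]$ of $S_i$ through $(\theta)$. The key computation is that a joining line $[\gamma_i e_i+\gamma_j e_j]$ with $\gamma_i,\gamma_j\in\{1,3,9\}$ meets exactly two of $[4e_i],[10e_i],[12e_i]$, the missed one depending bijectively on $\gamma_i$; since $\varphi$ preserves the set of the $9$ joining lines over $\{i,j\}$ compatibly with its action on those three lines on each side, and since $\mu_3$ induces a $3$-cycle on $\{[4e_i],[10e_i],[12e_i]\}$, the permutation $\varphi$ induces there must coincide with that of some $\mu_3^{t_i}\in H$. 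Composing $\varphi$ with $\prod_i\mu_3^{-t_i}\in(C_3)^n\subseteq K$, I may assume $\varphi$ fixes every line through $(\theta)$; a direct inspection of $PG(2,3)$ (a flag-preserving perspectivity with centre $(\theta)$ fixing all four lines through $(\theta)$ and compatible with the now-fixed incidences among the internal size-$4$ lines must be the identity) then gives $\varphi|_{\langle S_i,\flines_i\rangle}=\id$, hence $\varphi$ is the identity on $\otocz(\goth M,(\theta))$. Finally, arguing as in \ref{au:1} and \ref{au:2} and propagating along the chain of subplanes of \ref{connect:fanem} (using that $\goth M$ is connected, cf.\ \ref{pr:concomp}), one gets $\varphi=\id$, so $F\in K$. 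Combined with the first stage this yields $\Aut(\goth N^n)\cong S_n\ltimes((C_3)^n\ltimes(C_{13})^n)$.

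I expect the main obstacle to be the key computation in the third stage: showing that the only flag-preserving collineations of the $PG(2,3)$-subplanes that extend to automorphisms of $\goth N^n$ are exactly those in $H\cong C_3$. This is what forces the factor $(C_3)^n$ rather than the full (much larger) flag stabiliser of $PG(2,3)$, and it is the one ingredient with no counterpart in the power-of-Fano argument (where the analogous stabiliser was trivial); the self-duality of \ref{pr:corelindif} and a careful bookkeeping of the joining lines $\Xi^{2}$-lines should be the tools that make it work.
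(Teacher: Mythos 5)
Your overall architecture coincides with the paper's: exhibit the candidate subgroup generated by the translations, the coordinatewise multipliers $\mu_3$ from \eqref{wz:graut:C13}, and the coordinate permutations $G_\sigma$; normalize an arbitrary automorphism to fix $(\theta)$; recover the $n$ embedded copies of $PG(2,3)$ intrinsically from $\otocz({\goth N}^n,{(\theta)})$; and finish by rigidity plus propagation along \ref{connect:fanem}. However, the step you yourself flag as ``the main obstacle'' --- that the residual automorphism acts on each subplane as some $\mu_3^{t_i}$ --- is precisely the step you do not prove, and the sketch you give for it does not work as stated. First, you assert that the permutation induced on the three lines $[4e_i],[10e_i],[12e_i]$ must coincide with that of a power of $\mu_3$ because it is ``compatible'' with the action on the nine joining lines; a priori that induced permutation could be a transposition, which is not a power of the $3$-cycle induced by $\mu_3$, and nothing in your argument excludes this. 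Second, your fallback claim that a collineation of $PG(2,3)$ fixing a point and all four lines through it must be the identity is false for the abstract plane (central collineations exist); what actually kills the residual map is not the internal geometry of one subplane but its interaction with the joining lines to the other coordinates. Third, your stage-two assertion that distinct joining lines never meet contradicts \ref{singer:size:4}: each of the nine joining lines over a pair $\{i,j\}$ meets four of the other eight.

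The paper closes this gap differently and more economically. It never shows that the induced permutation on $\{[4e_i],[10e_i],[12e_i]\}$ is even; it only uses the transitivity of $H$ on these three lines to compose with a suitable $h\in H^n$ so that the resulting map fixes every line of $\flines^{4}$, and then Auxiliary Lemma \ref{singer:fourpreserve} --- a concrete arithmetic argument with the difference set $\{0,1,3,9\}$ in $C_{13}$, showing for instance that the image of the collinear triple $\{(x)\in S_i\colon x_i\in\{1,2,4\}\}$ would fail to be collinear --- forces that map to be the identity on the whole neighbourhood, which in particular rules out the residual transposition on the two remaining lines. To complete your proof you would need to supply an argument of this strength (or an equivalent computation pinning the point-stabilizer contribution of each factor down to exactly $H\cong C_3$); as written, the proposal identifies where the difficulty lies but does not resolve it.
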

\begin{proof}
  Let $(0,\ldots,0)=:\theta$, and 
    $\tau_{(u_1,\ldots,u_n)}((x_1,\ldots,x_n))=(x_1+u_1,\ldots,x_n+u_n)$  
   $\textrm{ for } (u_1,\ldots,u_n),(x_1,\ldots,x_n)\in (C_{13})^n$.
  Define
    $F:=\tau_{-f(\theta)}\circ f$, where $f\in \Aut({\goth N}^n)$.
  Obviously, 
    $\tau_{(u_1,\ldots,u_n)}\in \Aut({\goth N}^n)$ 
  and consequently
    $F\in \otocz({\Aut({\goth N}^n)},{(\theta)})$.
\par
 For $q=3$, $k=13$, and $\calD = \{ 0,1,3,9 \}$ lemmas \ref{line:through:0}, 
 \ref{pty:otocz}, \ref{clofan}, \ref{kroski} give us some description 
 of $\otocz({{\goth N}^n},{(\theta)})$. Now, we examine the structure
 of lines of the size $3$ and $4$ in $\otocz({{\goth N}^n},{(\theta)})$.
 Thus, we apply \eqref{war:size:3} and \eqref{war:size:4} to $\calD = \{ 0,1,3,9 \}$
over $C_{13}$:
\begin{itemize}\def\labelitemi{\strut}\itemsep-2pt
\item
  $\mcalD = \{ 0,4,10,12 \}$;
\item
  $2\calD  = \{ 0,2,6,5 \}$; then $-2\calD = \{ 0,11,7,8 \}$
\item
  $-2\calD \cup \mcalD = \{ 0,4,7,8,10,11,12 \}$;
\item
  $\calD + \calD = \{ 0,1,2,3,4,5,6,9,10,12 \}$,
  then $-(\calD +\calD) = \{ 0,1,3,4,7,8,9,10,11,12 \}$
\item
  $-(\calD + \calD) \cap \calD = \{ 0,1,3,9 \}$
\item
  $-(\calD + \calD) \cap \calD \cap -2\calD = \{ 0 \}$
\item
  \eqref{war:size:4} defines the set
  \\
  \strut\quad
  $(-(\calD + \calD))\cap\calD \setminus (-2\calD \cup \mcalD) =
  \{ 0,1,3,9 \} \setminus \{ 0,4,7,8,10,11,12 \} = \{ 1,3,9 \}$
  \\
  and \eqref{war:size:3} defines 
  \\
  \strut\quad
  $(-(\calD + \calD))\cap\calD \cap (-2\calD) \setminus \mcalD = \emptyset$.
\end{itemize}
\def\linesiii{\flines'_0}
\def\linesiv{\flines''_0}
Let $\linesiv$ be the family of lines of the size $4$ in $\otocz({\goth N}^n,{(\theta)})$
that are not in any of the $\flines_i$,
and $\linesiii$ be the family of the lines of the size $3$ in $\otocz({\goth N}^n,{(\theta)})$. 
\newline
Straightforward inference from the above computation is the following:
\begin{auxlem}\label{singer:size:4}
Let $y \in (C_{13})^n$.
  Then $[y]\in\linesiv$ iff $y\in \Xi_{\{\alpha,\beta\}}^{2}$, where
  $\alpha,\beta \in \{ 1,3,9 \}$.
  \par
  If besides $\supp(y) = \{ i_1,i_2 \}$ then
  $[y]$ intersects two out of 
  three lines in $\flines_{i_1}$ and in $\flines_{i_2}$, but do not intersects $[\theta]$.
  Consequently, counting all such possibilities, for any two $i_1,i_2 \in \{ 1,\ldots,n \}$
  there are exactly nine lines in $\linesiv$ that cross two lines in
  $\flines_{i_1}$ and two lines in $\flines_{i_2}$. 
  \par
  For every point $(x)\in \otocz({\goth N}^n,{(\theta)})$ with $x\notin P_i$
  there are two lines $[y]\in \linesiv$ such that
  $(x) \inc [y]$.
  \par
  Every line in $\linesiv$ intersects four other lines in $\linesiv$ and does not intersect
  remaining four lines from $\linesiv$.
\end{auxlem}
\begin{auproof}
Let $\supp(y) = \{ i_1,i_2 \}$ for $[y]\in\linesiv$. From 
\ref{line:through:0} and \ref{pty:otocz}, 
  the intersection points of $[y]$ and lines in 
  $\flines_{i_1}$ are such $(x)$ that $\supp(x) = \{ i_1,i_2 \}$ and $x_{i_2}\in \{ 1,3,9\}$.
  What is more:
  \begin{itemize}\def\labelitemi{\strut}\itemsep-2pt
  \item
   if $x_{i_2}= 1$ then $x_{i_1}\in \{4,10\}$,
  \item
   if $x_{i_2}= 3$ then $x_{i_1}\in \{4,12\}$,
   \item
    if $x_{i_2}= 9$ then $x_{i_1}\in \{10,12\}$.
\end{itemize}
  Analogous computation we can do for $\flines_{i_2}$.
  \par
  On the other hand, we consider $(x)\in \otocz({\goth N}^n,{(\theta)})$ with $\supp(x) = \{ i_1,i_2 \}$ and
  $[y]\in\linesiv$ with $\supp(x) = \supp(y)$.
  From \ref{pty:otocz} $x_{i_1}\in \{4,10,12\}$ and $x_{i_2}\in \{1,3,9\}$. 
  For every element $\alpha\in \{4,10,12\}$ there exist two elements 
  $\beta_1,\beta_2\in \{1,3,9\}$ such that $\alpha-\beta_1,\alpha-\beta_2\in \calD$; and then
  \eqref{wz:relofinc} justifies the next part of our claim. 
  The last part follows immediately from \eqref{wz:relocol}.
\end{auproof}
\begin{figure}[!h]
    \begin{center}
    \includegraphics[scale=0.7]{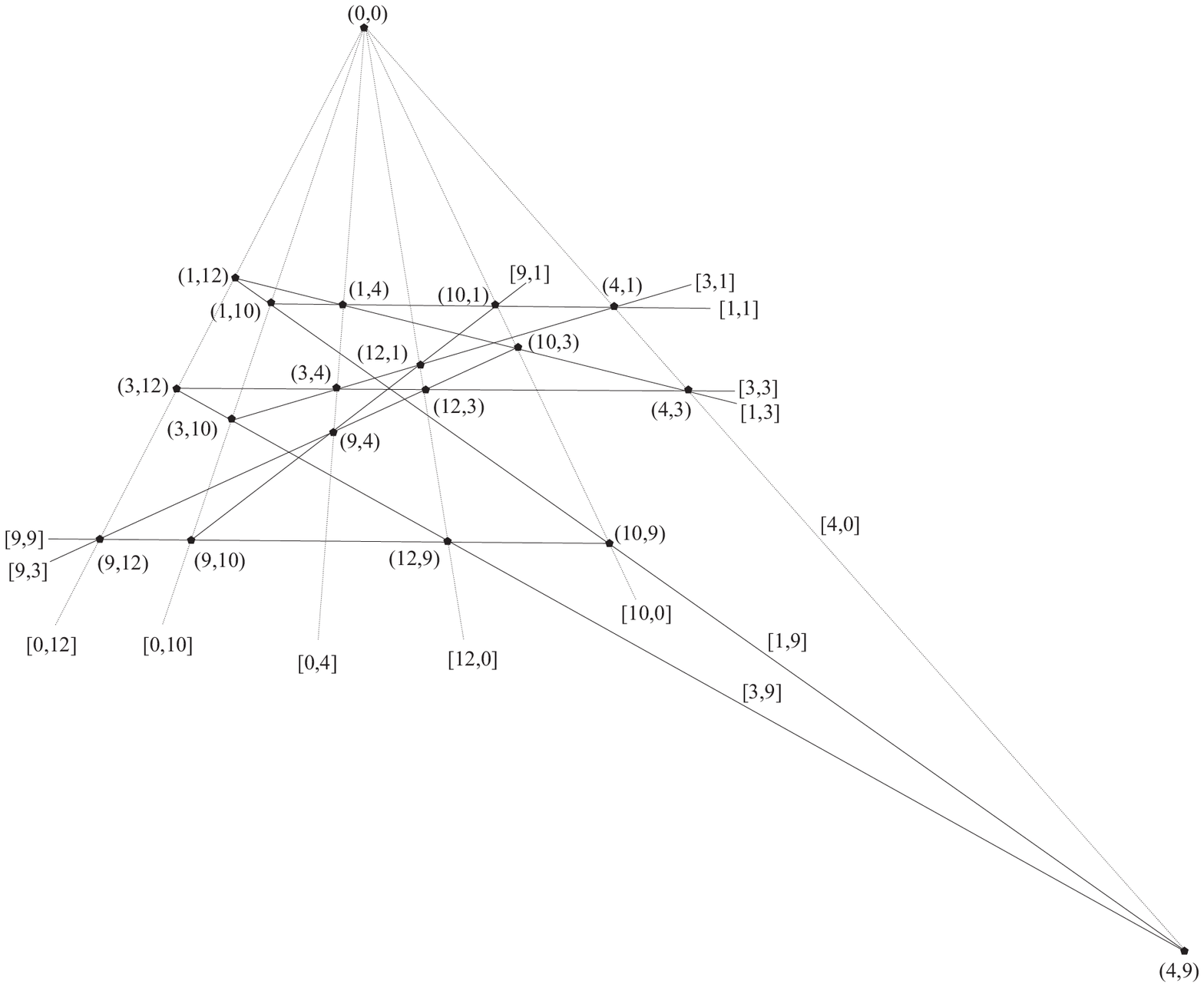}
    \end{center}
\caption{The structure of lines of the size $4$ in $\otocz({\goth N}^2,{(\theta)})$} 
\label{fig:4sizelines}
\end{figure} 
\begin{auxlem}\label{singer:size:3}
 There are no lines of the size $3$ in $\otocz({\goth N}^n,{(\theta)})$ (i.e. $\linesiii=\emptyset$).
\end{auxlem}
 %
%
Let us consider the set 
\newline
\centerline{
$\flines^{\alpha}:=\{[y]\in \flines_i\colon y_i=\alpha\in -\calD \textrm{ and } i=1,\ldots,n\}$.} 
We claim the following:
\begin{auxlem}\label{singer:fourpreserve}
If $F$ is an automorphism of ${\goth N}^n$  leaving every line in $\flines^{\alpha}$ invariant
 then $F\restriction{\otocz({\goth N}^n,{(\theta)})}=\id$.
\end{auxlem}
\begin{auproof}
Without loss of generality we can take $\alpha=4$.
Assume that $F\in\Aut({\goth N}^n)$ and $F$ leaves every line in $\flines^4$ invariant.
For arbitrary two distinct $i,j\in \{1,\ldots,n\}$ observe $[y'],[y'']\in \flines^4$ such that
$\supp(y')=i$, $\supp(y'')=j$. 
Note that four lines of the form $[z]$ with $z\in \Xi_{\{\alpha,\beta\}}^{2}$, where
$\alpha,\beta\in \{1,3\}$ and $\supp(z)=\{i,j\}$ are in view of \ref{singer:size:4}
unique lines of the size $4$ crossing both $[y']$ and $[y'']$. 
Consequently, $F$ must preserve such family of lines. 
\par
Let the line $[z]$ with 
$z_i=z_j=1$ not be mapped onto itself. In every of three such cases, two
triangles $(x),(x'),(x'')$ with   
\newline
\centerline{
$x_i=x'_i=x''_i\in \{1,3\},\; \{x_j,x'_j,x''_j\}= -\calD\setminus \{0\}$}
(or symmetric case for $i$ and $j$)
and
$x_s=0$ for all $s\neq i,j$ must be interchanged, and the triangle
$(x),(x'),(x'')$ with 
\newline
\centerline{$x_i=x'_i=x''_i=9,\; \{x_j,x'_j,x''_j\}= -\calD\setminus \{0\}$}
and
$z_s=0$ for all $s\neq i,j$, is preserved by $F$ (c.f. \ref{singer:size:4} and see 
\ref{fig:4sizelines}).
\newline
From \ref{singer:jointriangle} $F$ particularly interchanges points:
\begin{itemize}\def\labelitemi{\strut}\itemsep-2pt
  \item
   $(a)\in S_i$ with $(b)\in S_i$, and $a_i=1, b_i=3$;
  \item
   $(a')\in S_i$ with $(b')\in S_i$, and $a'_i=2, b'_i=11$
   \item
    $(a'')\in S_i$ with $(b'')\in S_i$, and $a''_i=5, b''_i=7$ .
\end{itemize}
The points $(a'')$, $(b'')$ lay on the line $[y']$ (which remains invariant), 
and thus $F$ fixes $(y')$ -- the third point (besides $(\theta)$) from
$S_i$ laying on $[y']$. From the above note that the set $\{(x)\in S_i\colon x_i\in\{1,2,4\}\}=:L$ consits
of three collinear points, but  $F(L)=\{(x)\in S_i\colon x_i\in\{3,11,4\}\}$ does not.
\par
Consequently, the line $[z]$ with 
$z_i=z_j=1$ must be mapped onto itself.
Then, again using \ref{singer:size:4} and \ref{singer:jointriangle}, step by step, we come to 
$F\restriction{\otocz({\goth N}^n,{(\theta)})}=\id$.
\end{auproof} 
In the similar way, as in the case of Fano plane, we can prove the following facts:
\begin{auxlem}\label{singer:au:1}
  Let $F$ be an automorphism of $\goth M$ such that $F$ leaves every line 
  through a point $a$ invariant.
  Then 
  $F\restriction{\otocz({\goth N}^n,{a})}=\id$.
\end{auxlem}
\begin{auproof}
  Without loss of generality we can assume that $a = \theta$
  (consider $F':= \tau_{-a} \circ F \circ \tau_a$, if necessary).
  In particular $F$ leaves every line in $\flines^{\alpha}$ invariant.
  Then we apply \ref{singer:size:4} and get our claim.
\end{auproof}
\begin{auxlem}\label{singer:au:2}
  Let $a,b$ be two points of ${\goth N}^n$ such that 
  $b$ is a point of a cyclic projective subplane
  in ${\otocz({\goth N}^n,{a})}$.
  If $F$ is an automorphism of ${\goth N}^n$ such that
  $F\restriction{\otocz({\goth N}^n,{a})}=\id$
  then
  $F\restriction{\otocz({\goth N}^n,{b})}=\id$ as well.
\end{auxlem}
\begin{auproof}
Assume that $a = (\theta)$.
The degree of every point $(x)\in S_i$  in $\otocz({\goth N}^n,{(\theta)})$ amount to $3n+1$ --
follows from \ref{singer:jointriangle}. Thus, every line through $b$ is
preserved and the claim follows from \ref{singer:au:1}.
\end{auproof}
       Directly from \ref{pr:autinsegre} and \ref{lem:syminsegre} we get:
     \begin{auxlem}\label{lem:syminpowerplane}
       Let $\sigma\in S_n$. We define the map $h_{\sigma}$ on $(C_{13})^n$ 
       by the formula
       $h_{\sigma}((x_1,\ldots,x_n)) = (x_{\sigma(1)},\ldots,x_{\sigma(n)})$.
       Then $G_{\sigma}=(h_{\sigma},h_{\sigma})\in \Aut({\goth N}^n)$, $G_{\sigma}(\theta) = \theta$, 
       and $G_{\sigma}(P_i)= P_{\sigma(i)}$ for all $i=1,\ldots,n$. 
 \end{auxlem}
 Let us consider $G=G_{\sigma}^{-1}\circ F$; then from \ref{lem:syminpowerplane} 
 $G \in \otocz({\Aut({\goth N}^n)},{(\theta)})$
 and $G(P_i)=P_i$.
 \par
  Observe the group $H^n :=\{(h_1,\ldots,h_n) \colon h_1,\ldots,h_n \in H \}$
  of permutations of $(C_{13})^n$ with the set $H$ defined in \eqref{wz:graut:C13}.
  The  coordinatewise action is evident, namely: 
  \newline
  \centerline{
  $(h_1,\ldots,h_n)(x_1,\ldots,x_n) = (h_1(x_1),\ldots,h_n(x_n))$.}
  In view of \ref{pr:autinsegre}, $H^n$ is an automorphism group of ${\goth N}^n$
  that leaves every "projective part" $P_i$ through $(\theta)$ invariant, and permutes lines through
$(\theta)$. 
\par
Take  $h\in H^n$.
We use the map $G'=h^{-1}\circ G$ to make every line in $\flines^4$ invariant
in the case of $G$ does not preserve some line in $\flines^4$.
Hence, from \ref{singer:fourpreserve}, $G'\in \otocz({\Aut({\goth N}^n}),{(\theta)})$ is the identity on
$\otocz({\goth N}^n,{(\theta)})$.
In a view of \ref{singer:au:2}, \ref{connect:fanem}, and by induction we obtain
$G'=\id$.
Finally, to make the proof complete we note:
\begin{eqnarray*}
 G_\sigma \circ \tau_u \circ G_{\sigma}^{-1} = \tau_{G_\sigma(u)}, &
 G_\sigma \circ h\circ G_{\sigma}^{-1} = G_\sigma(h), &
h \circ \tau_u \circ h^{-1} = \tau_{h(u)}.
\end{eqnarray*}
\end{proof}

\end{document}